\newcommand{\nocontentsline}[3]{}
\newcommand{\tocless}[2]{\bgroup\let\addcontentsline=\nocontentsline#1{#2}\egroup}
\newtheorem{lemma}{Lemma}[section]
\newtheorem{proposition}[lemma]{Proposition}
\newtheorem{theorem}[lemma]{Theorem}
\newtheorem{corollary}[lemma]{Corollary}
\newtheorem{conjecture}[lemma]{Conjecture}
\newtheorem*{theoremA}{Theorem}
\theoremstyle{definition}
\newtheorem{example}[lemma]{Example}
\newtheorem{definition}[lemma]{Definition}
\newtheorem{remark}[lemma]{Remark}
\newcommand{\mfk}[1]{\mathfrak{#1}}
\newcommand{\mbb}[1]{\mathbb{#1}}
\newcommand{\mcl}[1]{\mathcal{#1}}
\newcommand{\mrm}[1]{\mathrm{#1}}
\newcommand{\msc}[1]{\mathscr{#1}}
\newcommand{\msf}[1]{\mathsf{#1}}
\DeclareMathOperator{\Hom}{Hom}
\DeclareMathOperator{\End}{End}
\DeclareMathOperator{\RHom}{RHom}
\DeclareMathOperator{\Ext}{Ext}
\DeclareMathOperator{\rep}{rep}
\DeclareMathOperator{\Rep}{Rep}
\DeclareMathOperator{\Spec}{Spec}
\DeclareMathOperator{\Proj}{Proj}
\DeclareMathOperator{\Spf}{Spf}
\DeclareMathOperator{\Supp}{Supp}
\DeclareMathOperator{\ord}{ord}
\DeclareMathOperator{\res}{res}
\DeclareMathOperator{\supp}{supp}
\DeclareMathOperator{\Sym}{Sym}
\DeclareMathOperator{\Coh}{Coh}
\DeclareMathOperator{\thick}{thick}
\let\oldO\O
\newcommand{\ot}{\otimes}
\renewcommand{\1}{\mathbf{1}}
\renewcommand{\O}{\mathscr{O}}
\renewcommand{\hat}{\widehat}
\renewcommand{\tilde}{\widetilde}
\renewcommand{\b}[1]{[\!\hspace{.1mm}[{#1}]\!\hspace{.1mm}]}
\title[Support for integrable Hopf algebras]{Support for integrable Hopf algebras via noncommutative hypersurfaces}
\date{\today}
\author{Cris Negron}
\address{Department of Mathematics, University of North Carolina, Chapel Hill, NC 27599}
\email{cnegron@email.unc.edu}
\author{Julia Pevtsova}
\address{Department of Mathematics, University of Washington, Seattle, WA 98195}
\email{julia@math.washington.edu}
\begin{document}
\maketitle

 

\begin{abstract}
We consider finite-dimensional Hopf algebras $u$ which admit a smooth deformation $U\to u$ by a Noetherian Hopf algebra $U$ of finite global dimension.  Examples of such Hopf algebras include small quantum groups over the complex numbers, restricted enveloping algebras in finite characteristic, and Drinfeld doubles of height $1$ group schemes.  We provide a means of analyzing (cohomological) support for representations over such $u$, via the singularity categories of the hypersurfaces $U/(f)$ associated to functions $f$ on the corresponding parametrization space.  We use this hypersurface approach to establish the tensor product property for cohomological support, for the following examples: functions on a finite group scheme, Drinfeld doubles of certain height 1 solvable finite group schemes, bosonized quantum complete intersections, and the small quantum Borel in type $A$.
\end{abstract}

\tocless\section{Introduction}

The present paper is dedicated to analyses of cohomology and support for finite-dimensional Hopf algebras.  Given a Hopf algebra $\msf{u}$ over a field $k$, we are particularly interested in its associated tensor category of finite-dimensional representations $\rep(\msf{u})$.  At a basic level, support theory proposes that one can use a certain geometry, namely the geometry of the spectrum of the cohomology algebra $\Ext^\ast_\msf{u}(k,k)$ or some resolution thereof, to understand the representation theory $\rep(\msf{u})$ \emph{as a tensor category}.
\par

Support theory has its foundations in the modular representation theory of finite group (schemes), that is, the representation theory of finite group schemes in finite characteristic.  This geometric approach to modular representation theory arguably began with work of Quillen in the 70's \cite{quillen71} and continues into the present with strong contributions of many authors.  For a finite group scheme $\mcl{G}$ one assigns to any finite-dimensional $\mcl{G}$-representation $V$ a closed subvariety $\supp(V)$ in the spectrum of the cohomology ring $H^\ast(\mcl{G},k)=\Ext^\ast_\mcl{G}(k,k)$.  The subvariety $\supp(V)$ is defined as the support of the $H^\ast(\mcl{G},k)$-module $\Ext^\ast_\mcl{G}(V,V)$, with action defined via the tensor structure on $\rep(\mcl{G})$.  In the studies \cite{balmer05,bensoniyengarkrausepevtsova18}, for example, supports of objects are employed to provide strong analyses of the global structure on the (stable) category of $\mcl{G}$-representations.  The fundamental property which allows one to access the tensor structure on $\rep(\mcl{G})$ via support is the so-called tensor product property, which appears as the equality
\[
\supp(V\ot W)=\supp(V)\cap\supp(W).
\]
One proves the above equality by identifying the spectrum of cohomology with a certain moduli space of maps from $\rep(\mcl{G})$ to $\rep(\mbb Z/p)$ called the \emph{rank variety} for $\mcl{G}$ \cite{carlson83,suslinfriedlanderbendel97b,friedlanderpevtsova05,friedlanderpevtsova07}.
\par

Now, for a given finite-dimensional Hopf algebra $\msf{u}$ one can still define support varieties $\supp(V)$ of finite-dimensional $\msf{u}$-representation via cohomology as above.  However, for almost all examples over the complex numbers it is unknown how these support varieties acknowledge or do not acknowledge the tensor structure on $\rep(\msf{u})$.  At a basic level, one would like to know that the tensor product property for support, or some precise analog thereof, holds for $\msf{u}$-representations.  The purpose of the present paper and its sequels is to provide a framework with which one can address support theory outside of the modular setting, and also to resolve the tensor product property for support at least in some basic instances.  Our motivating examples are quantum groups over the complex numbers and Drinfeld doubles of finite group schemes in finite characteristic.  Before stepping into the world of support however, one does have some more fundamental questions to address.

Before one even starts to think about support theory for representations in terms of cohomological actions, one would like to guarantee that the cohomology ring in question is nice enough, specifically, finitely generated. We adopt the following terminology (cf.~\cite{negronplavnik}): A finite-dimensional Hopf algebra $\msf{u}$, over a field $k$, is said to have {\it finitely generated cohomology} if the self-extensions $\Ext^\ast_\msf{u}(k,k)$ are a finitely generated algebra and, for any finite-dimensional $\msf{u}$-representation $V$, the extensions $\Ext^\ast_\msf{u}(k,V)$ are a finite module over $\Ext^\ast_\msf{u}(k,k)$.  The so-called {\it finite generation conjecture}~\cite{friedlandersuslin97,etingofostrik04} proposes that all finite-dimensional Hopf algebras have finitely generated cohomology.
\par

Finite group schemes, or equivalently finite-dimensional cocommutative Hopf algebras over an arbitrary field, were shown to have finitely generated cohomology in celebrated work of Friedlander and Suslin \cite{friedlandersuslin97}.  The particular case of discrete groups had been covered in earlier works of Golod, Venkov and Evens \cite{golod59,venkov59,evens61}.  In the quantum setting over $\mbb{C}$, finite generation is known in many important cases, including small quantum groups $u_q(\mfk{g})$ and their duals \cite{ginzburgkumar93,bnpp14}, and certain fundamental classes of finite-dimensional pointed Hopf algebras \cite{mpsw10,aapw}.
\par

One observation of this work is that many situations for which finite generation is known can be united under the same umbrella of a {\it smooth integrability} property for Hopf algebras (see Definition~\ref{def:integrable}). We define and develop this property in the first part of the paper and show how it implies finite generation of cohomology, recovering several known results. Building on the ideas of Avramov-Buchweitz \cite{avramovbuchweitz00} and Avramov-Iyengar \cite{avramoviyengar18} in commutative algebra, we introduce an alternative notion of a rank variety formulated in terms of hypersurfaces on the smooth integration of the algebra $\msf{u}$ (see Definition~\ref{def:hyp_supp}). In some fundamental cases we are able to identify this new hypersurface support with the cohomological support. As an application, we show that the tensor product property holds in various situations including representations of the small quantum Borel in type $A$, and Drinfeld doubles of some naturally occurring height $1$ solvable group schemes in finite characteristic (see Section \ref{sect:PiqPi}).  Precise results for the full quantum group $u_q(\mfk{g})$ should appear in a subsequent paper \cite{negronpevtsova3}.
\par

Let us now describe the technical content of this paper in more detail. 
Throughout $k$ is an algebraically closed base field of varying characteristic.  Consider $\msf{u}$ a finite-dimensional Hopf algebra, over $k$.  We say that $\msf{u}$ is {\it smoothly integrable}, or just {\it integrable}, if $\msf{u}$ admits a deformation $U\to \msf{u}$ parametrized by a smooth, augmented, central subalgebra $Z\subset U$ such that (a) $U$ is a Noetherian Hopf algebra of finite global dimension, and $U\to \msf{u}$ is a Hopf map, and (b) $Z$ is a coideal subalgebra in $U$.  By smoothness we mean that $Z$ is smooth over our given base field $k$, and in particular of finite type over $k$.
\par

For example, the De Concini-Kac algebra $U^{DK}_q(\mfk{g})$ integrates the quantum group $u_q(\mfk{g})$, and we have a similar deformation for the quantum Borel $U^{DK}_q(\mfk{b})\to u_q(\mfk{b})$.  A restricted enveloping algebra $u^{\rm res}(\mfk{g})$, in finite characteristic, is smoothly integrated by the associated universal enveloping algebra $U(\mfk{g})\to u^{\rm res}(\mfk{g})$.  For $\mcl{G}$ a finite group scheme, any choice of embedding $\mcl{G}\to \mcl{H}$ into a smooth connected algebraic group $\mcl{H}$ provides a smooth integration $\O(\mcl{H})\to \O(\mcl{G})$ of functions on $\mcl{G}$.  The last two examples can be combined to produce a smooth integration $\tilde{D}(\mcl{G})\to D(\mcl{G})$ of the Drinfeld double of an infinitesimal height $1$ group scheme which admits a {\it normal} embedding into a smooth algebraic group.  See Section~\ref{sect:examples} for more details.
\par

As remarked above, the point of this paper is to explain how integrability of a given finite-dimensional Hopf algebra $\msf{u}$ can be employed in a meaningful way in analyses of cohomology and support.  At a relatively basic level, we have the following.
\vspace{2mm}

\begin{theoremA}[\ref{cor:fg_revisit}]
If a Hopf algebra $\msf{u}$ is integrable, then $\msf{u}$ has finitely generated cohomology.
\end{theoremA}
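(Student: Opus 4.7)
My plan is to leverage the smooth integration $U \to \msf{u}$ to identify a finitely generated polynomial subalgebra of $\Ext^*_\msf{u}(k,k)$ over which the full cohomology is a finitely generated module; the crucial input is that the finite global dimension of $U$ makes $\Ext^*_U(k,-)$ finite-dimensional on finite-dimensional coefficients. Let $\mfk{m}:=Z^+$ and $V:=\mfk{m}/\mfk{m}^2$. By smoothness of $Z$, the space $V$ is finite-dimensional and $Z$ is regular at its augmentation, so the Koszul complex on a regular system of parameters resolves $k$ by free $Z$-modules. Smoothness of the integration further forces $U$ to be flat over $Z$, so tensoring this resolution up along $Z \to U$ produces a finite resolution of $\msf{u} = U \ot_Z k$ by finitely generated projective $U$-modules, and a direct computation gives $\Tor^U_q(\msf{u}, k) = \Lambda^q V$ with trivial $\msf{u}$-action.

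Feeding this into the Cartan--Eilenberg change-of-rings spectral sequence for $U \twoheadrightarrow \msf{u}$ produces, for any finite-dimensional $\msf{u}$-module $M$, a multiplicative spectral sequence
\[
E_2^{p,q} = \Lambda^q V^* \ot_k \Ext^p_\msf{u}(k, M) \Longrightarrow \Ext^{p+q}_U(k, M),
\]
whose differentials act as graded derivations. The abutment is finite-dimensional over $k$: since $U$ is Noetherian of finite global dimension, $k$ admits a finite resolution by finitely generated projective $U$-modules, so $\Ext^*_U(k, M)$ is the cohomology of a bounded complex of finite-dimensional vector spaces. This spectral sequence is the sole mechanism by which the finiteness hypothesis on $U$ will control the cohomology of $\msf{u}$.

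Multiplicativity together with the derivation property of the $d_r$ imply that the successive transgressions of classes in $V^* \subset E_2^{0,1}$ land in $\Ext^2_\msf{u}(k, k)$ and assemble into a central algebra map $\Sym(V^*[-2]) \to \Ext^*_\msf{u}(k, k)$, through which every $\Ext^*_\msf{u}(k, M)$ becomes a $\Sym V^*$-module. The Koszul-duality pairing between the exterior algebra $\Lambda V^*$ appearing on $E_2$ and its dual $\Sym V^*$ now forces $\Ext^*_\msf{u}(k, M)$ to be finitely generated as a $\Sym V^*$-module: otherwise an infinite $\Sym V^*$-free piece would persist to $E_\infty$, contradicting boundedness of the abutment. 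Taking $M = k$ yields finite generation of $\Ext^*_\msf{u}(k, k)$ as a $k$-algebra, and the fact that the $\Sym V^*$-action factors through $\Ext^*_\msf{u}(k, k)$ then upgrades module-finiteness over $\Sym V^*$ to module-finiteness over $\Ext^*_\msf{u}(k, k)$ for general $M$.

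The main technical obstacle is making the transgression step rigorous: one must show that the abstract spectral-sequence analysis really does produce an honest polynomial subalgebra of $\Ext^*_\msf{u}(k, k)$ of the expected Krull dimension, over which the full cohomology is module-finite. In the framework of the present paper this is most naturally handled one parameter at a time, by successively reducing to the codimension-one case $U/(f)$ for single elements $f \in \mfk{m}$; for an individual hypersurface the Eisenbud-operator ideas of \cite{avramovbuchweitz00, avramoviyengar18} produce a single degree-$2$ polynomial generator directly, and an induction on $\dim V$ then assembles the full $\Sym V^*$-action.
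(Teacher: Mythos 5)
Your route is genuinely different from the paper's: you run the change-of-rings spectral sequence $E_2^{p,q}=\Lambda^q V^\ast\ot\Ext^p_{\msf{u}}(k,M)\Rightarrow \Ext^{p+q}_U(k,M)$ and try to pull finite generation of the $E_2$-page back from finite-dimensionality of the abutment, whereas the paper strictifies this situation into the dg-level isomorphism $\RHom_R\cong A_Z\ot^t\RHom_Q$ (Lemma~\ref{lem:twtt}) and reads off coherence of $\Ext^\ast_R(V,W)$ over $A_Z=\Sym(\Sigma^{-2}(m_Z/m_Z^2)^\ast)$ directly from a bounded model for $\RHom_Q(V,W)$ (Theorem~\ref{thm:fin2}, Corollary~\ref{cor:finH}). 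Your spectral sequence is the ``wrong-way'' one for this purpose: it converges to the cohomology of the \emph{big} algebra, so finiteness of the abutment constrains $E_\infty$, and $E_\infty$ is a subquotient of $E_2$ --- that inequality points in the unhelpful direction.

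Concretely, the gap is the sentence ``otherwise an infinite $\Sym V^\ast$-free piece would persist to $E_\infty$.'' Two things go wrong if you try to make this precise. First, the induced $\Sym V^\ast$-module structure on the pages $E_r$ for $r\geq 3$ is not visibly respected by the differentials: the derivation property only gives $d_r$-linearity over classes that are still alive in $E_r$, and the transgressed generators $\zeta_i=d_2(v_i)$ are exactly the classes that die on $E_3$. Second, even granting such a module structure, finite-dimensionality of $E_\infty^{\ast,0}$ does not imply finite-dimensionality of $E_3^{\ast,0}=\Ext^\ast_{\msf{u}}(k,M)/(\zeta_1,\dots,\zeta_n)\Ext^\ast_{\msf{u}}(k,M)$ --- which is what you would need in order to invoke graded Nakayama (Corollary~\ref{cor:Z_Nak}) --- since the higher differentials $d_3,\dots,d_{n+1}$ could a priori kill infinitely many classes in the bottom row. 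Your own proposed repair, the codimension-one induction via Eisenbud operators, is the correct classical fix (it is Gulliksen's argument, cf.\ \cite{gulliksen74,avramovgasharovpeeva97}) and amounts to reproving Theorem~\ref{thm:fin2}; the paper's dg model exists precisely to sidestep these convergence issues, replacing them with the $K$-flat fiber computation $k\ot^{\rm L}_{A_Z}\RHom_R(V,W)\cong\RHom_Q(V,W)$ plus graded Nakayama. Note finally that for finite generation alone only the easy implication of Theorem~\ref{thm:fin2} is needed: once $\Ext^\ast_Q(V,W)$ is bounded, a bounded model $N\overset{\sim}\to\RHom_Q(V,W)$ exhibits $A_Z\ot^t N$ as a finitely generated semi-free dg $A_Z$-module, and coherence of its cohomology is immediate from Lemma~\ref{lem:315}. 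The remaining steps of your outline (reduction to $M=k$ via the Hopf structure, and factoring the $A_Z$-action through $\Ext^\ast_{\msf{u}}(k,k)$) match the proof of Theorem~\ref{thm:fg}.
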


We note that the above theorem recovers finite generation for all of the examples above~\cite{friedlanderparshall86II,ginzburgkumar93,friedlandernegron18}, although it does not give information on the spectrum of cohomology beyond the embedding dimension.  (Our results  for height $1$ doubles are slightly stronger than those of~\cite{friedlandernegron18}.) The main technical tool here is a dg version of Koszul duality for the algebras $\Sym((m_Z/m_Z^2)^*)$ and $\wedge(m_Z/m_Z^2)$, where $m_Z$ is the kernel of the augmentation on $Z$.
\par

For $\msf{u}$ with finitely generated cohomology, and $\msf{Y}$ the reduced projective spectrum $\Proj(\Ext^\ast_\msf{u}(k,k)\big)_{\rm red}$, we consider the cohomological supports of objects
\begin{equation} 
\label{def:supp}
\supp_\msf{Y}(V):=\Supp_{\msf{Y}}\Ext^\ast_\msf{u}(V,V)^\sim,
\end{equation}
where $(-)^\sim$ denotes the sheaf on $\msf{Y}$ associated to a given graded $\Ext^\ast_\msf{u}(k,k)$-module.
\par

Suppose now that we have smoothly integrable $\msf{u}$ with associated deformation $U$ over $Z$.  The distinguished point $\epsilon: Z\to k$ at which we have $k\ot_Z U\cong \msf{u}$ must be the counit restricted to $Z$ in this case.
\par

Now, any choice of section $\tau:m_Z/m_Z^2\to m_Z$ specifies, for each point $c$ in the $m_Z/m_Z^2-\{0\}$, a corresponding ``noncommutative hypersurface" $U_c=U/(\tau(c))$.  As the algebra $U_c$ depends only on $c$ up to a scaling, we may instead consider the $U_c$ as associated to points in the projectivization $c\in \mbb{P}(m_Z/m_Z^2)$.  In what follows we say that $M$, a finitely generated module for $U_c$, is {\it perfect} if it has finite projective dimension. For $V$ a $\msf{u}$-representation we are interested in whether $V$ is, or is not, perfect when restricted to a given hypersurface $U_c$.
\par

In addition to finiteness of cohomology, our deformation situation also provides us with a map of varieties
\begin{equation}\label{eq:151}
\kappa:\msf{Y}=\Proj(\Ext^\ast_\msf{u}(k,k)\big)_{\rm red}\to \mbb{P}(m_Z/m_Z^2).
\end{equation}
The map $\kappa$ always has finite fibers, and in a number of important examples~\cite{friedlanderparshall86II,ginzburgkumar93,friedlandernegron18}, is a closed embedding.  The following theorem is a noncommutative variant of a result of Avramov and Buchweitz~\cite{avramovbuchweitz00}.

\begin{theoremA}[\ref{cor:hopf_hyp}]
Suppose that $\msf{u}$ is smoothly integrable, with corresponding deformation $U \to \msf{u}$ and parametrizing subalgebra $Z$.  Then for any $V$ in $\rep(\msf{u})$, and any choice of section $\tau: m_Z/m^2_Z\to m_Z$, we have
\[
\kappa\big(\supp_\msf{Y}(V)\big)=\{c\in \mbb{P}(m_Z/m_Z^2):V\text{ \rm is not perfect when restricted to }U_c\},
\]
where $\kappa$ is as in \eqref{eq:151}.  In particular, when $\kappa$ is a closed embedding there is an equality
\begin{equation}\label{eq:hyp0}
\supp_\msf{Y}(V)=\{c\in \mbb{P}(m_Z/m_Z^2):V\text{ \rm is not perfect when restricted to }U_c\}.
\end{equation}
\end{theoremA}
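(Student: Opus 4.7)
The plan is to combine the finite generation input of the first Theorem~A with a pointwise identification on $\mbb{P}(m_Z/m_Z^2)$ between non-perfection over $U_c$ and membership in the cohomological support at $c$.

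First, fix the section $\tau$ and observe that, by smoothness of $Z$ and faithful flatness of $U$ over $Z$, the image $\tau(m_Z/m_Z^2)$ provides a central regular sequence $f_1,\ldots,f_d \in U$ with $\msf{u} = U/(f_1,\ldots,f_d)$. The dg Koszul duality developed earlier in the paper then produces a natural action of the polynomial algebra $S = \Sym((m_Z/m_Z^2)^\ast)$, with generators in cohomological degree~$2$ dual to the $f_i$, on $\Ext^\ast_\msf{u}(V,W)$ for any $V,W \in \rep(\msf{u})$. Finite generation of $\Ext^\ast_\msf{u}(V,V)$ over $S$ is an intermediate result of the paper, and the map $\kappa$ of~\eqref{eq:151} is, by construction, the map of reduced projective spectra induced by the structure morphism $S \to \Ext^\ast_\msf{u}(k,k)$. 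Consequently, $c \in \kappa(\supp_\msf{Y}(V))$ if and only if $c$ lies in the support of $\Ext^\ast_\msf{u}(V,V)$ as an $S$-module, via the identification $\mbb{P}(m_Z/m_Z^2) = \Proj S$.

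Next, reduce to the single-hypersurface case. For a direction $c \in \mbb{P}(m_Z/m_Z^2)$, the lift $\tau(c) \in m_Z$ is a central non-zero-divisor in $U$, so $U_c = U/(\tau(c))$ is a Noetherian algebra of finite global dimension through which $U \to \msf{u}$ factors. The noncommutative Eisenbud machinery---adapted from the commutative Avramov--Buchweitz and Avramov--Iyengar setting---produces a degree-$2$ cohomology class $\chi_c \in S \subset \Ext^\ast_\msf{u}(k,k)$ and a change-of-rings identification relating $\Ext^\ast_{U_c}(V,V)$ to the quotient of $\Ext^\ast_\msf{u}(V,V)$ by multiplication by $\chi_c$. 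From this one deduces the key equivalence: $V$ is perfect over $U_c$ if and only if $\Ext^n_{U_c}(V,V) = 0$ for $n \gg 0$ (using the hypersurface structure of $U_c$ to promote eventual vanishing to finite projective dimension, via a two-periodicity argument for nonperfect modules), and in turn if and only if the localization of $\Ext^\ast_\msf{u}(V,V)^\sim$ at $c$ vanishes, i.e.\ $c$ is not in the cohomological support.

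Assembling these equivalences over all $c \in \mbb{P}(m_Z/m_Z^2)$ yields the first displayed equality. The second equality, in the case where $\kappa$ is a closed embedding, is immediate: a closed embedding is injective on points, so the preimage under $\kappa$ of the right-hand set coincides with $\supp_\msf{Y}(V)$. The main technical obstacle is the noncommutative Avramov--Buchweitz dictionary in the second paragraph---namely, constructing the cohomology operator $\chi_c$ naturally from the central deformation $U_c \to \msf{u}$, proving the requisite change-of-rings identity compatibly with the Hopf structure, and upgrading eventual $\Ext$-vanishing to genuine finite projective dimension over the noncommutative hypersurface $U_c$.
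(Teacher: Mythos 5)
Your top-level strategy---identify $\kappa(\supp_\msf{Y}(V))$ with the support of $\Ext^\ast_\msf{u}(V,V)$ over $A_Z=\Sym((m_Z/m_Z^2)^\ast)$ using finiteness of the map $A_Z\to\Ext^\ast_\msf{u}(k,k)$, then match that support pointwise against non-perfection over the hypersurfaces $U_c$---is exactly the paper's, and your first paragraph is fine. The second paragraph, which is where all the content lives, has real problems. First, the assertion that $U_c=U/(\tau(c))$ ``is a Noetherian algebra of finite global dimension'' is false and, were it true, would trivialize the theorem: over an algebra of finite global dimension every finitely generated module is perfect, so the right-hand side of the claimed equality would always be empty. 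The whole point is that $U_c$ is a hypersurface of infinite global dimension, so that perfection over it is a nontrivial condition. Second, the change-of-rings you invoke is not the right one. The deformation $U_c\to\msf{u}$ is parametrized by $Z/(f_c)$, whose cotangent space is a hyperplane; it therefore contributes the polynomial subalgebra $A_c\subset A_Z$ on $\dim(m_Z/m_Z^2)-1$ degree-$2$ operators (generated by $\ker(c)\subset(m_Z/m_Z^2)^\ast$), not a single class $\chi_c$, and $\RHom_{U_c}(V,W)$ is identified with the derived fiber $k\ot^{\rm L}_{A_c}\RHom_\msf{u}(V,W)$, not with a quotient by multiplication by one degree-$2$ element. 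The single degree-$2$ class enters in the opposite way: $c$ fails to lie in the support precisely when the localization of $\Ext^\ast_\msf{u}(V,W)$ at that class---equivalently the $k[t]$-base change along $\phi_c:A_Z\to k[t]$ up to torsion---vanishes.

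Third, and most seriously, the step you defer as ``the main technical obstacle''---upgrading eventual $\Ext$-vanishing to finite projective dimension over $U_c$ via two-periodicity, together with the criterion that $V$ is perfect over $U_c$ iff $\Ext^{\gg0}_{U_c}(V,V)=0$---is essentially the whole theorem, and the paper does not prove it that way; no periodicity of resolutions over noncommutative hypersurfaces is established or needed. Instead: (i) since $Z$ is local, $U_c$ is semilocal with all simples restricted from $\msf{u}$, so by minimal resolutions $V$ is perfect over $U_c$ iff $\Ext^{\gg0}_{U_c}(\Lambda\ot V,k)=0$ for $\Lambda$ the sum of the simples; (ii) the dg Koszul duality finiteness theorem (Theorem~\ref{thm:fin2}), applied to the deformation sequence $Z/(f_c)\to U_c\to\msf{u}$, converts this vanishing into finite generation of $\Ext^\ast_\msf{u}(\Lambda\ot V,k)$ over $A_c$, which by graded Nakayama is detected by the $k[t]$-fiber (Lemma~\ref{lem:whatevsbruh}); and (iii) one passes back to $\Ext^\ast_\msf{u}(V,V)$ because it has the same support over $\Ext^\ast_\msf{u}(k,k)$ as $\Ext^\ast_\msf{u}(\Lambda\ot V,k)$. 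You also never address why perfection over $U_c$ depends only on the class of $\tau(c)$ in $m_Z/m_Z^2$ (Corollary~\ref{cor:reductions}), which is needed for the statement to make sense for ``any choice of section $\tau$.'' As written, the proposal reduces the theorem to an unproved noncommutative Eisenbud periodicity statement rather than proving it.
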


Implicit in the statement above is the fact that perfection of a given $\msf{u}$-representation over $U_c$, at some $c\in \mbb{P}(m_Z/m_Z^2)$, depends only on the point $c$, not on the choice of section $\tau: m_Z/m_Z^2 \to m_Z$.  This fact is proved independently in Corollary~\ref{cor:reductions}.
\par

We use the hypersurface expression~\eqref{eq:hyp0} to provide a stronger analysis of support for certain ``solvable" Hopf algebras.  As discussed above, we are particularly interested in the so-called {\it tensor product property} for support,
\begin{equation}\label{eq:tpp}
\supp_{\msf{Y}}(V\ot W)\overset{\rm question}=\supp_\msf{Y}(V)\cap\supp_\msf{Y}(W).
\end{equation}
This relation suggests, in the cases in which it applies, that cohomology actually ``understands" something about the tensor structure on the category $\rep(\msf{u})$.
\par

It is known, via work of Witherspoon and coauthors, that for Hopf algebras which are sufficiently noncocommutative, so that $\rep(\msf{u})$ is sufficiently non-symmetric, the tensor product property will {\it not} hold~\cite{bensonwitherspoon14,plavnikwitherspoon18}.  In the present work we consider certain centralizing hypotheses on objects, and (co)normality hypotheses on the integration $U\to \msf{u}$, when studying support for tensor products of objects (see Section~\ref{sect:Z_supp}).  We discuss some of our examples below.

\subsection{Applications}
\label{sect:PiqPi}

We first consider rings of regular functions $\O(\mcl{G})$ on finite group schemes in finite characteristic, where we have $\rep(\O(\mcl{G}))=\Coh(\mcl{G})$.  

\begin{theoremA}[\ref{thm:Pio}]
Let $\mcl{G}$ be a connected finite group scheme, in finite characteristic.  Then cohomological support for $\Coh(\mcl{G})$ satisfies the tensor product property
\[
\supp_\msf{Y}(V\ot W)= \supp_\msf{Y}(V) \cap \supp_\msf{Y}(W).
\]
\end{theoremA}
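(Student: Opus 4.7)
The plan is to identify cohomological support with hypersurface support via Theorem~\ref{cor:hopf_hyp}, and then to reduce the tensor product property to a statement about perfect modules over commutative hypersurfaces, where classical Avramov-Buchweitz theory applies.

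First I would embed $\mcl{G}$ as a closed subgroup of a smooth connected algebraic group $\mcl{H}$ (possible for any finite group scheme, for example via the regular representation landing in $\mrm{GL}_n$). This yields the smooth integration $U := \O(\mcl{H}) \to \O(\mcl{G}) =: \msf{u}$ with parametrizing coideal subalgebra $Z := \O(\mcl{H}/\mcl{G})$, which is central because $U$ is commutative. For $\mcl{G}$ connected finite the Friedlander-Parshall computation of cohomology~\cite{friedlanderparshall86II} shows that the map $\kappa$ of \eqref{eq:151} is a closed embedding, so by Theorem~\ref{cor:hopf_hyp},
\[
\supp_\msf{Y}(V) = \{c \in \mbb{P}(m_Z/m_Z^2) : V \text{ is not perfect over } U_c\}.
\]
The tensor product property thus reduces to the hypersurface-level assertion that, for every $c$ and every $V, W \in \Coh(\mcl{G})$, the convolution $V \otimes W$ is perfect over $U_c$ if and only if $V$ or $W$ is.

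The proof of the hypersurface statement exploits the fact that $\msf{u}$ is commutative and $U_c$ is a commutative local hypersurface, together with the coideal subalgebra property $\Delta_U(Z) \subset Z \otimes U$. The latter, combined with the vanishing of $m_Z$ on $\msf{u}$-modules, ensures that $\tau(c)$ annihilates $V \otimes_k W$ under $\Delta_U$, so that $V \otimes W$ acquires a canonical $U_c$-module structure. Since $m_\mcl{H}: \mcl{H} \times \mcl{H} \to \mcl{H}$ is smooth, the shearing isomorphism $(g,h) \mapsto (gh, h)$ makes $U \otimes_k U$ Zariski-locally free over $U$ via $\Delta_U$. A Künneth formula
\[
\Tor_n^{U_c \otimes U_c}(V \otimes_k W, k \otimes k) \cong \bigoplus_{i+j=n} \Tor_i^{U_c}(V, k) \otimes \Tor_j^{U_c}(W, k),
\]
combined with change-of-rings comparison with $\Tor_*^{U_c}(V \otimes W, k)$, identifies the asymptotic $\Tor$-behavior. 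By Avramov-Buchweitz, perfection over a commutative hypersurface is equivalent to eventual vanishing of $\Tor(-, k)$, so the Künneth comparison yields both directions of the desired equivalence: if one of $V, W$ is perfect then the right-hand side eventually vanishes and hence so does $\Tor_*^{U_c}(V \otimes W, k)$; conversely, eventual vanishing on the left forces one tensor factor on the right to vanish eventually.

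The main obstacle is the comparison step: $\Delta_U$ does not directly descend to a coproduct $U_c \to U_c \otimes U_c$, since $(\tau(c))$ is not generally a coideal ideal in $U$. The transfer of the Künneth computation from $U \otimes U$ down to the hypersurface level must therefore proceed via a Koszul-type or base-change spectral sequence, exploiting the flatness of $U \otimes U$ over $U$ together with the central coideal structure of $Z$ to route the comparison through $U$ and then specialize modulo $\tau(c)$. Executing this descent cleanly---so that non-perfection of $V$ and $W$ over $U_c$ forces non-perfection of $V \otimes W$ over $U_c$---is the technical heart of the argument.
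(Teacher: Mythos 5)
Your reduction to the hypersurface level coincides with the paper's: embed $\mcl{G}$ into a smooth connected $\mcl{H}$, observe that $\kappa$ is a closed embedding (the paper deduces this from the abstract presentation $\O(\mcl{G})\cong k[x_1,\dots,x_n]/(x_i^{d_i})$ via Lemmas~\ref{lem:surjO} and~\ref{lem:surj0}, rather than from \cite{friedlanderparshall86II}, but either route is fine), and invoke Corollary~\ref{cor:hopf_hyp} together with the weak containment of Proposition~\ref{prop:weak_tpp}/Corollary~\ref{cor:weak_tpp2}, so that the remaining claim is: if $V$ and $W$ are both non-perfect over a hypersurface $U_c$, then so is $V\ot W$. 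The gap is in your proof of this claim. The K\"unneth isomorphism you write down computes $\Tor^{U_c\ot U_c}_\ast(V\ot_k W,\,k\ot k)$, the homology of the \emph{exterior} product, whereas perfection of $V\ot W$ is governed by $\Tor^{U_c}_\ast(V\ot W,k)$ for the \emph{internal} (convolution) module structure. Relating these two is precisely the tensor product property, so the ``change-of-rings comparison'' you defer to is not a technicality but the entire content of the theorem. Moreover, the repair you sketch---routing the comparison through $U$ and then specializing modulo $\tau(c)$---cannot work as stated: $U=\hat{Z}\ot_Z\O(\mcl{H})$ has finite global dimension, so every finitely generated module is perfect over $U$ and all Tor groups over $U$ vanish eventually; no information about the hypersurface survives passage through $U$. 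And, as you yourself note, there is no algebra map $U_c\to U_c\ot U_c$ along which to restrict.

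The paper's proof of Theorem~\ref{thm:Pio} sidesteps all of this with a thick-subcategory argument. Since $U_c$ is a \emph{commutative} local hypersurface, non-perfection of $W$ over $U_c$ implies $k\in\langle W\rangle$ in $D_{coh}(U_c)$ (a Hopkins-type statement; see \cite{takahashi10} or \cite{carlsoniyengar15}). The coideal property of $Z$ makes $U_c\text{-mod}_{fg}$ a module category over $\Coh(\mcl{G})$, and the exact endofunctor $V\ot-$ of $D_{coh}(U_c)$ preserves thick subcategories, so $V\cong V\ot k\in\langle V\ot W\rangle$. Hence perfection of $V\ot W$ forces perfection of $V$, which is the contrapositive of what is needed. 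If you want to keep a Tor-theoretic flavor, you should replace your K\"unneth step with this generation statement; as written, your argument does not close.
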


There are some referential points to take into account here, as we explain in the preamble to Section~\ref{sect:O} and Remark~\ref{rem:CI}.
\par

It is clear, from work of Plavnik and Witherspoon~\cite{plavnikwitherspoon18} (see also Example~\ref{ex:no_tpp}), that the precise tensor product property \eqref{eq:tpp} does not hold for coherent sheaves on general non-connected $\mcl{G}$.  However, we provide a version of the tensor product property for arbitrary finite group schemes by considering certain centralizing structures on objects in $\Coh(\mcl{G})$ (cf.\ \ref{sect:Z_supp}).

\begin{theoremA}[\ref{cor:Pi}]
Let $\mcl{G}$ be an arbitrary finite group scheme with $\pi=\pi_0(\mcl{G})$ its subgroup of connected components.  For $W$ in $\Coh(\mcl{G})$, and $V$ in the Drinfeld centralizer $Z^{\Coh(\pi)}(\Coh(\mcl{G}))$ of $\Coh(\pi)$, we have
\[
\supp_\msf{Y}(F(V)\ot W)=\supp_\msf{Y}(F(V))\cap \supp_\msf{Y}(W).
\]
\end{theoremA}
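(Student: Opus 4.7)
The plan is to combine the tensor product property for the connected case, Theorem \ref{thm:Pio}, with the general centralizer-support framework developed in Section \ref{sect:Z_supp}.  The framework of Section \ref{sect:Z_supp} presumably formalizes the slogan that \emph{if $V$ is in the Drinfeld centralizer of a subcategory $\mcl{K}\subseteq\rep(\msf{u})$ whose objects have trivial cohomological support, then tpp for tensor products $F(V)\ot W$ reduces to tpp in an appropriate ``quotient'' theory}.  Concretely one expects, via the half-braiding $c_X:F(V)\ot X\xrightarrow{\sim}X\ot F(V)$ for $X\in\mcl{K}$ together with the hypersurface description of support from Theorem \ref{cor:hopf_hyp}, a reduction of the nontrivial inclusion $\supp_\msf{Y}(F(V))\cap\supp_\msf{Y}(W)\subseteq\supp_\msf{Y}(F(V)\ot W)$ to the analogous inclusion in a smaller category.

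The first concrete step is to verify the hypotheses of Section \ref{sect:Z_supp} for $\msf{u}=\O(\mcl{G})$ with the subcategory $\Coh(\pi)\subseteq\Coh(\mcl{G})$.  Here $\pi$ is étale, so $\O(\pi)$ is a product of copies of $k$ and every $\O(\pi)$-module is projective; in particular $\Coh(\pi)$ is semisimple and its objects, pulled back along $\mcl{G}\to\pi$, have empty cohomological support.  One must also verify the relevant (co)normality condition on the integration $\O(\mcl{H})\to\O(\mcl{G})$, which should follow from the normality $\mcl{G}^\circ\trianglelefteq\mcl{G}$ and the fact that $\Coh(\pi)$ arises from the quotient $\mcl{G}\to\pi$.

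The second step is to identify the quotient theory with $\Coh(\mcl{G}^\circ)$ and invoke Theorem \ref{thm:Pio}.  The augmentation of $\O(\mcl{G})$ is evaluation at $e\in\mcl{G}^\circ$ and factors through $\O(\mcl{G})\twoheadrightarrow\O(\mcl{G}^\circ)$, so $\Ext^\ast_{\O(\mcl{G})}(k,k)=\Ext^\ast_{\O(\mcl{G}^\circ)}(k,k)$ and hence $\msf{Y}^{\mcl{G}}=\msf{Y}^{\mcl{G}^\circ}$; similarly the cotangent space $m_Z/m_Z^2$ and the noncommutative hypersurfaces $U_c$ of Theorem \ref{cor:hopf_hyp} coincide with their counterparts for a smooth integration of $\mcl{G}^\circ$.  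Combining with Theorem \ref{thm:Pio} applied to $\mcl{G}^\circ$ should then deliver the asserted equality.

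The main obstacle is the first step: pinning down the precise form of the centralizer-support reduction in Section \ref{sect:Z_supp} and checking that our deformation $\O(\mcl{H})\to\O(\mcl{G})$ with centralizing subcategory $\Coh(\pi)$ satisfies the required (co)normality hypotheses.  A secondary subtlety is the block decomposition $M=\bigoplus_{g\in\pi(k)}M_g$ of objects in $\Coh(\mcl{G})$: the hypersurfaces $U_c$ in general do not split along blocks, so one must argue that the half-braidings with all of $\Coh(\pi)$ (and the hexagon axioms, not merely existence of isomorphisms) are strong enough to transport non-perfection of $F(V)_a\ot W_b$ over $U_c$ across the pairs $(a,b)$, including possible conjugation twists by $\pi$ when $\pi$ is nonabelian.
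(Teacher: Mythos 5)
Your high-level strategy---reduce to the connected case of Theorem \ref{thm:Pio} by exploiting the $\Coh(\pi)$-centralizing structure on $V$---is the right one, and your observation that $\msf{Y}$ and the hypersurfaces for $\mcl{G}$ agree with those for $\mcl{G}_o$ is correct and is used implicitly. But the proposal has a genuine gap at its center: the entire reduction is deferred to a ``centralizer-support framework'' that you presume Section \ref{sect:Z_supp} provides. That section only \emph{defines} the centralized tensor product property and records the easy equivalence of Lemma \ref{lem:c_tpp}; it contains no theorem reducing support for non-connected $\mcl{G}$ to support for $\mcl{G}_o$. The step you flag as a ``secondary subtlety''---tracking the block decomposition $M=\oplus_{\lambda\in\pi}M_\lambda\ot\lambda$ through the tensor product---is in fact the whole content of the missing argument, and it is carried out in the paper as Lemma \ref{lem:1026}. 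There one writes $V=\oplus_\lambda(V_\lambda\ot\lambda)=\oplus_\lambda(\lambda\ot{_\lambda V})$, uses the half-braiding to see that the left- and right-handed supports of $V$ coincide, i.e.\ $\cup_\lambda\supp^{\mcl{G}_o}({_\lambda V})=\cup_\lambda\supp^{\mcl{G}_o}(V_\lambda)$, computes
\[
\Ext^\ast_{\Coh(\mcl{G})}\bigl(k,\oplus_\lambda\lambda\ot(V\ot W)\bigr)\cong\oplus_{\lambda,\mu}\Ext^\ast_{\Coh(\mcl{G}_o)}\bigl(k,V_\mu\ot{_\lambda W}\bigr),
\]
and then applies the connected-case tensor product property to each summand.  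Your worries about nonabelian $\pi$ and hexagon axioms are absorbed automatically here: the half-braiding is exactly a $\pi$-equivariant structure on $V$, which is what makes the two decompositions of $V$ compatible.

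Two further points where your route would create unnecessary difficulty. First, the reduction from $\mcl{G}$ to $\mcl{G}_o$ in the paper is done entirely at the level of $\Ext$-groups, using $\Ext^\ast_{\O(\mcl{G})}(k,-)=\Ext^\ast_{\O(\mcl{G}_o)}(k,(-)_1)$ and the abelian-category decomposition $\Coh(\mcl{G})\cong\oplus_\lambda\Coh(\mcl{G}_o)\ot\lambda$; hypersurfaces are invoked only inside the proof of the connected case.  Trying to push the non-connected reduction through the hypersurfaces $U_c$, as you propose, runs directly into the problem you yourself identify (the $U_c$ need not respect blocks) and is not how the paper proceeds.  Second, no (co)normality hypothesis on the integration $\O(\mcl{H})\to\O(\mcl{G})$ is needed or verified for this corollary; indeed the ``easy'' inclusion $\supp_\msf{Y}(F(V)\ot W)\subset\supp_\msf{Y}(F(V))\cap\supp_\msf{Y}(W)$ is \emph{not} available for general non-connected $\mcl{G}$ (Example \ref{ex:no_tpp}), and both inclusions in Lemma \ref{lem:1026} come out of the explicit block computation rather than from Corollary \ref{cor:weak_tpp2}.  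To complete your proof you would need to supply the computation of Lemma \ref{lem:1026} or an equivalent substitute.
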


Here $Z^{\Coh(\pi)}(\Coh(\mcl{G}))$ is the category of objects in $\Coh(\mcl{G})$ which centralize the fusion subcategory $\Coh(\pi)\subset \Coh(\mcl{G})$ of semisimple objects.  More specifically, $Z^{\Coh(\pi)}(\Coh(\mcl{G}))$ is the category of objects which admit a \emph{centralizing structure} against this distinguished subcategory, and $F:Z^{\Coh(\pi)}(\Coh(\mcl{G}))\to \Coh(\mcl{G})$ is the functor forgetting the specific choice of centralizing structure (see Definition~\ref{def:ZC}).
\par

In a non-classical setting, over $\mbb{C}$, one can consider a truncated skew polynomial ring 
\[
\msf{a}^+_q=\msf{a}^+_q(P)=\mbb{C}\langle x_1,\dots, x_n\rangle/(x_ix_j-q_{ij}x_jx_i,\ x_i^l),
\] 
which is often referred to as a quantum complete intersection, or quantum linear space, in the literature.  Here $q$ is a root of unity of odd order $l$ and $P=[a_{ij}]$ is a skew-symmetric matrix specifying the parameters $q_{ij}=q^{a_{ij}}$.  The algebra  $\msf{a}^+_q$ has the natural structure of a Hopf algebra in a certain braided fusion category $\rep(\Lambda)$, where $\Lambda$ is the group ring of a finite abelian group.  One then obtains, via a standard bosonization procedure, a usual Hopf algebra $\msf{a}_q:=\msf{a}^+_q\rtimes \Lambda$. We refer to the Hopf algebra $\msf{a}_q$ as a {\it bosonized quantum complete intersection}.  

\begin{theoremA}[\ref{thm:fun1}]
Consider $\msf{a}_q=\msf{a}_q(P)$ a bosonized quantum complete intersection, with generic parameters $q$ and $P$.  Cohomological support for $\rep(\msf{a}_q)$ satisfies the tensor product property
\[
\supp_\msf{Y}(V\ot W)=\supp_\msf{Y}(V)\cap \supp_\msf{Y}(W).
\]
\end{theoremA}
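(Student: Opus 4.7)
The plan is to apply Theorem~\ref{cor:hopf_hyp}: exhibit $\msf{a}_q$ as a smoothly integrable Hopf algebra, verify that the associated map $\kappa$ is an isomorphism, and then establish the tensor product property at the level of perfection over the noncommutative hypersurfaces $U_c$.

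\emph{Smooth integration and the map $\kappa$.} Let $U^+ := k\langle x_1,\ldots,x_n\rangle/(x_ix_j - q_{ij}x_jx_i)$ denote the quantum affine space, viewed as a Hopf algebra in $\rep(\Lambda)$, and form the bosonization $U := U^+ \rtimes \Lambda$. Then $U$ is a Noetherian Hopf algebra of global dimension $n$ (an iterated Ore extension over $k\Lambda$). Since $q^l = 1$, each $x_i^l$ is central in $U^+$, and the quantum binomial formula gives $\Delta(x_i^l) = x_i^l \ot 1 + g_i^l \ot x_i^l$. Under the generic parameter setup, each $x_i^l$ is $\Lambda$-invariant, hence central in $U$, and $Z := k[x_1^l,\ldots,x_n^l]$ is a smooth augmented central coideal subalgebra with $U \ot_Z k \cong \msf{a}_q$. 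Thus $\msf{a}_q$ is smoothly integrable, and Theorem~\ref{cor:fg_revisit} yields finitely generated cohomology. A direct computation, via the Koszul dual description employed in the proof of Theorem~\ref{cor:fg_revisit}, identifies $\Ext^\ast_{\msf{a}_q}(k,k)_{\rm red} \cong \Sym((m_Z/m_Z^2)^*)$, so that $\kappa$ is an isomorphism $\msf{Y} \cong \mbb{P}(m_Z/m_Z^2) \cong \mbb{P}^{n-1}$, and Theorem~\ref{cor:hopf_hyp} produces
\[
\supp_\msf{Y}(V) = \{c \in \mbb{P}(m_Z/m_Z^2) : V|_{U_c}\text{ is not perfect}\}.
\]

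\emph{Tensor product property.} The inclusion $\supp_\msf{Y}(V\ot W)\subseteq \supp_\msf{Y}(V)\cap \supp_\msf{Y}(W)$ follows from the skew-primitivity $\Delta(f_c) = f_c \ot 1 + \gamma_c \ot f_c$ of $f_c := \tau(c)$, with $\gamma_c \in \Lambda$: if $V$ is $U_c$-perfect, then tensoring a finite $U_c$-projective resolution of $V$ with $W$ over $k$ (using the $\Delta_U$-action) yields a finite $U_c$-projective resolution of $V\ot W$, since twisting by a grouplike preserves $U_c$-projectivity. For the reverse containment, observe that every $\msf{a}_q$-module is canonically $\Lambda$-graded, providing a centralizing structure against the symmetric fusion subcategory $\rep(\Lambda)\subset\rep(\msf{a}_q)$. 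Under the generic assumption on $q$ and $P$ this embeds $\rep(\msf{a}_q)$ functorially into its Drinfeld centralizer $Z^{\rep(\Lambda)}(\rep(\msf{a}_q))$, permitting the centralizing-structure argument in the spirit of Theorem~\ref{cor:Pi}: one constructs Carlson-type modules $L_c$ with $\supp_\msf{Y}(L_c) = \{c\}$, shows that $L_c \ot V \ot W$ remains non-perfect over $U_c$ whenever $c\in\supp_\msf{Y}(V)\cap\supp_\msf{Y}(W)$, and concludes $c\in\supp_\msf{Y}(V\ot W)$.

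The main obstacle is the reverse containment, namely excluding cancellation that would render $V\ot W$ perfect over $U_c$ while neither factor is. The generic hypothesis on $q$ and $P$ is essential: it ensures that the canonical $\Lambda$-grading supplies sufficient commutativity within the tensor structure to import Carlson-module techniques from the modular setting into the noncommutative hypersurface $U_c$. Executing this step demands care in choosing the appropriate Carlson-type modules over $U_c$ and in tracking $\Lambda$-equivariance through the tensor product, but the framework of Theorem~\ref{cor:Pi} applies once the centralizing structure has been identified.
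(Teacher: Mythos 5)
Your setup matches the paper's: the integration $U=U^+\rtimes\Lambda\to\msf{a}_q$ parametrized by $Z=k[x_1^l,\dots,x_n^l]$, the identification of $\kappa$ as a closed embedding via polynomiality of reduced cohomology (Lemma~\ref{lem:surj0}), and the containment $\supp_\msf{Y}(V\ot W)\subseteq\supp_\msf{Y}(V)\cap\supp_\msf{Y}(W)$. For that containment the paper invokes Corollary~\ref{cor:weak_tpp2}, using that $Z$ is a Hopf subalgebra; your direct argument is essentially the same, though note that since $K_i^l=1$ the elements $x_i^l$ and hence $f_c$ are genuinely primitive, so $(f_c)$ is a Hopf ideal and $U_c$ a Hopf quotient, which is what makes the tensoring-of-resolutions step legitimate.

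The gap is the reverse containment, which is the entire content of the theorem. You defer to ``Carlson-type modules $L_c$'' and ``the framework of Theorem~\ref{cor:Pi},'' but that framework does not transfer. In Corollary~\ref{cor:Pi} the hard inclusion is carried by the connected component $\Coh(\mcl{G}_o)$, whose hypersurfaces are \emph{commutative} local rings where Hopkins' lemma ($M$ non-perfect implies $k\in\langle M\rangle$) is available. Here the positive hypersurface $\msf{U}^+/(f)$ is noncommutative, and that implication fails as stated; what must be proved is the weaker generation statement that non-perfection of $M$ over $\msf{U}^+/(f)$ forces $k\in\langle\lambda\ot M:\lambda\in\operatorname{Irrep}(\Lambda)\rangle$, after which Lemma~\ref{lem:gen_lem2} delivers the tensor product property. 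The paper establishes this in two steps: (i) reduce along $k\ot^{\rm L}_{\msf{U}^+}-$ to the dual numbers $k[\varepsilon]$ and apply the dg Hopkins theorem of Carlson--Iyengar to obtain $k\in\langle k\ot^{\rm L}_{\msf{U}^+}M\rangle$; (ii) realize the functor $k\ot^{\rm L}_{\msf{U}^+}-$ by the $q$-exterior Koszul resolution $F=E\ot\msf{U}^+$ of $k$, which is a dg \emph{bimodule} only because its left action can be twisted by the characters $\chi_v$ as in \eqref{eq:1625} --- and genericity of $P$ is exactly what places these characters in the standard group of grouplikes. That is where the generic hypothesis enters; it is not needed for the centralizing structure against $\rep(\Lambda)$, which every object of a bosonization carries by Lemma~\ref{lem:bos}. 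Neither (i) nor (ii) appears in your proposal, and without the thick-subcategory generation statement the cancellation you yourself flag --- $V\ot W$ perfect over $U_c$ with both factors non-perfect --- is not excluded.
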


By ``generic" parameters we mean precisely that the determinant of the matrix $P=[a_{ij}]$ is a unit mod $l$.  We prove in Theorem \ref{thm:qci} that, at \emph{arbitrary} $q$ and $P$, representation of $\msf{a}_q$ still satisfy a version of the tensor product property where one again employs centralizers, exactly as in the case of sheaves $\Coh(\mcl{G})$ on a non-connected group scheme $\mcl{G}$. 
\par

At Theorem~\ref{thm:ures}, we reprove the tensor product property for $\rep(u^{\rm res}(\mfk{n}))$ via hypersurfaces, where $\mfk{n}$ is a nilpotent restricted Lie algebra in large characteristic.  This is a fundamental result of Friedlander and Parshall~\cite[Theorem 2.7]{friedlanderparshall87}, which precedes the $\pi$-point approach to support for finite group schemes~\cite{friedlanderparshall86,suslinfriedlanderbendel97b,friedlanderpevtsova07}.  We consider also Drinfeld doubles $D(B_{(1)})$ for $B_{(1)}$ the first Frobenius kernel of a Borel $B\subset \mbb{G}$ in an almost-simple algebraic group $\mbb{G}$.

\begin{theoremA}[\ref{thm:D_unip}]
Consider $\mbb{G}$ an almost-simple algebraic group over $\overline{\mbb{F}}_p$, and let $B$ be a Borel subgroup in $\mbb{G}$.  If $p>\dim(B)+1$, then cohomological support for the Drinfeld double $D(B_{(1)})$ satisfies the tensor product property.
\end{theoremA}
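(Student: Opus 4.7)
The strategy is to realize $D(B_{(1)})$ as a smoothly integrable Hopf algebra and then apply the hypersurface support machinery of the paper. Since $B_{(1)}\subset B$ is a normal closed embedding of a height $1$ group scheme into the smooth connected solvable algebraic group $B$, the general construction recalled in Section~\ref{sect:examples} supplies a smooth integration $\tilde D(B_{(1)})\to D(B_{(1)})$ parametrized by a smooth, augmented, central coideal subalgebra $Z\subset \tilde D(B_{(1)})$. Finite generation of $\Ext^\ast_{D(B_{(1)})}(k,k)$ is then immediate from Theorem~\ref{cor:fg_revisit}.

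To access the sharp form of Theorem~\ref{cor:hopf_hyp}, one needs to verify that the comparison map $\kappa:\msf Y\to \mbb P(m_Z/m_Z^2)$ of~\eqref{eq:151} is a closed embedding. Under the large-prime hypothesis $p>\dim(B)+1$ I would establish this by an explicit computation of $\Ext^\ast_{D(B_{(1)})}(k,k)$: using the Hochschild-Serre spectral sequence associated to the coideal subalgebra $\O(B_{(1)})\subset D(B_{(1)})$, together with the analyses of $\Ext^\ast_{\O(B_{(1)})}(k,k)$ from Section~\ref{sect:O} and of $\Ext^\ast_{u^{\rm res}(\mfk b)}(k,k)$ (following \cite{friedlandernegron18}), the characteristic hypothesis ensures collapse at the second page and produces a polynomial ring on even-degree generators that, under the Koszul-duality identification used in Theorem~\ref{cor:fg_revisit}, matches $\Sym((m_Z/m_Z^2)^\ast)$. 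Theorem~\ref{cor:hopf_hyp} then yields
\[
\supp_\msf{Y}(V)=\{c\in \mbb P(m_Z/m_Z^2):V|_{U_c}\text{ is not perfect}\}
\]
for every $V\in \rep(D(B_{(1)}))$.

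With this hypersurface description in hand, TPP reduces to the claim that for each $c$ and any finite-dimensional $V,W\in \rep(D(B_{(1)}))$, the restriction $(V\otimes W)|_{U_c}$ is non-perfect if and only if both $V|_{U_c}$ and $W|_{U_c}$ are. The ``only if" direction is formal, since a projective factor forces projectivity of the tensor product. For the ``if" direction, I would exploit the fact that, because $B$ is solvable and connected, after a coordinate change the hypersurface algebras $U_c$ belong to a class of noncommutative complete intersections already handled in earlier sections (compare Theorems~\ref{thm:fun1} and~\ref{thm:ures}). Connectedness of $B_{(1)}$ removes the need to invoke centralizing structures as in Theorem~\ref{cor:Pi}: it eliminates the component-group obstruction of~\cite{plavnikwitherspoon18}, which is the source of failure of TPP in the closely related non-connected examples considered earlier.

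The main obstacle in this plan is the cohomology computation and the associated verification that $\kappa$ is a closed embedding. This is where the precise hypothesis $p>\dim(B)+1$ genuinely enters: it controls the Ext algebras of both tensor factors $\O(B_{(1)})$ and $u^{\rm res}(\mfk b)$ in low degree and forces degeneration of the spectral sequence that combines them. Once that is secured, the transfer of TPP to the hypersurfaces $U_c$ should follow from the template already developed in the earlier applications, modulo careful bookkeeping to accommodate the non-symmetric braided structure on $\rep(D(B_{(1)}))$.
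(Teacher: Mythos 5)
Your overall architecture (smooth integration, reduction to hypersurface support via Theorem~\ref{cor:hopf_hyp}, identification of $\kappa$ as a closed embedding using the polynomiality of the reduced cohomology ring when $p-1>\dim(B)$) matches the paper, and your account of where the characteristic hypothesis enters for the $\kappa$ step is essentially right — though the paper simply quotes \cite[Theorem 6.10]{friedlandernegron18} for polynomiality and then applies Lemma~\ref{lem:surj0}, rather than running a new spectral sequence argument. The genuine gap is in the ``if'' direction of the TPP over a fixed hypersurface. You assert that ``after a coordinate change the hypersurface algebras $U_c$ belong to a class of noncommutative complete intersections already handled in earlier sections,'' but $\hat{D}(B_{(1)})=\hat{\O}_B\rtimes\hat{U}(\mfk{n})$ (further smashed with $k\mbb{G}_{m(1)}^r$) is not an instance of Theorem~\ref{thm:fun1} or Theorem~\ref{thm:ures}; it is a new case, and the entire technical content of the paper's proof lies in the unnamed ``coordinate change.'' Concretely, one needs a quasi-logarithm for $B$ (available since $p$ is very good), i.e.\ an isomorphism of $\mfk{b}$-algebras $\hat{\O}_{B,1}\cong k\b{\mfk{b}^\ast}$, and then one must produce a \emph{regular sequence} generating the augmentation ideal of $k\b{\mfk{b}^\ast}\rtimes\hat{U}(\mfk{n})$ — obtained by concatenating a basis of $\mfk{n}$ adapted to the lower central series with a basis of $\mfk{b}^\ast$ adapted to a filtration with trivial $\mfk{n}$-subquotients. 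Only with that sequence in hand can one invoke Lemma~\ref{lem:thick_subcats} to get $k\ot^{\rm L}_{\hat{D}}M\in\langle M\rangle$ and then Lemma~\ref{lem:gen_lem} (the noncommutative Hopkins-type step, which yields $k\in\langle M\rangle$ for $M$ non-perfect over a hypersurface) to conclude the hard inclusion. Your proposal never establishes, or even identifies the need for, this generation-by-a-regular-sequence property, so the key implication ``$V\ot W$ perfect and $W$ non-perfect over $U_c$ $\Rightarrow$ $V$ perfect'' is unsupported.

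A secondary inaccuracy: connectedness of $B_{(1)}$ does not make $D(B_{(1)})$ local — the torus contributes the semisimple fusion subcategory $\rep(T_{(1)})$, so $D(B_{(1)})$ is geometrically Chevalley with $\Lambda=k\mbb{G}_{m(1)}^r$ rather than local. The paper handles this by noting that semisimplicity of $k\mbb{G}_{m(1)}^r$ reduces perfection over $\hat{D}$ to perfection over the subalgebra $\hat{\O}_B\rtimes\hat{U}(\mfk{n})$; since the braiding here is the trivial symmetry, no genuine centralizing subtlety arises, but the reduction still has to be made rather than dismissed by appeal to connectedness.
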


We consider, finally, the small quantum Borel $u_q(\mfk{b})$ in a quantum group $u_q(\mfk{g})$, over $\mbb{C}$.  This algebra, and its full counterpart $u_q(\mfk{g})$, are the most important characteristic $0$ examples for which cohomological support is not understood at this point.  We address the quantum Borel in type $A$.

\begin{theoremA}[\ref{thm:borel_A}]
Consider $u_q(\mfk{b})$ the quantum Borel in type $A_n$, at $q$ of odd order $>h$.  Cohomological support for $u_q(\mfk{b})$ satisfies the tensor product property.
\end{theoremA}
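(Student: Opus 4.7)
The plan is to exploit the smooth integration $U^{DK}_q(\mfk{b})\to u_q(\mfk{b})$ by the De Concini--Kac algebra, which is the prototypical example of quantum smooth integrability. Here the parametrizing central coideal subalgebra $Z\subset U^{DK}_q(\mfk{b})$ is the polynomial algebra generated by the $l$-th powers of the positive root vectors $E_\alpha^l$, so that $m_Z/m_Z^2\cong \mfk{n}^+$ canonically as a vector space. By the cohomology computation of Ginzburg--Kumar, the support map $\kappa:\msf{Y}\to \mbb{P}(\mfk{n}^+)$ from~\eqref{eq:151} is in fact an isomorphism onto $\mbb{P}(\mfk{n}^+)$. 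Theorem~\ref{cor:hopf_hyp} then identifies
\[
\supp_\msf{Y}(V)=\{c\in \mbb{P}(\mfk{n}^+):V\text{ is not perfect over }U_c\},
\]
where $U_c=U^{DK}_q(\mfk{b})/(f_c)$ for a fixed linear section. Consequently the tensor product property reduces to the hypersurface-level statement: for each $c$, the module $V\ot W$ is non-perfect over $U_c$ whenever both $V$ and $W$ are (the reverse direction being automatic from the fact that perfect complexes form a tensor ideal).

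To establish this hypersurface-level tensor product property, I would follow the template of the solvable cases treated earlier in Section~\ref{sect:PiqPi}. A convex ordering of $\Phi^+$ coming from a reduced expression for the longest Weyl group element yields a filtration of $u_q(\mfk{b})$ by normal Hopf subalgebras whose successive quotients are (bosonized) rank-one quantum linear spaces. This builds $u_q(\mfk{b})$ as an iterated extension of Hopf algebras for which the tensor product property is known by Theorem~\ref{thm:fun1}. The strategy is then, for each $c\in\mbb{P}(\mfk{n}^+)$, to produce a centralizing structure on the restriction functor $\rep(u_q(\mfk{b}))\to \D(U_c)$ in the sense of~\ref{sect:Z_supp}, so that the centralizing/normal form of the tensor product property applies at the level of hypersurfaces.

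The case $c=[E_\beta^l]$ for $\beta$ the highest root is the most direct, since here $E_\beta$ is central in the nilpotent part and $U_c$ has a transparent description via the bosonized quantum linear space structure. For a general direction $c$, the type $A$ hypothesis enters through the specific features of $\mbb{G}=\SL_n$: the braid group action on $U^{DK}_q(\mfk{g})$ permutes the positive root vectors transitively up to the Cartan, and in type $A$ the resulting orbits on $\mbb{P}(\mfk{n}^+)$ are rich enough to put any $c$, up to a suitable twist, into a ``normal form" centralizing the appropriate Hopf subalgebra filtration.

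The main obstacle is precisely this last reduction: verifying that an arbitrary $c\in\mbb{P}(\mfk{n}^+)$ can be brought into a centralizing position for some normal subalgebra filtration, in a manner compatible with the tensor structure on $\rep(u_q(\mfk{b}))$. The subtlety is that the quantum braid group does not act by Hopf automorphisms of $U^{DK}_q(\mfk{b})$ itself, so one must either pass to the full algebra $U^{DK}_q(\mfk{g})$ and track the induced action on categories of representations, or handle non-generic $c$ by a semicontinuity argument using that both sides of the desired equality are closed and $\kappa$-compatible. The restriction to type $A$ in the statement suggests that the argument does not yet extend to other types, likely because the transitivity/normal-form step above exploits the special combinatorics of $\SL_n$ root systems.
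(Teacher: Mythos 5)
Your reduction to the hypersurface level is correct and matches the paper: Ginzburg--Kumar plus Lemma~\ref{lem:surj0} make $\kappa$ a closed embedding (in fact an isomorphism onto $\mbb{P}(\mfk{n})$ for $\ord(q)>h$), Corollary~\ref{cor:hopf_hyp} identifies $\supp_\msf{Y}(V)$ with the non-perfection locus over the hypersurfaces $U_c$, and the containment $\supp(V\ot W)\subset\supp(V)\cap\supp(W)$ does come for free from $Z$ being a Hopf subalgebra. The gap is in the hard direction at a fixed hypersurface. Your plan to handle a general $c$ by using the braid group to move $c$ into a ``normal form'' cannot work as stated: the braid group operators permute the (finitely many) root vectors, so their induced action on $\mbb{P}(m_Z/m_Z^2)\cong\mbb{P}(\mfk{n})$ moves around a finite set of coordinate directions, whereas a general $c$ is an arbitrary linear combination of the classes $\bar{E_\gamma^l}$; no amount of twisting puts such a $c$ into a position where some single root vector is central. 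Moreover the $T_w$ are not Hopf maps, so even on the orbit of root directions they do not interact with the tensor structure in the way your centralizing-functor step requires. A semicontinuity patch also fails, since both sides of the desired equality are closed and one needs equality of sets, not a containment that could degenerate on a boundary.

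What the paper actually does at the hypersurface level is independent of which direction $c$ one looks at. The type~$A$ input is Lemma~\ref{lem:AS}: by the Andruskiewitsch--Schneider commutator formulas, any height-compatible ordering of the root vectors $E_\gamma$ is a \emph{$q$-regular sequence} generating the augmentation ideal of $\hat{U}^{DK}_q(\mfk{n})$ --- each $E_\gamma$ is $\chi_\gamma$-skew-central modulo the higher root vectors. Feeding this into the iterated-cone argument of Lemmas~\ref{lem:1440}--\ref{lem:thick_subcats} shows that for any dg module $M$ over the Koszul resolution of $\msf{U}^+/(f)$, the reduction $k\ot^{\rm L}_{\msf{U}^+}M$ lies in the thick subcategory generated by the $\operatorname{Irrep}(\Lambda)$-orbit of $M$; combining with the dg Hopkins lemma for $k[\varepsilon]$ gives $k\in\langle\lambda\ot M\rangle$ whenever $M$ is non-perfect, and Lemma~\ref{lem:gen_lem2} then yields the tensor product property. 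So the restriction to type~$A$ enters through skew-centrality of root vectors (which fails already in types $B_2$ and $D_4$), not through any transitivity of the braid action. Your proposed filtration by normal Hopf subalgebras with quantum-linear-space quotients is closer in spirit to this regular-sequence argument than to the braid-group step, but as written the proposal does not supply the mechanism that makes the argument uniform in $c$, and the step you flag as the main obstacle is indeed where the proof has to go a different way.
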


In the above statement $n$ is arbitrary and $h$ denotes the Coxeter number for $A_n$, which is just $n+1$.  We observe in Corollary~\ref{cor:weak_uqb} that, for the quantum Borel $u_q(\mfk{b})$ in \emph{any} Dynkin type, there is at least a containment
\[
\supp_\msf{Y}(V\ot W)\subset\left(\supp_\msf{Y}(V)\cap\supp_\msf{Y}(W)\right).
\]
We conjecture, at Conjecture~\ref{conj:borel} below, that this containment is an equality in all Dynkin types.

\subsection{Snashall-Solberg support for associative algebras}

There is a theory of support for finite-dimensional (non-Hopf) algebras $R$, introduced by Snashall and Solberg~\cite{snashallsolberg04,erdmannetal04}, which utilizes the map from Hochschild cohomology to the center of the derived category $-\ot^{\rm L}_Rid:H\!H^\ast(R)\to Z(D^b(R))$.  The materials of Sections~\ref{sect:defos_n_fin} and~\ref{sect:supports} are presented in the general setting of a finite-dimensional algebra $R$ with a Noetherian deformation $Q\to R$ of finite global dimension.  We observe at Theorem~\ref{thm:cohom_hyp} that the Snashall-Solberg type support in this case is identified with the corresponding support defined via hypersurfaces.  That is to say, we show that the proper analog of equation~\eqref{eq:hyp0} holds in this context.

\subsection{Related works}

In~\cite{boekujawanakano} Boe, Kujawa, and Nakano propose an extremal version of the tensor product property for (possibly infinite-dimensional) modules over $u_q(\mfk{b})$, at odd order parameter with $\ord(q)>h$.  Specifically, the authors suggest that for modules $M$ and $N$ over $u_q(\mfk{b})$, the support of the product $M\ot N$ vanishes if and only if the supports for $M$ and $N$ have trivial intersection.  The approach of~\cite{boekujawanakano} involves reductions to an associated graded algebra $\operatorname{gr}u_q(\mfk{g})$ with respect to a root vector filtration of De Concini and Kac~\cite{deconcinikac91}.
\par

In~\cite{bensonerdmannholloway07} and~\cite{pevtsovawitherspoon09,pevtsovawitherspoon15}, Benson-Erdmann-Holloway and Pevtsova-Witherspoon, respectively, consider support for the (bosonized) quantum complete intersections $\msf{a}_q$ with constant parameter $q_{ij}=q_{12}$, for all $i<j$.  (To be clear, in~\cite{bensonerdmannholloway07} the tensor structure is not considered, and in~\cite{pevtsovawitherspoon09,pevtsovawitherspoon15} the skewing parameter is taken to be of constant value $1$.)  In~\cite{bensonerdmannholloway07} certain rank varieties are also constructed for $\msf{a}_q$ with arbitrary parameters by reducing, via a sequence of $\mbb{C}$-linear functors, i.e.\ non-tensor functors, to the constant parameter case~\cite[Definition 4.4]{bensonerdmannholloway07}.
\par

One can view the materials of this paper as an approach to support theory which is an ``inversion" of the canonical $\pi$-point approach to support for finite group schemes~\cite{suslinfriedlanderbendel97b,friedlanderpevtsova07,drupieskikujawa,bensoniyengarhenning}.  A strictly categorical reinterpretation of the $\pi$-point approach to support can be found in work of Balmer, Krause, and Stevenson~\cite{balmerkrausestevenson19,balmer20}.

\subsection{A basic outline}

The paper essentially has two interlocking halves.  In Sections~\ref{sect:defos_basics}--\ref{sect:hopf_supp} we show that the existence of a smooth integration $U\to \msf{u}$ implies strong finiteness conditions on cohomology (Theorem~\ref{thm:fg}).  We then develop the fundamentals regarding hypersurface support.  Sections \ref{sect:bg_examples}--\ref{sect:last} are dedicated to examples and applications.  Addressing the tensor product property for these examples does require the cultivation of certain additional tools, and in particular requires an analysis of derived categories for noncommutative local hypersurface rings (see Lemma \ref{lem:gen_lem2}).

\subsection{Acknowledgements}

Thanks to Nicholas Andruskiewitsch, Srikanth Iyengar, Eric Friedlander, Ellen Kirkman, Dan Nakano, Dan Rogalski, Mark Walker, Sarah Witherspoon and James Zhang for helpful conversation.  This work began at the conference Stable Cohomology: Foundations and Applications, May 2018 at Snowbird Utah, and continued during a visit of the first author to the University of Washington.  We thank the organizers of the Snowbird conference, and the staff of the Cliff Lodge, for providing a hospitable and intellectually engaging environment.  The first author thanks the University of Washington mathematics department for their hospitality.
\par

This material is based upon work supported by the National Science Foundation under Grant No.\ DMS-1440140, while the second author was in residence at the Mathematical Sciences Research Institute in Berkeley, California, during the Spring 2020 semester. We thank the MSRI staff for their exceptional efforts during the latter half of that difficult semester when the second author was also in virtual residence.  The second author was supported by the NSF grants DMS-1501146 and DMS-1901854.

\tableofcontents

\section{Examples of integrable Hopf algebras}
\label{sect:examples}

Throughout $k$ is an algebraically closed field of varying characteristic, and all algebras are algebras over $k$. We repeat the definition from the introduction before giving a long list of examples of smoothly integrable Hopf algebras. 
\par

Recall that a deformation of an algebra $\msf{u}$, parametrized by an affine scheme $\Spec(Z)$ with choice of distinguished closed point $x\in \Spec(Z)$, is the information of a flat $Z$-algebra $U$ equipped with an algebra map $U\to \msf{u}$ which reduces to an isomorphism $k\ot_ZU\cong \msf{u}$.  Here we reduce along the given point $x:Z\to k$.  Such a deformation can be represented via the corresponding sequence of algebra maps $Z\to U\to \msf{u}$, which we refer to informally as a deformation sequence.

\begin{definition}\label{def:integrable}
A finite-dimensional Hopf algebra $\msf{u}$ is said to be {\it smoothly integrable}, or just {\it integrable}, if $\msf{u}$ admits a deformation $U\to \msf{u}$ parametrized by a smooth central subalgebra $Z\subset U$ such that
\begin{enumerate}
\item[(a)] $U$ is a Noetherian Hopf algebra of finite global dimension, and $U\to \msf{u}$ is a map of Hopf algebras.
\item[(b)] $Z$ is a coideal subalgebra in $U$.
\end{enumerate}
Under these conditions, $U$ is called a {\it smooth integration} of $\msf{u}$ parametrized by $Z$.
\end{definition}

We suppose specifically that $Z$ is a \emph{right} coideal subalgebra in $U$, so that the comultiplication on $U$ restricts to a coaction $\Delta:Z\to Z\ot U$.

\begin{remark}
Our choice of right versus left is irrelevant.  If $U\to \msf{u}$ is a smooth integration with parametrizing algebra $Z$ a right coideal subalgebra, then $U$ is also flat over the left coideal subalgebra $S(Z)$ and $k\ot_{S(Z)}U=\msf{u}$.
\end{remark}

Implicit in Definition \ref{def:integrable} is the assumption that $Z$ is of finite type over $k$.  Although all of our examples arrive in such a finite type flavor, it is convenient to complete and work with a formal version of integrability.  We elaborate on the formal setting in Section \ref{sect:formal} below, after giving our examples.
\par

We note that for an integration $f:U\to \msf{u}$ of a finite-dimensional Hopf algebra $\msf{u}$, as in Definition~\ref{def:integrable}, $Z$ is contained in the $\msf{u}$-coinvariants in $U$ under the left coaction $(f\ot 1)\Delta:U\to \msf{u}\ot U$.  Indeed, $Z$ must be {\it equal} to the $\msf{u}$-coinvariants $Z=U^{\msf{u}}$ in this case~\cite[Theorem 1]{takeuchi79}.  So the parametrization space $\Spec(Z)$ is determined uniquely by the Hopf surjection $U\to \msf{u}$.

\subsection{Examples of smooth integration, and an anti-example}\ 

\begin{example}[Finite abelian groups]
Any finite abelian group $A$ admits a surjective group map from a free abelian group of finite rank, $p:\mbb{Z}^r\to A$. Let $A^\prime$ be the kernel of $p$.  Note that $A^\prime$ is a free abelian group of finite rank as well.  We take group rings $kA^\prime\to k(\mbb{Z}^r)\to kA$, over an arbitrary field, to obtain a smooth integration of $kA$.
\end{example}

\begin{example}[Rings of regular functions on a finite group scheme]\label{ex:1}
Take $k=\overline{\mbb{F}}_p$, and let $\mbb{G}$ be a smooth connected algebraic group. Denote by  $\mbb{G}^{(r)}$ the $r^{\rm th}$ Frobenius twist of $\mbb G$ and let $\mcl{G} = \mbb{G}_{(r)}$ be the $r^{\rm th}$ Frobenius kernel of $\mbb G$, that is, the kernel of the $r^{\rm th}$ iteration of the Frobenius map $F^r: \mbb G \to \mbb{G}^{(r)}$. The algebra of functions $\O(\mcl{G})$ is a complete intersection, and in this instance we get a deformation sequence 
\[
\O(\mbb{G}^{(r)})\to \O(\mbb{G})\to \O(\mcl{G}),
\]  
so that $\O(\mcl{G})$ is smoothly integrated by $\O(\mbb{G})$.
\par

Frobenius kernels are examples of finite connected group schemes. More generally, for $\mcl{G}$ {\it any} finite group scheme, we choose an embedding $\mcl{G}\to \mcl{H}$ into a smooth connected algebraic group.  Any faithful representation $V$ for $\mcl{G}$ provides an embedding into $\mcl{H}=\operatorname{GL}(V)$, for example.  Then we have the deformation sequence $\O(\mcl{H}/\mcl{G})\to \O(\mcl{H})\to \O(\mcl{G})$, and find that $\O(\mcl{H})$ integrates $\O(\mcl{G})$.  (Recall that $\mcl{H}/\mcl{G}$ is smooth~\cite[Corollary 5.26]{milne17}.)  When $\mcl{G}$ is not normal in $\mcl{H}$, $\O(\mcl{H}/\mcl{G})$ is not a Hopf subalgebra in $\O(\mcl{H})$, but a $\O(\mcl{H})$-coideal subalgebra.  Indeed, functions on $\mcl{H}/\mcl{G}$ are, essentially by definition, the $\O(\mcl{G})$-coinvariants in $\O(\mcl{H})$.
\end{example}

\begin{example}[Restricted enveloping algebras]\label{ex:2}
Take $k=\overline{\mbb{F}}_p$ and $\mfk{g}$ a restricted Lie algebra.  We have the surjection $U(\mfk{g})\to u^{\rm res}(\mfk{g})$ onto the restricted enveloping algebra, and consider the Zassenhaus subalgebra, or $p$-center, $Z_0(\mfk{g})$ of $U(\mfk{g})$.  This is the central subalgebra generated by the differences $x^p-x^{[p]}$ at arbitrary $x\in \mfk{g}$~\cite{zassenhaus54,jantzen98}.  The algebra $Z_0(\mfk{g})$ is a Hopf subalgebra in $U(\mfk{g})$ which is isomorphic to a polynomial ring, and $k\ot_{Z_0(\mfk{g})}U(\mfk{g})=u^{\rm res}(\mfk{g})$, so that $u^{\rm res}(\mfk{g})$ is seen to be smoothly integrated by the standard universal enveloping algebra.
\end{example}

\begin{example}[Height $1$ doubles]\label{ex:3}
Consider again $\mcl{H}$ a smooth connected algebraic group over $\overline{\mbb{F}}_p$, and $\mcl{G}\subset \mcl{H}$ a normally embedded, finite, connected subgroup scheme of height 1. Recall that a connected finite group scheme $\mcl{G}$ is of height 1 if  the $p^{\rm th}$ power of any element in the augmentation ideal of $ \O(\mcl{G})$ vanishes. Equivalently, $\mcl{G}$ is a subgroup scheme of $\mcl{H}_{(1)}$, the first Frobenius kernel of $\mcl H$ (see \cite{jantzen03}).  Let $\mfk{g}$ denote the restricted Lie algebra for $\mcl{G}$. We denote by $k\mcl{G}$ the {\it group algebra} of $\mcl{G}$, which is the linear dual of the algebra of regular functions $\O(\mcl{G})$.  Since $\mcl{G}$ is of height $1$, there is an identification $k\mcl{G}=u^{\rm res}(\mfk{g})$ (\cite[I.9.6]{jantzen03}).  Then $k\mcl{G}$ acts on $\O(\mcl{G})$ via the adjoint representation and we consider the Drinfeld double 
\[
D(\mcl{G})=\O(\mcl{G})\rtimes_{\rm ad}k \mcl{G}.
\]  
This algebra is integrated by the smash product $\O(\mcl{H})\rtimes_{\rm ad} U(\mfk{g})$, where $U(\mfk{g})$ acts on $\O(\mcl{H})$ via the projection $U(\mfk{g})\to k\mcl{G}$ (using the isomorphism $k\mcl{G} \cong u^{\rm res}(\mfk{g})$) and adjoint action of $\mcl{G}$ on $\mcl{H}$.  The parametrization space in this case is provided by the product $\O(\mcl{H}/\mcl{G})\ot Z_0(\mfk{g})$.  The relative double $D(\mcl{H}_1,\mcl{G}):=\O(\mcl{H}_1)\rtimes_{\rm ad}k\mcl{G}$ is similarly integrated by $\O(\mcl{H})\rtimes_{\rm ad}U(\mfk{g})$.
\end{example}

\begin{example}[Small quantum groups]\label{ex:4}
Consider the De Concini-Kac quantum enveloping algebra $U_q^{DK}(\mfk{g})$ at $q$ a root of unity of odd order $l$.  We have the standard sequence 
\[
Z_0\to U_q^{DK}(\mfk{g})\to u_q(\mfk{g}),
\] where $Z_0$ is the subalgebra generated by the $l$-th powers $E_\gamma^{l_\gamma}$, $F^{l_\gamma}_\gamma$, $K_\alpha^{l_\alpha}$~\cite[Section 3]{deconcinikac91},~\cite[Proposition 5.6]{deconcinikacporcesi92}.  By filtering by a normal ordering on the root vectors and considering the associated graded algebra (cf.~\cite{deconcinikac91}), we find that $U_q^{DK}(\mfk{g})$ is of finite global dimension~\cite[Corollary 3.1.2]{chemla04}.  Hence $u_q(\mfk{g})$ is smoothly integrated by $U^{DK}_q(\mfk{g})$.  Similarly, the quantum Borel $u_q(\mfk{b})$ is integrated by $U^{DK}_q(\mfk{b})$. For the quantum Borel, however, we consider the De Concini-Kac algebra $U^{DK}_q(\mfk{b})$ with the same (torsion) group of grouplikes as its small counterpart $u_q(\mfk{b})$.
\end{example}

In each of the above examples, faithful flatness of the extension $Z\to U$ is well-known.  However, it is also the case that any extension $Z\to U$ of a commutative coideal subalgebra $Z$ whatsoever is faithfully flat~\cite[Proposition 3.12]{arkhipovgaitsgory03}.  (There is a problem with the proof given in~\cite{arkhipovgaitsgory03}, but the result is nonetheless correct~\cite{gaitsgory}.)

\begin{example}[Anti-example]
The group ring $k(\operatorname{GL}_n)_{(2)}$ of the second Frobenius kernel in $\operatorname{GL}_n$, over $\overline{\mbb{F}}_p$, should not be integrable.  This is due to the presence of divided powers in the algebra.
\end{example}

\subsection{Formality and deformations}
\label{sect:formal}

In considering deformations of Hopf algebras, it is often advantageous to suppose that the parametrizing subalgebra $Z$ is not smooth, but \emph{formally} smooth.  

\begin{definition}\label{def:integrableformal}
A finite-dimensional Hopf algebra $\msf{u}$ is said to be {\it formally smoothly integrable} if $\msf{u}$ admits a deformation $\msf{U}\to \msf{u}$ parametrized by a central subalgebra $Z\subset \msf{U}$ such that
\begin{enumerate}
\item[(z)] $Z$ is isomorphic to a power series algebra $k\b{y_1,\dots,y_n}$.
\item[(a)] $\msf{U}$ is a Noetherian Hopf algebra of finite global dimension, and $\msf{U}\to \msf{u}$ is a map of Hopf algebras.
\item[(b)] $Z$ is a (right) coideal subalgebra in $\msf{U}$.
\end{enumerate}
In this case we call $\msf{U}$ a \emph{formally smooth integration} of $\msf{u}$.
\end{definition}

Abstractly, a commutative algebra $Z$ is isomorphic to such a power series provided $Z$ is formally smooth and complete local, with finite-dimensional cotangent space $m_Z/m_Z^2$ at the unique maximal ideal $m_Z$.  Such a formally smooth integration $\msf{U}$, which is necessarily a finite $Z$-module, is complete with respect to the $m_Z$-adic topology.  We therefore consider $\msf{U}$ as a complete linear topological algebra.  By a ``Hopf algebra" in this case we mean a Hopf algebra in the symmetric category of complete, linear topological, vector spaces.  This simply means that the comultiplication on $\msf{U}$ is a map to the completed tensor product $\Delta:\msf{U}\to \msf{U}\hat{\ot}\msf{U}$.
\par

To distinguish between the finite type and formal setting we generally employ a serif font $\msf{U}$ when we intend for our deformation to be complete.  We note that the formal situation is preferable simply because the parametrizing subalgebra $Z$ is local in this case.  As we explain below, any finite type integration $U\to \msf{u}$ determines a formal integration $\msf{U}\to \msf{u}$.  Unless confusion will arise, we drop the modifier ``formal" in our presentation and speak only of integrable Hopf algebras and (implied, formally smooth) integrations $\msf{U}\to \msf{u}$.
\par

Now, given a deformation $U\to \msf{u}$ parameterized by a smooth, finite type, $k$-algebra $Z$, one produces a corresponding formal deformation by completing at the distinguished maximal ideal $m_Z$,
\[
\msf{U}\to \msf{u},\ \ \msf{U}:=\hat{Z}\ot_Z U.
\]
Since $m_ZU$ is the kernel of the Hopf map $U\to \msf{u}$ it is a Hopf ideal in $U$, and the completion $\msf{U}=\varprojlim_n U/m^n_ZU$ is a complete, linear topological Hopf algebra.

\begin{lemma}
Suppose that $\msf{u}$ is (finite-dimensional and) integrable, $Z\to U\to \msf{u}$ is a smooth integration with $Z$ of finite type over $k$, and that $U$ is Noetherian and of finite global dimension.  Then the completion $\hat{Z}\ot_Z U$ is also Noetherian and of finite global dimension.
\end{lemma}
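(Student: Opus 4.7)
The plan is to use flat base change along $Z\to\hat Z$, first deducing Noetherianity from finiteness of $\msf{U}$ as a $\hat Z$-module, and then bounding the global dimension of $\msf{U}$ by that of $U$ by lifting projective resolutions of simples, combined with the semi-perfect structure on $\msf{U}$ to extend this bound to all finitely generated modules.

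First, I would establish Noetherianity. Since $\msf{u}=k\otimes_Z U$, the kernel of $U\to\msf{u}$ is $m_ZU$, and base change gives $\msf{U}/m\msf{U}\cong\msf{u}$, a finite-dimensional $k$-vector space. The algebra $\msf{U}$ is $m$-adically complete by construction, so Nakayama's lemma for the complete Noetherian local ring $\hat Z$ yields that $\msf{U}$ is finitely generated over $\hat Z$. Since $\hat Z$ is central and Noetherian, this forces $\msf{U}$ to be Noetherian as a ring.

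For the global dimension bound, set $d=\mathrm{gl.dim}(U)$. The algebra $\msf{U}$ is flat as a right $U$-module, because $\msf{U}\otimes_U-$ agrees with $\hat Z\otimes_Z-$ and $\hat Z$ is flat over $Z$. Completeness forces $m\msf{U}\subseteq J(\msf{U})$, so $\msf{U}/J(\msf{U})=\msf{u}/J(\msf{u})$ and the simple $\msf{U}$-modules are precisely the simple $\msf{u}$-modules. For any such simple $V$, take a projective resolution $P_\bullet\to V$ over $U$ of length at most $d$; flatness ensures $\msf{U}\otimes_UP_\bullet$ is a projective $\msf{U}$-resolution of $\msf{U}\otimes_UV=\hat Z\otimes_ZV=V$, where the last identification uses $m_ZV=0$. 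Hence $\mathrm{pd}_{\msf{U}}(V)\le d$ for every simple $\msf{U}$-module $V$.

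To pass to all finitely generated $\msf{U}$-modules, I verify that $\msf{U}$ is semi-perfect: the quotient $\msf{U}/J(\msf{U})$ is semi-simple Artinian, and idempotents lift from $\msf{u}/J(\msf{u})$ first to $\msf{u}$ (via nilpotence of $J(\msf{u})$) and then to $\msf{U}$ (via Newton iteration in the $m$-adic topology on $\msf{U}$). Any finitely generated $\msf{U}$-module $M$ then admits a minimal projective resolution, and by the standard computation for semi-perfect Noetherian rings, $\mathrm{pd}_{\msf{U}}(M)$ equals the largest index $i$ with $\Tor^{\msf{U}}_i(S,M)\ne 0$ for some simple $S$. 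Computing $\Tor^{\msf{U}}_i(S,M)$ from a projective resolution of $S$ of length at most $d$, these Tor groups vanish for $i>d$, giving $\mathrm{gl.dim}(\msf{U})\le d<\infty$. The main obstacle is this last transfer: in the noncommutative setting Auslander--Buchsbaum--Serre is unavailable, and one must lean on the full semi-perfect Noetherian framework on $\msf{U}$, together with the identification of simple $\msf{U}$-modules with those of $\msf{u}$, to detect finite global dimension from finite projective dimension of simples alone.
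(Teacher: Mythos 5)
Your proof is correct and follows essentially the same route as the paper: Noetherianity via finiteness of $\msf{U}$ over $\hat{Z}$, flat base change along $Z\to\hat{Z}$ to transport resolutions and bound $\operatorname{Tor}^{\msf{U}}$ of the simples in degrees $>d=\operatorname{gldim}(U)$, and minimal (projective) resolutions over the semi-perfect ring $\msf{U}$ to propagate that bound to all finitely generated modules. The only differences are cosmetic — you base-change projective resolutions of the individual simples rather than a flat resolution of $\Lambda=\msf{U}/\operatorname{Jac}(\msf{U})$, and you spell out the idempotent-lifting and complete-Nakayama steps that the paper leaves implicit.
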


\begin{proof}
Take $\msf{U}=\hat{Z}\ot_ZU$ the completion.  Noetherianity of $\msf{U}$ follows from the fact that $\msf{U}$ is finite over the Noetherian algebra $\hat{Z}$.  We have that $\msf{U}$ is semi-local, and all of the simples are restricted from simples over $\msf{u}$.  So $\msf{U}/\operatorname{Jac}(\msf{U})=\msf{u}/\operatorname{Jac}(\msf{u})$ is a finite sum of finite-dimensional modules which are annihilated by the augmentation ideal of $\hat{Z}$.  Take $\Lambda=\msf{U}/\operatorname{Jac}(\msf{U})$ the sum of the simples.
\par

By applying the faithful and exact functor $\hat{Z}\ot_Z-$ to any flat resolution of $\Lambda$ over $U$ we obtain a resolution over $\msf{U}$, and so find that $\operatorname{Tor}^\msf{U}_\ast(\Lambda,-)=\operatorname{Tor}^U_\ast(\Lambda,-)$.  By our assumption that $U$ is of finite global dimension, we have $\operatorname{Tor}^U_{>d}(\Lambda,-)=0$ for $d=\rm{gldim}(U)$, and so observe $\operatorname{Tor}^\msf{U}_{>d}(\Lambda,-)=0$.
\par

By considering minimal resolutions of modules over $\msf{U}$, we see that the projective dimension of any finitely generated $\msf{U}$-module $M$ is the minimal $d(M)$ so that $\operatorname{Tor}^\msf{U}_{>d(M)}(\Lambda,M)=0$.  So all finitely generated $\msf{U}$-modules are of projective dimension $\leq d$.  It follows that $\msf{U}$ is of global dimension $\leq d$, and in particular of finite global dimension~\cite[Theorem 4.1.2]{weibel95}.
\end{proof}


\section{Basics for deformations of associative algebras}
\label{sect:defos_basics}

We consider for the moment deformations of associative algebras, without any assumed Hopf structure.  Below we explain how such deformations provide specific approaches to cohomology which leverage the deforming algebra as a realization of its corresponding classes in Hochschild cohomology.  After providing a relatively general analysis of cohomology, in the present section and Section~\ref{sect:supports}, we give the necessary implications for Hopf algebras in Section~\ref{sect:hopf_supp}.
\par

The materials of the present section are basically a review of work of Bezrukavnikov and Ginzburg~\cite{bezrukavnikovginzburg07}, and provide the technical foundations of our study.  We employ standard constructions for dg modules, which can be found throughout the literature (e.g.~\cite[Chapter III]{krizmay95} or~\cite{drinfeld04}).

\subsection{The setup}
\label{sect:setup}

In Sections~\ref{sect:defos_basics}--\ref{sect:hopf_supp} we work with a deformation sequence $Z\to Q\to R$ satisfying the following:
\begin{itemize}
\item $Z$ is complete local, formally smooth, and essentially of finite type, i.e.\ isomorphic to $k\b{y_1,\dots,y_n}$ in coordinates.
\item $Q$ is finite and flat over $Z$.
\item The reduction $k\ot_Z Q\cong R$ is finite-dimensional.
\end{itemize}

The third point here is obviously a consequence of the second.  Note that in this situation $Q$ is also Noetherian, since it is finite over the Noetherian subalgebra $Z$.  We always let $1\in \Spf(Z)$ denote the distinguished (and unique) closed point in $\Spf(Z)$ at which the fiber of $Q$ is identified with $R$.  Here $Q$ and $R$ are not assumed to be Hopf algebras.

\begin{remark}
One can replace the finiteness conditions in the above setup with the assumption that $Q$ is finite over its center.  Specifically, one should consider a deformation $Q$ which is finite over a central subalgebra $T$ such that $T$ contains $Z$, is complete local, and is essentially of finite type over $k$ (cf.\ Remark \ref{rem:1586}).  Of course, in this more general setting one must augment the results of Sections~\ref{sect:defos_n_fin}--\ref{sect:hopf_supp} in the expected manner.  For example, in the statement of Theorem~\ref{thm:fin2} one should consider instead finiteness over $T\ot B_Z$ and $T\ot A_Z$.
\end{remark}

\subsection{Dg algebras, dg modules and semi-projective resolutions}

Consider a dg algebra $K$ over $T$, where $T$ is a commutative algebra.  (Usually for us $T$ will be $Z$, the parametrizing subalgebra of \ref{sect:setup}.)  We consider $K$-dgmod the category of left dg modules over $K$, and let $D(K)$ denote the corresponding derived category, $D(K)=(K\text{-dgmod})[{\rm quis}^{-1}]$.  When $T$ is Noetherian, which is always the case for us, we take
\[
D_{coh}(K)=\left\{\begin{array}{c}\text{The full subcategory of dg modules in $D(K)$}\\ \text{with finitely generated cohomology over $T$}
\end{array}\right\}
\]
When $T=k$ we write $D_{fin}(K)$ for $D_{coh}(K)$, the category of dg modules with finite-dimensional cohomology.  Note that when $K$ is a finite-dimensional algebra $D_{fin}(K)$ is just the bounded derived category of finite $K$-modules.
\par
 
We say that a dg $K$-module $M$ is \emph{semi-projective} if it has a dg filtration 
\[
0 \subset F_0M \subseteq F_1M \subseteq \ldots \subseteq M
\] 
such that each $F_nM/F_{n-1}M$ is a direct summand of a direct sum of shifts of $K$.  (See for example \cite[\S3.1]{keller94}, where such modules are referred to as having \emph{property ({\rm P})}.)  We call a semi-projective dg module \emph{semi-free} if each subquotient $F_nM/F_{n-1}M$ is exactly a sum of shifts of $K$.  For a dg module $N$, we call a quasi-isomorphism $M \overset{\sim}\to N$ from a semi-projective dg module $M$ a {\it semi-projective resolution} of $N$.
As shown, for example, in \cite[Theorem 3.1]{keller94} or \cite[Lemma 20.4]{stacks}, such semi-projective resolutions always exist.  Following the usual procedures, we derive tensor products and Hom functors by employing semi-projective resolutions.
\par

For $M$ a dg module (over $k$) we let $M^\ast$ denote the corresponding dual dg module $M^\ast=\Hom_k(M,k)$.  Here $\Hom_k$ is the standard Hom complex, so that the underlying graded space for $M^\ast$ is the sum $\oplus_{n\in \mbb{Z}}(M^{-n})^\ast$.

\subsection{Deformations, Koszul resolutions, and the Koszul dual pair $(A_Z,B_Z)$}
\label{sect:koszul}

Consider $Z\to Q\to R$ a sequence as in \ref{sect:setup}.  A choice of coordinates for $Z$ is equivalent to a choice of section $\tau:m_Z/m_Z^2\to m_Z$ from the cotangent space at $1$.  From $\tau$ we can explicitly define the Koszul complex $K_Z$, which is a dg algebra resolving $k$ over $Z$.  Specifically, we take $K_Z$ to be the dg $Z$-algebra $\wedge^\ast_Z\Omega_Z$ with differential given on the generators $\Omega_Z$ by $d_{y_i}\mapsto y_i$.  Here the $\{y_i\}_{i=1}^n$ is any choice of basis for the image of $\tau$ in $m_Z$.  The quasi-isomorphism $K_Z\overset{\sim}\to k$ is given by the augmentation on $Z$ in degree $0$.

\begin{remark}
Different choices of sections $\tau$ result in resolutions which are isomorphic, as dg $Z$-algebras, but not literally equal.  However, one can check directly that the definition of $K_Z$ is completely independent the choice of basis $\{y_i\}_{i=1}^n$.  We employ the basis $\{y_i\}_i$ only for clarity of presentation.
\end{remark}

We change base along the extension $Z\to Q$ to obtain, from $K_Z$, a complex
\[
K_Q:=Q\ot_{Z}K_Z
\]
which provides a resolution of $R$ by a dg algebra which is finite and flat over $Z$: 
\[
K_Q\overset{\sim}\to Q\ot_Z k=R.
\]
We refer to $K_Q$ as the \emph{Koszul resolution} of $R$ associated to the deformation $Q\to R$.

\begin{lemma}
The resolution $K_Q$ is finite and free over $Q$.
\end{lemma}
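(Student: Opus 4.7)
The plan is essentially a base-change argument that reduces the statement to the fact that the Koszul algebra $K_Z$ is already finite and free over $Z$ on the level of the underlying graded module.

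First, I would recall that because $Z\cong k\b{y_1,\dots,y_n}$ is a power series ring (equivalently, formally smooth with finite-dimensional cotangent space at $m_Z$), the module $\Omega_Z$ used in the definition of $K_Z$ is free of rank $n$ over $Z$, with basis given by the formal differentials $dy_1,\dots,dy_n$ corresponding to a chosen system of coordinates. Consequently the underlying graded $Z$-module of $K_Z=\wedge^\ast_Z\Omega_Z$ is free of rank $2^n$, with $Z$-basis the wedge monomials $dy_{i_1}\wedge\cdots\wedge dy_{i_r}$ indexed by subsets $\{i_1<\cdots<i_r\}\subseteq\{1,\dots,n\}$.

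Next, since $K_Q$ is defined as the base change $Q\ot_Z K_Z$ along the algebra map $Z\to Q$, extension of scalars sends a free $Z$-module of rank $r$ to a free $Q$-module of the same rank $r$. Applying this to the basis above produces the $Q$-basis $\{1\ot dy_{i_1}\wedge\cdots\wedge dy_{i_r}\}$ for $K_Q$, witnessing that $K_Q$ is finite and free over $Q$ of total rank $2^n$.

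There is no genuine obstacle here: the lemma is essentially a direct consequence of the freeness of $\Omega_Z$ over a power series ring, combined with the definition of $K_Q$ as a base change. The only point worth flagging is that ``free over $Q$'' is to be interpreted on the underlying graded $Q$-module; since the Koszul differential satisfies $d(dy_i)=y_i\neq 0$, one does not claim that $K_Q$ is free as a dg $Q$-module, and no such claim is needed for the subsequent use of $K_Q$ as a semi-projective replacement of $R$ over $Q$.
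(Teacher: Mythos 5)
Your proof is correct and is exactly the paper's argument, spelled out: the paper's proof is the one-line observation that $K_Z$ is finite and free over $Z$, and your base-change of the explicit wedge-monomial basis is precisely how that observation yields the claim for $K_Q=Q\ot_Z K_Z$. Your remark that freeness refers to the underlying graded module rather than the dg module is also the correct reading.
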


\begin{proof}
This follows from the fact that $K_Z$ is finite and free over $Z$.
\end{proof}

Given a deformation $Q\to R$ as above, we make repeated use of the (graded) commutative dg algebras
\begin{equation}\label{eq:AZ}
B_Z:=\Sym(\Sigma (m_Z/m_Z^2))\ \ \text{and}\ \ A_Z:=\Sym(\Sigma^{-2}(m_Z/m_Z^2)^\ast).
\end{equation}
Since we are in the dg context $B_Z$ is, as an associative algebra, actually an {\it exterior} algebra.  The two algebras $B_Z$ and $A_Z$ are generated by the shifted cotangent space (in degree -1) and tangent space (in degree 2) of $\Spf(Z)$ at $1$, respectively, and can therefore be viewed as functions on corresponding linear dg schemes.
\par

An important point about these two algebras is that they are Koszul dual, in the usual dg sense. In particular,  $\RHom_{B_Z}(k,k) \cong A_Z$ and $\RHom_{A_Z}(k,k) \cong B_Z$ 
(see, for example, \ \cite{goreskykottwitzmacpherson98} \cite[Section 3.3]{arkhipovbezrukavnikovginzburg04}).

\subsection{Koszul resolutions and a map to the center of $D_{fin}(R)$ \cite{bezrukavnikovginzburg07}}
\label{sect:w/e}

We fix a deformation $Q\to R$ as in Section~\ref{sect:setup}.  Since the Koszul resolution $K_Q\overset{\sim}\to R$ is a quasi-isomorphism of dg algebras, reduction and restriction provide mutually inverse equivalences of triangulated categories
\begin{equation}\label{eq:equiv}
k\ot_{K_Z}^{\rm L}-:D(K_Q)\overset{\sim}\longrightarrow D(R),\ \ \res: D(R)\overset{\sim}\longrightarrow D(K_Q).
\end{equation}
These equivalences restrict to equivalences on the full subcategories of dg modules with coherent (finitely generated) cohomology over $Z$ and $k$ respectively,
\[
k\ot_{K_Z}^{\rm L}-:D_{coh}(K_Q)\overset{\sim}\longrightarrow D_{fin}(R),\ \ \res: D_{fin}(R)\overset{\sim}\longrightarrow D_{coh}(K_Q).
\]
Note that $D_{coh}(K_Q)$ and $D_{fin}(R)$ can be identified with the respective derived categories of finitely generated dg modules.

Consider now dg modules $M$ and $N$ over $K_Q$, and the algebra of extensions
\[
\Ext^\ast_{K_Q}(M,N)=\oplus_{i\in\mbb{Z}}\Hom^i_{D(K_Q)}(M,\Sigma^i N).
\]
Consider also the Koszul dual dg algebras $B_Z$ and $A_Z$ of~\eqref{eq:AZ}.
\par

Let us explicitly choose coordinates $\tau:m_Z/m_Z^2\to m_Z$ here, and consider the Koszul complex $K_Z$ defined precisely as in Section \ref{sect:koszul}.  We enumerate a basis $\{y_i\}_{i=1}^n$ for the image of the section $\tau$, and have the corresponding basis $\{\bar{y}_i\}_{i=1}^n$ of $m_Z/m_Z^2$.  Recall that the algebra $B_Z$ is generated by the cotangent space $T_1^\ast\Spf(Z)=m_Z/m_Z^2$, so that the basis $\{\bar{y}_i\}_i$ provides a set of generators for this algebra as well.
\par

Consider the product $K_Z\ot_Z K_Q$, which is a dg algebra over $Z$.  As explained in~\cite[Lemma 2.4.2]{bezrukavnikovginzburg07}, one has a flat extension of dg algebras $j_\tau:B_Z\to K_Z\ot_Z K_Q$ defined as follows: by definition $K_Z= \wedge^\ast_Z\Omega_Z$, with differential $d_{y_i}\mapsto y_i$, and we take
\begin{equation}\label{eq:BZKZ}
j_{\tau}:B_Z\to K_Z\ot_Z K_Q,\ \ j_\tau(\bar{y}_i)=d_{y_i}\ot 1-1\ot d_{y_i}.
\end{equation}
\par

We may view $K_Z\ot_Z K_Q=K_Q\ot_{Q} K_Q$ as a $K_Q$-bimodule, with commuting $B_Z$-action provided by the aforementioned algebra map.  Flatness of $K_Z\ot_Z K_Q$ over $B_Z$ implies that the induction functor $(K_Z\ot_Z K_Q)\ot_{B_Z}-$ from dg $B_Z$-modules to dg $K_Q$-bimodules is exact, and hence is derived simply as
\begin{equation}
\label{eq:hoch}
(K_Z\ot_Z K_Q)\ot^{\rm L}_{B_Z}-=(K_Z\ot_Z K_Q)\ot_{B_Z}-:D(B_Z)\to D(K_Q\ot K_Q^{op})
\end{equation} 
One calculates directly $(K_Z\ot_Z K_Q)\ot_{B_Z}k = Q \ot_Z (K_Z \ot K_Z) \ot_{B_Z} k  = Q \ot_Z K_Z  = K_Q$  (where the middle equality is by \cite[Lemma 2.4.2]{bezrukavnikovginzburg07}) to see that the above functor induces a map to Hochschild cohomology
\[
i_\tau:A_Z=\Ext_{B_Z}^\ast(k,k)\to H\!H^\ast(K_Q).
\]

\begin{remark} 
One is free to view the functor $(K_Z\ot_Z K_Q)\ot_{B_Z}-$ as having image in the category of dg $K_Z\ot_Z K_Q^{op}$-modules, or in the category of dg $K_Q$-bimodules.  This depends on whether one views $K_Z\ot_Z K_Q$ as a $Z$-central $(K_Z,K_Q)$-bimodule or a $K_Q$-bimodule. One advantage of the $K_Q$-bimodule perspective is that we can then use the quasi-isomorphism $K_Q \to R$ to pass from 
$D(K_Q\ot K_Q^{op})$ to $D(R \ot R^{op})$ and, hence, to identify the Hochschild cohomologies $H\!H^\ast(K_Q)$ and $H\!H^\ast(R)$ as we do below. 
\end{remark}

Recall that the center $Z(D_{coh}(K_Q))$ of the derived category $D_{coh}(K_Q)$ is the graded algebra of natural transformations from the identity functor on $D_{coh}(K_Q)$ to its shifts
\[
Z(D_{coh}(K_Q))=\oplus_{n\in\mbb{Z}} \Hom_{\rm Fun}(id,\Sigma^n),
\]
and recall also the map from Hochschild cohomology to the center.  This map takes a class in $H\!H^\ast(K_Q)$, which is some map $f:K_Q\to \Sigma^n K_Q$ in the derived category of bimodules, to the transformation $f\ot_{K_Q}-:id\to \Sigma^n$.  So from  $i_\tau$ we obtain a map to the center $\iota_\tau:A_Z\to Z(D_{coh}(K_Q))$, i.e.\ an action of $A_Z$ on the derived category of $K_Q$-modules.
\par

We elaborate on this final point.  From the maps $Z(D_{coh}(K_Q))\to \Ext^\ast_{K_Q}(M,M)$, $\xi\mapsto \xi_M$, we obtain a natural action of $A_Z$ on arbitrary objects $M$ in the derived category via restricting along $\iota_\tau$.  Through these actions on objects we obtain, in principle, \emph{two} actions of $A_Z$ on each graded extension group $\Ext^\ast_{K_Q}(M,N)$, one through $M$ and one through $N$.  Naturality tells us that these two actions agree, and in the case $M=N$ we find that the corresponding algebra map $A_Z\to \Ext^\ast_{K_Q}(M,M)$ has central image.

\begin{theorem}[\cite{bezrukavnikovginzburg07}]
\label{thm:bg}
The algebra map $i_\tau:A_Z\to H\!H^\ast(K_Q)$ is independent of the choice of section $\tau:m_Z/m_Z^2\to m_Z$.  In particular, the restriction to the generators $i_\tau:(A_Z)^2=(m_Z/m_Z^2)^\ast\to H\!H^2(K_Q)\cong H\!H^2(R)$ is the usual deformation map of~\cite{gerstenhaber64,bezrukavnikovginzburg07,friedlandernegron18}.
\end{theorem}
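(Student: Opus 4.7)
The plan is to reduce both claims to a single computation on degree-$2$ generators, and then identify the resulting linear map with the classical Gerstenhaber deformation class.

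First, I would exploit the fact that $A_Z=\Sym(\Sigma^{-2}(m_Z/m_Z^2)^\ast)$ is, as a graded algebra, a polynomial algebra freely generated in a single (even) cohomological degree $2$. Hence any (graded) algebra map out of $A_Z$ into a graded-commutative algebra is determined uniquely by its restriction to the generating subspace $(A_Z)^2=(m_Z/m_Z^2)^\ast$. Since the image of $i_\tau$ lies in $H\!H^\ast(K_Q)\cong H\!H^\ast(R)$, which is graded commutative (Gerstenhaber), both the independence of $\tau$ and the identification with the deformation map reduce to an analysis of the linear map
\[
i_\tau^2:(m_Z/m_Z^2)^\ast\longrightarrow H\!H^2(K_Q)\cong H\!H^2(R).
\]

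Second, I would unwind $i_\tau^2$ explicitly from the construction. Pick dual bases $\{\bar y_i\}\subset m_Z/m_Z^2$ and $\{\bar y_i^\ast\}\subset (m_Z/m_Z^2)^\ast$. In $A_Z=\Ext^\ast_{B_Z}(k,k)$, the class $\bar y_i^\ast$ is represented by a standard length-$2$ Yoneda extension of $k$ by $k$ over $B_Z$, dual to the generator $\bar y_i\in B_Z^{-1}$. Applying the functor $(K_Z\otimes_Z K_Q)\otimes_{B_Z}-$ to this Yoneda representative, and using the identification $(K_Z\otimes_Z K_Q)\otimes_{B_Z}k=K_Q$ recalled in the text, I would read off an explicit $K_Q$-bimodule $2$-cocycle. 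The key point is that the map $j_\tau(\bar y_i)=d_{y_i}\otimes 1-1\otimes d_{y_i}$ measures the ``failure'' of the two $Z$-actions on $K_Z\otimes_Z K_Q$ to agree in the Koszul resolution direction, so after passing through $K_Q\overset{\sim}\to R$, the resulting bimodule extension of $R$ by $R$ is precisely the first-order extension recording the deformation of the multiplication on $R$ in the $\bar y_i$-direction coming from $Q/(m_Z^2,y_j:j\neq i)$.

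Third, I would invoke the standard fact that this first-order deformation class is intrinsic: the $2$-cocycle obtained from any $k$-linear splitting of $Q/(m_Z^2)\to R$ via the Gerstenhaber formula represents the same class in $H\!H^2(R)$, independent of the splitting. Since a choice of section $\tau:m_Z/m_Z^2\to m_Z$ is equivalent data to a choice of such splitting modulo $m_Z^2$, two sections $\tau,\tau'$ produce first-order deformations along each direction $\bar y_i$ whose classes in $H\!H^2(R)$ agree. This simultaneously yields the independence of $\tau$ and the identification of $i_\tau^2$ with the deformation map of \cite{gerstenhaber64,bezrukavnikovginzburg07,friedlandernegron18}. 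Combining with Step~1 completes the proof.

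The main obstacle is the bookkeeping in Step~2: one must carefully pass from an abstract Yoneda class in $\Ext^2_{B_Z}(k,k)$ through the (non-strict) tensor functor $(K_Z\otimes_Z K_Q)\otimes_{B_Z}-$ and the quasi-isomorphism $K_Q\overset{\sim}\to R$ to produce an honest Hochschild $2$-cocycle on $R$, and then recognize this cocycle as the Gerstenhaber cocycle of a first-order deformation. Once this cocycle-level identification is made, independence of $\tau$ is automatic.
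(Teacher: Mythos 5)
The paper does not actually prove Theorem~\ref{thm:bg}: it is imported verbatim from Bezrukavnikov--Ginzburg, so there is no in-text argument to compare against line by line. Your plan is nonetheless the standard (and essentially BG's) route, and its logical skeleton is sound. Step~1 is correct: $A_Z$ is a free graded-commutative algebra on the even-degree space $(m_Z/m_Z^2)^\ast$ and $H\!H^\ast(K_Q)$ is graded-commutative, so the algebra map $i_\tau$ is determined by $i_\tau^2$. Step~3 is also correct in principle: the first-order deformation $Q\otimes_Z k[\epsilon_\xi]$ is an intrinsically defined square-zero extension of $R$ (its defining ideal is the preimage of $\ker\xi$ in $m_Z$, which involves no choice of $\tau$), and its class in $H\!H^2(R)$ is independent of the $k$-linear splitting used to write a Gerstenhaber cocycle; so once $i_\tau^2(\xi)$ is identified with that class, $\tau$-independence is automatic. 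This is exactly the description the paper itself relies on later, in the proof of Lemma~\ref{lem:diagram_commutes}. The one caveat is that you have correctly located, but not discharged, the entire content of the theorem: Step~2, the cocycle-level computation showing that pushing the degree-$2$ generator of $\Ext^\ast_{B_Z}(k,k)$ through $(K_Z\otimes_Z K_Q)\otimes_{B_Z}-$ and the quasi-isomorphism $K_Q\overset{\sim}\to R$ yields precisely the class of $Q\otimes_Z k[\epsilon_\xi]$, is where all the work lives (note also that a degree-$2$ class over the dg algebra $B_Z$, whose generators sit in cohomological degree $-1$, is most cleanly represented via the Koszul resolution $F=B_Z\otimes^t A_Z^\ast$ rather than a literal length-two Yoneda extension). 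As a plan the proposal is correct; as a proof it defers its only nontrivial step to the cited source.
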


Since the $i_\tau$ are independent of the choice of section $\tau$, we can speak of {\it the} algebra map $A_Z\to H\!H^\ast(R)$ determined by the deformation $Q\to R$, parametrized by $\Spf(Z)$, and \emph{the} corresponding action of $A_Z$ on $D_{coh}(K_Q)\cong D_{fin}(R)$.

\begin{definition}\label{def:456}
We fix $i_Q:A_Z\to H\!H^\ast(R)$ the algebra map associated to a given deformation sequence $Z\to Q\to R$, and view all extensions $\Ext^\ast_R(M,N)$ as $A_Z$-modules via $i_Q$.
\end{definition}


\section{Deformations and finiteness conditions on cohomology}
\label{sect:defos_n_fin}

For this section we fix a deformation $Q\to R$ as in~\ref{sect:setup}.  We provide a general finite generation result at Theorem~\ref{thm:fin2} (see also Lemma~\ref{lem:twtt}).  We then apply Theorem~\ref{thm:fin2} to recover, in a uniform manner, finite generation results of~\cite{friedlanderparshall86II,ginzburgkumar93,friedlandernegron18} in Subsection~\ref{sect:hopf_fg}.  In particular, we find that any integrable Hopf algebra $\msf{u}$ has finitely generated cohomology.

The key observation of this section is that $\RHom_R(V,W)$ with its $A_Z$-action as described in \ref{thm:bg} and $
\RHom_Q(V,W)$ with a natural $B_Z$-action are related by a dg version of Koszul duality for $A_Z$ and $B_Z$, see Lemma~\ref{lem:twtt} as the main technical result.
Since Koszul duality preserves finiteness conditions (see \cite{avramovetal10,avramovetal10corrigendum}),  it allows us to translate between finite generation of $\Ext^\ast_R(V,W)$ as $A_Z$-module and finite generation of $\Ext^\ast_Q(V,W)$ as $B_Z$-module, as done in Theorem~\ref{thm:fin2}. Finally, for our applications to integrable Hopf algebras, one observes that finite generation of $\Ext^\ast_Q(V,W)$ as a $B_Z$-module is equivalent to finite-dimensionality which, in turn, follows from the assumption that $Q$ has final global dimension.

\subsection{Technicalities}
\label{sect:technology}

Consider dual bases $\{y^i\}_i$ and $\{y_i\}_i$ for the tangent space and cotangent space of $\operatorname{Spf}(Z)$, and consider $A_Z\ot B_Z$ as a dg algebra with vanishing differential.  We define the functor
\begin{equation}\label{eq:t}
\begin{array}{l}
A_Z\ot^t-:B_Z\text{-dgmod}\to A_Z\ot B_Z\text{-dgmod},\\
\hspace{5mm}A_Z\ot^tM:=(A_Z\ot M,1\ot d_M+\sum_i y^i\ot y_i).
\end{array}
\end{equation}
The functor $A_Z\ot^t-$ respects quasi-isomorphisms on the category $B_Z\text{-dgmod}^+$ of bounded below dg $B$-modules, as it is identified with the functor $\Hom_{B_Z}(F,-)$, where $F=(B_Z\ot(A_Z)^\ast,d_F)$ is the Koszul resolution of $k$ over $B_Z$.  So we have the derivation $A_Z\ot^t-={\rm L}(A_Z\ot^t-)$ on $D^+(B_Z)$ (see, for example, \cite[Section 4]{negron17}, \cite{avramovetal10}).   

Recall that a dg module $N$ over a dg algebra $A$ is called $K$-flat if the functor $-\ot_AN$ preserves acyclic objects \cite{spaltenstein88}.  For $K$-flat $N$ the derived and underived tensor products can be identified $-\ot^{\rm L}_AN=-\ot_AN$.

\begin{lemma}\label{lem:529}
For any dg $B_Z$-module $M$, the twisted product $A_Z\ot^t M$ is $K$-flat over $A_Z$.
\end{lemma}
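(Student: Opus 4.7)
My plan is to reduce the problem to the case where $M$ is bounded below via a directed-colimit argument, and then verify $K$-flatness directly by a filtration spectral sequence on $C\ot_{A_Z}(A_Z\ot^t M)$ for an arbitrary acyclic right dg $A_Z$-module $C$.

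First I would filter $M$ by the increasing family of sub-dg-$B_Z$-modules $F_pM := B_Z\cdot M^{\geq -p}$, for $p\in\mbb{Z}$. Each $F_p M$ is closed under the $B_Z$-action by construction, and closed under $d_M$ because $d_M$ is a $B_Z$-linear derivation of degree $+1$; and each $F_pM$ sits in degrees $\geq -p-n$, using that $B_Z$ is concentrated in degrees $-n,\dots,0$. The filtration exhausts $M$, so $A_Z\ot^t M = \varinjlim_p A_Z\ot^t F_pM$ in dg $A_Z$-modules. Since the class of $K$-flat dg $A_Z$-modules is closed under directed colimits (homology commutes with such colimits), it suffices to prove the lemma when $M$ is bounded below.

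Assume now $M^b=0$ for $b<b_0$ and take $C$ an arbitrary acyclic right dg $A_Z$-module. Under the identification $C\ot_{A_Z}(A_Z\ot^t M)\cong (C\ot M, d_E)$, the differential takes the form $d_E = d_C\ot 1 + 1\ot d_M + \delta_E$, where the twisting term is $\delta_E(c\ot m) = (-1)^{|c|}\sum_i (c\cdot y^i)\ot y_i m$. I would filter by the decreasing family $G^p := C^{\geq p}\ot M$; this is a filtration by subcomplexes because $d_C$ raises $C$-degree by $1$, $1\ot d_M$ preserves it, and $\delta_E$ raises $C$-degree by $2$. The associated spectral sequence has $E_0^{p,q}=C^p\ot M^q$ with $d_0 = 1\ot d_M$, so that $E_1^{p,q}=C^p\ot H^q(M)$ and $d_1 = d_C\ot 1$, and therefore $E_2^{p,q} = H^p(C)\ot H^q(M) = 0$ by acyclicity of $C$.

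The main technical point is convergence of this spectral sequence for an arbitrary, possibly unbounded, acyclic $C$. Here is where the bounded-below hypothesis on $M$ is essential: in each total degree $n$, the piece $(G^p)^n$ is the sum of $C^a\ot M^b$ with $a\geq p$ and $a+b=n$, and the bound $b\geq b_0$ forces $a\leq n-b_0$, so $(G^p)^n = 0$ for all $p > n-b_0$. Combined with exhaustiveness and the Hausdorff property, this verifies the hypothesis of the classical convergence theorem for decreasing filtrations, so the spectral sequence converges strongly to $H^\ast(E,d_E)$, which is therefore zero.
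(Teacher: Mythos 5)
Your proof is correct, but it follows a genuinely different route from the paper's. The paper filters $L\ot^t M$ (for $L$ an acyclic right dg $A_Z$-module) by the $m_{B_Z}$-adic filtration on $M$: because $B_Z$ is an exterior algebra on $n$ generators, $m_{B_Z}^{n+1}=0$, so this filtration has length at most $n+1$; the twisting term $\sum_i y^i\ot y_i$ strictly raises the $m_{B_Z}$-adic index, so the associated graded complex is the \emph{untwisted} product $L\ot(\operatorname{gr}M)$, which is acyclic since every complex of $k$-vector spaces is $K$-flat over the field $k$. A filtration of finite length makes convergence of the resulting spectral sequence automatic, so the paper needs neither a boundedness reduction nor a convergence discussion. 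You instead filter by the degree on the acyclic factor, which forces you to first reduce to bounded-below $M$ (your colimit step, which is fine --- $K$-flatness is closed under filtered colimits, and your $F_pM=B_Z\cdot M^{\geq -p}$ are indeed dg $B_Z$-submodules bounded below by $-p-n$) and then to verify classical convergence for an exhaustive, degreewise eventually-zero decreasing filtration; your verification of that hypothesis is correct. The trade-off: your argument is a standard double-complex-style computation whose only use of the special structure of $B_Z$ is that it is concentrated in finitely many degrees, whereas the paper's argument leans on the nilpotence of $m_{B_Z}$ to collapse everything into a single finite filtration with no convergence subtleties. Both are valid; the paper's is shorter.
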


\begin{proof}
Consider an acyclic right dg $A_Z$-module $L$.  We have
\[
L\ot_{A_Z}(A_Z\ot^t M)=L\ot^tM,
\]
where the latter object is the linear product $L\ot M$ equipped with the differential $d_L\ot 1+1\ot d_M+\sum_i y_i\ot y^i$.  This complex has a bounded filtration provided by the $m_{B_Z}$-adic filtration on $M$, and the associated graded complex is the linear product
\[
\operatorname{gr}(L\ot^t M)=L\ot (\operatorname{gr}M).
\]
As any complex of $k$-vector spaces is $K$-flat over $k$,
we see that the associated graded complex for $L\ot^tM$ is acyclic.  By considering the spectral sequence associated to this filtration, it follows that $L\ot^tM$ is in fact acyclic.
\end{proof}

We recall also, from the materials of Section~\ref{sect:w/e}, that $\Ext_R\cong \Ext_{K_Q}$ naturally takes values in the category of dg $A_Z$-modules.  Furthermore, by resolving $R$-modules by $K_Q$-modules $M\to V$ which are projective over $Q$, we see that the functor $\Ext_Q$ from $R$-modules naturally takes values in the category of dg $B_Z$-modules.

\subsection{Finite generation results}

As we argue below, one can lift the natural actions of $A_Z$ on $\Ext^\ast_{K_Q}(M,N)$ described in Section~\ref{sect:w/e} the dg level, i.e.\ to an action of $A_Z$ on $\RHom_{K_Q}(M,N)$.  The action of $B_Z$ on $\Ext^\ast_{Q}(M,N)$ also lifts to the dg level, at least when $N$ is replaced with an (quasi-isomorphic) dg $R$-module.  After translating from $K_Q$ to $R$, we establish the following.

\begin{lemma}\label{lem:twtt} 
\begin{enumerate} 
\item Let $V$ and $W$ be in $D_{fin}(R)$. Then $\RHom_R(V, W)$ has a natural structure of a dg $A_Z \ot B_Z$-module which lifts the action of $A_Z$ on $\Ext^\ast_R(V, W)$ of Definition~\ref{def:456}. 

\item There is a natural isomorphism
\[
\RHom_R\cong A_Z\ot^t\RHom_Q
\]
of functors from $D_{fin}(R)^{op}\times D_{fin}(R)$ to $D(A_Z\ot B_Z)$, where $\ot^t$ is as defined in \eqref{eq:t}. 
\end{enumerate} 
\end{lemma}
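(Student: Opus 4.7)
The strategy is to compute both $\RHom_R(V,W)$ and $\RHom_Q(V,W)$ from a single semi-free $K_Q$-resolution $P\to V$, and then to recognize the passage from one to the other as an instance of Koszul duality for the pair $(A_Z,B_Z)$. First I would fix such a $P$; because $K_Q=Q\ot_Z K_Z$ is finite and free over $Q$, the restriction of $P$ to $Q$ is also semi-free, so $\RHom_Q(V,W)\simeq\Hom_Q(P,N)$ for any dg $R$-module representative $N$ of $W$ (via \eqref{eq:equiv}), while $\RHom_R(V,W)\simeq\Hom_{K_Q}(P,N)$.

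The Koszul generators $d_{y_i}\in K_Q^{-1}$ act on $P$ and as zero on the $R$-module $N$, so the assignment $\bar{y}_i\cdot f := (-1)^{|f|+1} f\circ d_{y_i}$ defines pairwise anticommuting operators of degree $-1$ on $\Hom_Q(P,N)$ that anticommute with the internal differential. This endows $\Hom_Q(P,N)$ with a functorial dg $B_Z$-module structure, which is the promised lift of $\RHom_Q$ to $D(B_Z)$. One sees immediately that
\[
\Hom_{K_Q}(P,N)=\Hom_{B_Z}(k,\Hom_Q(P,N))
\]
as a subcomplex of $\Hom_Q(P,N)$, since $K_Q$-linearity of a $Q$-linear map is exactly the vanishing $f\circ d_{y_i}=0$ for all $i$.

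For part (2), I would then invoke the Koszul duality identification $\RHom_{B_Z}(k,M)\simeq A_Z\ot^t M$ in $D(A_Z\ot B_Z)$, natural in $M\in D(B_Z)$, obtained from the explicit Koszul resolution $B_Z\ot (A_Z)^\ast\overset{\sim}\to k$. Applied to $M=\Hom_Q(P,N)$, this produces the zig-zag
\[
\RHom_R(V,W)\simeq\Hom_{K_Q}(P,N)=\Hom_{B_Z}(k,\Hom_Q(P,N))\to A_Z\ot^t\Hom_Q(P,N)\simeq A_Z\ot^t\RHom_Q(V,W)
\]
in $D(A_Z\ot B_Z)$. The $A_Z\ot B_Z$-structure on $\RHom_R(V,W)$ asserted in part (1) is defined by pulling back along this zig-zag, and its $A_Z$-part coincides with the action of Definition~\ref{def:456} because the map $j_\tau$ of~\eqref{eq:BZKZ} is what governs both the Bezrukavnikov--Ginzburg construction and the Koszul resolution used here.

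The main obstacle is showing that the middle arrow above is a quasi-isomorphism, i.e.\ that $\Hom_Q(P,N)$ is $K$-injective over $B_Z$. I would establish this using the semi-free filtration on $P$: each associated graded piece is a sum of shifts of $K_Q$, and by adjunction combined with the vanishing $m_Z\cdot N=0$ one obtains $\Hom_Q(K_Q,N)\cong\Hom_k(\wedge^\ast(m_Z/m_Z^2),N)\cong B_Z^\ast\ot_k N$, which is free (hence injective) over the Frobenius algebra $B_Z$. A standard limit / spectral-sequence argument then transfers $K$-injectivity from the associated graded to $\Hom_Q(P,N)$, so that the underived $B_Z$-invariants already compute the derived functor.
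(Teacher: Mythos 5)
Your argument is correct, and it reaches the identification $\RHom_R\cong A_Z\ot^t\RHom_Q$ by a route that is dual, in a precise sense, to the one in the paper. The paper never works with the underived $B_Z$-invariants $\Hom_{B_Z}(k,\Hom_Q(P,N))$ at all: it replaces the resolution of $V$ by the larger semi-projective resolution $F^{\rm ind}\ot_{K_Z}M$, where $F^{\rm ind}=(K_Z\ot_ZK_Z)\ot_{B_Z}F$ is induced from the Koszul resolution $F=B_Z\ot^t A_Z^\ast$ of $k$ over $B_Z$, and then obtains $A_Z\ot^t\Hom_Q(M,W)$ as a chain of \emph{literal} adjunction isomorphisms, with the only technical input being that the induced module is again semi-projective over $K_Q$ (Lemma~\ref{lem:511}). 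You instead keep the small resolution $P$, recognize $\Hom_{K_Q}(P,N)$ as the $B_Z$-invariants of $\Hom_Q(P,N)$ under the cohomology operators $f\mapsto\pm f\circ d_{y_i}$, and then must show that these underived invariants already compute $\RHom_{B_Z}(k,-)$; your K-injectivity argument (each layer of the dual semi-free filtration is a product of shifts of $B_Z^\ast\ot N$, which is free over the Frobenius algebra $B_Z$, followed by the inverse-limit transfer along the tower of degreewise-split surjections) is the correct and complete way to do this. In effect the paper makes the source cofibrant enough that no fibrancy of the target is needed, while you check fibrancy of the target directly; your extra step could alternatively be bypassed by noting that $F^{\rm ind}\ot_{K_Z}P\to P$ is a quasi-isomorphism of semi-projective resolutions, so both $\Hom_{K_Q}$-complexes compute $\RHom_R(V,W)$. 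The one point where you are as terse as the paper is the verification that the resulting $A_Z$-action agrees with that of Definition~\ref{def:456}; both proofs reduce this to tracing the map $j_\tau$ of~\eqref{eq:BZKZ}, so no complaint there. Your approach has the mild advantage of exhibiting the $B_Z$-action on $\RHom_Q$ completely explicitly, and of isolating the Frobenius property of $B_Z$ as the reason the comparison map is an equivalence.
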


The proof of Lemma~\ref{lem:twtt} is given in Section~\ref{sect:twtt}.  In Section~\ref{sect:fin2} we prove the following result.

\begin{theorem}[{cf.~\cite[Theorem 4.2]{avramovgasharovpeeva97}}]\label{thm:fin2}
Consider a deformation sequence $Z\to Q\to R$ as in Section~\ref{sect:setup}.  For finite $R$-modules $V$ and $W$ the following are equivalent:
\begin{enumerate}
\item[(a)] $\Ext^\ast_Q(V,W)$ is finite over $B_Z$.
\item[(b)] $\Ext^\ast_R(V,W)$ is finite over $A_Z$.
\end{enumerate}
\end{theorem}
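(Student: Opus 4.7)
The plan centers on Lemma~\ref{lem:twtt}(2), which identifies $\RHom_R(V,W)$ with the twisted product $A_Z\otimes^t\RHom_Q(V,W)$ in $D(A_Z\otimes B_Z)$, combined with the Koszul duality between the dg algebras $A_Z$ and $B_Z$. A key simplifying observation is that $B_Z=\Sym(\Sigma(m_Z/m_Z^2))$, being a graded-commutative algebra on an odd-degree finite-dimensional space, is actually a finite-dimensional exterior algebra. Consequently condition~(a) is equivalent to the statement that $\Ext^\ast_Q(V,W)$ has finite total dimension over $k$.

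For the forward implication (a)$\Rightarrow$(b), I would equip $A_Z\otimes^t\RHom_Q(V,W)$ with the decreasing filtration by powers of the augmentation ideal $m_{A_Z}\subset A_Z$. The twisting term $\sum_i y^i\otimes y_i$ strictly increases $A_Z$-degree, so on the associated graded it vanishes and only the untwisted differential $1\otimes d$ remains. The resulting spectral sequence has
\[
E_1=A_Z\otimes\Ext^\ast_Q(V,W),\qquad E_2\cong\Ext^\ast_{B_Z}\bigl(k,\Ext^\ast_Q(V,W)\bigr),
\]
where the identification of $E_2$ uses $A_Z\cong\RHom_{B_Z}(k,k)$ and the fact that $A_Z\otimes^t(-)$ computes $\RHom_{B_Z}(k,-)$. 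Under assumption (a) the $B_Z$-module $\Ext^\ast_Q(V,W)$ is finite-dimensional, so $E_2$ is a finitely generated $A_Z$-module by standard Koszul duality for finite modules over the exterior algebra $B_Z$. Because the $A_Z$-generators live in cohomological degree~$2$ and $\RHom_Q(V,W)$ can be represented by a cohomologically bounded-below complex, the filtration is bounded in each total degree, the spectral sequence converges strongly, and $\Ext^\ast_R(V,W)$ emerges as a subquotient of a finite $A_Z$-module.

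For the reverse implication (b)$\Rightarrow$(a), I would invoke the general Koszul duality principle of~\cite{avramovetal10,avramovetal10corrigendum}, to the effect that the derived functor $A_Z\otimes^t(-)\simeq\RHom_{B_Z}(k,-)$ exchanges dg $B_Z$-modules with finite-dimensional cohomology and dg $A_Z$-modules with finite cohomology over $A_Z$. Applied to $M=\RHom_Q(V,W)$ and combined with the identification of Lemma~\ref{lem:twtt}(2), this converts finite generation of $\Ext^\ast_R(V,W)$ over $A_Z$ directly into finite-dimensionality of $\Ext^\ast_Q(V,W)$, which is~(a). The main obstacle is precisely this reverse direction: a single ``upward'' spectral sequence does not reverse on its own, and one must either construct the dual Koszul-duality spectral sequence (with the roles of $A_Z$ and $B_Z$ interchanged) or quote the converse Koszul finiteness theorem in its full strength. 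Once that tool is in place, the equivalence (a)$\Leftrightarrow$(b) is immediate from Lemma~\ref{lem:twtt}(2).
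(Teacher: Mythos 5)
Your proposal rests on the same two pillars as the paper's proof---Lemma~\ref{lem:twtt}(2) together with the transfer of finiteness under Koszul duality between $A_Z$ and $B_Z$---and your opening observation that (a) is just finite-dimensionality of $\Ext^\ast_Q(V,W)$ is exactly how the paper uses the hypothesis. But the execution differs in both directions. For (a)$\Rightarrow$(b) the paper avoids your spectral sequence entirely: it replaces $\RHom_Q(V,W)$ by a quasi-isomorphic bounded dg $B_Z$-module $N$, and then $A_Z\ot^t N$ is a finitely generated graded $A_Z$-module on the nose, so its cohomology is coherent by Noetherianity of $A_Z$. Your filtration argument does work (the filtration is bounded in each total degree for the reasons you give, and $E_\infty$ is a subquotient of the finitely generated $E_2$), but it is longer and forces you to track convergence and the passage from $\operatorname{gr}$ to the abutment.

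The real divergence is in (b)$\Rightarrow$(a), which you correctly identify as the crux but then outsource to a ``converse Koszul finiteness theorem.'' The paper proves exactly this converse in a few lines, and you should internalize the mechanism rather than cite it: Lemma~\ref{lem:315} upgrades finiteness of $\Ext^\ast_R(V,W)$ over $A_Z$ to a resolution $M\to\RHom_R(V,W)$ by a \emph{finitely generated semi-projective} dg $A_Z$-module, so the derived fiber $k\ot^{\rm L}_{A_Z}\RHom_R(V,W)\cong k\ot_{A_Z}M$ has finite-dimensional cohomology; then Lemma~\ref{lem:529} ($K$-flatness of $A_Z\ot^t(-)$ over $A_Z$) together with the evident identity $k\ot_{A_Z}\bigl(A_Z\ot^t\RHom_Q(V,W)\bigr)=\RHom_Q(V,W)$ identifies that fiber with $\RHom_Q(V,W)$. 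This is the precise sense in which the duality ``reverses'': one does not run a dual spectral sequence (which, as you suspect, would be awkward, since finiteness of an abutment does not propagate back to an $E_1$-page), one simply applies $k\ot^{\rm L}_{A_Z}(-)$ and recovers the input. With that substitution your argument closes; as written, the reverse implication is a genuine citation-shaped hole rather than a proof.
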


Note that finiteness of $\Ext^\ast_Q(V,W)$ over $B_Z$ is equivalent to boundedness of $\Ext^\ast_Q(V,W)$, since $B_Z$ is concentrated in finitely many degrees.  Theorem~\ref{thm:fin2} follows by a basic understanding of the category of dg modules over $A_Z$ (Lemma~\ref{lem:315}) and a direct application of Lemma~\ref{lem:twtt}.
\par

We apply Theorem~\ref{thm:fin2} to recover a number of essential finite generation results for Hopf cohomology, in Section~\ref{sect:hopf_fg}.

\subsection{Interpretation via noncommutative complete intersections}

One can see Theorem~\ref{thm:fin2} as a noncommutative variant of results of Gulliksen and Avramov-Gasharov-Peeva~\cite{gulliksen74,avramovgasharovpeeva97} which concern the cohomology of local complete intersections (see also~\cite{kirkmankuzzhang15}).  From the complete intersection perspective~\cite{gulliksen74,eisenbud80}, $A_Z$ is the ``algebra of cohomological operators" deduced from the ``noncommutative complete intersection" $Q\to R$ (cf.~\cite{kirkmankuzzhang15}).

\subsection{Proof of Lemma~\ref{lem:twtt}}
\label{sect:twtt}

\begin{lemma}\label{lem:511}
If $M$ is a dg $K_Q$-module which is semi-projective over $Q$, then $(K_Z\ot_Z K_Z)\ot_{K_Z}M$ is semi-projective over $K_Q$.
\end{lemma}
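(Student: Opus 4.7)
The plan is to realize $(K_Z\ot_Z K_Z)\ot_{K_Z} M$ as the induction $K_Q\ot_Q M$ (where $M$ is viewed as a dg $Q$-module by restriction along $Q\hookrightarrow K_Q$), and then to produce the required semi-projective filtration by applying $K_Q\ot_Q -$ to a semi-projective filtration of $M$ over $Q$.

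The key step is a natural isomorphism $(K_Z\ot_Z K_Z)\ot_{K_Z} M \cong K_Q\ot_Q M$ of dg $K_Q$-modules. Both sides are canonically identified, as graded $Z$-modules, with $K_Z\ot_Z M$: the left-hand side via the cancellation $(a\ot b)\ot m\mapsto a\ot(b\cdot m)$, which uses the $K_Z$-action on $M$ inherited from the $K_Q$-structure; the right-hand side via $(q\ot a)\ot m\mapsto a\ot(q\cdot m)$, which uses the $Q$-action on $M$. A direct computation, relying on the fact that $Q$ and $K_Z$ commute as subalgebras of $K_Q=Q\ot_Z K_Z$, shows that the two resulting $K_Q$-module structures on $K_Z\ot_Z M$ in fact agree --- in each case $K_Z$ acts by multiplication on the first tensor factor and $Q$ acts on $M$ through its $K_Q$-action --- and the two differentials likewise coincide.

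Granting the identification, I would fix a semi-projective filtration $0\subset F_0M\subseteq F_1M\subseteq\cdots\subseteq M$ of $M$ as a dg $Q$-module, with each subquotient $F_nM/F_{n-1}M$ a direct summand of a direct sum of shifts of $Q$. Because $K_Q=Q\ot_Z K_Z$ is a free right $Q$-module (by freeness of $K_Z$ over $Z$), the functor $K_Q\ot_Q -$ is exact. The induced chain $\{K_Q\ot_Q F_nM\}_n$ is therefore a filtration of $K_Q\ot_Q M$ by dg $K_Q$-submodules whose subquotients $K_Q\ot_Q(F_nM/F_{n-1}M)$ are direct summands of direct sums of shifts of $K_Q\ot_Q Q = K_Q$.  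This is exactly the defining condition for semi-projectivity of $K_Q\ot_Q M$ over $K_Q$.

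The main obstacle is verifying the identification in the second paragraph --- in particular, confirming that the two a priori different $K_Q$-actions on $K_Z\ot_Z M$ arising from the two cancellation isomorphisms genuinely coincide. Once this compatibility is established, the production of the filtration is a routine exercise in the general principle that induction along a free extension preserves semi-projectivity.
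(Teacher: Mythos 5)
Your proposal is correct and follows essentially the same route as the paper: you identify the functor $(K_Z\ot_Z K_Z)\ot_{K_Z}-$ with induction along a free extension (you write it as $K_Q\ot_Q-$, the paper as $K_Z\ot_Z-$, which agree under the canonical isomorphism $K_Q\ot_QM\cong K_Z\ot_ZM$) and then push the semi-projective filtration of $M$ over $Q$ through this exact functor, checking the subquotients. The compatibility of module structures you flag as the main obstacle does check out by the commutation of $Q$ and $K_Z$ inside $K_Q=Q\ot_ZK_Z$, exactly as you indicate.
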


To be clear, $K_Q$ acts on $(K_Z\ot_Z K_Z)\ot_{K_Z}M$ via the $K_Z$ action on the left factor of $K_Z\ot_Z K_Z$ and the $Q$-action on $M$.

\begin{proof}
We have by definition a filtration $M=\cup_{i\geq 0}F_iM$ by dg $Q$-modules so that each quotient $F_iM/F_{i-1}M$ is a summand of a free dg module over $Q$ (recall that we consider $Q$ as a dg algebra concentrated in degree $0$).  Since $K_Z$ is a bounded complex of flat $Z$-modules, the functor 
\[
(K_Z\ot_Z K_Z)\ot_{K_Z}-=K_Z\ot_Z-: D(K_Q) \to D(K_Q)
\] 
is exact. Hence, 
\[
K_Z\ot_Z(F_iM/F_{i-1}M)\cong \frac{K_Z \ot_Z F_iM}{K_Z \ot_Z F_{i-1}M}. 
\] 
Since each $F_iM/F_{i-1}M$ is a summand of a free dg $Q$-module, it follows that  $K_Z\ot_Z(F_iM/F_{i-1}M)$ is a summand of a free dg $K_Q$-module (that is, direct sum of shifts of $K_Q$). We consider the filtration $\cup_{i\geq 0} K_Z\ot_Z F_iM$ on $(K_Z\ot_Z K_Z)\ot_{K_Z} M$ to observe a semi-projective structure of this dg module over $K_Q$. 
\end{proof}

We can now prove our essential lemma.

\begin{proof}[Proof/Construction of Lemma~\ref{lem:twtt}] Let $V$ and $W$ be finite complexes of $R$-modules which we consider as dg $K_Q$-modules via the restriction functor \eqref{eq:equiv}. We first construct an action of $A_Z\ot B_Z$ on $\RHom_{K_Q}(V,W)$ which lifts the action of $A_Z$ on $\Ext_R(V,W) \cong \Ext_{K_Q}(V,W)$ of Definition~\ref{def:456} to the dg level. 

Choose a resolution $M\to V$ by a dg $K_Q$-module which is bounded above and projective over $Q$ in each degree.  (We may assume the resolution $M$ is bounded when $Q$ is of finite global dimension.)  Let $F=B_Z\ot^t A_Z^\ast$ be the Koszul resolution of $k$ over $B_Z$, and consider the induction
\[
F^{\rm ind}=(K_Z\ot_Z K_Z)\ot_{B_Z}F
\]
along the dg map~\eqref{eq:BZKZ}.  Since $(K_Z\ot_Z K_Z)\ot_{B_Z}k \cong K_Z$  (see \cite[Lemma 2.4.2]{bezrukavnikovginzburg07}), and $F$ is the Koszul resolution of $k$ over $B_Z$, we conclude that 
\[F^{\rm ind} \overset{\sim}\to K_Z\] 
is a semi-free resolution of $K_Z$ over $K_Z\ot_Z K_Z$. Therefore,  $F^{\rm ind}\ot_{K_Z}M$ is a semi-projective resolution of $V$ over $K_Q$, by Lemma~\ref{lem:511}.  We have specifically the quasi-isomorphism
\[
F^{\rm ind}\ot_{K_Z} M\overset{\sim}\to K_Z\ot_{K_Z} M=M\overset{\sim}\to V.
\]
\par

Since $F = B_Z\ot^t A_Z^\ast$ is a dg $(B_Z,A_Z)$-bimodule, the induction $F^{\rm ind}$ admits commuting actions of $K_Z\ot_Z K_Z$ and $A_Z$, and the product $F^{\rm ind}\ot_{K_Z}M$ has commuting actions of $K_Z$ and $A_Z$.  Specifically, we employ the $K_Z$ action on $M$, and the $A_Z$-action on $F^{\rm ind}$ to obtain the action of $A_Z\ot K_Z$ on $F^{\rm ind}\ot_{K_Z} M$.   The action of $A_Z\ot K_Z$ on $F^{\rm ind}\ot_{K_Z}M$ induces a natural action of $A_Z\ot B_Z=k\ot_Z(A_Z\ot K_Z)$ on the complex
\[
\RHom_{K_Q}(V,W)=\Hom_{K_Q}(F^{\rm ind}\ot_{K_Z}M,W).
\]
A direct comparison verifies that this dg-level action lifts the action of Definition~\ref{def:456}.  
\par

We have the sequence of natural isomorphisms
\[
\begin{array}{ll}
\RHom_{K_Q}(V,W)=\Hom_{K_Q}(F^{\rm ind}\ot_{K_Z}M,W)\\
\hspace{5mm}=\Hom_{K_Z\ot_ZK_Z}(F^{\rm ind},\Hom_Q(M,W)) &\text{(adjunction)}\\
\hspace{5mm}=\Hom_{K_Z\ot_ZK_Z}\big((K_Z\ot_ZK_Z)\ot_{B_Z}F,\Hom_Q(M,W)\big)\\
\hspace{5mm}=\Hom_{B_Z}(F,\Hom_Q(M,W)). & \text{(adjunction)}\\
\hspace{5mm}=A_Z\ot^t\Hom_Q(M,W).
\end{array}
\]
The final expression is $A_Z\ot^t\RHom_Q(V,W)$, and we obtain the claimed result.
\end{proof}

\subsection{Proof of Theorem~\ref{thm:fin2}}
\label{sect:fin2}

We first give a basic lemma on dg modules over regular dg algebras.

\begin{lemma}\label{lem:315}
Consider a non-negatively graded dg algebra $S$ with vanishing differential.  Suppose also that $S$ is Noetherian and has finite global dimension, as an associative algebra.  For any dg $S$-module $M$ the following are equivalent
\begin{enumerate}
\item The cohomology $H^\ast(M)$ is finitely generated over $S$.
\item $M$ admits a resolution $M'\to M$ by a finitely generated semi-projective dg $S$-module $M'$.
\end{enumerate}
\end{lemma}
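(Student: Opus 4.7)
The plan is to prove the two implications separately, with (1)\,$\Rightarrow$\,(2) carrying the bulk of the content.

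For the direction (2)\,$\Rightarrow$\,(1), the argument is essentially formal. A finitely generated semi-projective dg module $M'$ admits a finite semi-projective filtration $0 = F_{-1} M' \subseteq F_0 M' \subseteq \cdots \subseteq F_N M' = M'$ with each subquotient a finitely generated summand of a direct sum of shifts of $S$.  Hence the underlying graded $S$-module of $M'$ is finitely generated, and by Noetherianity of $S$ the cocycles and coboundaries of $M'$ form finitely generated graded $S$-submodules, so that $H^\ast(M') \cong H^\ast(M)$ is a finitely generated graded subquotient.

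For the main direction (1)\,$\Rightarrow$\,(2), the key step is to reduce to a purely graded statement over $S$.  Since $S$ is graded Noetherian of finite global dimension $d$, and $H^\ast(M)$ is a finitely generated graded $S$-module by assumption, there exists a finite graded projective resolution
\[
0 \to P_n \to P_{n-1} \to \cdots \to P_0 \to H^\ast(M) \to 0
\]
with $n \leq d$ and each $P_i$ finitely generated projective.  I plan to lift this resolution to a semi-projective dg $S$-module $M'$ by iterated cell attachment.  Set $F_0 M' := P_0$ with zero differential, placed in the appropriate cohomological bi-grading to match $H^\ast(M)$, and equip it with a chain map $F_0 M' \to M$ lifting the surjection $P_0 \twoheadrightarrow H^\ast(M)$; such a lift exists because $P_0$ is projective and cocycles of $M$ surject onto cohomology.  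Inductively, at stage $i+1$, the kernel of $H^\ast(F_i M') \to H^\ast(M)$ will equal the $(i+1)^{\rm st}$ syzygy $\Omega_{i+1} := \ker(P_i \to P_{i-1})$ (with convention $P_{-1} = H^\ast(M)$), realized as cocycles in $F_i M'$ whose images in $M$ are coboundaries; I pick preimages in $M$ and form $F_{i+1} M'$ by attaching an appropriately shifted copy of $P_{i+1}$ whose differential into $F_i M'$ lifts $P_{i+1} \twoheadrightarrow \Omega_{i+1}$ and whose structure map to $M$ lands at those preimages.

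The finite global dimension of $S$ is what makes the process terminate: at stage $n$ we have $\Omega_{n+1} = 0$, so $H^\ast(F_n M') \to H^\ast(M)$ becomes an isomorphism and $M' := F_n M'$ serves as the desired finitely generated semi-projective (in fact semi-free, upon replacing each $P_i$ by a finitely generated free cover) resolution of $M$.  The main technical obstacle will be verifying, inductively, that the kernel of $H^\ast(F_i M') \to H^\ast(M)$ is correctly identified with $\Omega_{i+1}$; this should follow by analyzing the differential on $F_i M'$, which by construction decomposes along the chosen resolution maps, so that $H^\ast(F_i M')$ realizes the cohomology of the truncated complex $P_i \to \cdots \to P_0$ in the appropriate bidegrees.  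The identities $d_{M'}^2 = 0$ and compatibility of $F_i M' \to M$ with differentials hold automatically, since each adjoined generator has differential landing on a cocycle and image landing on a chosen preimage of a coboundary.
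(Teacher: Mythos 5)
Your proposal is correct and is essentially the paper's argument: the (2) $\Rightarrow$ (1) direction is the same one-line Noetherianity observation, and your cell-attachment construction for (1) $\Rightarrow$ (2) is the paper's induction on $\mathrm{pd}_S H^\ast(M)$ read from the bottom up, with the ``technical obstacle'' you flag (identifying $\ker\big(H^\ast(F_iM')\to H^\ast(M)\big)$ with the next syzygy) handled there by the long exact sequence / two-column spectral sequence of the mapping cone of $P\to M$. The only organizational difference is that the paper recurses downward --- it resolves $\Sigma^{-1}\mathrm{cone}(P\to M)$, whose cohomology is $\Omega^1 H^\ast(M)$ of projective dimension one less, and then forms $\mathrm{cone}(M''\to P)$ --- rather than carrying the full filtration explicitly.
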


\begin{proof}
The existence of such a resolution implies coherence of cohomology, since $S$ is Noetherian.  Suppose now that the cohomology $H^\ast(M)$ is finitely generated.  We prove the existence of the appropriate resolution $M$ by induction on the projective dimension of $H^\ast(M)$, the projective dimension $0$ case being clear.  Suppose now that $H^\ast(M)$ has projective dimension $r$, and that the desired result holds for bounded below dg modules with cohomology of projective dimension $<r$.  Take $P\to H^\ast(M)$ a surjection from a finitely generated (graded) projective $S$-module, and $\phi:P\to Z(M)\subset M$ any lift thereof.  Let $C$ denote the mapping cone of this lift.
\par

By a spectral sequence argument one sees that
\[
H^\ast(C)\cong H^\ast(\mrm{cone}(P\to H^\ast(M)))=\Omega^1 H^\ast(M),
\]
and hence that $H^\ast(C)$ is finite over $S$ and of projective dimension $r-1$.  Let $\psi:M''\to \Sigma^{-1}C$ be a semi-projective resolution by a finite dg $S$-module.  Then the mapping cone $M'=\mrm{cone}(M''\to P)$ of the composite $M''\to \Sigma^{-1}C\to P$, along with the morphism of dg modules $M'\to M$ induced by the graded $S$-module maps $\bar{\psi}:M''\to M$ and $\phi:P\to M$, provides the desired resolution.
\end{proof}

 \begin{remark} The lemma essentially identifies the categories $D_{coh}(S)$ and $\thick_S(S)$ (as defined in, for example, \cite{avramovetal10}) allowing us to use the conditions interchangeably. 
 \end{remark}

Recall that the symmetric algebra $A_Z$ is of (finite) global dimension $\dim(m_Z/m_Z^2)$.  So the above result applies to dg modules over $A_Z$.  We now offer a proof of our theorem.

\begin{proof}[Proof of Theorem~\ref{thm:fin2}]
Suppose that $\RHom_Q(V,W)$ has finitely generated cohomology over $B_Z$.  Then $\RHom_Q(V,W)$ admits a quasi-isomorphism $N\overset{\sim}\to \RHom_Q(V,W)$ from a bounded dg $B_Z$-module.  We have then
\[
\RHom_R(V,W)=A_Z\ot^t\RHom_Q(V,W)\overset{\sim}\leftarrow A_Z\ot^t N.
\]
The dg module on the right is coherent over $A_Z$, and thus has coherent cohomology.
\par

Suppose now that $\Ext^\ast_R(V,W)$ is finite over $A_Z$.  Then $\RHom^\ast_R(V,W)$ admits a resolution $M\to \RHom_R(V,W)$ by a semi-projective, coherent, dg module over $A_Z$, by Lemma~\ref{lem:315}.  It follows that the cohomology of the fiber
\[
k\ot^{\rm L}_{A_Z}\RHom_R(V,W)\cong k\ot_{A_Z}M
\]
is finite-dimensional.  We have an identification
\[
\begin{array}{rll}
k\ot^{\rm L}_{A_Z}\RHom_R(V,W)&=k\ot^{\rm L}_{A_Z}(A_Z\ot^t\RHom_Q(V,W)) & (\text{Lemma \ref{lem:twtt}})\\
&=k\ot_{A_Z}(A_Z\ot^t\RHom_Q(V,W)) & (\text{Lemma \ref{lem:529}})\\
&=\RHom_Q(V,W).
\end{array}
\]
Thus
\[
\Ext^\ast_Q(V,W)\cong H^\ast(\RHom_Q(V,W))\cong H^\ast(k\ot^{\rm L}_{A_Z} \RHom_R(V,W))
\]
is finite-dimensional, and subsequently finite over $B_Z$.
\end{proof}

As a corollary to Theorem~\ref{thm:fin2} we have the following

\begin{corollary}\label{cor:finH}
Suppose $Q$ is of finite global dimension, and that $Z\to Q\to R$ is a deformation sequence as in Section~\ref{sect:setup}.  Then for any finite $R$-modules $V$ and $W$, $\Ext^\ast_R(V,W)$ is a finite $A_Z$-module.
\end{corollary}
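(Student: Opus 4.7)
The plan is to verify hypothesis (a) of Theorem~\ref{thm:fin2} directly using the finite global dimension assumption on $Q$, and then invoke the theorem to conclude hypothesis (b), which is the claim.

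First, I would observe that any finite $R$-module $V$, being finite-dimensional over $k$, is in particular finitely generated as a $Q$-module via the restriction along $Q\to R$. Since $Q$ is Noetherian, $V$ admits a resolution $P_\bullet\to V$ by finitely generated projective $Q$-modules. The assumption that $Q$ has finite global dimension then ensures $\Ext^i_Q(V,W)=0$ for all $i$ larger than $\operatorname{gldim}(Q)$, so $\Ext^\ast_Q(V,W)$ is concentrated in finitely many cohomological degrees.

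Next I would argue that each individual $\Ext^i_Q(V,W)$ is finite-dimensional over $k$. Since $W$ is an $R$-module, it is annihilated by $m_Z$, so for any finitely generated $Q$-module $P$,
\[
\Hom_Q(P,W)=\Hom_R(P/m_ZP,W),
\]
and $P/m_ZP$ is finite-dimensional because $R=Q/m_ZQ$ is. Applying this to each $P_i$ in the resolution shows $\Hom_Q(P_i,W)$ is finite-dimensional, whence $\Ext^i_Q(V,W)$ is finite-dimensional for every $i$. Combined with the previous step, the total graded module $\Ext^\ast_Q(V,W)$ is finite-dimensional over $k$.

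Finally, since $B_Z=\Sym(\Sigma(m_Z/m_Z^2))$ is concentrated in finitely many (negative) degrees — a point explicitly noted just after Theorem~\ref{thm:fin2} — any finite-dimensional (equivalently, bounded) graded module is automatically finite over $B_Z$. Hence condition (a) of Theorem~\ref{thm:fin2} is satisfied, and we conclude that $\Ext^\ast_R(V,W)$ is finite over $A_Z$, as desired. There is no genuine obstacle here; all of the technical work was absorbed into the Koszul duality isomorphism of Lemma~\ref{lem:twtt} and the resulting equivalence in Theorem~\ref{thm:fin2}, so the corollary really is a one-line deduction once one observes that finite global dimension of $Q$ forces the $Q$-side of the duality to be finite-dimensional.
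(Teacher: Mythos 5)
Your proposal is correct and matches the paper's (implicit) argument exactly: the paper treats this as an immediate consequence of Theorem~\ref{thm:fin2}, noting in the preamble to Section~\ref{sect:defos_n_fin} that finite generation of $\Ext^\ast_Q(V,W)$ over $B_Z$ amounts to finite-dimensionality, which follows from $Q$ having finite global dimension. Your verification that each $\Ext^i_Q(V,W)$ is finite-dimensional via $\Hom_Q(P_i,W)=\Hom_R(P_i/m_ZP_i,W)$ is the right way to fill in the detail the paper leaves unstated.
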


\subsection{Implications for Hopf cohomology}
\label{sect:hopf_fg}

Suppose that $\msf{u}$ is a finite-dimensional Hopf algebra which admits a smooth integration $\msf{U}\to \msf{u}$, as in the examples of Section~\ref{sect:examples}.  We have the algebra map
\begin{equation}\label{eq:627}
A_Z\overset{i_\msf{U}}\longrightarrow H\!H^\ast(\msf{u})\overset{-\ot_\msf{u}^{\rm L} k}\longrightarrow \Ext^\ast_\msf{u}(k,k).
\end{equation}

\begin{theorem}\label{thm:fg}
In the above situation,~\eqref{eq:627} is a finite algebra map, so that $\Ext_\msf{u}^\ast(k,k)$ is a finitely generated algebra.  Furthermore, for finite-dimensional $\msf{u}$-modules $V$ and $W$, $\Ext^\ast_\msf{u}(V,W)$ is a finite $\Ext_\msf{u}^\ast(k,k)$-module via the tensor action $W\ot-:\Ext^\ast_\msf{u}(k,k)\to \Ext^\ast_\msf{u}(W,W)$.
\end{theorem}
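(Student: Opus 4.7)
The plan is to deduce both statements from Corollary~\ref{cor:finH} applied to the deformation sequence $Z\to\msf{U}\to\msf{u}$ coming from the smooth integration. One first checks that this sequence fits the setup of Section~\ref{sect:setup}: after passage to the formal version as in Section~\ref{sect:formal}, $Z$ is complete local and formally smooth, $\msf{U}$ is finite and flat over $Z$, and $\msf{U}$ has finite global dimension (the latter being part of the integration hypothesis). Corollary~\ref{cor:finH} then yields, for any finite-dimensional $\msf{u}$-modules $V,W$, that $\Ext_\msf{u}^\ast(V,W)$ is finite as an $A_Z$-module via the action of Definition~\ref{def:456}. Specializing to $V=W=k$ gives finiteness of $\Ext_\msf{u}^\ast(k,k)$ over $A_Z$ through~\eqref{eq:627}; since $A_Z=\Sym(\Sigma^{-2}(m_Z/m_Z^2)^\ast)$ is a finitely generated polynomial $k$-algebra (the cotangent space $m_Z/m_Z^2$ being finite-dimensional), $\Ext_\msf{u}^\ast(k,k)$ is in turn a finitely generated $k$-algebra and the map \eqref{eq:627} is finite. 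This settles the first assertion.

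For the second assertion, finiteness of $\Ext_\msf{u}^\ast(V,W)$ over $A_Z$ is already in hand, and the remaining task is to argue that the $A_Z$-action factors through the composition $A_Z\to\Ext_\msf{u}^\ast(k,k)\overset{W\otimes -}\longrightarrow \Ext_\msf{u}^\ast(W,W)$ together with the Yoneda action on $\Ext_\msf{u}^\ast(V,W)$. Equivalently, for $\xi\in A_Z$ with image $\bar\xi\in\Ext_\msf{u}^\ast(k,k)$, the central natural transformation $\eta_\xi:\mathrm{id}\to\mathrm{id}[\ast]$ on $D_{fin}(\msf{u})$ should satisfy $\eta_\xi^M = \mathrm{id}_M\otimes\bar\xi$ for every $M$. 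This identity holds tautologically on $M=k$ because the map $H\!H^\ast(\msf{u})\to\Ext_\msf{u}^\ast(k,k)$ in~\eqref{eq:627} is precisely evaluation at $k$; the general case reduces to the monoidality identity $\eta_\xi^{M\otimes N}=\mathrm{id}_M\otimes\eta_\xi^N$, specialized with $N=k$.

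The principal obstacle is establishing this monoidality of the Bezrukavnikov--Ginzburg action. I expect it to follow from the Hopf nature of the smooth integration: since $Z\subset\msf{U}$ is a coideal subalgebra and $\msf{U}\to\msf{u}$ is a Hopf map, the Koszul dg algebras $K_Z$ and $K_\msf{U}=\msf{U}\otimes_Z K_Z$ should inherit compatible dg bialgebra structures, forcing the construction of Section~\ref{sect:w/e}---and in particular the flat dg algebra map $B_Z\to K_Z\otimes_Z K_\msf{U}$ of~\eqref{eq:BZKZ}---to be monoidal in an appropriate sense. Once this compatibility is secured, the factoring through $\Ext_\msf{u}^\ast(k,k)$ is immediate, and finiteness of $\Ext_\msf{u}^\ast(V,W)$ as an $\Ext_\msf{u}^\ast(k,k)$-module via the tensor action then follows formally from the $A_Z$-finiteness established in the first paragraph.
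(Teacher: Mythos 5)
The first half of your proposal is correct and is exactly the paper's argument: Corollary~\ref{cor:finH} applied to the (completed) deformation sequence $Z\to\msf{U}\to\msf{u}$ gives finiteness of $\Ext^\ast_\msf{u}(k,k)$ over the polynomial algebra $A_Z$, hence finite generation and finiteness of the map~\eqref{eq:627}.

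The second half has a genuine gap, which you yourself flag but do not close. Your route requires the monoidality identity $\eta_\xi^{M\ot N}=\mathrm{id}_M\ot\eta_\xi^N$ for the central transformations coming from $i_\msf{U}(A_Z)\subset H\!H^\ast(\msf{u})$, and you only say you ``expect'' it to follow from the Hopf nature of the integration. This is not a formality: for a general class in $H\!H^\ast(\msf{u})$ the induced element of $Z(D_{fin}(\msf{u}))$ is \emph{not} of the form $\mathrm{id}\ot\bar\xi$, so one would have to prove that the specific deformation classes produced by the Bezrukavnikov--Ginzburg construction have this special form. That would mean redoing the constructions of Section~\ref{sect:w/e} compatibly with the coproduct on $K_\msf{U}$, which is substantive work absent from your proposal. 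The paper sidesteps the issue entirely: it first invokes the standard isomorphism $\Ext^\ast_\msf{u}(V,W)\cong\Ext^\ast_\msf{u}(W^\ast\ot V,k)$ \emph{of $\Ext^\ast_\msf{u}(k,k)$-modules} (tensor action $W\ot-$ plus Yoneda on the left, Yoneda action through the target $k$ on the right), thereby reducing to the case $W=k$. For $W=k$ no monoidality is needed: the $A_Z$-action on $\Ext^\ast_\msf{u}(W^\ast\ot V,k)$ computed through the \emph{target} object $k$ factors through $\Ext^\ast_\msf{u}(k,k)$ by construction, and that this agrees with the action through the source is precisely the naturality of the central action already recorded in Section~\ref{sect:w/e}. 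Finiteness over $A_Z$ from Corollary~\ref{cor:finH} then yields finiteness over $\Ext^\ast_\msf{u}(k,k)$. Replace your monoidality step with this duality reduction and the argument is complete.
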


\begin{proof}
Corollary~\ref{cor:finH} says directly that $\Ext^\ast_\msf{u}(k,k)$ is finite over $A_Z$ in this case.  For modules, we have $\Ext^\ast_\msf{u}(V,W)=\Ext^\ast_\msf{u}(W^\ast\ot V,k)$, as an $\Ext^\ast_\msf{u}(k,k)$-module.  So it suffices to assume $W=k$.  But in this case the actions of $A_Z$ on $\Ext^\ast_\msf{u}(V,k)$ factors through $\Ext^\ast_\msf{u}(k,k)$.  So finiteness over $A_Z$, which holds by Theorem~\ref{thm:fin2}, implies finiteness over $\Ext^\ast_{\msf{u}}(k,k)$.
\end{proof}

At this point we recover a number of finite generation results, uniformly, via application of Theorem~\ref{thm:fg}.

\begin{corollary}\label{cor:fg_revisit}
The following Hopf algebras have finitely generated cohomology, in the strong sense described in the introduction:
\begin{enumerate}
\item The quantum group $u_q(\mfk{g})$~\cite{ginzburgkumar93,bnpp14} {\rm ($\operatorname{char}(k)=0$)}.
\item The quantum Borel $u_q(\mfk{b})$~\cite{ginzburgkumar93,bnpp14} {\rm ($\operatorname{char}(k)=0$)}.
\item Restricted enveloping algebras $u^{\rm res}(\mfk{g})$~\cite{friedlanderparshall86II} {\rm ($\operatorname{char}(k)<\infty$)}.
\item Algebras of functions $\O(\mcl{G})$ on arbitrary finite group schemes~\cite{waterhouse} {\rm ($\operatorname{char}(k)<\infty$)}.
\item Drinfeld doubles $D(\mcl{G})$, and relative doubles $D(\mcl{H}_r,\mcl{G})$, for $\mcl{G}$ a height $1$ connected group scheme which embeds normally in a smooth algebraic group $\mcl{H}$~\cite{friedlandernegron18} {\rm ($\operatorname{char}(k)<\infty$)}.
\end{enumerate}
\end{corollary}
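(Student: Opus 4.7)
The plan is to apply Theorem~\ref{thm:fg} case by case, using the smooth integrations already constructed in Section~\ref{sect:examples}. For each Hopf algebra $\msf{u}$ in the list I need only to exhibit a Noetherian Hopf algebra $U$ of finite global dimension, a smooth central coideal subalgebra $Z\subset U$, and an isomorphism $k\otimes_Z U\cong \msf{u}$; the corollary then falls out of Theorem~\ref{thm:fg} together with the observation that the $\Ext^\ast_\msf{u}(k,k)$-module structure on $\Ext^\ast_\msf{u}(V,W)$ obtained via tensoring with $W$ is compatible with the $A_Z$-action coming from the deformation. The point of this corollary is simply to tabulate the resulting finite generation statements under one umbrella.

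Cases (1) and (2) are handled by Example~\ref{ex:4}: one takes $U=U^{DK}_q(\mfk{g})$ (resp.\ $U^{DK}_q(\mfk{b})$) with central subalgebra $Z_0$ generated by the $l$-th powers of root vectors and grouplikes, and appeals to the De Concini--Kac associated graded calculation and~\cite{chemla04} for finite global dimension. For case~(3), Example~\ref{ex:2} supplies the integration $U(\mfk{g})\to u^{\rm res}(\mfk{g})$ with parametrizing Zassenhaus $p$-center $Z_0(\mfk{g})$, which is a polynomial algebra. For case~(4), Example~\ref{ex:1} produces $\O(\mcl{H})\to \O(\mcl{G})$ for any embedding $\mcl{G}\hookrightarrow \mcl{H}$ into a smooth connected algebraic group, with parametrizing coideal subalgebra $\O(\mcl{H}/\mcl{G})$. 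Finally, case~(5) is handled by Example~\ref{ex:3}, which gives the integration $\O(\mcl{H})\rtimes_{\rm ad}U(\mfk{g})\to D(\mcl{G})$ with parametrizing subalgebra $\O(\mcl{H}/\mcl{G})\otimes Z_0(\mfk{g})$.

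The one technicality to address is that Theorem~\ref{thm:fg} was stated for formally smooth integrations in the sense of Definition~\ref{def:integrableformal}, while the integrations above are of finite type. This is resolved by passing to the completion $\hat{Z}\otimes_Z U\to \msf{u}$ at the distinguished maximal ideal as in Section~\ref{sect:formal}; the lemma there guarantees that the completed algebra remains Noetherian and of finite global dimension, so all hypotheses of Theorem~\ref{thm:fg} are met.

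There is no genuine obstacle in this corollary; all the difficulty has been absorbed into two earlier pieces of the paper — the verification in Section~\ref{sect:examples} that each of these fundamental Hopf algebras is integrable, and the Koszul-duality argument of Theorem~\ref{thm:fin2} which powers Theorem~\ref{thm:fg}. If any care is required, it is only in matching conventions (e.g.\ that $\O(\mcl{H}/\mcl{G})$ is a right coideal subalgebra of $\O(\mcl{H})$, and $Z_0(\mfk{g})$ is a Hopf subalgebra of $U(\mfk{g})$) and in confirming that the central subalgebras named above are smooth in the sense used throughout the paper.
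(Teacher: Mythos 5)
Your proposal is correct and matches the paper's treatment exactly: the paper gives no separate argument for this corollary beyond the remark that it follows uniformly by applying Theorem~\ref{thm:fg} to the integrations constructed in Section~\ref{sect:examples}, which is precisely the case-by-case matching (plus the completion step of Section~\ref{sect:formal}) that you carry out.
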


Our finite generation results for quantum groups are stronger than those of~\cite{ginzburgkumar93}, as we allow $q$ to be of small (odd) order.  (Finite generation at arbitrary $q$ was already known, however, and can be found in the text~\cite{bnpp14}.)  Our results for doubles are also slightly stronger than those of \cite{friedlandernegron18}, in the height $1$ case, although \cite{friedlandernegron18} also addresses doubles of group schemes of height $>1$.

\section{A meditation on hypersurfaces}

We explain the particular implications of the results of Section~\ref{sect:defos_n_fin} in the cases in which $Q$ is (homologically) smooth, or of finite global dimension, and subsequently when $Q$ is a ``noncommutative hypersurface" in such a smooth algebra.
\par

In this section, and all sections that follow, by a perfect module over a Noetherian algebra we simply mean a finitely generated module which is of finite projective dimension.  This is equivalent to perfection of such a module in the corresponding derived category, and we adopt the derived language as it is still meaningful in a dg context.

\subsection{Smooth deformations, finiteness, and naturality}

Fix $Z\to Q\to R$ a deformation sequence as in Section~\ref{sect:setup} with $Q$ \emph{of finite global dimension}.  Corollary~\ref{cor:finH} tells us that each $\Ext^\ast_R(M,N)$ is a finite module over $A_Z$ in this case.  So we may view extensions as a bifunctor
\[
\Ext^\ast_R:(R\text{-mod}_{fg})^{op}\times (R\text{-mod}_{fg})\to A_Z\text{-dgmod}_{fg}.
\]
\par

Let $f\in m_Z$ be any function with nonvanishing reduction $\bar{f}\in m_Z/m_Z^2$.  We then have the diagram of deformation sequences
\[
\xymatrix{
Z\ar[r]\ar[d]^{\pi_Z} & Q\ar[r]\ar[d]^{\pi_Q} & R\ar[d]^=\\
Z/(f)\ar[r] & Q/(f)\ar[r] & R,
}
\]
where the $\pi_\#$ are the obvious projections.  The projection $Z\to Z/(f)$ also induces an inclusion of tangent spaces $(m_{(Z/f)}/m_{(Z/f)}^2)^\ast\to (m_Z/m_Z^2)^\ast$ and subsequent algebra inclusion
\[
A_{(Z/f)}\to A_Z.
\]
We note that the image of this inclusion depends only on the {\it class} of $f$ in the cotangent space, and not on the choice of $f$ itself.  Consider now the possibly non-commuting diagram
\begin{equation}\label{eq:310}
\xymatrix{
A_{(Z/f)}\ar[d]_{\rm incl}\ar[rr]^{i_{Q/f}} && H\!H^\ast(R)\ar[d]^=\\
A_Z\ar[rr]^{i_Q} && H\!H^\ast(R).
}
\end{equation}

\begin{lemma}\label{lem:diagram_commutes}
The diagram~\eqref{eq:310} does in fact commute.
\end{lemma}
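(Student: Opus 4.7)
The plan is to reduce the commutation of~\eqref{eq:310} to a statement in cohomological degree $2$, where it will become an instance of the base-change functoriality of the Gerstenhaber deformation class.

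By Theorem~\ref{thm:bg}, $i_Q$ and $i_{Q/f}$ are algebra maps whose degree-$2$ restrictions
\[
i_Q^2:(m_Z/m_Z^2)^\ast \to H\!H^2(R),\qquad i_{Q/f}^2:(m_{Z/f}/m_{Z/f}^2)^\ast \to H\!H^2(R)
\]
are the Gerstenhaber classes of the deformations $Q\to R$ and $Q/(f)\to R$ respectively. Since $A_Z$ and $A_{Z/f}$ are shifted symmetric algebras freely generated in degree $2$ and $H\!H^\ast(R)$ is graded-commutative, any graded algebra map out of either is determined by its restriction to its degree-$2$ generating subspace. The first step of the plan is therefore to reduce the problem to verifying commutativity of the induced degree-$2$ square on cotangent duals.

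Next, I would verify this degree-$2$ square directly. The inclusion $A_{Z/f}\hookrightarrow A_Z$ is by construction the linear dual of the surjection $m_Z/m_Z^2\twoheadrightarrow m_{Z/f}/m_{Z/f}^2$, and $Q/(f)=Z/(f)\otimes_Z Q$ is by definition the base change of the deformation $Q\to R$ along the projection $Z\to Z/(f)$.  The standard base-change functoriality of the Gerstenhaber class then identifies $i_{Q/f}^2$ with the composite of the cotangent inclusion with $i_Q^2$, which is the commutativity we want. One can make this explicit by choosing a basis $y_1,\ldots,y_n$ of $m_Z/m_Z^2$ with $\bar y_1=\bar f$, a compatible section $\tau$, and the induced section $\tau'$ on $Z/(f)$: then $K_{Z/f}$ is identified with the dg quotient $K_Z/(d_{y_1},y_1)$, $K_{Q/f}$ with $K_Q/(d_{y_1},f)$, and the formula $j_\tau(\bar y_i)=d_{y_i}\otimes 1-1\otimes d_{y_i}$ from~\eqref{eq:BZKZ} visibly restricts to $j_{\tau'}$ under the dg subalgebra inclusion $B_{Z/f}\hookrightarrow B_Z$. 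Koszul dualizing through this compatibility yields commutativity at the level of $A_{Z/f}\to H\!H^2(R)$.

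The main obstacle will be the cochain-level bookkeeping needed to verify base-change functoriality of the Gerstenhaber class; once that compatibility is in hand, the reduction to degree $2$ via Theorem~\ref{thm:bg} and the free generation of $A_{Z/f}$ makes the rest of the argument entirely formal.
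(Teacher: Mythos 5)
Your proposal is correct and follows essentially the same route as the paper: reduce to the degree-$2$ generators of $A_{Z/f}$ via Theorem~\ref{thm:bg}, then observe that for a tangent vector $\xi$ killing $\bar f$ the first-order deformations $Q\ot_Z k[\epsilon_\xi]$ and $Q_f\ot_{Z_f}k[\epsilon_\xi]$ coincide, which is exactly the ``base-change functoriality'' you invoke. The paper simply writes out this isomorphism of first-order deformations explicitly rather than appealing to a named functoriality principle or the Koszul-complex bookkeeping you sketch.
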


We view the above diagram as a type of naturality property for our setup~\ref{sect:setup}.

\begin{proof}
Consider any deformation situation $Z'\to Q'\to R'$.  The image of a tangent vector $\xi\in (m/m^2)^\ast$ in Hochschild cohomology is the class of the deformation
\[
Q'\ot_{Z'}k[\epsilon_\xi],
\]
where the $k[\epsilon_\xi]$ is $k[\epsilon]=k[\epsilon]/(\epsilon^2)$ equipped with the algebra map $Z'\to k[\epsilon]$ which annihilates $m^2$ and is defined on the quotient $Z/m^2\to k[\epsilon]$ by the function $\epsilon\cdot\xi:m/m^2\to k\epsilon$.  Take $Q_f=Q/(f)$ and $Z_f=Z/(f)$.  Using this description, one finds that for $\xi\in (m_{Z_f}/m_{Z_f})^\ast\subset (m_Z/m_Z^2)^\ast$, i.e.\ for any function $\xi:m_Z/m^2_Z\to k$ which vanishes in $\bar{f}$, the projection $Q\to Q_f$ induces an isomorphism of deformations
\[
Q\ot_{Z}k[\epsilon_{\xi}]\cong Q_f\ot_{Z_f}k[\epsilon_\xi].
\]
That is to say, the diagram~\eqref{eq:310} commutes when restricted to the generators, and thus commutes on all of $A_{(Z/f)}$.
\end{proof}

\subsection{Perfection of modules over hypersurfaces}

Commutativity of~\eqref{eq:310} implies the following.

\begin{corollary}\label{cor:606}
Consider $f,g\in m_Z$ elements with equivalent, nonzero, reductions $\bar{f}=\bar{g}$ in $m_Z/m_Z^2$.  Then for finite $R$-modules $V$ and $W$,
\[
\Ext^{\ast}_{Q/(f)}(V,W)\text{ vanishes in high degree }\Leftrightarrow\ \Ext^{\ast}_{Q/(g)}(V,W)\text{ vanishes in high degree}.
\]
\end{corollary}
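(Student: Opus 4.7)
The plan is to apply Theorem~\ref{thm:fin2} separately to the two reduced deformation sequences $Z/(f)\to Q/(f)\to R$ and $Z/(g)\to Q/(g)\to R$, translating eventual vanishing of $\Ext^\ast_{Q/(f)}(V,W)$ and of $\Ext^\ast_{Q/(g)}(V,W)$ into finiteness statements for $\Ext^\ast_R(V,W)$ over the corresponding Koszul-dual symmetric algebras $A_{Z/(f)}$ and $A_{Z/(g)}$, respectively.  Lemma~\ref{lem:diagram_commutes} will then identify the two resulting module structures on $\Ext^\ast_R(V,W)$, forcing the two finiteness conditions to agree.

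First I would check that the reduced sequences genuinely fit the framework of Section~\ref{sect:setup}.  Since $\bar f\neq 0$ in $m_Z/m_Z^2$, after a change of coordinates on $Z\cong k\b{y_1,\dots,y_n}$ we may assume $f=y_1$; this presents $Z/(f)$ as a power series algebra in $n-1$ variables, hence complete local, formally smooth, and essentially of finite type.  Flatness of $Q$ over $Z$ forces $f$ to be a non-zero-divisor in $Q$, so $Q/(f)$ is flat (in fact free) over $Z/(f)$ and the reduction at the distinguished point is still $R$.  The same holds for $g$.  Because $B_{Z/(f)}$ is an exterior algebra on the finite-dimensional shifted cotangent space at $1$, it is finite-dimensional, and so finite generation of a graded $B_{Z/(f)}$-module is equivalent to boundedness, i.e.\ to vanishing in sufficiently high degree.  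Theorem~\ref{thm:fin2}, applied to the sequence $Z/(f)\to Q/(f)\to R$, thus reformulates the statement that $\Ext^\ast_{Q/(f)}(V,W)$ vanishes in high degree as the statement that $\Ext^\ast_R(V,W)$ is finite over $A_{Z/(f)}$ with respect to the action coming from $i_{Q/(f)}$; likewise for $g$.

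The crux is then to see that these two module structures are literally the same.  The inclusion $(m_{Z/(f)}/m_{Z/(f)}^2)^\ast\hookrightarrow(m_Z/m_Z^2)^\ast$ has image precisely the annihilator of $\bar f$, and similarly for $g$, so the hypothesis $\bar f=\bar g$ forces $A_{Z/(f)}=A_{Z/(g)}$ as graded subalgebras of $A_Z$.  By Lemma~\ref{lem:diagram_commutes}, the maps $i_{Q/(f)}$ and $i_{Q/(g)}$ both agree with the restriction of $i_Q:A_Z\to H\!H^\ast(R)$ to this common subalgebra; hence the two induced actions on $\Ext^\ast_R(V,W)$ coincide, and finiteness over $A_{Z/(f)}$ is the same condition as finiteness over $A_{Z/(g)}$.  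Combined with the two applications of Theorem~\ref{thm:fin2} above, this yields the claimed equivalence.  I do not anticipate any real obstacle beyond this structural identification of subalgebras; the only technicality one might dwell on is confirming that $Q/(f)$ is flat over $Z/(f)$, but that is immediate from the regularity of $f$ in $Z$ together with the flatness of $Q$ over $Z$.
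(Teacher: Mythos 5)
Your proposal is correct and follows exactly the paper's own argument: the paper's proof of this corollary consists precisely of applying Theorem~\ref{thm:fin2} to the deformation sequences $Z/(f)\to Q/(f)\to R$ and $Z/(g)\to Q/(g)\to R$ and invoking Lemma~\ref{lem:diagram_commutes} to identify the two $A_{(Z/f)}=A_{(Z/g)}$-module structures on $\Ext^\ast_R(V,W)$. The additional verifications you supply (that $f$ is regular on $Q$ by flatness, that $Q/(f)$ is flat and finite over the power series ring $Z/(f)$, and that finiteness over the exterior algebra $B_{Z/(f)}$ is equivalent to eventual vanishing) are exactly the details the paper leaves implicit.
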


\begin{proof}
By considering $Q$ in Theorem~\ref{thm:fin2} to be either of the deformations $Q/(f)$ or $Q/(g)$ of $R$, the result is a consequence of Theorem~\ref{thm:fin2} and Lemma~\ref{lem:diagram_commutes}.
\end{proof}

Locality of $Z$ tells us that all of the finitely generated simples over $Q$ are restricted from simple $R$-modules along the quotient $Q\to R$.  We take $\Lambda=R/\operatorname{Jac}(R)=Q/\operatorname{Jac}(Q)$ to find, for any finitely generated module $M$ over $Q$,
\[
M\text{ is perfect over }Q\ \Leftrightarrow\ \Ext^{\gg0}_Q(M,\Lambda)=0.
\]
By taking $W=\Lambda$ in Corollary~\ref{cor:606} we therefore obtain

\begin{corollary}\label{cor:reductions}
Consider $f,g\in m_Z$ elements with equivalent, nonzero, reductions $\bar{f}=\bar{g}$ in $m_Z/m_Z^2$.  Then a finite $R$-module $V$ is perfect over $Q/(f)$ if and only if $V$ is perfect over $Q/(g)$.
\end{corollary}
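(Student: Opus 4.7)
The plan is to reduce the corollary directly to Corollary~\ref{cor:606} by specializing the second argument to the common semisimple quotient $\Lambda = R/\operatorname{Jac}(R)$. As observed in the paragraph immediately preceding the statement, locality of $Z$ forces all finitely generated simples of $Q$ to be restricted from simples of $R$, and the same reasoning applies to the hypersurfaces $Q/(f)$ and $Q/(g)$: each of these is a Noetherian algebra (finite over the local ring $Z/(f)$, respectively $Z/(g)$) whose semisimple quotient coincides with $\Lambda$. Consequently, for any finitely generated module $M$ over $Q/(f)$, perfection is detected by $\Lambda$, that is,
\[
M \text{ is perfect over } Q/(f) \ \Longleftrightarrow\ \Ext^{\gg 0}_{Q/(f)}(M,\Lambda) = 0,
\]
and symmetrically over $Q/(g)$.

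To deploy Corollary~\ref{cor:606} I need to verify that the hypotheses of Section~\ref{sect:setup} are met by the deformation sequences $Z/(f) \to Q/(f) \to R$ and $Z/(g) \to Q/(g) \to R$. This is transparent from the diagram of deformation sequences displayed just before Lemma~\ref{lem:diagram_commutes}: since $\bar{f}, \bar{g}$ are nonzero in the cotangent space $m_Z/m_Z^2$, the quotients $Z/(f)$ and $Z/(g)$ remain formally smooth (of one dimension less), $Q/(f)$ and $Q/(g)$ remain finite and flat over these quotients, and the fiber over the closed point is still $R$. Hence Corollary~\ref{cor:606} applies and, taking $V$ as given and $W = \Lambda$, yields
\[
\Ext^{\gg 0}_{Q/(f)}(V,\Lambda) = 0 \ \Longleftrightarrow\ \Ext^{\gg 0}_{Q/(g)}(V,\Lambda) = 0.
\]

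Combining this with the perfection criterion above gives the desired equivalence. I do not expect any genuine obstacle here; the corollary is essentially a cosmetic specialization of Corollary~\ref{cor:606}, and the only point that requires a moment's thought is confirming that the perfection-detection criterion via $\Lambda$ is robust under passage from $Q$ to its hypersurface quotients, which in turn rests on the standard fact that minimal projective resolutions over a Noetherian semi-local algebra are detected by the simple quotients.
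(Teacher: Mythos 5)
Your proof is correct and follows the same route as the paper: the paper likewise observes that locality of $Z$ forces all simples of $Q/(f)$ and $Q/(g)$ to be restricted from $R$, so that perfection is detected by $\Ext^{\gg 0}(-,\Lambda)$ with $\Lambda=R/\operatorname{Jac}(R)$, and then applies Corollary~\ref{cor:606} with $W=\Lambda$. Your added verification that the hypersurface sequences $Z/(f)\to Q/(f)\to R$ satisfy the hypotheses of Section~\ref{sect:setup} is a correct (and implicit in the paper) bookkeeping step, not a divergence.
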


\section{Cohomological supports and hypersurfaces}
\label{sect:supports}

We show that the cohomological support for $R$, defined via the action of $A_Z$ on extension $\Ext^\ast_R(V,W)$, can be identified with a certain hypersurface support.  The hypersurface support for a module $V$ is defined by considering (non-)perfection of $V$ over hypersurfaces $Q/(f)$, for varying $f\in m_Z$.  (See Definition~\ref{def:hyp_supp} and Corollary~\ref{cor:hopf_hyp} below.)  We then discuss consequences for support in a Hopf theoretic context in Section \ref{sect:hopf_supp}.  Throughout $Z\to Q\to R$ is a deformation sequence as in~\ref{sect:setup}.

\begin{remark}
The results of this section are noncommutative analogs of results of Avramov-Buchweitz~\cite{avramovbuchweitz00}.
\end{remark}

\subsection{Graded Nakayama}

The following graded version of Nakayama's lemma is standard.

\begin{lemma}\label{lem:Z_Nak}
Let $S$ be a $\mbb{Z}_{\geq 0}$-graded algebra, and $N$ be a bounded below graded $S$-module.  Then $N$ is $0$ if and only if $S_0\ot_S N=0$.
\end{lemma}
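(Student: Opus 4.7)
The plan is to mimic the classical Nakayama argument, but in the graded setting where we do not need any local or finite generation hypothesis — only boundedness below is required.

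First I would dispose of the trivial direction: if $N=0$, then of course $S_0\otimes_S N = 0$. So the content is the converse. The key observation is that $S_0$ is identified with the quotient $S/S_{>0}$, where $S_{>0}:=\bigoplus_{i>0}S_i$ is the graded augmentation ideal (since $S$ is non-negatively graded, $S_{>0}$ is a two-sided ideal and $S/S_{>0}\cong S_0$ as $S$-bimodules). Hence there is an identification
\[
S_0\otimes_S N \;\cong\; N/(S_{>0}\cdot N).
\]
So the hypothesis $S_0\otimes_S N=0$ is equivalent to $N = S_{>0}\cdot N$.

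Next I would argue by contradiction. Suppose $N\neq 0$ and let $n_0\in\mathbb{Z}$ be the minimal degree in which $N$ is nonzero, which exists by the boundedness below hypothesis. Now inspect the degree-$n_0$ component of $S_{>0}\cdot N$: any element there is a sum of products $s\cdot x$ with $s\in S_i$, $x\in N_j$, $i>0$, $j\geq n_0$, and $i+j=n_0$. This forces $i\leq 0$, contradicting $i>0$, so $(S_{>0}\cdot N)_{n_0}=0$. But the equality $N=S_{>0}\cdot N$ then gives $N_{n_0}=0$, contradicting the choice of $n_0$. Hence $N=0$.

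There is no real obstacle here; the only point requiring care is the boundedness-below hypothesis, which is precisely what rules out the ``infinite descent'' counterexamples (such as a $\mathbb{Z}$-graded module equal to its own $S_{>0}$-saturation in all degrees). I would make sure to flag this in the write-up, since later applications in the paper will invoke the lemma for cohomology modules of finitely generated modules, where boundedness below is automatic.
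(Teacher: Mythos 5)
Your proof is correct and is essentially the paper's own argument: both identify $S_0\ot_S N$ with $N/(S_{>0}\cdot N)$ and observe that the minimal-degree component of $N$ survives in this quotient, so vanishing of the fiber forces that component to vanish. Your write-up just makes the degree bookkeeping explicit, which is fine.
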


Note that we have imposed no finiteness conditions on $N$.

\begin{proof}
Suppose that $N$ is nonzero and that the fiber $S_0\ot_S N$ vanishes.  We have that the minimal degree $N_{\rm min}$ survives in the fiber $N_{\rm min}\subset S_0\ot_S N$.  Hence vanishing of the fiber implies $N_{\rm min}=0$, which is nonsense.
\end{proof}

\begin{corollary}\label{cor:Z_Nak}
For $S$ and $N$ as in Lemma~\ref{lem:Z_Nak}, $N$ is generated by any graded $S_0$-submodule $N_\omega\subset N$ which surjects onto the fiber $S_0\ot_S N$.  In particular, if the fiber $S_0\ot_S N$ is finite over $S_0$ then $N$ is finite over $S$.
\end{corollary}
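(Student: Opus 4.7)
The plan is to reduce the first assertion to Lemma~\ref{lem:Z_Nak} applied to the quotient $N/(S\cdot N_\omega)$. Set $N' := S\cdot N_\omega$, the graded $S$-submodule of $N$ generated by $N_\omega$. Since $N$ is bounded below, so is the quotient $N/N'$, and it remains a graded $S$-module. By Lemma~\ref{lem:Z_Nak}, it will suffice to verify $S_0\ot_S(N/N') = 0$.

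Right-exactness of $S_0\ot_S -$ identifies $S_0\ot_S(N/N')$ with the cokernel of the map $S_0\ot_S N'\to S_0\ot_S N$. The inclusion $N_\omega\hookrightarrow N$ factors through $N'$, and after tensoring with $S_0$ we obtain the commutative triangle
\[
\xymatrix@R=3mm{
 & S_0\ot_S N'\ar[dd] \\
S_0\ot_{S_0} N_\omega\ar[ru]\ar[rd] & \\
 & S_0\ot_S N
}
\]
in which the diagonal arrow on the bottom is surjective by hypothesis. Consequently the vertical map is surjective, the cokernel vanishes, and Lemma~\ref{lem:Z_Nak} yields $N = N' = S\cdot N_\omega$. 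This proves the first assertion.

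For the second assertion, suppose the fiber $S_0\ot_S N$ is finite over $S_0$. Since the fiber is graded and bounded below, I can choose finitely many homogeneous elements $\bar n_1,\dots,\bar n_r\in S_0\ot_S N$ that generate it over $S_0$. Lifting each $\bar n_i$ to a homogeneous element $n_i\in N$ of the same degree, let $N_\omega := S_0 n_1 + \dots + S_0 n_r\subset N$. Then $N_\omega$ is a graded $S_0$-submodule which by construction surjects onto $S_0\ot_S N$, so by the first part $N = S\cdot N_\omega = Sn_1 + \dots + Sn_r$ is finitely generated over $S$.

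No step here is a genuine obstacle; the only point that requires minor care is the existence of homogeneous lifts of fiber generators, which is immediate from the grading on $N$ and the right-exactness of $S_0\ot_S-$ in the category of graded modules.
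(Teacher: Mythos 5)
Your proof is correct and follows essentially the same route as the paper's: both pass to the cokernel of the map generated by $N_\omega$ (your $N/N'$ is the paper's $M$), use right-exactness of $S_0\ot_S-$ together with the hypothesis that $N_\omega$ surjects onto the fiber, and then invoke the graded Nakayama Lemma~\ref{lem:Z_Nak}. Your explicit treatment of the second assertion via homogeneous lifts of finitely many fiber generators is a harmless elaboration of what the paper leaves implicit.
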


\begin{proof}
Take such a lift $N_\omega\subset N$ and consider the graded $S$-module map $S\ot_{S_0}N_\omega\to N$ induced by the action of $S$ on $N$.  Let $M$ denote the cokernel, so that we have an exact sequence $S\ot_{S_0}N_\omega\to N\to M\to 0$.  By right exactness of $S_0\ot_S-$, we obtain an exact sequence $N_\omega\to S_0\ot_S N\to S_0\ot_S M\to 0$, from which we conclude that $S_0\ot_S M$ vanishes, and hence that $M$ vanishes.  So the result holds.
\end{proof}

\subsection{Supports!}
\label{sect:nchyp_supp}

In anticipation of the Hopf case, we now impose the following conditions on our deformation $Q\to R$:
\begin{enumerate}
\item[(A)] $Q$ is of finite global dimension and augmented.
\item[(B)]$Q\text{-mod}_{fg}$ comes equipped with a $Z$-linear endofunctor $\sigma$ such that, for any finite $R$-module $V$ and $f\in m_Z$, $V$ is perfect over $Q/(f)$ if and only if $\Ext^{\gg 0}_{Q/(f)}(\sigma V,k)=0$.
\end{enumerate}

Note that $Z$-linearity of $\sigma$ implies that $\sigma$ restricts to an endofunctor on each full subcategory $Q/IQ\text{-mod}_{fg}$, for any ideal $I\subset Z$, as this category is simply the subcategory of $Q$-modules for which the action $Z\to \End_Q(M,M)$ vanishes on $I$.

\begin{example}[Hopf deformations]\label{ex:comod_def}
Suppose that $\msf{u}$ is a finite-dimensional Hopf algebra with an integration $\msf{U}\to \msf{u}$.  Then for the sum of the simples $\Lambda=\msf{u}/\operatorname{Jac}(\msf{u})$ we may take $\sigma=\Lambda\ot-:\msf{U}\text{-mod}_{fg}\to \msf{U}\text{-mod}_{fg}$.
\end{example}

In the given context, each $\Ext^\ast_R(V,W)$ is finite over $A_Z$, by Corollary~\ref{cor:finH}.  We fix
\[
\mbb{P}:=\mbb{P}(m_Z/m_Z^2)=\Proj(A_Z).
\]
(In a more geometric notation, one could write $\mbb{P}=\mbb{P}\big(T_1^\ast\Spf(Z)\big)$.)  For a graded module $N$ over $A_Z$ we let $N^\sim$ denote the associated sheaf on $\mbb{P}$.
\par

For a deformation $Q\to R$ as above we define two cohomological supports as follows.

\begin{definition}
For $V$ finite over $R$, define the cohomological support in $\mbb{P}$ as
\[
\supp_{\mbb{P}}(V):=\Supp_{\mbb{P}}\Ext^\ast_R(V,\Lambda)^\sim,
\]
where $\Lambda=R/\operatorname{Jac}(R)$ is the sum of the simples for $R$.  We defined the cohomological $\sigma$-support as
\[
\supp_{\mbb{P}}^\sigma(V):=\Supp_{\mbb{P}}\Ext^\ast_R(\sigma V,k)^\sim.
\]
\end{definition}

\begin{remark}
The reader should note that these supports $\supp_\mbb{P}$ and $\supp^\sigma_\mbb{P}$ dependent on the choice of deformation $Q$ for $R$.
\end{remark}

For a closed point $c\in \mbb{P}$ we let $Q_c$ denote any quotient $Q/(f_c)$ where $f_c\in m_Z$ is any lift of an element in the corresponding line $c\subset m_Z/m_Z^2-\{0\}$.  Recall that the algebra $A_{Z/(f_c)}$ and its corresponding map to Hochschild cohomology are independent of the choice of lift $f_c$, by Lemma \ref{lem:diagram_commutes}.  We take
\begin{equation}\label{eq:Ac}
A_c:=A_{(Z/f_cZ)},\ \ \text{for any lift $f_c$ of }c\in \mbb{P}(m_Z/m_Z^2).
\end{equation}
By Corollary~\ref{cor:reductions}, for any other choice of lift $g_c$, and $R$-module $V$, perfection of $V$ over $Q/(f_c)$ is equivalent to perfection over $Q/(g_c)$.  We may therefore speak unambiguously of perfection of $V$ over $Q_c$.

\begin{definition}\label{def:hyp_supp}
For $V$ a finite $R$-module, we define the {\it hypersurface support} as
\[
\supp^{hyp}_{\mbb{P}}(V):=\{c\in \mbb{P}(m_Z/m_Z^2):V\text{ is \emph{not} perfect over }Q_c\}^{-},
\]
where the final bar denotes the closure in $\mbb{P}$, and $c$ runs over all {\it closed} points in projective space.
\end{definition}

\subsection{Equating cohomological and hypersurface supports}

We maintain the assumptions of the previous subsection.  Closed points $c\in \mbb{P}(m_Z/m_Z^2)$ correspond to graded algebra surjections $\phi_c:A_Z\to k[t]$, where $t$ is given degree $2$, and two maps are taken to be equivalent if they differ by a graded automorphism of $k[t]$, i.e.\ a nonzero scaling of $t$.  Taking the fiber $k[t]\ot_{A_Z}-$ along any point $c$ is then identified with the reduction
\begin{equation}\label{eq:811}
k[t]\ot_{A_Z}-\cong k\ot_{A_c}-,
\end{equation}
where $A_c\subset A_Z$ is the subalgebra generated by $\ker(\phi_c|_{A^2_Z})$.  One sees directly that the generating subspace $\ker(\phi_c|_{A^2_Z})\subset (m_Z/m_Z^2)^\ast$ is the kernel of the associated map $c:(m_Z/m_Z^2)^\ast\to k$, or rather of the map associated to any choice of representative for $c\in \mbb{P}(m_Z/m_Z^2)$, so that $A_c$ is precisely the subalgebra of~\eqref{eq:Ac}.

\begin{lemma}\label{lem:whatevsbruh}
Consider a deformation $Q\to R$ as in Section \ref{sect:setup}.  Then for any closed point $c\in \mbb{P}(m_Z/m_Z^2)$, and any finite-dimensional $R$-modules $V$ and $W$, the following are equivalent:
\begin{enumerate}
\item[(a)] $\Ext^{\gg0}_{Q_c}(V,W)=0$.
\item[(b)] The base change $k[t]\ot_{A_Z}\Ext^\ast_R(V,W)$ along the map $\phi_c:A_Z\to k[t]$ is finite-dimensional.
\item[(c)] The base change $k[t,t^{-1}]\ot_{A_Z}\Ext^\ast_R(V,W)$ along the localized map $\phi_{c,loc}:A_Z\to k[t,t^{-1}]$ vanishes.
\end{enumerate}
\end{lemma}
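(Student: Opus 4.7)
The plan is to tie each condition to a finite generation statement over the subalgebra $A_c \subset A_Z$ described in the passage preceding the lemma, then translate between the three conditions using graded Nakayama and the polynomial structure of $A_Z$ over $A_c$. Recall from that passage that $A_c$ coincides with the Koszul dual algebra $A_{Z_c}$ of the hypersurface quotient $Z_c := Z/(f_c)$, and that $k[t] \ot_{A_Z} - = k \ot_{A_c} -$ as per \eqref{eq:811}.

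For the equivalence $\text{(a)} \Leftrightarrow \text{(b)}$, I would apply Theorem \ref{thm:fin2} to the deformation sequence $Z_c \to Q_c \to R$, which satisfies the hypotheses of Section \ref{sect:setup} since $Z_c$ is formally smooth (as $\bar{f}_c \neq 0$ in $m_Z/m_Z^2$) and $Q_c = Q \ot_Z Z_c$ inherits finiteness and flatness over $Z_c$. The exterior algebra $B_{Z_c}$ is finite-dimensional, so finiteness of $\Ext^\ast_{Q_c}(V,W)$ over $B_{Z_c}$ just amounts to finite-dimensionality, which (as $V$ and $W$ are themselves finite-dimensional) is equivalent to (a). Theorem \ref{thm:fin2} then reformulates (a) as: $\Ext^\ast_R(V,W)$ is finite over $A_c$. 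Since this extension module is bounded below and finitely generated over $A_Z$, graded Nakayama (Corollary \ref{cor:Z_Nak}) identifies finiteness over $A_c$ with finite-dimensionality of the fiber $k \ot_{A_c} \Ext^\ast_R(V,W)$, which by \eqref{eq:811} is precisely (b).

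For $\text{(b)} \Leftrightarrow \text{(c)}$, set $M := k[t] \ot_{A_Z} \Ext^\ast_R(V,W)$, which is a finitely generated, non-negatively graded $k[t]$-module. Finite-dimensionality of $M$ forces some positive power of $t$ to annihilate $M$, and hence $M[t^{-1}] = 0$. Conversely, if $M[t^{-1}] = 0$ then each of the finitely many homogeneous generators of $M$ is killed by a power of $t$, and non-negativity of the grading then forces $M$ to be finite-dimensional.

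I do not expect a serious obstacle here: the key identifications $A_c = A_{Z_c}$ and $k[t] \ot_{A_Z} - = k \ot_{A_c} -$ are already carefully laid out in the excerpt, and after them the argument is a formal chain of equivalences driven by Theorem \ref{thm:fin2} and graded Nakayama. If anything requires extra care, it will be verifying that $Z_c \to Q_c \to R$ genuinely fits the setup of Section \ref{sect:setup}, i.e.\ confirming formal smoothness of $Z_c$ and the inherited finiteness and flatness of $Q_c$ over $Z_c$.
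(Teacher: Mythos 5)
Your proposal is correct and follows essentially the same route as the paper: both directions of (a)$\Leftrightarrow$(b) go through Theorem \ref{thm:fin2} applied to the hypersurface deformation $Z/(f_c)\to Q_c\to R$ together with graded Nakayama (Corollary \ref{cor:Z_Nak}) and the identification \eqref{eq:811}, and (b)$\Leftrightarrow$(c) is the standard fact that a finitely generated graded $k[t]$-module is $t$-torsion iff finite-dimensional. Your extra care about $Z/(f_c)$ being formally smooth and $Q_c$ flat over it is warranted but unproblematic, exactly as the paper implicitly assumes.
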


\begin{proof}
Suppose (a) holds.  Then Theorem \ref{thm:fin2} implies that $\Ext^\ast_R(V,W)$ is a finitely generated $A_c$-module, and hence the fiber
\[
k[t]\ot_{A_Z}\Ext^\ast_R(V,W)\cong k\ot_{A_c}\Ext^\ast_R(V,W)
\]
is finite-dimensional.  So we see (a) implies (b).  Conversely, if the above fiber is finite-dimensional then $\Ext^\ast_R(V,W)$ is finitely generated over $A_c$, by Corollary \ref{cor:Z_Nak}.  Theorem \ref{thm:fin2} then tells us that $\Ext^{\gg 0}_{Q_c}(V,W)=0$, providing (a).
\par

For the equivalence between (b) and (c), we note that $\Ext^\ast_R(V,W)$ is finitely-generated over $A_Z$, by Theorem \ref{thm:fin2}, so that $k[t]\ot_{A_Z}\Ext^\ast_R(V,W)$ is a finitely-generated $k[t]$-module.  We recall that a finitely generated, graded, $k[t]$-module is $t$-torsion if and only if it is finite-dimensional.  So we see that the localization $k[t,t^{-1}]\ot_{A_Z}\Ext^\ast_R(V,W)$ vanishes if and only if $k[t]\ot_{A_Z}\Ext^\ast_R(V,W)$ is finite-dimensional.
\end{proof}

\begin{theorem}\label{thm:sigma_hyp}
Suppose $Q\to R$ is a deformation satisfying {\rm (A)} and {\rm (B)} from Section \ref{sect:nchyp_supp}.  Then for $V$ any finitely generated $R$-module we have an identification of supports
\[
\supp^{\sigma}_{\mbb{P}}(V)=\supp^{hyp}_{\mbb{P}}(V).
\]
\end{theorem}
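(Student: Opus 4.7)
The plan is to show that the defining conditions of the two supports agree at closed points of $\mbb{P}$, and then invoke the fact that $\mbb{P}$ is Jacobson to pass to the closure. Concretely, fix a closed point $c\in\mbb{P}(m_Z/m_Z^2)$ with associated graded surjection $\phi_c\colon A_Z\to k[t]$, kernel $\mfk{p}_c$, and quotient $A_Z/\mfk{p}_c\cong k[t]$. Assumption (B) says that $V$ is perfect over $Q_c$ if and only if $\Ext^{\gg0}_{Q_c}(\sigma V,k)=0$; note that $\sigma V$ is indeed a finite $R$-module, by $Z$-linearity of $\sigma$ together with $R=Q/m_ZQ$. I would then apply Lemma~\ref{lem:whatevsbruh} with the pair $(\sigma V, k)$ in place of $(V,W)$: the equivalence of (a) and (c) there gives
\[
V\text{ is not perfect over }Q_c\ \Longleftrightarrow\ k[t,t^{-1}]\ot_{A_Z}\Ext^\ast_R(\sigma V,k)\neq 0.
\]

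The next step is the geometric translation. Since $\Ext^\ast_R(\sigma V,k)$ is a finitely generated graded $A_Z$-module (by Corollary~\ref{cor:finH}), and since for any such module $M$ and closed point $c\in\mbb{P}$ one has
\[
c\in\Supp_{\mbb{P}}(M^\sim)\ \Longleftrightarrow\ M\ot_{A_Z}k[t,t^{-1}]\neq 0
\]
(the right-hand side is, up to grading shifts, the graded stalk of $M^\sim$ at $c$; equivalently, $c$ lies in $V(\Ann_{A_Z}M)$ precisely when the image of the annihilator in $A_Z/\mfk{p}_c\cong k[t]$ consists of non-units, i.e.\ is zero), I obtain the equivalence, for every closed point $c\in\mbb{P}$:
\[
c\in \supp^{\sigma}_{\mbb{P}}(V)\ \Longleftrightarrow\ V\text{ is not perfect over }Q_c.
\]

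Finally, $\supp^{\sigma}_{\mbb{P}}(V)$ is closed in $\mbb{P}$ as the support of a coherent sheaf, and $\mbb{P}$ is a Jacobson scheme (it is of finite type over $k$), so every closed subset of $\mbb{P}$ equals the closure of the set of closed points it contains. Thus $\supp^{\sigma}_{\mbb{P}}(V)$ is the closure of $\{c\in\mbb{P}:V\text{ is not perfect over }Q_c\}$, which is exactly $\supp^{hyp}_{\mbb{P}}(V)$. The main obstacle is really just the standard geometric translation in the middle paragraph; once that is granted, the identification is a formal consequence of Lemma~\ref{lem:whatevsbruh} and assumption (B). I expect no subtleties beyond double-checking that Lemma~\ref{lem:whatevsbruh} applies to $\sigma V$ as an $R$-module, which is guaranteed by the $Z$-linearity clause in (B).
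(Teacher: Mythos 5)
Your proposal is correct and follows essentially the same route as the paper: reduce to closed points, combine assumption (B) with Lemma~\ref{lem:whatevsbruh} applied to the pair $(\sigma V,k)$ to characterize non-perfection over $Q_c$ by nonvanishing of $k[t,t^{-1}]\ot_{A_Z}\Ext^\ast_R(\sigma V,k)$, and then identify this locus with the support of the associated coherent sheaf. Your middle paragraph simply spells out the standard geometric translation that the paper leaves implicit, so there is nothing to add.
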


\begin{proof}
It suffices to show that the closed points of $\supp_{\mbb{P}}^\sigma(V)$ are equal to the collection of closed points $c\in \mbb{P}$ at which $V$ is not perfect over $Q_c$, in which case taking the closure of this final set recovers $\supp_{\mbb{P}}(V)$.  Equivalently, it suffices to show that the complements of these collections of closed points agree.  Recalling our assumption (B), Theorem \ref{thm:fin2} applied to the deformation sequence $Z/(f_c) \to Q/(f_c) \to R$ implies
\[
c\notin\supp^{hyp}_\mbb{P}(V)\ \Leftrightarrow\ \Ext^{\gg0}_{Q_c}(\sigma V,k)=0.
\]
By Lemma \ref{lem:whatevsbruh} we therefore find that $c\notin\supp^{hyp}_\mbb{P}(V)$ if and only if the base change of $\Ext^\ast_R(\sigma V,k)$ along the localized map $\phi_c:A_Z\to k[t,t^{-1}]$ vanishes,
\[
k[t,t^{-1}]\ot_{A_Z}\Ext^\ast_{R}(\sigma V,k)=0.
\]
This gives $\mbb{P}-\supp^{hyp}_\mbb{P}(V)$ as the complement of the support of the coherent $\O_\mbb{P}$-module associated to $\Ext_R^\ast(\sigma V,k)$, and thus $\supp^{hyp}_{\mbb{P}}(V)=\supp^{\sigma}_{\mbb{P}}(V)$.
\end{proof}

As one sees from the proof, the closed points of $\supp^{\sigma}_\mbb{P}(V)$ agree \emph{precisely} with those (closed) points $c$ of $\mbb{P}$ at which $V$ is non-perfect over the hypersurface $Q_c$.  So we have as a corollary to the proof

\begin{corollary}
The closed points of $\supp^{\sigma}_\mbb{P}(V)$ in $\mbb{P}$ are exactly the set $\{c\in \mbb{P}:V\text{\rm is \emph{not} perfect over }Q_c\}$.  In particular, this set is already closed (in the topological subspace of $k$-points) in $\mbb{P}$.
\end{corollary}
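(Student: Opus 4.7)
The plan is to observe that this corollary is essentially bookkeeping on top of the proof of Theorem \ref{thm:sigma_hyp}, so no new technical input is required beyond unwinding the equivalences already recorded at the closed points. First, I would fix a closed point $c\in \mbb{P}$ and trace through the chain of equivalences: by Lemma \ref{lem:whatevsbruh} applied to $W=k$ and $\sigma V$ in place of $V$, the condition $\Ext^{\gg 0}_{Q_c}(\sigma V,k)=0$ is equivalent to the vanishing of the localized base change $k[t,t^{-1}]\ot_{A_Z}\Ext^\ast_R(\sigma V,k)$ along the map $\phi_{c,\mathrm{loc}}$. This, in turn, is precisely the statement that $c$ does not lie in the support of the associated coherent sheaf $\Ext^\ast_R(\sigma V,k)^\sim$ on $\mbb{P}$, i.e.\ that $c\notin \supp^\sigma_\mbb{P}(V)$.

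Invoking hypothesis (B) from Section \ref{sect:nchyp_supp} then converts the cohomological vanishing $\Ext^{\gg 0}_{Q_c}(\sigma V,k)=0$ into the statement that $V$ is perfect over $Q_c$. Combining the two equivalences yields, for every closed point $c\in \mbb{P}$,
\[
c\notin\supp^\sigma_\mbb{P}(V)\quad\Longleftrightarrow\quad V\text{ is perfect over }Q_c,
\]
which rearranges to the asserted description of the closed-point set of $\supp^\sigma_\mbb{P}(V)$.

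For the closedness assertion, I would note that $\supp^\sigma_\mbb{P}(V)=\Supp_\mbb{P}\Ext^\ast_R(\sigma V,k)^\sim$ is by construction the support of a coherent sheaf on the Noetherian projective scheme $\mbb{P}$, hence a Zariski-closed subset. Its intersection with the subspace of $k$-rational points inherits the subspace topology and is therefore closed there. Since the first part of the corollary identifies this intersection with $\{c\in\mbb{P}:V\text{ is not perfect over }Q_c\}$, this latter set is automatically closed in the $k$-point topology.

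There is no serious obstacle; the only point that requires care is applying Lemma \ref{lem:whatevsbruh} with the module $\sigma V$ rather than $V$, which is legitimate because hypothesis (B) only uses $\sigma V$ through the functorial cohomological vanishing criterion, and $Z$-linearity of $\sigma$ ensures $\sigma V$ remains a finitely generated $R$-module so that all the finite generation results of Section \ref{sect:defos_n_fin} continue to apply.
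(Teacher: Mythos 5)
Your proposal is correct and follows essentially the same route as the paper: the corollary is extracted directly from the proof of Theorem \ref{thm:sigma_hyp}, using hypothesis (B) to translate perfection over $Q_c$ into the vanishing $\Ext^{\gg 0}_{Q_c}(\sigma V,k)=0$ and Lemma \ref{lem:whatevsbruh} to identify that vanishing with $c$ lying outside the support of the coherent sheaf $\Ext^\ast_R(\sigma V,k)^\sim$, whence closedness is automatic. Your remark that working directly with $\supp^\sigma_{\mbb{P}}(V)$ (rather than the a priori closure in $\supp^{hyp}_{\mbb{P}}$) avoids any circularity is exactly the right point of care.
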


The same arguments apply verbatim to the support $\supp_\mbb{P}(V)=\Supp_{\mbb{P}}\Ext_R(V,\Lambda)^\sim$ in $\mbb{P}$, so that we may identify this cohomological and hypersurface support.

\begin{theorem}[{cf.~\cite[Theorem 2.5]{avramovbuchweitz00}}]\label{thm:cohom_hyp}
For $V$ any finite $R$-module, there is an identification of supports $\supp_{\mbb{P}}(V)=\supp^{hyp}_{\mbb{P}}(V)$.
\end{theorem}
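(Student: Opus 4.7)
The plan is to reduce Theorem~\ref{thm:cohom_hyp} to Lemma~\ref{lem:whatevsbruh} by replacing the role of $k$ (with its companion $\sigma V$) in the proof of Theorem~\ref{thm:sigma_hyp} with the sum of simples $\Lambda$. The essential observation is that, because $Z$ is local, the Jacobson radical passes cleanly through the quotient $Q\twoheadrightarrow Q_c=Q/(f_c)\twoheadrightarrow R$, so that $Q_c/\operatorname{Jac}(Q_c)=R/\operatorname{Jac}(R)=\Lambda$ for every closed point $c\in\mbb{P}$. Consequently, for any finite $R$-module $V$ (equivalently, a finite $Q_c$-module via restriction) one has the standard perfection criterion
\[
V\text{ is perfect over }Q_c\ \Longleftrightarrow\ \Ext^{\gg 0}_{Q_c}(V,\Lambda)=0.
\]
This is what lets us avoid using hypothesis (B) from Section~\ref{sect:nchyp_supp}: detection against $\Lambda$ is already a direct detection of perfection, without the mediating endofunctor $\sigma$.

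Given this, I would first fix a closed point $c\in\mbb{P}$ and apply Lemma~\ref{lem:whatevsbruh} with $W=\Lambda$, which yields the chain of equivalences
\[
V\text{ perfect over }Q_c \ \Leftrightarrow\ \Ext^{\gg 0}_{Q_c}(V,\Lambda)=0 \ \Leftrightarrow\ k[t,t^{-1}]\ot_{A_Z}\Ext^\ast_R(V,\Lambda)=0.
\]
By definition of the sheaf $\Ext^\ast_R(V,\Lambda)^\sim$ on $\mbb{P}=\Proj(A_Z)$, the last vanishing is precisely the statement $c\notin \Supp_{\mbb{P}}\Ext^\ast_R(V,\Lambda)^\sim=\supp_\mbb{P}(V)$. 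Thus, as $c$ ranges over closed points,
\[
\{c\in\mbb{P}:V\text{ not perfect over }Q_c\}\;=\;\supp_\mbb{P}(V)\cap\{\text{closed points of }\mbb{P}\}.
\]

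Finally, since $\supp_\mbb{P}(V)$ is the support of a coherent sheaf on $\mbb{P}$, and hence already a closed subset of $\mbb{P}$ (equal to the closure of its collection of closed points), taking the closure of the left-hand side yields $\supp^{hyp}_\mbb{P}(V)$ by Definition~\ref{def:hyp_supp}, while the right-hand side closes up to $\supp_\mbb{P}(V)$ itself. The two supports therefore coincide.

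I do not anticipate a real obstacle: all nontrivial inputs — the finite generation of $\Ext^\ast_R(V,\Lambda)$ over $A_Z$ (Corollary~\ref{cor:finH}), the passage between vanishing at a closed point and vanishing of the localized fiber along $\phi_{c,loc}:A_Z\to k[t,t^{-1}]$ (Lemma~\ref{lem:whatevsbruh}), and the independence of $Q_c$ on the choice of lift of $c$ (Corollary~\ref{cor:reductions}) — are already in place. The only minor point to verify carefully is the identification $Q_c/\operatorname{Jac}(Q_c)=\Lambda$, which follows because $Q_c$ is finite over the local ring $Z/(f_c)$ so all its simples are restricted from $R$, and then the argument proceeds exactly as in the proof of Theorem~\ref{thm:sigma_hyp}.
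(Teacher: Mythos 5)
Your proposal is correct and is exactly the argument the paper intends: the paper proves Theorem~\ref{thm:cohom_hyp} by noting that the proof of Theorem~\ref{thm:sigma_hyp} "applies verbatim" with $(\sigma V,k)$ replaced by $(V,\Lambda)$, and your write-up simply makes that substitution explicit, correctly observing that hypothesis (B) becomes unnecessary because $\Lambda=Q_c/\operatorname{Jac}(Q_c)$ detects perfection directly.
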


\begin{remark}
The cohomological support $\supp_\mbb{P}$ does not play a significant role in our study.  However, it is the appropriate object to consider if one is interested in Snashall and Solberg's support theory for associative algebras via Hochschild cohomology~\cite{snashallsolberg04}.  All un-superscripted supports $\supp_{?}$ which appear in the complement of this section are Hopfy cohomological supports, as defined in Section~\ref{sect:hopf_hypersurface} below.
\end{remark}

\section{Hopfy support and hypersurfaces}
\label{sect:hopf_supp}

We give a precise relation between hypersurface support, defined as in Definition \ref{def:hyp_supp}, and the usual cohomological support for integrable Hopf algebras.

\subsection{Choose a side}\label{sect:side}

In deciding, in~\ref{sect:setup}, that the parametrizing algebra $Z$ for an integration $\msf{U}\to \msf{u}$ is a right coideal subalgebra in $\msf{U}$, we have chosen that $\rep(\msf{u})$ acts on the left of the hypersurface categories $\msf{U}/(f)\text{-mod}_{fg}$ (see Section~\ref{sect:module_cats}).  So the endomorphism $\sigma:\msf{U}/(f)\text{-mod}_{fg}\to \msf{U}/(f)\text{-mod}_{fg}$ considered in the Hopf situation of Example~\ref{ex:comod_def} must be tensoring with the simples $\Lambda$ on the {\it left}, $\sigma=\Lambda\ot-$.
\par

To produce consistency with these choices, we consider Hopfy cohomological support according to the algebra maps
\begin{equation}\label{eq:881}
V\ot-:\Ext^\ast_\msf{u}(k,k)\to \Ext^\ast_\msf{u}(V,V)
\end{equation}
provided by the left action of $\rep(\msf{u})$ on itself.
\par

These particular right/left choices do not really matter.  What matters, however, is that one does not switch from a ``right handed support" to a ``left handed support" in a willy-nilly manner.  So, we stick to the handedness proposed in Section~\ref{sect:setup}, unless explicitly stated otherwise.

\subsection{Hypersurface support and Hopf algebras}
\label{sect:hopf_hypersurface}

We consider again $\msf{u}$ a finite-dimensional, integrable, Hopf algebra with chosen integration $\msf{U}\to \msf{u}$.  We observe the algebra map $A_Z\to\Ext^\ast_\msf{u}(k,k)$ of equation~\eqref{eq:627}.  Fix $\msf{Y}:=\Proj\left(\Ext^\ast_\msf{u}(k,k)\right)_{\rm red}$ and
\begin{equation}\label{eq:kappa}
\kappa:\msf{Y}\to \mbb{P}(m_Z/m_Z^2)
\end{equation}
the map of varieties dual to~\eqref{eq:627}.  When $Z\subset \msf{U}$ is specifically \emph{not} a Hopf subalgebra, we may consider the alternate parametrizing subalgebra $Z'=S(Z)$ and corresponding map $\kappa':\msf{Y}\to \mbb{P}(m_Z/m_Z^2)$.
\par

Recall the standard Hopfy cohomological support
\[
\supp_{\msf{Y}}(V):=\Supp_{\msf{Y}}\Ext_\msf{u}^\ast(V,V)^\sim,
\]
where we calculate the sheaf $\Ext_\msf{u}^\ast(V,V)^\sim$ according to the (graded) action of $\Ext_\msf{u}^\ast(k,k)$ on $\Ext_\msf{u}^\ast(V,V)$ provided by the algebra map \eqref{eq:881}.

\begin{theorem}\label{thm:hopf_hyp}
Consider $\msf{u}$ a Hopf algebra which admits an integration $\msf{U}\to \msf{u}$.  Then the Hopfy cohomological support and hypersurface support satisfy
\[
\kappa\big(\supp_{\msf{Y}}(V)\big)=\supp^{hyp}_\mbb{P}(V),
\]
where $\kappa:\msf{Y}\to \mbb{P}$ is as above.
\end{theorem}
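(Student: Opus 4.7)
My plan is to prove both inclusions of the claimed equality, using Theorem~\ref{thm:cohom_hyp} for the containment $\supp^{hyp}_\mbb{P}(V)\subset\kappa(\supp_\msf{Y}(V))$ and Lemma~\ref{lem:whatevsbruh} for the reverse. Because Theorem~\ref{thm:fg} makes $\Ext^\ast_\msf{u}(k,k)$ finite over $A_Z$, the map $\kappa:\msf{Y}\to\mbb{P}$ is a finite (hence proper) morphism of Noetherian schemes; consequently, for any finitely generated graded $\Ext^\ast_\msf{u}(k,k)$-module $M$ I have $\kappa\bigl(\Supp_\msf{Y} M^\sim\bigr)=\Supp_\mbb{P} M^\sim$, with $M$ regarded as an $A_Z$-module on the right via restriction of scalars. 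I will use throughout that the deformation-theoretic $A_Z$-action on $\Ext^\ast_\msf{u}(V,W)$ from Definition~\ref{def:456} coincides with the Hopfy action obtained by composing $A_Z\to \Ext^\ast_\msf{u}(k,k)$ with the tensor map $V\ot-$ and Yoneda composition; this follows from Theorem~\ref{thm:bg} together with the factorization \eqref{eq:627} through $H\!H^\ast(\msf{u})$.

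For the inclusion $\kappa(\supp_\msf{Y}(V))\subset\supp^{hyp}_\mbb{P}(V)$, I would apply the pushforward identity with $M=\Ext^\ast_\msf{u}(V,V)$ to obtain $\kappa(\supp_\msf{Y}(V))=\Supp_\mbb{P}\Ext^\ast_\msf{u}(V,V)^\sim$. For each closed point $c\in\mbb{P}$, Lemma~\ref{lem:whatevsbruh} with $V=W$ says $c$ belongs to this support iff $\Ext^{\gg 0}_{\msf{U}_c}(V,V)\neq 0$, which forces $V$ to have infinite projective dimension over $\msf{U}_c$ and hence places $c$ in $\supp^{hyp}_\mbb{P}(V)$. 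Both sides are closed in $\mbb{P}$ (the left by properness of $\kappa$, the right by definition and the corollary following Theorem~\ref{thm:sigma_hyp}), so this containment on closed points suffices.

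For the reverse inclusion, Theorem~\ref{thm:cohom_hyp} applied to the deformation $Z\to\msf{U}\to\msf{u}$ gives $\supp^{hyp}_\mbb{P}(V)=\Supp_\mbb{P}\Ext^\ast_\msf{u}(V,\Lambda)^\sim$ with $\Lambda=\msf{u}/\operatorname{Jac}(\msf{u})$. Using the pushforward identity and finiteness of $\kappa$, it is enough to show $\Supp_\msf{Y}\Ext^\ast_\msf{u}(V,\Lambda)^\sim\subset\supp_\msf{Y}(V)$ in $\msf{Y}$. I would deduce this from a direct comparison of annihilators: since $\Ext^\ast_\msf{u}(V,\Lambda)$ is naturally a right module over the Yoneda algebra $\Ext^\ast_\msf{u}(V,V)$ by post-composition, and the $\Ext^\ast_\msf{u}(k,k)$-action on $\Ext^\ast_\msf{u}(V,\Lambda)$ reads $\alpha\cdot g=g\circ(V\ot\alpha)$, any $\alpha\in\Ext^\ast_\msf{u}(k,k)$ with $V\ot\alpha=0$ kills all of $\Ext^\ast_\msf{u}(V,\Lambda)$. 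Since $\operatorname{Ann}_{\Ext^\ast_\msf{u}(k,k)}\bigl(\Ext^\ast_\msf{u}(V,V)\bigr)=\ker(V\ot-)$, this produces the containment $\operatorname{Ann}_{\Ext^\ast_\msf{u}(k,k)}\bigl(\Ext^\ast_\msf{u}(V,V)\bigr)\subset \operatorname{Ann}_{\Ext^\ast_\msf{u}(k,k)}\bigl(\Ext^\ast_\msf{u}(V,\Lambda)\bigr)$, and the required support containment follows by taking reduced projective spectra.

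The main subtlety is the identification of the two $A_Z$-actions in play in the first paragraph, because Lemma~\ref{lem:whatevsbruh} is phrased in the deformation-theoretic language while $\supp_\msf{Y}$ is defined in the Hopfy language. Once that compatibility is in place, the hypersurface content is entirely absorbed into Theorem~\ref{thm:cohom_hyp} and Lemma~\ref{lem:whatevsbruh}, and the remainder of the proof is a manipulation of annihilators over $\Ext^\ast_\msf{u}(k,k)$ and an application of the pushforward identity for the finite map $\kappa$.
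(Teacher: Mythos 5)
Your architecture (two inclusions, routed through Theorem~\ref{thm:cohom_hyp} and Lemma~\ref{lem:whatevsbruh}) would work, but it rests entirely on the compatibility you assert in the first paragraph, and that assertion is a genuine gap.  The claim is that the deformation-theoretic $A_Z$-action of Definition~\ref{def:456} on $\Ext^\ast_\msf{u}(V,W)$ coincides with the restriction along \eqref{eq:627} of the Hopfy tensor action.  Concretely this says that for $\xi\in A_Z$ with image $f=i_\msf{U}(\xi)\in H\!H^\ast(\msf{u})$, the central operator $f\ot^{\rm L}_\msf{u}V\in\Ext^\ast_\msf{u}(V,V)$ equals $V\ot(f\ot^{\rm L}_\msf{u}k)$.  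Theorem~\ref{thm:bg} does not give this: it only says that $i_\tau$ is independent of $\tau$ and agrees with the Gerstenhaber deformation map on generators; it says nothing about how the image of $H\!H^\ast(\msf{u})$ in the center of the derived category interacts with the tensor action of $\Ext^\ast_\msf{u}(k,k)$.  Nor is the coincidence formal: the degree-zero analogue already fails, since a central element $z\in Z(\msf{u})=H\!H^0(\msf{u})$ acts on $V$ by multiplication, not by the scalar $\epsilon(z)=z\ot_\msf{u}k$.  Both of your inclusions invoke this unproved identification --- the forward one to feed the Hopfy module $\Ext^\ast_\msf{u}(V,V)$ into Lemma~\ref{lem:whatevsbruh}, and the reverse one when you declare that the $\Ext^\ast_\msf{u}(k,k)$-action on $\Ext^\ast_\msf{u}(V,\Lambda)$ appearing in Theorem~\ref{thm:cohom_hyp} ``reads $\alpha\cdot g=g\circ(V\ot\alpha)$''; in Theorem~\ref{thm:cohom_hyp} that module carries the deformation action, not the tensor action.

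The paper's proof is structured precisely to avoid this comparison.  The two actions \emph{do} provably agree on modules of the form $\Ext^\ast_\msf{u}(M,k)$: by the naturality discussed in Section~\ref{sect:w/e}, the deformation action may be computed through the second argument, where it is Yoneda composition with the image of $A_Z$ under \eqref{eq:627}, which is exactly the Hopfy action there.  So the paper first trades $\Ext^\ast_\msf{u}(V,V)$ (tensor action) for $\Ext^\ast_\msf{u}(\Lambda\ot V,k)$ (Yoneda action) using the standard support identity of \cite[\S 5.7]{benson91}, applies the pushforward identity for the finite map $\kappa$ --- the one step of your argument that is fine as stated --- and then concludes by Theorem~\ref{thm:sigma_hyp}, i.e.\ via $\supp^\sigma_\mbb{P}$ rather than via $\supp_\mbb{P}$ and Theorem~\ref{thm:cohom_hyp}.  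To repair your proof you should either supply an actual argument that the two $A_Z$-module structures on $\Ext^\ast_\msf{u}(V,V)$ have the same support (which is not in the paper and would require new input), or reroute both inclusions through $\Ext^\ast_\msf{u}(\Lambda\ot V,k)$ as the paper does.
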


\begin{proof}
For $\sigma=\Lambda\ot -$ we have $\kappa(\supp_{\msf{Y}}(V))=\supp^\sigma_\mbb{P}(V)$.  Specifically, the modules $\Ext^\ast_\msf{u}(V,V)$ and $\Ext^\ast_\msf{u}(\Lambda\ot V,k)$ have the same support over $\Ext^\ast_\msf{u}(k,k)$ \cite[\S 5.7]{benson91}.  By commutative algebra, when we restrict along the (finite) algebra map $A_Z\to \Ext^\ast_\msf{u}(k,k)$, the support of the given module over $A_Z$ is the image of the support in $\Spec(\Ext^\ast_\msf{u}(k,k))_{\rm red}$ along the map $i_\msf{U}^\ast$ to $\Spec(A_Z)=\mbb{A}(m_Z/m_Z^2)$.  These supports are conical subvarieties in the given $\mbb{G}_m$-equivariant varieties, and we projectivize to obtain the claimed identification of $\kappa(\supp_{\msf{Y}}(V))$ with $\supp^\sigma_\mbb{P}(V)$.  So the result follows by Theorems~\ref{thm:sigma_hyp} and~\ref{thm:cohom_hyp}.
\end{proof}

If we suppose, furthermore, that $\kappa:\msf{Y}\to \mbb{P}$ is a closed embedding then the Hopfy support can be seen as a support theory valued in closed subvarieties in $\mbb{P}$.

\begin{corollary}\label{cor:hopf_hyp}
If the map $\kappa:\msf{Y}\to \mbb{P}$ of~\eqref{eq:kappa} is a closed embedding, then
\[
\supp_{\msf{Y}}(V)=\supp^{hyp}_\mbb{P}(V).
\]
In particular, the hypersurface support vanishes on the open complement $\mbb{P}-\msf{Y}$.
\end{corollary}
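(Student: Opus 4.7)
The plan is to derive the corollary as an essentially formal consequence of Theorem~\ref{thm:hopf_hyp}. That theorem already provides the set-theoretic equality $\kappa(\supp_{\msf{Y}}(V))=\supp^{hyp}_{\mbb{P}}(V)$ for arbitrary integrable $\msf{u}$, so all that remains is to upgrade this to an equality of closed subvarieties of $\mbb{P}$ under the hypothesis that $\kappa$ is a closed embedding.

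First I would note that, by assumption, the map $\kappa:\msf{Y}\to \mbb{P}$ realizes $\msf{Y}$ as a closed subscheme of $\mbb{P}$, so in particular the induced map on underlying topological spaces is a homeomorphism onto its image $\kappa(\msf{Y})\subset\mbb{P}$. Because the cohomological support $\supp_{\msf{Y}}(V)$ is a closed subvariety of $\msf{Y}$, its image $\kappa(\supp_{\msf{Y}}(V))$ is a closed subvariety of $\kappa(\msf{Y})$, and hence (since $\kappa(\msf{Y})$ is closed in $\mbb{P}$) also a closed subvariety of $\mbb{P}$. Identifying $\supp_{\msf{Y}}(V)$ with its homeomorphic image inside $\mbb{P}$ via $\kappa$, Theorem~\ref{thm:hopf_hyp} gives
\[
\supp_{\msf{Y}}(V)\ =\ \kappa\bigl(\supp_{\msf{Y}}(V)\bigr)\ =\ \supp^{hyp}_{\mbb{P}}(V),
\]
which is the asserted equality.

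For the final assertion, the containment $\supp^{hyp}_{\mbb{P}}(V)\subset \kappa(\msf{Y})$ is automatic from the equality above, so $\supp^{hyp}_{\mbb{P}}(V)\cap (\mbb{P}-\kappa(\msf{Y}))=\varnothing$, which is the claim that hypersurface support vanishes on the open complement of $\msf{Y}$. The only possible conceptual subtlety here is whether the two natural structures on $\supp^{hyp}_{\mbb{P}}(V)$ (namely, as the image of a closed subvariety of $\msf{Y}$ under a closed embedding versus the closure-of-points definition in Definition~\ref{def:hyp_supp}) genuinely coincide, but this is already built into Theorem~\ref{thm:hopf_hyp}, which handled the closure step through Theorem~\ref{thm:sigma_hyp} and Theorem~\ref{thm:cohom_hyp}. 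Consequently, beyond carefully invoking the previous results, there is no substantive obstacle; the main bookkeeping point is simply to be explicit about treating $\kappa$ as an identification of $\msf{Y}$ with a closed subvariety of $\mbb{P}$.
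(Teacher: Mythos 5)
Your proposal is correct and matches the paper's (implicit) argument exactly: the corollary is stated as an immediate consequence of Theorem~\ref{thm:hopf_hyp}, obtained by using the closed embedding $\kappa$ to identify $\supp_{\msf{Y}}(V)$ with its image $\kappa(\supp_{\msf{Y}}(V))$ in $\mbb{P}$. The observation about vanishing on the complement $\mbb{P}-\msf{Y}$ follows just as you say.
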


\subsection{A weak tensor product property}
\label{sect:weak_tpp}

We consider again an integrable Hopf algebra $\msf{u}$ with chosen integration $\msf{U}\to \msf{u}$.  We consider the following additional conditions:
\begin{enumerate}
\item[(S1)] $Z$ is a Hopf subalgebra in $\msf{U}$.
\item[(S2)] $\msf{u}$ is local.
\item[(S3)] $\rep(\msf{u})$ is braided.
\end{enumerate}
Recall that a braiding on $\rep(\msf{u})$ is a choice of natural swap operation $c_{V,W}:V\ot W\overset{\sim}\to W\ot V$ on products of $\msf{u}$-representations.  The $c_{V,W}$ here are assumed to be natural in both $V$ and $W$, so that in total we have a natural isomorphism $c_{-,-}$ between the tensor product on $\rep(\msf{u})$ and its opposite, and are also assumed to satisfy the braid relations \cite[\S 1]{joyalstreet86}.

\begin{proposition}\label{prop:weak_tpp}
Suppose that the integration $\msf{U}\to \msf{u}$ satisfies any one of the conditions {\rm (S1)--(S3)} above.  Then for arbitrary $V$ and $W$ in $\rep(\msf{u})$, hypersurface support satisfies the following:
\begin{enumerate}
\item $\supp^{hyp}_\mbb{P}(V\ot W)\subset\big(\supp^{hyp}_\mbb{P}(V)\cap\supp^{hyp}_\mbb{P}(W)\big)$.
\item $\supp^{hyp}_\mbb{P}(V)=\supp^{hyp}_\mbb{P}(V^\ast)$.
\end{enumerate}
\end{proposition}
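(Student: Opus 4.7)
The plan is to handle (1) by separate arguments tied to each of the hypotheses (S1)--(S3), and to deduce (2) from the corresponding cohomological statement.

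For the common setup of (1), observe that for any $V\in\rep(\msf{u})$ and $M\in Q_c\text{-mod}$ with $Q_c=\msf{U}/(f_c)$, the tensor $V\ot M$ equipped with the diagonal $\msf{U}$-action descends to a $Q_c$-module. Indeed, writing $\Delta(f_c)=\sum a_i\ot b_i\in Z\ot\msf{U}$ and using $a_iv=\epsilon(a_i)v$ together with $(\epsilon\ot 1)\Delta(f_c)=f_c$, one finds $f_c\cdot(v\ot m)=v\ot f_c m=0$. So the left action of $\rep(\msf{u})$ on $Q_c\text{-mod}$ is always well-defined, and (1) reduces to showing that such tensoring preserves perfection in the appropriate sense.

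Under (S1), since $Z$ is a Hopf subalgebra one has $\Delta(f_c)=f_c\ot 1+1\ot f_c+\xi$ with $\xi\in m_Z\ot m_Z$, and I would establish the pretzel isomorphism $\phi\colon V^{\mrm{triv}}\ot Q_c\overset{\sim}\to V\ot Q_c$, $v\ot\bar h\mapsto\sum\bar h_{(1)}v\ot\bar h_{(2)}$, of $Q_c$-modules. The underlying $\msf{U}$-linear version is the standard Hopf-module isomorphism, and its descent to $Q_c$ amounts to checking that both $\phi$ and $\phi^{-1}(v\ot\bar h)=\sum S^{-1}(\bar h_{(1)})v\ot\bar h_{(2)}$ preserve the submodule $V\ot(f_c)$; this uses the expansion of $\Delta(f_c)$ above, centrality of $f_c\in Z$, bijectivity of $S$, the inclusions $S^{\pm 1}(m_Z)\subseteq m_Z$, and $m_Z V=0$. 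The isomorphism realizes $V\ot Q_c$ as free of rank $\dim V$ over $Q_c$, so $V\ot-$ preserves perfection; an analogous verification handles $-\ot V$, giving (1) under (S1). Under (S2), every $V\in\rep(\msf{u})$ admits a finite filtration with trivial composition factors, and filtering either tensor factor yields a filtration on $V\ot W$ whose $Q_c$-module subquotients are isomorphic to the other factor; since perfection is preserved under extensions in any triangulated category, $V\ot W$ is perfect whenever $V$ or $W$ is. Under (S3), the braiding $c_{V,W}$ gives an isomorphism $V\ot W\cong W\ot V$ in $\rep(\msf{u})$ which restricts to $Q_c\text{-mod}$ and, by naturality of $c$, induces an $\Ext^{\ast}_{\msf{u}}(k,k)$-linear identification on endomorphism extensions, so $\supp_{\msf{Y}}(V\ot W)=\supp_{\msf{Y}}(W\ot V)$; combined with the general containment $\supp_{\msf{Y}}(X\ot Y)\subseteq\supp_{\msf{Y}}(Y)$ (which holds in any monoidal triangulated category, since if $Y\ot\eta=0$ then $X\ot(Y\ot\eta)=0=(X\ot Y)\ot\eta$ by associativity), this yields the cohomological weak tensor product property, and Theorem~\ref{thm:hopf_hyp} translates it to the hypersurface statement via $\kappa$.

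For (2), I would work cohomologically. The contravariant duality $(-)^{\ast}\colon\rep(\msf{u})\to\rep(\msf{u})^{\mrm{op}}$ induces an isomorphism $\Ext^{\ast}_{\msf{u}}(V,V)\cong\Ext^{\ast}_{\msf{u}}(V^{\ast},V^{\ast})$, and since the image of $\Ext^{\ast}_{\msf{u}}(k,k)\to\Ext^{\ast}_{\msf{u}}(V,V)$ consists of central elements, the annihilator is intrinsic to the module and preserved under duality. Hence $\supp_{\msf{Y}}(V)=\supp_{\msf{Y}}(V^{\ast})$, and Theorem~\ref{thm:hopf_hyp} yields the hypersurface version. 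I expect the main technical obstacle to be the descent of the pretzel isomorphism in case (S1), which requires careful bookkeeping with the Hopf coproduct on the ideal $(f_c)$, leveraging centrality of $Z$, the Hopf subalgebra structure, and the bijectivity of $S$.
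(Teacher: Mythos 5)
Your treatment of part (1) is essentially correct, though it takes a somewhat different route from the paper in two of the three cases. Under (S1) you prove that $V\ot Q_c$ is free via an explicit Hopf-module (``pretzel'') isomorphism, whereas the paper gets preservation of perfection more softly from the adjunction $\Hom_{\msf{U}/(f)}(V\ot M,-)\cong\Hom_{\msf{U}/(f)}(M,{}^\ast V\ot -)$ together with the fact that, when $Z$ is a Hopf subalgebra, $\rep(\msf{u})$ acts on \emph{both} sides of $\msf{U}/(f)\text{-mod}_{fg}$; your computation is a legitimate, more hands-on substitute, and your descent checks (centrality of $f_c$, $\Delta(f_c)\in f_c\ot 1+1\ot f_c+m_Z\ot m_Z$, $S(m_Z)\subseteq m_Z$, $m_ZV=0$) are exactly the right ones. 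The (S2) filtration argument coincides with the paper's thick-subcategory argument, and for (S3) your detour through cohomological support and $\kappa$ is valid (the image of an intersection lies in the intersection of images, which is the direction needed), though the paper argues directly with perfection over $\msf{U}/(f)$.

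Part (2) has a genuine gap. You assert that the annihilator of $\Ext^\ast_\msf{u}(V,V)$ over $\Ext^\ast_\msf{u}(k,k)$ is ``intrinsic to the module and preserved under duality,'' but the module structure is not intrinsic to the ring $\Ext^\ast_\msf{u}(V,V)$: it is the data of the particular ring map $V\ot-$, and the annihilator is exactly $\ker(V\ot-)$. The duality anti-isomorphism $\Ext^\ast_\msf{u}(V,V)\cong\Ext^\ast_\msf{u}(V^\ast,V^\ast)$ does not carry the left-handed structure map $V\ot-$ to the left-handed structure map $V^\ast\ot-$; what it gives is an identification of the \emph{left}-handed support of $V$ with the \emph{right}-handed support of $V^\ast$, and reconciling left- and right-handed supports is precisely the content that requires one of (S1)--(S3) (see the paper's discussion in Section~\ref{sect:side} and the failure of these relations in general per Example~\ref{ex:no_tpp} and \cite{plavnikwitherspoon18}). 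Tellingly, your argument for (2) never invokes any of the hypotheses, yet it would then prove $\supp_\msf{Y}(V)=\supp_\msf{Y}(V^\ast)$ for an arbitrary Hopf algebra with finitely generated cohomology, which is not what is known. The paper instead deduces (2) from (1) using rigidity: by (1), any summand of a product $W\ot V\ot W'$ has hypersurface support contained in $\supp^{hyp}_\mbb{P}(V)$, and the evaluation/coevaluation zigzag exhibits $V^\ast$ as a summand of such a product (and symmetrically for the reverse inclusion). You should replace your duality argument for (2) with this summand argument, which genuinely uses the already-established part (1).
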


In the proof we employ the notion of a thick subcategory in a triangulated category.  The definition of such a subcategory is recalled in Section \ref{sect:nc_Hop} below.

\begin{proof}
In the local case (S2) we have that $V\ot W$ is in the thick subcategory generated by $V$ and also the thick subcategory generated by $W$, where here we consider specifically the thick subcategories in $D_{coh}(\msf{U}/(f))$ for varying hypersurface.  This implies the inclusion (1).
\par

Consider now the braided case (S3).  We note that at any function $f\in m_Z$ the category $\msf{U}/(f)\text{-mod}_{fg}$ is a left $\rep(\msf{u})$-module category, in the sense that for any $V$ in $\rep(\msf{u})$, and $\msf{U}/(f)$-module $M$, the action of $\msf{U}$ on $V\ot M$ descends to an action of $\msf{U}/(f)$.  The duality
\[
\Hom_{\msf{U}/(f)}(V\ot M,-)\cong \Hom_{\msf{U}/(f)}(M,{^\ast V}\ot-)
\]
implies that the endomorphism $V\ot-:\msf{U}/(f)\text{-mod}_{fg}\to \msf{U}/(f)\text{-mod}_{fg}$ associated to any object in $\rep(\msf{u})$ preserves perfection.  Here ${^\ast V}$ is the linear dual of $V$ with $\msf{u}$ acting via the inverse antipode $S^{-1}$ (see \cite[\S 2.10]{egno15}).  So we see that
\[
\supp_\mbb{P}^{hyp}(V\ot W)\subset \supp^{hyp}_\mbb{P}(W).
\]
Since $V\ot W\cong W\ot V$ in this case, we also have an inclusion into $\supp^{hyp}_\mbb{P}(V)$, establishing (1).
\par

Suppose now that $Z$ is a Hopf subalgebra in $\msf{U}$, as in (S1).  In this case $\rep(\msf{u})$ acts on both the left \emph{and the right} of the category $\msf{U}/(f)\text{-mod}_{fg}$.  So one again employs dualities, as in the braided case, to obtain (1).
\par

Point (1) now implies, for $\msf{u}$ satisfying any of the (S\#), that any object $L$ which is a summand of a product $W\ot V\ot W'$, there is an inclusion $\supp_\mbb{P}^{hyp}(L)\subset\supp_\mbb{P}^{hyp}(V)$.  In particular, $\supp_\mbb{P}^{hyp}(V^\ast)\subset \supp_\mbb{P}^{hyp}(V)$, as $V^\ast$ is a summand of the product $V\ot V^\ast\ot V$, by definition of the dual ${^\ast (V^\ast)}\cong V$~\cite[Definition 2.10.1]{egno15}.  One obtains the opposite inclusion similarly.  So we have (2). 
\end{proof}

The analogous equations (1) and (2), for cohomological support $\supp_\msf{Y}$, are known to hold in both the local and braided cases (S2) \& (S3).  However, in the case (S1) we obtain something new.  We apply Corollary \ref{cor:hopf_hyp} to obtain the following from Proposition \ref{prop:weak_tpp}.

\begin{corollary}\label{cor:weak_tpp2}
Suppose that $\msf{u}$ admits an integration $\msf{U}\to \msf{u}$ for which $Z$ is a Hopf subalgebra in $\msf{U}$ and that the map $\kappa$ of \eqref{eq:kappa} is a closed embedding.  Then cohomological support for $\rep(\msf{u})$ satisfies the following:
\begin{enumerate}
\item $\supp_\msf{Y}(V\ot W)\subset\big(\supp_\msf{Y}(V)\cap\supp_\msf{Y}(W)\big)$.
\item $\supp_\msf{Y}(V)=\supp_\msf{Y}(V^\ast)$.
\end{enumerate}
\end{corollary}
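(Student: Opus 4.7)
The plan is to deduce both assertions directly from Proposition \ref{prop:weak_tpp} by transporting its conclusions along the identification of cohomological and hypersurface support provided by Corollary \ref{cor:hopf_hyp}. Essentially no new technical work is required: the two ingredients are already in hand, and one merely has to check that they combine without friction.

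First I would invoke Corollary \ref{cor:hopf_hyp}: the assumption that $\kappa$ is a closed embedding yields an equality $\supp_\msf{Y}(V)=\supp^{hyp}_\mbb{P}(V)$ for every finite-dimensional $\msf{u}$-representation $V$, where $\msf{Y}$ is tacitly identified with its image $\kappa(\msf{Y})\subset\mbb{P}$. Consequently, any set-theoretic inclusion or equality among supports holds for $\supp_\msf{Y}$ if and only if it holds for $\supp^{hyp}_\mbb{P}$, and set operations such as intersection commute with $\kappa$ since $\kappa$ is an embedding.

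Next I would apply Proposition \ref{prop:weak_tpp} in case (S1), which is available precisely because $Z$ is assumed to be a Hopf subalgebra in $\msf{U}$. That proposition supplies, for arbitrary $V,W\in\rep(\msf{u})$, both the inclusion $\supp^{hyp}_\mbb{P}(V\ot W)\subset \supp^{hyp}_\mbb{P}(V)\cap \supp^{hyp}_\mbb{P}(W)$ and the duality equality $\supp^{hyp}_\mbb{P}(V)=\supp^{hyp}_\mbb{P}(V^\ast)$. Substituting through the identification obtained in the previous step immediately produces statements (1) and (2) for $\supp_\msf{Y}$.

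There is no real obstacle: the substantive content is already packaged inside Proposition \ref{prop:weak_tpp}, namely the observation that when $Z\subset \msf{U}$ is a Hopf subalgebra, $\rep(\msf{u})$ acts on \emph{both} sides of each hypersurface module category $\msf{U}/(f)\text{-mod}_{fg}$, so that both $V\ot-$ and $-\ot V$ preserve perfection over every noncommutative hypersurface $\msf{U}_c$, from which the Hopf-theoretic duality argument then yields (2). The only point worth flagging is that Corollary \ref{cor:hopf_hyp} already provides an honest equality of subvarieties, not merely equality up to closure, so no issue arises when transporting the inclusion in (1) through $\kappa$.
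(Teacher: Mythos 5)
Your proposal is correct and follows exactly the route the paper takes: the paper derives this corollary by applying Corollary \ref{cor:hopf_hyp} (closed embedding of $\kappa$ identifies $\supp_\msf{Y}$ with $\supp^{hyp}_\mbb{P}$) to Proposition \ref{prop:weak_tpp} in case (S1). No further comment is needed.
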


We note that the relations of Corollary~\ref{cor:weak_tpp2} are not obvious when $\rep(\msf{u})$ is not braided, and not even true in general.  One can see Example~\ref{ex:no_tpp} below, or~\cite{plavnikwitherspoon18} for a more extensive exposition.

\begin{lemma}\label{lem:surj0}
If $\msf{u}$ is such that the reduced spectrum $\Spec(\Ext^\ast_\msf{u}(k,k))_{\rm red}$ is isomorphic to an affine space $\mbb{A}^n_k$, then the map $\kappa:\msf{Y}\to \mbb{P}(m_Z/m_Z^2)$ of~\eqref{eq:kappa} is a closed embedding.
\end{lemma}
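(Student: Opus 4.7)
My plan is to show that the finite graded algebra map $\psi\colon A_Z\to R:=(\Ext^\ast_\msf{u}(k,k))_{\rm red}$ dual to $\kappa$ is in fact surjective. A surjection of positively graded $k$-algebras generated in positive degree induces a closed embedding on $\Proj$, which is exactly the claim for $\kappa$.

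To begin, $\psi$ is finite by Theorem~\ref{thm:fg}, and the hypothesis identifies $R$ with a polynomial ring $k[x_1,\dots,x_n]$ in some positive degrees $d_i$. Graded commutativity of $\Ext^\ast_\msf{u}(k,k)$ forces the generators of any reduced quotient into even degrees (at least away from characteristic $2$), so we may assume $d_i\geq 2$. Since $A_Z$ is generated in degree $2$ (as noted in Section~\ref{sect:koszul}), the image $S:=\psi(A_Z)$ is a graded subring of $R$ generated in degree $2$, and hence sits inside the polynomial subring $R':=k[x_i : d_i=2]\subset R$.

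I next eliminate the possibility that any $d_i>2$. A finite graded extension of graded $k$-domains preserves Krull dimension, so $\dim S=\dim R=n$; together with $\dim R'=\#\{i:d_i=2\}\leq n$ and $S\subset R'\subset R$, this forces every $d_i=2$. Then $R=\Sym(R_2)$ with $\dim R_2=n$ and $S=\Sym(U)$, where $U:=\psi((m_Z/m_Z^2)^\ast)\subset R_2$ (the identification $S\cong\Sym(U)$ uses that linearly independent elements of $R_2$ are algebraically independent in the polynomial ring $R$). The equality $\dim S=n=\dim R_2$ then forces $U=R_2$, giving surjectivity of $\psi$ in degree $2$, and therefore in every positive degree.

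The principal technical subtlety I anticipate is the characteristic $2$ case, where graded commutativity no longer outputs only even degrees and $R$ may have generators in degree $1$; there the subring of $R$ generated by $R_2$ is strictly larger than $k[x_i : d_i=2]$, so Step~1 requires a small separate analysis. However, the central dimension-counting argument (a polynomial ring is not finite over any proper subring of strictly smaller Krull dimension, since polynomial rings are integrally closed) should adapt cleanly; one may also always pass to a Veronese subring to reduce to the case where $R$ is generated in a single degree.
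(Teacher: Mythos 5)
Your proof is correct and follows essentially the same route as the paper: both reduce the claim to surjectivity of the finite graded algebra map $A_Z\to E:=\Ext^\ast_\msf{u}(k,k)_{\rm red}$ and force this surjectivity from the two facts that the target is a polynomial ring and the source is generated in degree $2$. The only difference is the finishing device — you count Krull dimensions, whereas the paper observes that the fibers $k\ot_{A_Z}E'$ and $k\ot_{E'}E$ (with $E'$ the subalgebra of $E$ generated by $E^2$) must be finite-dimensional — and your explicit flagging of the characteristic-$2$/odd-degree-generator subtlety is, if anything, more careful than the paper, which passes over it silently.
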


\begin{proof}
Take $E=\Ext^\ast_{\msf{u}}(k,k)_{\rm red}$ and take $E'$ the subalgebra generated by $E^2$.  By assumption, $E$ is isomorphic to a polynomial ring.  Since $E$ is graded, we may take homogenous generators $\xi_i$, $\deg(\xi_i)=d_i$, and have $E=k[\xi_1,\dots, \xi_d]$.  We know that the deformation map
\begin{equation}\label{eq:1075}
\Sym(\Sigma^{-2}(m_Z/m_Z^2)^\ast)=A_Z\to E
\end{equation}
is finite, by Theorem \ref{thm:fg}.  We note that $A_Z$ has image in the subalgebra $E'$ generated in degree $2$, so that $E$ must be finite over $E'$, and $E'$ must be finite over the image of $A_Z$.  We have $k\ot_{A_Z}E'=\Sym(E^2/{\rm im}(m_Z/m_Z^2)^\ast)$ and $k\ot_{E'}E=k[\xi_j:\deg(\xi_j)>2]$.  Finiteness of these fibers forces $E^2={\rm im}(m_Z/m_Z^2)^\ast$, and $E'=E$.  Hence we have surjectivity of~\eqref{eq:1075}.
\end{proof}

One applies Lemma~\ref{lem:surj0} to see that the relations of Corollary~\ref{cor:weak_tpp2} hold in a number of ``unipotent" and ``solvable" settings.  We just remark on one.

\begin{corollary}\label{cor:weak_uqb}
For $u_q(\mfk{b})$ the small quantum Borel, in arbitrary Dynkin type, at $q$ of odd order greater than the associated Coxeter number $h$, cohomological support satisfies
\[
\supp_\msf{Y}(V\ot W)\subset \big(\supp_\msf{Y}(V)\cap\supp_\msf{Y}(W)\big)
\]
and also $\supp_\msf{Y}(V)=\supp_\msf{Y}(V^\ast)$.
\end{corollary}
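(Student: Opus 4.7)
The plan is to verify that $u_q(\mfk{b})$ falls under the hypotheses of Corollary~\ref{cor:weak_tpp2}, so that both claimed relations become immediate consequences of that corollary. Two things need to be checked: first, that the De Concini-Kac integration $U_q^{DK}(\mfk{b})\to u_q(\mfk{b})$ from Example~\ref{ex:4} has a parametrizing subalgebra $Z$ which is actually a \emph{Hopf} subalgebra (condition (S1) of Section~\ref{sect:weak_tpp}); second, that the map $\kappa:\msf{Y}\to \mbb{P}(m_Z/m_Z^2)$ associated to this integration is a closed embedding.

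For the first point, one recalls that in the quantum Borel case the De Concini-Kac algebra $U_q^{DK}(\mfk{b})$ is taken with the same (torsion) grouplikes as $u_q(\mfk{b})$, so that the parametrizing subalgebra $Z$ is generated by the central $l$-th powers $E_\gamma^{l_\gamma}$ of the positive root vectors alone (no group-like $l$-th powers, and of course no $F$'s). These $l$-th power elements are primitive modulo lower terms in the De Concini-Kac filtration, and in fact a direct inspection of the coproduct on the $E_\gamma^{l_\gamma}$, using that $q^{l}=1$ and the quantum binomial identities, shows that $\Delta(E_\gamma^{l_\gamma})$ lies in $Z\ot Z$ rather than merely $Z\ot U_q^{DK}(\mfk{b})$. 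Hence $Z$ is a Hopf subalgebra and condition (S1) holds.

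For the second point, I would invoke the classical computation, due to Ginzburg-Kumar~\cite{ginzburgkumar93}, which identifies $\Ext^\ast_{u_q(\mfk{b})}(k,k)_{\rm red}$ with the coordinate ring of the nilpotent radical $\mfk{n}$ of $\mfk{b}$; in particular the reduced cohomology spectrum is an affine space $\mbb{A}^N_k$ where $N=\dim\mfk{n}$. This is precisely the hypothesis of Lemma~\ref{lem:surj0}, which then tells us that the deformation map $A_Z\to \Ext^\ast_{u_q(\mfk{b})}(k,k)_{\rm red}$ is surjective, equivalently that $\kappa$ is a closed embedding.

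With both conditions verified, Corollary~\ref{cor:weak_tpp2} applies directly and gives both the subadditivity $\supp_\msf{Y}(V\ot W)\subset \supp_\msf{Y}(V)\cap \supp_\msf{Y}(W)$ and the duality invariance $\supp_\msf{Y}(V)=\supp_\msf{Y}(V^\ast)$. The main obstacle here is the first point: ensuring that with our particular choice of De Concini-Kac algebra for the Borel (same torus as the small version), the full $l$-th power subalgebra $Z$ is genuinely a Hopf subalgebra and not only a coideal subalgebra. All other ingredients—finite generation of cohomology, the Ginzburg-Kumar identification of the cohomology ring, and the general framework of Corollary~\ref{cor:weak_tpp2}—are already in place.
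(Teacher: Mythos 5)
Your proposal is correct and follows essentially the same route as the paper: verify condition (S1) using that $Z_0\subset U_q^{DK}(\mfk{b})$ (with torsion grouplikes) is a Hopf subalgebra, verify that $\kappa$ is a closed embedding via the Ginzburg--Kumar identification of $\Ext^\ast_{u_q(\mfk{b})}(\mbb{C},\mbb{C})_{\rm red}$ with functions on the nilpotent radical together with Lemma~\ref{lem:surj0}, and then apply Corollary~\ref{cor:weak_tpp2}. The only cosmetic difference is that you sketch the coproduct computation for the $E_\gamma^{l_\gamma}$, whereas the paper simply cites the (known, De Concini--Kac--Procesi) fact that $Z_0$ is a Hopf subalgebra.
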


\begin{proof}
The distinguished subalgebra $Z_0$ in $U^{DK}_q(\mfk{b})$ is a Hopf subalgebra and, by~\cite{ginzburgkumar93}, the algebra of extensions $\Ext^\ast_{u_q(\mfk{b})}(\mbb{C},\mbb{C})$ is a polynomial ring in this case.  So the result follows by Lemma \ref{lem:surj0} and Corollary \ref{cor:weak_tpp2}.
\end{proof}

\begin{remark}
For $\msf{u}$ the restricted enveloping algebra $u^{\rm res}(L)$ of reductive $L$ in large characteristic, we expect the map $\kappa$ is simply the embedding of the (twisted) nilpotent cone $\mcl{N}^{(1)}$ into $L^{(1)}$, or rather the projectivization of this map.  Similarly, for the quantum group $u_q(\mfk{g})$ at large order $q$, $\kappa$ should also be the projectivized embedding of the nilpotent cone into $\mfk{g}$.  So, in particular, the hypersurface and cohomological supports should agree here.  In the finite characteristic setting, one could explicitly prove this result by identifying the Hochschild map employed in \cite{friedlanderparshall86II}, \cite[Proposition 5.2]{friedlanderparshall83}, with our deformation map of \eqref{eq:627}.
\end{remark}

\begin{remark}
In Section \ref{sect:last} we will be rather precise in our definition of the quantum Borel.  In particular, we acknowledge that there are varying choices of Borels with varying groups of grouplikes.  In Corollary~\ref{cor:weak_uqb} the particular choice of grouplikes for $u_q(\mfk{b})$ does not matter.
\end{remark}

\section{Background: a preamble to examples}
\label{sect:bg_examples}

We now move on to the second phase in this work.  In the following sections we use hypersurface support to prove the tensor product property for cohomological support in a number of ``solvable" examples.  We explain what precisely counts as a ``solvable" Hopf algebra in Section~\ref{sect:g_chev} below.  The proofs of the tensor product property in these varied contexts rely on certain recurring arguments which are both homological and (mildly) tensor categorical in nature.  We provide here the backgrounds needed to understand the examples considered in the latter sections of the text.
\par

We recall that a tensor category is a $k$-linear, abelian, rigid monoidal~\cite[\S 2.10]{egno15} category $\msc{C}$ in which all objects are of finite length and $\Hom$ sets are finite-dimensional.  We also require that the unit $\1\in \msc{C}$ is simple.  A tensor category $\msc{C}$ is \emph{finite} if it has finitely many simples and enough projectives, and $\msc{C}$ is \emph{fusion} if it is finite and semisimple.  The only tensor categories we are interested are those of the form $\rep(\msf{u})$ for a finite-dimensional Hopf algebra $\msf{u}$ (in which case $\rep(\msf{u})$ is finite), so one needn't concern themselves with the intricacies of the theory of tensor categories here.  We would claim, however, that the language and philosophies from the subject are useful.
\par

We are also interested in \emph{braided} tensor categories, which, just as in the case of $\rep(\msf{u})$ discussed in Section \ref{sect:weak_tpp}, are tensor categories $\msc{C}$ equipped with a natural swap operation $c_{V,W}:V\ot W\overset{\sim}\to W\ot V$ on objects \cite[\S 1]{joyalstreet86}.

\subsection{Module categories}
\label{sect:module_cats}

We refer the reader to~\cite[Definition 6.2]{ostrik03} for a precise definition of a module category.  Basically, a (left) module category over a tensor category $\msc{C}$ is a $k$-linear abelian category $\msc{M}$ with a biexact action functor $\ot:\msc{C}\times \msc{M}\to \msc{M}$ which is compatible with the tensor structure on $\msc{C}$, in the obvious ways.  One has the analogous notion of a bimodule category.
\par

We have already encountered our primary source of examples: Given $\rep(\msf{u})$ for a Hopf algebra $\msf{u}$, and $B$ a (left) $\msf{u}$-comodule algebra, we obtain an action of $\rep(\msf{u})$ on $B\text{-mod}$.  In particular, for arbitrary $V$ in $\rep(\msf{u})$ and $M$ in $B$-mod, we employing the coaction $B\to \msf{u}\ot B$ to provide a natural $B$-action on the tensor products $V\ot B$.  In this way, the linear tensor product $\ot=\ot_k$ provides our exact action functor $\ot:\rep(\msf{u})\times B\text{-mod}\to B\text{-mod}$, endowing $B\text{-mod}$ with a $\rep(\msf{u})$-module category structure.  When $B$ is a $\msf{u}$-bicomodule, we similarly find that $B\text{-mod}$ is a bimodule category over $\rep(\msf{u})$.
\par

By considering the adjunction $\Hom_B(V\ot M,-)=\Hom_B(M,{^\ast V}\ot-)$ we see that the action of $\rep(\msf{u})$ on such $B$-mod preserves the subcategory of compact objects, i.e.\ finitely presented modules.  So, when $B$ is Noetherian, we have that $B\text{-mod}_{fg}$ is a $\rep(\msf{u})$-module subcategory in $B\text{-mod}$.  The analogous claims hold in the bicomodule/$\rep(\msf{u})$-bimodule setting.  Such duality arguments also show that the action(s) of $\rep(\msf{u})$ on $B$-mod preserves perfect objects.

\subsection{Drinfeld centeralizers}\label{sect:ZC}

\begin{definition}[\cite{muger03}]\label{def:ZC}
Given a tensor subcategory $\msc{D}$ in a tensor category $\msc{C}$, we define the Drinfeld centralizer $Z^{\msc{D}}(\msc{C})$ to be the category of pairs $(V,\gamma_V)$ where $V$ is an object in $\msc{C}$ and $\gamma_V$ is a half braiding against $\msc{D}$.  Specifically, $\gamma_V:V\ot-\to -\ot V$ is a natural isomorphism between the functors $-\ot V,\ V\ot -:\msc{D}\to \msc{C}$ which satisfies the appropriate braid relations.
\par

The Drinfeld center $Z(\msc{C})$ of $\msc{C}$ is the centralizer of $\msc{C}$ against itself, i.e.\ the category of pairs of an object $V$ with a global half braiding $\gamma_V:V\ot -\to -\ot V$.
\end{definition}

Both $Z^\msc{D}(\msc{C})$ and $Z(\msc{C})$ inherit obvious tensor structures $(V,\gamma_V)\ot (W,\gamma_W)=(V\ot W,(\gamma_V\ot 1)(1\ot\gamma_W))$ from $\msc{C}$, and we have a surjective tensor functor
\begin{equation}\label{eq:forget}
\operatorname{Forget}:Z^\msc{D}(\msc{C})\to\msc{C}
\end{equation}
given by forgetting the half braiding~\cite[Proposition 3.39]{etingofostrik04}.  By surjective we mean that all objects of $\msc{C}$ are obtained by taking subquotients of objects from $Z^\msc{D}(\msc{C})$.
\par

In the Hopf setting, $Z(\rep(\msf{u}))$ is equivalent to the category of modules over the Drinfeld double $D(\msf{u})=\msf{u}\!\bowtie\! \msf{u}^\ast$ (see e.g.~\cite{montgomery93,kassel12}).  For $\rep(\Lambda)\to \rep(\msf{u})$ the tensor inclusion corresponding to a Hopf quotient $\msf{u}\to \Lambda$, the Drinfeld centralizer is identified with representations over the Hopf subalgebra $\msf{u}\!\bowtie\! \Lambda^\ast$ in $D(\msf{u})$.
\par

We prefer the categorical expression $Z^\msc{D}(\msc{C})$ to the algebraic one as we mean to emphasize a single point here: It is natural to consider, in some specific contexts, objects $V$ in $\rep(\msf{u})$ with a uniform means of moving $V$ past a chosen class of other objects in $\rep(\msf{u})$, under the tensor action.

\subsection{Bosonization and half-braidings}
\label{sect:bhb}

Consider $\msf{u}^+$ a finite-dimensional Hopf algebra in a braided tensor category $\rep(\Lambda)$ which, as the notation suggests, is the representation category of a quasitriangular Hopf algebra $\Lambda$.  Then the category $\rep_{\Lambda}(\msf{u}^+)$ of finite-dimensional left $\msf{u}^+$-modules in $\rep(\Lambda)$ forms a tensor category for which the composition
\[
\rep_{\Lambda}(\msf{u}^+)\overset{forget}\to \rep(\Lambda)\to Vect
\]
provides a canonical fiber functor.  Such modules are simply $\msf{u}^+$-modules with a compatible action of $\Lambda$, i.e.\ modules over the smash product $\msf{u}=\msf{u}^+\rtimes \Lambda$, and the tensor structure on $\rep_{\Lambda}(\msf{u}^+)$ induces a Hopf structure on $\msf{u}$ under which the subalgebra $\msf{u}^+$ is a left $\msf{u}$-comodule algebra.  The algebra $\msf{u}$, with its given Hopf structure, is the \emph{bosonization} of $\msf{u}^+$.  We consider the explicit examples of bosonized quantum complete intersections in Section~\ref{sect:fun-q} below.
\par

One can endow all objects in $\rep(\Lambda)$ with the trivial $\msf{u}^+$-action, via the counit $\msf{u}^+\to \1$, to obtain a tensor embedding
\[
\rep(\Lambda)\to \rep_{\Lambda}(\msf{u}^+)=\rep(\msf{u}).
\]
When $\msf{u}^+$ is local, and $\Lambda$ is semisimple, this map is an equivalence onto the fusion subcategory of semisimple objects in $\rep(\msf{u})$.  In the statement of the following lemma $c$ denotes the braiding on $\rep(\Lambda)$.

\begin{lemma}\label{lem:bos}
For $\msf{u}$ a bosonized Hopf algebra $\msf{u}=\msf{u}^+\rtimes\Lambda$ as above, there is a canonical tensor functor $\rep(\msf{u})\to Z^{\rep(\Lambda)}(\rep(\msf{u}))$ which is a section of the forgetful functor.  Specifically, for $V$ in $\rep(\msf{u})$ and $L$ in $\rep(\Lambda)$, the isomorphisms
\[
\gamma_{V,L}:V\ot L\to L\ot V
\]
given by the braiding $\gamma_{V,L}:=c_{V,L}$ on $\rep(\Lambda)$ provide a $\rep(\Lambda)$-centralizing structure on all objects in $\rep(\msf{u})$, and the operations $\gamma_{V,L}$ are natural in $V$ and $L$.
\end{lemma}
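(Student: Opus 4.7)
The plan is to verify directly that, for each $V\in \rep(\msf{u})$ and each $L$ in the image of the tensor embedding $\rep(\Lambda) \hookrightarrow \rep(\msf{u})$, the braiding $c_{V,L}: V \ot L \to L \ot V$ provided by the quasitriangular structure on $\Lambda$ is a morphism of $\msf{u}$-modules, and that the resulting family $\gamma_{V,-}:=c_{V,-}$ constitutes a half-braiding of $V$ against $\rep(\Lambda)$. Compatibility of $c_{V,L}$ with the $\Lambda$-actions on $V\ot L$ and $L\ot V$ is automatic, since $c_{V,L}$ is by definition an isomorphism in $\rep(\Lambda)$. What requires actual verification is compatibility with the $\msf{u}^+$-action, which on tensor products in $\rep_{\Lambda}(\msf{u}^+)$ is governed by the braided coproduct $\Delta^+:\msf{u}^+\to \msf{u}^+\ot \msf{u}^+$ combined with the braiding of $\rep(\Lambda)$ used to define the bosonization.

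The key observation is that objects in the image of $\rep(\Lambda)$ carry trivial $\msf{u}^+$-action through the counit $\epsilon^+:\msf{u}^+\to \1$. Expanding the $\msf{u}^+$-action on $V\ot L$ and on $L\ot V$ via $\Delta^+$ and then applying $\epsilon^+$ on the $L$-factor collapses the defining formulas: the action of $x\in \msf{u}^+$ on each of $V\ot L$ and $L\ot V$ reduces to acting on $V$ in the usual way while letting the $\Lambda$-degree of $x$, extracted via the braiding, act on $L$, the only distinction between the two tensor orderings being on which side of $L$ this $\Lambda$-element is placed. Verifying the identity $c_{V,L}\circ(x\cdot-)=(x\cdot -)\circ c_{V,L}$ is then precisely a naturality instance of the braiding $c$ on $\rep(\Lambda)$, applied to the $\Lambda$-module map ``action by $x$'' on $V$.

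Once $\msf{u}$-linearity of each $\gamma_{V,L}$ is in hand, the remaining structure is entirely formal. Naturality in both $V$ and $L$, the hexagon identity $\gamma_{V,L\ot L'} = (1_L\ot \gamma_{V,L'})(\gamma_{V,L}\ot 1_{L'})$, and the tensor identity $\gamma_{V\ot W,L} = (\gamma_{V,L}\ot 1_W)(1_V\ot \gamma_{W,L})$ all follow directly from the corresponding axioms for the braiding $c$ on $\rep(\Lambda)$, producing the claimed tensor functor $\rep(\msf{u})\to Z^{\rep(\Lambda)}(\rep(\msf{u}))$. Composition with the forgetful map \eqref{eq:forget} returns the identity on $\rep(\msf{u})$ by construction, which gives the section claim. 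The only real obstacle in this plan is a matter of bookkeeping: one must fix conventions for the bosonization coproduct on $\msf{u}^+\rtimes \Lambda$ and for the braiding on $\rep(\Lambda)$ in a compatible way, so that the collapse described in the second paragraph is visibly correct rather than obscured by implicit identifications with the $R$-matrix of $\Lambda$.
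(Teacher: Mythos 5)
Your proposal is correct and follows essentially the same route as the paper: reduce to checking $\msf{u}^+$-linearity of $c_{V,L}$ (the $\Lambda$-linearity being automatic), use triviality of the $\msf{u}^+$-action on $L$ to collapse the coproduct formulas for the actions on $V\ot L$ and $L\ot V$, and conclude by naturality of $c$ together with the braid relation. The only nuance worth tightening in the writeup is that the relevant naturality instance is with respect to the action morphism $\msf{u}^+\ot V\to V$ in $\rep(\Lambda)$ (combined with the hexagon $c_{\msf{u}^+\ot V,L}=(c_{\msf{u}^+,L}\ot 1)(1\ot c_{V,L})$), rather than with respect to ``action by a fixed $x$'' on $V$, which is not a $\Lambda$-module map when $x$ has nontrivial degree.
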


\begin{proof}
Both $V\ot L$ and $L\ot V$ have well-defined $\msf{u}$-actions, and the braiding $\gamma=c$ is a map of $\Lambda$-modules by construction.  So we need only check $\msf{u}^+$-linearity.  Since $\msf{u}^+$ acts trivially on $L$, the action of $\msf{u}^+$ on $V\ot L$ is just induced by the action on $V$, and the action on $L\ot V$ employs the braiding
\[
\operatorname{act}_{L\ot V}=(1\ot \operatorname{act}_V)(c_{\msf{u}^+,L}\ot 1):\msf{u}^+\ot L\ot V\to L\ot V.
\]
Naturality of the braiding, and the braid relation, then imply commutativity of the diagram
\[
\xymatrix{
\msf{u}^+\ot V\ot L\ar[r]^{1\ot c}\ar[d]_{\operatorname{act}_{V\ot L}} & \msf{u}^+\ot L\ot V\ar[d]^{\operatorname{act}_{L\ot V}}\\
V\ot L\ar[r]^c & L\ot V.
}
\]
The above diagram implies that $\gamma_{V,L}=c_{V, L}$ is a map of $\msf{u}^+$, and hence $\msf{u}$, representations.
\end{proof}

\subsection{Geometrically Chevalley algebras}
\label{sect:g_chev}

In the following definition we let $\Rep(\Lambda)$ denote the monoidal category of infinite-dimensional representations for a finite-dimensional Hopf algebra $\Lambda$.

\begin{definition}
Call an integrable Hopf algebra $\msf{u}$ geometrically Chevalley if
\begin{enumerate}
\item[(a)] $\msf{u}$ is the bosonization $\msf{u}=\msf{u}^+\rtimes \Lambda$ of a local Hopf algebra $\msf{u}^+$ in a (semisimple!) braided fusion category $\rep(\Lambda)$.
\item[(b)] $\msf{u}^+$ admits a deformation sequence $Z\to \msf{U}^+\to \msf{u}^+$ via algebras in $\Rep(\Lambda)$ such that 
\begin{enumerate}
\item[(b1)] $\msf{U}^+$ is a local Hopf algebra in $\Rep(\Lambda)$ which is of finite global dimension, as an associative algebra.
\item[(b2)] $Z$ is a central Hopf subalgebra in $\msf{U}^+$ which has trivial $\Lambda$-action and admits an algebra isomorphism $Z\cong k\b{y_1,\dots,y_n}$.
\end{enumerate}
\end{enumerate}
We call $\msf{U}=\msf{U}^+\rtimes \Lambda$ the corresponding Chevalley integration of $\msf{u}$, and have the corresponding integration $\msf{U}\to \msf{u}$ parametrized by the Hopf subalgebra $Z$ in $\msf{U}$.
\end{definition}

Examples of geometrically Chevalley algebras include bosonized quantum complete intersections, quantum Borels, and also the Drinfeld doubles $D(B_{(1)})$ for Borel subgroups $B\subset \mbb{G}$ in almost-simple algebraic group $\mbb{G}$ (in particular characteristics).  Restricted enveloping algebras $u^{\rm res}(\mfk{n})$ of nilpotent restricted Lie algebras are also geometrically Chevalley, with $\Lambda=k$.  In the case of $D(B_{(1)})$ we take $\Lambda=kT_1$, where $T\subset B$ is the torus, and the braiding on $\rep(\Lambda)=\rep(T_1)$ is the standard (trivial) symmetry.  These examples are all considered in detail in Sections~\ref{sect:fun-q} and \ref{sect:last} below.
\par

By Lemma~\ref{lem:bos}, all objects in $\rep(\msf{u})$ centralize the simples $\rep(\Lambda)$ in this case, and we have a canonical section $\rep(\msf{u})\to Z^{\rep(\Lambda)}(\rep(\msf{u}))$ for any geometrically Chevalley $\msf{u}$.

\section{Noncommutative Hopkins lemma for hypersurfaces}
\label{sect:nc_Hop}

Let $\mcl{T}$ be a triangulated category.  Recall that a thick subcategory in $\mcl{T}$ is a full triangulated subcategory which is closed under taking summands in $\mcl{T}$, and the thick subcategory $\langle X_i:i\in I\rangle$ \emph{generated} by a collection of objects $\{X_i\}_{i\in I}$ in $\mcl{T}$ is the smallest thick subcategory in $\mcl{T}$ which contains all the $X_i$.  The most basic example of a thick subcategory for us is that of perfect complexes $\operatorname{perf}(Q)=\langle Q\rangle$ in the derived category of modules over a given algebra $Q$.
\par

Here we suggest an analysis of thick subcategories for the hypersurface categories $D_{coh}(\msf{U}/(f))$--or rather, $D_{coh}(\msf{U}^+/(f))$ in the geometrically Chevalley instance--which provides a technical foundation for the work that follows.

\subsection{Hopkins Lemma and noncommutative hypersurfaces}
\label{sect:hop}

Consider $Q$ a \emph{commutative}, regular, local algebra and $Q/(f)$ a hypersurface algebra.  Then a version of Hopkins lemma~\cite[Proposition 5.8]{carlsoniyengar15} implies that, for $M$ non-perfect over $Q/(f)$, the trivial module $k$ is in the thick subcategory $\langle M\rangle$ generated by $M$ in the derived category of $Q/(f)$.  It seems clear that this precise result does \emph{not} hold for a noncommutative hypersurface algebra $Q/(f)$, as one needs to properly account for the noncommutativity in this case.
\par

In a geometrically Chevalley setting $\msf{U}^+\to \msf{u}^+$, we consider an alternative implication:
\begin{equation}\label{eq:1082}
M\text{ non-perfect in }\msf{U}^+/(f)\text{-mod}_{fg}\ \overset{\rm question}\Rightarrow\ k\in \langle \lambda\ot M:\lambda\in \operatorname{Irrep}(\Lambda)\rangle.
\end{equation}
We act on the left of $\msf{U}^+/(f)$-mod via the left $\msf{u}$, and hence $\Lambda$, comodule structure.  More generally, one may replace the $\lambda$ in $\operatorname{Irrep}(\Lambda)$ with objects in an arbitrary tensor subcategory $\msc{D}\subset \rep(\msf{u})$.  Our main approach to the tensor product property for the examples considered in Section~\ref{sect:O}--\ref{sect:last} below is to show that the implication~\eqref{eq:1082} \emph{does} hold in these cases, and use this implication to deduce the tensor product property for support.

\begin{lemma}\label{lem:gen_lem2}
Consider $\msf{u}$ geometrically Chevalley, with Chevalley integration $\msf{U}\to \msf{u}$.  Fix $\msc{D}\subset \rep(\msf{u})$ an arbitrary tensor subcategory, and consider functions $f\in m_Z$ with non-trivial reduction to $m_Z/m_Z^2$.
\par

Suppose that for any non-perfect, finitely generated, module $M$ over a hypersurface $\msf{U}^+/(f)$, $k$ is in the thick subcategory $\langle \lambda\ot M:\lambda\in \msc{D}\rangle$ generated by the $\msc{D}$-orbit of $M$ in $D_{coh}(\msf{U}^+/(f))$.  Then the hypersurface support $\supp^{hyp}_\mbb{P}$ on $D_{fin}(\msf{u})$ satisfies the tensor product property
\[
\supp^{hyp}_\mbb{P}(V\ot W)=\supp^{hyp}_\mbb{P}(V)\cap\supp^{hyp}_\mbb{P}(W).
\]
The analogous result holds when $\msf{u}$ is local and admits an integration $\msf{U}\to \msf{u}$ by a local Hopf algebra $(\msf{U}^+=)\msf{U}$.
\end{lemma}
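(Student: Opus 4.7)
The plan is to establish the nontrivial inclusion $\supp^{hyp}_\mbb{P}(V) \cap \supp^{hyp}_\mbb{P}(W) \subset \supp^{hyp}_\mbb{P}(V \ot W)$; the reverse inclusion is already furnished by Proposition~\ref{prop:weak_tpp}, via condition (S1) in the geometrically Chevalley case (since $Z$ is a Hopf subalgebra of $\msf{U}$) or via (S2) in the local case. By the corollary to Theorem~\ref{thm:sigma_hyp}, the closed points of each hypersurface support are exactly the $k$-points at which the corresponding module is non-perfect over the associated hypersurface; since our supports are reduced closed subvarieties of $\mbb{P}$ and are determined by their $k$-points, it suffices to establish the pointwise implication: at any closed $c \in \mbb{P}(m_Z/m_Z^2)$ with $V$ and $W$ both non-perfect over $\msf{U}^+_c := \msf{U}^+/(f_c)$, the product $V \ot W$ is non-perfect over $\msf{U}^+_c$. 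Since $\Lambda$ is semisimple and $\msf{U}_c = \msf{U}^+_c \rtimes \Lambda$ is free as a left $\msf{U}^+_c$-module, perfection of a $\msf{u}$-module over $\msf{U}_c$ and over $\msf{U}^+_c$ coincide, so this pointwise reformulation is exactly what is required.

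Next I would verify that $\msf{U}^+_c\text{-mod}_{fg}$ carries the structure of a $\rep(\msf{u})$-bimodule category. Centrality of $Z$ in $\msf{U}^+$, together with the triviality of the $\Lambda$-action on $Z$, makes $f_c$ central in $\msf{U}$. The Hopf subalgebra property of $Z \subset \msf{U}^+$ gives
\[
\Delta(f_c) \equiv f_c \ot 1 + 1 \ot f_c \pmod{m_Z \msf{U}^+ \ot m_Z \msf{U}^+};
\]
since any $\msf{u}$-module $L$ is killed by $m_Z$, one reads off that $f_c$ acts as zero on both $L \ot M$ and $M \ot L$ whenever $M$ is a $\msf{U}^+_c$-module. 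The resulting exact endofunctors $L \ot -$ and $- \ot L$ admit biadjoints via the duals of $L$ in the rigid category $\rep(\msf{u})$, and so in particular preserve the thick subcategory of perfect complexes in $D_{coh}(\msf{U}^+_c)$.

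Now I invoke the hypothesis on $V$, which is non-perfect over $\msf{U}^+_c$, to obtain
\[
k \in \langle \lambda \ot V : \lambda \in \msc{D}\rangle \subset D_{coh}(\msf{U}^+_c).
\]
Applying the exact triangulated endofunctor $- \ot W$ to this containment yields
\[
W \in \langle \lambda \ot V \ot W : \lambda \in \msc{D}\rangle.
\]
If $V \ot W$ were perfect over $\msf{U}^+_c$, then by the preservation property recorded above each $\lambda \ot V \ot W$ would be perfect; consequently $W$, being in the thick subcategory generated by these objects, would itself be perfect, contradicting the non-perfection of $W$. Hence $V \ot W$ is non-perfect over $\msf{U}^+_c$, as desired.

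The main technical obstacle is the bimodule-category bookkeeping: confirming the $\rep(\msf{u})$-bimodule structure on $\msf{U}^+_c\text{-mod}_{fg}$, the equivalence of perfection over $\msf{U}^+_c$ and $\msf{U}_c$, and the stability of perfection under tensoring with objects of $\rep(\msf{u})$. Once these are in place the argument is essentially a one-line application of the hypothesis. The local case follows by precisely the same recipe, with $\msc{D}$ trivialized and (S2) replacing (S1) in the reverse containment.
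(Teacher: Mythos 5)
Your overall architecture matches the paper's: reduce to the pointwise statement at closed points, pass from $\msf{U}_c$ to $\msf{U}^+_c$ using semisimplicity of $\Lambda$, and then run the thick-subcategory argument against the hypothesis. But there is a genuine gap in the middle step. You apply the hypothesis to $V$ and then hit the containment $k\in\langle\lambda\ot V\rangle\subset D_{coh}(\msf{U}^+_c)$ with the functor $-\ot W$. This requires a \emph{right} action of $\rep(\msf{u})$ on $\msf{U}^+_c\text{-mod}_{fg}$, i.e.\ a right $\msf{u}$-comodule algebra structure on $\msf{U}^+$, and this does not exist. The bosonization coproduct satisfies $\Delta(\msf{U}^+)\subset \msf{U}\ot\msf{U}^+$ but \emph{not} $\Delta(\msf{U}^+)\subset\msf{U}^+\ot\msf{U}$ (e.g.\ $\Delta(x_i)=x_i\ot 1+K_i\ot x_i$ with $K_i\notin\msf{U}^+$), so only the left action $L\ot -$ descends to $\msf{U}^+_c$-modules. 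Your verification that ``$f_c$ acts as zero on $M\ot L$'' presupposes that $\msf{U}^+$ acts on $M\ot L$ at all, which is exactly what fails; your computation with $\Delta(f_c)$ only controls the element $f_c\in Z$, not the rest of $\msf{U}^+$. Note also that the available left action does not rescue your variant: applying $W\ot-$ to $k\in\langle\lambda\ot V\rangle$ produces statements about $W\ot V$, not $V\ot W$, and these are not interchangeable in a nonbraided category.

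The repair is small and is precisely the paper's maneuver: apply the hypothesis to $W$ (non-perfect at $c$) to get $k\in\langle\lambda\ot W\rangle$, apply the well-defined left endofunctor $V\ot-$ of $D_{coh}(\msf{U}^+_c)$ to get $V\in\langle V\ot\lambda\ot W\rangle$, and then use the half-braiding $V\ot\lambda\cong\lambda\ot V$ of Lemma~\ref{lem:bos} (after first reducing to $\msc{D}=\rep(\Lambda)$, which you may do since $\rep(\Lambda)$ is the semisimple subcategory and generates any $\msc{D}$ under extensions) to rewrite this as $V\in\langle\lambda\ot V\ot W\rangle$. Perfection of $V\ot W$ then forces perfection of $V$, contradicting your assumption. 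The remaining ingredients of your write-up — the reduction to closed points via the corollary to Theorem~\ref{thm:sigma_hyp}, the equivalence of perfection over $\msf{U}_c$ and $\msf{U}^+_c$, and the preservation of perfection under the (left) tensor action via adjunction — are all correct and agree with the paper.
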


\begin{proof}
In what follows, for local $\msf{u}$ we take $\Lambda=k$, $\rep(\Lambda)=Vect$.  In both the geometrically Chevalley and local case we have an inclusion of supports $\supp^{hyp}_\mbb{P}(V\ot W)\subset \supp^{hyp}_\mbb{P}(V)\cap\supp^{hyp}_\mbb{P}(W)$, by Proposition~\ref{prop:weak_tpp}.  So, we need only establish opposite inclusion.
\par

Since the positive subalgebra $\msf{u}^+$ is local, by hypothesis, the inclusion $\rep(\Lambda)\to \rep(\msf{u})$ is an isomorphism onto the fusion subcategory of semisimple modules.  Furthermore, this fusion subcategory generates $\rep(\msf{u})$ under extension.  Therefore, for any $\msc{D}$ as in the statement we have
\[
\langle \lambda\ot M:\lambda\in \msc{D}\rangle\subset \langle \lambda\ot M:\lambda\in \operatorname{Irrep}(\Lambda)\rangle.
\]
So it suffices to prove the result when $\msc{D}=\rep(\Lambda)$.
\par

We must prove the following: If $\msf{U}/(f)$ is a hypersurface along which $W$ is non-perfect, and the product $V\ot W$ is perfect, then $V$ must be perfect.  (Here $f$ is a function in the parametrizing algebra $Z$ with non-vanishing linear part.)  Now, imperfection of a module over $\msf{U}/(f)=(\msf{U}^+/(f))\rtimes\Lambda$ is equivalent to imperfection over the subalgebra $\msf{U}^+/(f)$.  So we are free to replace $\msf{U}/(f)$ with this local subalgebra $\msf{U}^+/(f)$, and attempt to establish the same claims regarding imperfection.
\par

By locality of $\msf{U}^+/(f)$, imperfection of a finitely generated module $M$ over $\msf{U}^+/(f)$ is equivalent to nonvanishing of $\Ext^\ast_{\msf{U}^+/(f)}(M,k)$ in arbitrarily high degrees.  We recall also that the $\msf{u}$-comodule structure on $\msf{U}^+/(f)$ implies a (left) action of $\rep(\msf{u})$ on $\msf{U}^+/(f)$-mod.

Consider such $f$, $V$, and $W$ as prescribed.  As $W$ is non-perfect along $\msf{U}^+/(f)$ we have
\[
k\in \langle \lambda\ot W:\lambda\in \operatorname{Irrep}(\Lambda)\rangle\ \subset D_{coh}(\msf{U}^+/(f)),
\]
by hypothesis.  Applying the endofunctor $V\ot-:D_{coh}(\msf{U}^+/(f))\to D_{coh}(\msf{U}^+/(f))$ and the centrality claim of Lemma~\ref{lem:bos} gives then
\[
V\in \langle V\ot\lambda\ot W:\lambda\in \operatorname{Irrep}(\Lambda)\rangle=\langle \lambda \ot V\ot W:\lambda\in \operatorname{Irrep}(\Lambda)\rangle.
\]
Since $V\ot W$ is perfect, this final thick subcategory is contained in $\operatorname{perf}(\msf{U}^+/(f))$, so that $V$ must be perfect.
\end{proof}

\begin{remark}
It suffices to work stably, so that one may replace the thick subcategory generated by the $\lambda\ot M$ with the thick subcategory generated by the $\lambda\ot M$ and $\msf{U}^+/(f)$ in equations~\eqref{eq:1082} and Lemma~\ref{lem:gen_lem2} above.
\end{remark}

\begin{remark}
It is possible that~\eqref{eq:1082} holds in general, although we would not claim such a result at this time.
\end{remark}

\section{The tensor product property for functions on finite group schemes}
\label{sect:O}

We consider representations of $\O(\mcl{G})$, for a finite group scheme $\mcl{G}$.  In this case we have $\rep(\O(\mcl{G}))=\Coh(\mcl{G})$, and we employ the latter notation for the sake of expediency.  We prove in Theorem~\ref{thm:Pio} below that (Hopfy) cohomological support for $\Coh(\mcl{G})$ has the tensor product property whenever $\mcl{G}$ is connected.  In the case in which $\mcl{G}$ is {\it not} connected, the tensor product property for $\Coh(\mcl{G})$ simply does not hold (see Example~\ref{ex:no_tpp}).  However, we prove in Corollary~\ref{cor:Pi} a variant of the tensor product property for non-connected $\mcl{G}$ which is, in a global sense, optimal.

Arguably, the case of $\Coh(\mcl{G})$ is not the most interesting application of the methods developed in this paper.  Indeed, many of the issues we've addressed in the present document are well-established in the commutative setting (see Remark~\ref{rem:CI}).  However, this very natural class of examples provides an excellent case study from which one can observe a number of important phenomenon.  We consider more novel situations in Sections~\ref{sect:fun-q} and~\ref{sect:last} below.

\subsection{Reducing to the connected component I}

Consider $\mcl{G}$ a finite group scheme, with identity component $\mcl{G}_o$.  We note that in this case the cohomology rings $\Ext^\ast_{\O(\mcl{G})}(k,k)$ and $\Ext^\ast_{\O(\mcl{G}_o)}(k,k)$ for the algebras of functions agree.  So we write unambiguously $\msf{Y}$ for the reduced projective spectrum of either of these rings.  The following lemma can be deduced from work of Plavnik and Witherspoon~\cite{plavnikwitherspoon18}, and we omit the proof.

\begin{lemma}[{\cite[Theorem 3.2]{plavnikwitherspoon18}}]\label{lem:coh1}
Let $\mcl{G}$ be a finite group scheme, with identity component $\mcl{G}_o\subset \mcl{G}$ and subgroup of closed points $\pi=\pi_0(\mcl{G})$.  The following are equivalent:
\begin{enumerate}
\item[(a)] Cohomological support for $\Coh(\mcl{G})$ has the tensor product property.
\item[(b)] Cohomological support for the identity components $\Coh(\mcl{G}_o)$ has the tensor product property and the group of closed points $\pi\subset \mcl{G}$ acts trivially on $\msf{Y}$, under the adjoint action.
\end{enumerate}
\end{lemma}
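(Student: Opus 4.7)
The plan is to exploit the semidirect product decomposition $\mcl{G} \cong \mcl{G}_o \rtimes \pi$ available since $k$ is algebraically closed. As a scheme, $\mcl{G}$ is the disjoint union of the cosets $\{g\mcl{G}_o\}_{g\in\pi}$, so any coherent sheaf on $\mcl{G}$ decomposes uniquely as $V = \bigoplus_{g\in\pi}V_g$ with $V_g$ supported on $g\mcl{G}_o$. Left translation by $g^{-1}$ yields an equivalence $\Coh(g\mcl{G}_o)\overset{\sim}\to\Coh(\mcl{G}_o)$, sending $V_g$ to a sheaf $V_g^o$ on $\mcl{G}_o$. Since cohomology is computed locally at the identity, $\Ext^\ast_{\O(\mcl{G})}(k,k) = \Ext^\ast_{\O(\mcl{G}_o)}(k,k)$ so $\msf{Y}$ is common to both algebras, and the conjugation action of $\pi$ on $\mcl{G}_o$ induces an action on $\msf{Y}$.

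The main technical step is to establish two transport formulas describing how supports and tensor products move across components. First, for $V$ supported on $g\mcl{G}_o$, unwinding the comultiplication of $\O(\mcl{G})$ through left translation gives an identification that intertwines the $\Ext^\ast_{\O(\mcl{G})}(k,k)$-action on $\Ext^\ast_{\O(\mcl{G})}(V,V)$ with the standard $\Ext^\ast_{\O(\mcl{G}_o)}(k,k)$-action on $\Ext^\ast_{\O(\mcl{G}_o)}(V_g^o,V_g^o)$ twisted by the conjugation automorphism $c_g$, so that
\[
\supp_\msf{Y}(V) \;=\; g\cdot \supp_\msf{Y}^{o}(V_g^o),
\]
where $\supp_\msf{Y}^o$ denotes support computed inside $\Coh(\mcl{G}_o)$. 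Second, the convolution product $V\otimes W$ of sheaves on $g\mcl{G}_o$ and $h\mcl{G}_o$ is supported on $gh\mcl{G}_o$, and its translate to $\mcl{G}_o$ is isomorphic (up to a convention choice) to $V_g^o\otimes (c_{g^{-1}})^\ast W_h^o$ as an object of $\Coh(\mcl{G}_o)$. Both assertions are bookkeeping with the Hopf structure of $\O(\mcl{G})$ transported along the semidirect product decomposition.

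Granted these two facts, the direction (b)$\Rightarrow$(a) is immediate: decompose $V,W$ componentwise, and for each pair $(g,h)$ compute
\[
\supp_\msf{Y}(V_g\otimes W_h) \;=\; gh\cdot\supp_\msf{Y}^o(V_g^o\otimes (c_{g^{-1}})^\ast W_h^o) \;=\; \supp_\msf{Y}(V_g)\cap\supp_\msf{Y}(W_h),
\]
where triviality of the $\pi$-action on $\msf{Y}$ removes the conjugation and translation twists, and the TPP on $\Coh(\mcl{G}_o)$ is applied in the middle equality. Distributivity of union over intersection then produces $\supp_\msf{Y}(V\otimes W) = \supp_\msf{Y}(V)\cap \supp_\msf{Y}(W)$. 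Conversely for (a)$\Rightarrow$(b), TPP on $\Coh(\mcl{G}_o)$ follows from the fact that pushforward along the subgroup inclusion $\mcl{G}_o\hookrightarrow\mcl{G}$ is a fully faithful tensor embedding that preserves supports in $\msf{Y}$. To see that $\pi$ must act trivially on $\msf{Y}$, suppose some $g\in\pi$ moves a closed subvariety $Z\subset\msf{Y}$, choose $V\in\Coh(\mcl{G}_o)$ with $\supp_\msf{Y}^o(V)=Z$ together with an auxiliary sheaf $W$ supported on $g\mcl{G}_o$ built from $V$, and apply the two transport formulas to $V\otimes W$ to exhibit a strict discrepancy between $\supp_\msf{Y}(V\otimes W)$ and $\supp_\msf{Y}(V)\cap\supp_\msf{Y}(W)$, contradicting~(a).

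The main obstacle is the precise formulation and verification of the two transport formulas, since one must track carefully the interaction between left translation and the comultiplication-based tensor structure on $\Coh(\mcl{G})$. Once the conjugation twist appearing in both formulas is identified, the rest of the argument is formal, and the result falls out of the Plavnik--Witherspoon framework~\cite{plavnikwitherspoon18} applied to this commutative Hopf setting.
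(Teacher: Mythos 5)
Your argument is correct and is essentially the route the paper intends: your two ``transport formulas'' are exactly the tensor decomposition $\Coh(\mcl{G})\cong\Coh(\mcl{G}_o)\rtimes\pi$, with $\pi$ permuting the summands $\Coh(\mcl{G}_o)\ot\lambda$ and acting on $\Coh(\mcl{G}_o)$ by conjugation $\mathrm{Ad}_\lambda=\lambda\ot(-)\ot\lambda^{-1}$, and your (b)$\Rightarrow$(a) computation is the one carried out in Lemma~\ref{lem:1026}, while the (a)$\Rightarrow$(b) obstruction is the mechanism behind Example~\ref{ex:no_tpp}. The only point you gloss over is in (a)$\Rightarrow$(b): to conclude that a nontrivial action of $g\in\pi$ on $\msf{Y}$ produces a violation of the tensor product property, you need both the compatibility $\supp^{o}_{\msf{Y}}(\mathrm{Ad}_g V)=\phi_g\,\supp^{o}_{\msf{Y}}(V)$ and the realization theorem (every closed conical subvariety of $\msf{Y}$ is the support of some object of $\Coh(\mcl{G}_o)$, e.g.\ via Carlson-type modules over the local complete intersection $\O(\mcl{G}_o)$) -- both standard, but they should be stated and cited since without realization a nontrivial action could a priori fix every support.
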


It is easy to construct examples of $\mcl{G}=\mcl{G}_o\rtimes \pi$ for which $\pi$ acts non-trivially on cohomology for $\mcl{G}_o$.  Indeed, one deduces from such pairs $(\mcl{G}_o,\pi)$ examples of tensor categories for which one has no such inclusion
\[
\supp_\msf{Y}(V\ot W)\nsubseteq \big(\supp_\msf{Y}(V)\cap\supp_\msf{Y}(W)\big).
\]
One can compare the following example with Corollary~\ref{cor:weak_tpp2}.

\begin{example}[{cf.\ \cite{bensonwitherspoon14}}]\label{ex:no_tpp}
Take $k=\overline{\mbb{F}}_p$.  Consider $\mbb{Z}_2$ acting on $\mbb{G}_{a(1)}\times \mbb{G}_{a(1)}$ by swapping the factors.  Take $\mcl{G}_o=\mbb{G}_{a(1)}\times\mbb{G}_{a(1)}$ and $\mcl{G}=\mcl{G}_o\rtimes \mbb{Z}_2$.  Let $\supp^{\mcl{G}_o}$ and $\supp^{\mcl{G}}$ denote the corresponding supports.  Since $\O(\mcl{G}_o)$ is cocommutative, its support $\supp^{\mcl{G}_o}$ is known to have the tensor product property~\cite{friedlandersuslin97}.
\par

Consider $V$ and $V'$ the $\O(\mcl{G}_o)$ representations given by projecting onto the second and first factor of $\O(\mbb{G}_{a(1)})$ respectively.  Then one has
\[
\Ext^\ast_{\O(\mcl{G}_o)}(k,k)_{\rm red}=k[x,y],\ \ \Ext^\ast_{\O(\mcl{G}_o)}(k,V)=k[x],\ \ \Ext^\ast_{\O(\mcl{G}_o)}(k,V')=k[y].
\]
We also have $V'=\rm{Ad}_{\sigma}(V)$ in $\Coh(\mcl{G})$, where $\mbb{Z}/2\mbb{Z}=\{1,\sigma\}$.  Thus
\[
\supp^\mcl{G}(V)=pt_x,\ \supp^\mcl{G}(\sigma\oplus k)=\mbb{P}^1,
\]
and
\[
\supp^\mcl{G}(V\ot(\sigma\oplus k))=\supp^\mcl{G}(V'\oplus V)=pt_x\cup pt_y,
\]
where $\mbb{P}^1$ is identified with the projective spectrum of $k[x,y]=\Ext^\ast(k,k)_{\rm red}$ and $pt_x$ and $pt_y$ are the images of the coordinate axes for $x$ and $y$.  In particular, if we take $\supp_\msf{Y}$ the cohomological support for $\O(\mcl{G})$, $V$ as above and $W=\sigma\oplus k$, then
\[
pt_x\cup pt_y=\supp_\msf{Y}(V\ot W)\nsubseteq\big(\supp_\msf{Y}(V)\cap\supp_\msf{Y}(W)\big)=pt_x.
\]
We note finally that the Hopfy support and hypersurface support agree for this example, by Lemma~\ref{lem:surjO}, so that the tensor product property for hypersurface support also fails to hold in this case.
\end{example}

One can similarly consider an $n$-fold product of $\mbb{G}_{a(1)}$'s equipped with the permutation action of $S_n$, and observe an explicit obstruction to the tensor product property in these cases as well.  A generalization of the above example, where $\O(\mcl{G})$ is replaced with an arbitrary Hopf algebra, can be found in~\cite[Proof of Corollary 3.4]{plavnikwitherspoon18}.

\subsection{Reducing to the connected component II}

We consider a finite group scheme $\mcl{G}$ with connected component $\mcl{G}_o$.  Suppose that support for $\Coh(\mcl{G}_o)$ satisfies the tensor product property, which is in fact the case by Theorem~\ref{thm:Pio} below.
\par

We have the action of the finite group $\pi=\pi_0(\mcl{G})$ of closed points, i.e.\ simple $\O(\mcl{G})$-representations, on $\Coh(\mcl{G})$ via conjugation $\rm{Ad}_\lambda(V)=\lambda\ot V\ot\lambda^{-1}$.  This action corresponds to the adjoint action of $\pi$ on the scheme $\mcl{G}$, in the sense that the tensor adjoint action permutes sheaves on $\mcl{G}$ via pushforward along the adjoint action automorphisms $\rm{Ad}_\lambda:\mcl{G}\to \mcl{G}$.
\par

To say that an object $V$ in $\Coh(\mcl{G})$ is equivariant with respect to this conjugation action of $\pi$ is to say that there is a compatible collection of natural isomorphisms $V\ot\lambda\cong \lambda\ot V$ for all $\lambda\in \pi$.  So we observe that a $\pi$-equivariant structure on $V$ is equivalent to a $\rep(\pi)$-centralizing structure, i.e.\ half-braiding, on $V$.
\par

In the following theorem we consider non-connected $\mcl{G}$, and let $\supp_\msf{Y}$ denote the cohomological support for $\Coh(\mcl{G})$.  We take $F:Z^{\Coh(\pi)}(\Coh(\mcl{G}))\to \Coh(\mcl{G})$ the forgetful functor.

\begin{lemma}[{cf.~\cite{plavnikwitherspoon18}}]\label{lem:1026}
Let $\mcl{G}$ be a finite group scheme and suppose that support for the identity component $\Coh(\mcl{G}_o)$ satisfies the tensor product property.  Take $\pi=\pi_0(\mcl{G})$ the group of closed points.
\par

For arbitrary $W$ in $\Coh(\mcl{G})$, and $V$ in the Drinfeld centralizer of $\Coh(\pi)$ in $\Coh(\mcl{G})$, there is an identification of supports
\[
\supp_\msf{Y}(F(V)\ot W)=\supp_\msf{Y}(F(V))\cap\supp_\msf{Y}(W).
\]
\end{lemma}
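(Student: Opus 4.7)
The plan is to reduce the tensor product property on $\Coh(\mcl{G})$ to its counterpart on the identity component (which holds by the standing hypothesis) and then use the half-braiding on $V$ to absorb the conjugation terms that will inevitably appear. Writing $\mcl{G}=\mcl{G}_o\rtimes\pi$, I would decompose each object of $\Coh(\mcl{G})$ according to components of $\mcl{G}$ as $W=\bigoplus_{\mu\in\pi}W_\mu\otimes\mu$ with $W_\mu\in\Coh(\mcl{G}_o)$, where $W_\mu$ is pulled back from the restriction $W|_{\mcl{G}_o\mu}$ under a fixed translation identification $\mcl{G}_o\cong\mcl{G}_o\mu$. Since $\Ext^\ast_{\O(\mcl{G})}(k,k)=\Ext^\ast_{\O(\mcl{G}_o)}(k,k)$, one sees directly that the support in $\msf{Y}$ decomposes as $\supp_\msf{Y}(W)=\bigcup_\mu\supp_\msf{Y}(W_\mu)$, where the supports on the right are computed in $\Coh(\mcl{G}_o)$.

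A direct calculation using the group multiplication $(g\lambda,h\mu)\mapsto g\cdot\operatorname{Ad}_\lambda(h)\cdot\lambda\mu$ yields a convolution formula of the shape
\[
(V\otimes W)_\nu\;\cong\;\bigoplus_{\lambda\mu=\nu}\operatorname{Ad}_{\mu^{-1}}(V_\lambda)\otimes_{\Coh(\mcl{G}_o)}W_\mu.
\]
Applying the tensor product property on $\mcl{G}_o$ to each summand then produces
\[
\supp_\msf{Y}(F(V)\otimes W)\;=\;\bigcup_{\lambda,\mu\in\pi}\supp_\msf{Y}\!\big(\operatorname{Ad}_{\mu^{-1}}(V_\lambda)\big)\cap\supp_\msf{Y}(W_\mu).
\]

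The centralizing structure on $V$ is the decisive input at this stage: translating the half-braidings $\gamma_{V,\mu}:F(V)\otimes\mu\overset{\sim}\to\mu\otimes F(V)$ into the above decomposition produces, for each $\lambda,\mu\in\pi$, a compatible isomorphism $\operatorname{Ad}_{\mu^{-1}}(V_\lambda)\cong V_{\mu^{-1}\lambda\mu}$ in $\Coh(\mcl{G}_o)$. Hence $\supp_\msf{Y}(\operatorname{Ad}_{\mu^{-1}}(V_\lambda))=\supp_\msf{Y}(V_{\mu^{-1}\lambda\mu})$, and reindexing the inner union by $\lambda'=\mu^{-1}\lambda\mu$, a bijection of $\pi$ for each fixed $\mu$, converts $\bigcup_\lambda\supp_\msf{Y}(\operatorname{Ad}_{\mu^{-1}}(V_\lambda))$ into $\bigcup_{\lambda'}\supp_\msf{Y}(V_{\lambda'})=\supp_\msf{Y}(F(V))$. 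This quantity no longer depends on $\mu$ and so may be pulled outside the $\mu$-union, delivering
\[
\supp_\msf{Y}(F(V)\otimes W)\;=\;\supp_\msf{Y}(F(V))\cap\bigcup_\mu\supp_\msf{Y}(W_\mu)\;=\;\supp_\msf{Y}(F(V))\cap\supp_\msf{Y}(W).
\]
The main point of care is the convolution formula above, where the direction of the $\operatorname{Ad}$-twist depends sensitively on the convention used to identify $\mcl{G}_o\mu$ with $\mcl{G}_o$; once that is pinned down (cf.\ Example~\ref{ex:no_tpp} for an implicit choice), the rest is formal reindexing, with the possibly nonabelian $\pi$ handled uniformly by the bijection $\lambda\mapsto\mu^{-1}\lambda\mu$.
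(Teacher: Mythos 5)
Your argument is correct and is essentially the proof in the paper: the paper likewise decomposes $V$ and $W$ over $\pi$, uses the half-braiding to trade the conjugation twist for a reindexing of the components of $V$, and then applies the tensor product property on $\mcl{G}_o$ summand by summand. The one convention to pin down is that $\supp_\msf{Y}(X)$ for $X\in\Coh(\mcl{G})$ is the union of the $\mcl{G}_o$-supports of the \emph{left} components ${}_\lambda X$ (where $X=\oplus_\lambda\,\lambda\ot{}_\lambda X$), which is the version your convolution formula actually produces; with that fixed, your identity $\operatorname{Ad}_{\mu^{-1}}(V_\lambda)\cong V_{\mu^{-1}\lambda\mu}$ is precisely the paper's observation that the left- and right-handed supports of a $\Coh(\pi)$-centralizing object coincide.
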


\begin{proof}
We may write $V=\oplus_\lambda (V_\lambda\ot \lambda)=\oplus_\lambda (\lambda\ot {_\lambda V})$ for representations $V_\lambda$ and ${_\lambda V}$ supported on $\mcl{G}_o$, and adopt similar expressions for $W$.  (Here the sums run over the simples $\lambda\in \pi$.)  Let $\supp^{\mcl{G}}$ and $\supp^{\mcl{G}_o}$ denote the cohomological supports for $\Coh(\mcl{G})$ and $\Coh(\mcl{G}_o)$ respectively.  We have
\[
\supp^{\mcl{G}}(V)=\cup_\lambda\supp^{\mcl{G}_o}({_\lambda V}),\ \ \supp^{\mcl{G}}(W)=\cup_\lambda \supp^{\mcl{G}_o}({_\lambda W}).
\]
We are assuming that $V$ centralizes the simples.  This implies an identification of the right handed and left handed supports for $V$,
\begin{equation}\label{eq:1046}
\cup_\lambda\supp^{\mcl{G}_o}({_\lambda V})=\supp^{\mcl{G}}(V)=\cup_\lambda\supp^{\mcl{G}_o}(V_\lambda)
\end{equation}
\par

We have that $\supp^{\mcl{G}}(V\ot W)$ is the support of cohomology
\[
\begin{array}{rl}
\Ext^\ast_{\Coh \mcl{G}}(k,\bigoplus_\lambda \lambda\ot(V\ot W))&\cong\Ext^\ast_{\Coh \mcl{G}}(k,\bigoplus_\lambda (V\ot\lambda\ot W))\\
&=\bigoplus_{\lambda,\mu}\Ext^\ast_{\Coh \mcl{G}_o}(k,V_\mu\ot {_{(\mu\lambda)^{-1}}W}),\\
&=\bigoplus_{\lambda,\mu}\Ext^\ast_{\Coh \mcl{G}_o}(k,V_\mu\ot {_{\lambda}W})
\end{array}
\]
where the sum runs over all invertibles $\lambda\in \pi$.  We employ the tensor product property for $\mcl{G}_o$ and equation~\eqref{eq:1046} to obtain
\[
\begin{array}{rl}
\supp^{\mcl{G}}(V\ot W)&=\bigcup_{\lambda,\mu}\supp^{\mcl{G}_o}(V_\mu\ot{_\lambda W})\\
&=\bigcup_{\lambda,\mu}\big(\supp^{\mcl{G}_o}(V_\mu)\cap\supp^{\mcl{G}_o}({_\lambda W})\big)\\
&=\big(\bigcup_\mu \supp^{\mcl{G}_o}(V_\mu)\big)\cap\supp^{\mcl{G}}(W)\ =\supp^{\mcl{G}}(V)\cap\supp^{\mcl{G}}(W).
\end{array}
\]
\end{proof}

\subsection{Support for coherent sheaves on finite group schemes}

Recall that a finite group scheme $\mcl{G}$ is called infinitesimal if it is connected.  Equivalently, $\mcl{G}$ is infinitesimal if $\O(\mcl{G})$ is local.  Recall also that for any embedding $\mcl{G}\to \mcl{H}$ into smooth $\mcl{H}$, we have the corresponding deformation sequence $\hat{\O}_{\mcl{H}/\mcl{G}}\to \hat{\O}_\mcl{H}\to \O(\mcl{G})$ (see Example \ref{ex:1}).  We take in this instance $Z=\hat{\O}_{\mcl{H}/\mcl{G}}$, and as usual $\msf{Y}=\Proj(\Ext_{\Coh(\mcl{G})}(k,k))_{\rm red}$.

\begin{lemma}\label{lem:surjO}
Let $\mcl{G}$ be an infinitesimal group scheme, with chosen embedding $\mcl{G}\to \mcl{H}$ into a smooth algebraic group.  Then the associated map $\kappa:\msf{Y}\to \mbb{P}(m_Z/m_Z^2)$ of~\eqref{eq:kappa} is a closed embedding.
\end{lemma}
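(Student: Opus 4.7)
The plan is to invoke Lemma~\ref{lem:surj0} by verifying that $\Spec(\Ext^\ast_{\mcl{O}(\mcl{G})}(k,k))_{\rm red}$ is isomorphic to an affine space. Since $\mcl{G}$ is infinitesimal and $\mcl{H}$ is smooth, the completed coordinate ring $Z = \hat{\mcl{O}}_{\mcl{H}/\mcl{G},e}$ is a regular local ring of dimension $n = \dim\mcl{H}$. The deformation $Q = \hat{\mcl{O}}_{\mcl{H},e}$ is flat and finite over $Z$, so a regular system of parameters $y_1, \ldots, y_n$ for $Z$ remains a regular sequence in $Q$, and the reduction $\mcl{O}(\mcl{G}) \cong Q/(y_1, \ldots, y_n)$ presents $\mcl{O}(\mcl{G})$ as the quotient of a regular local ring by a regular sequence, i.e.\ as a commutative local complete intersection.

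Next I would apply the classical Tate/Gulliksen formula for the cohomology of a commutative local complete intersection to obtain an isomorphism
\[
\Ext^\ast_{\mcl{O}(\mcl{G})}(k,k) \cong \Lambda_k(\Sigma V^\ast) \otimes_k \Sym\bigl(\Sigma^{-2}(m_Z/m_Z^2)^\ast\bigr)
\]
of graded algebras, where $V = m_{\mcl{O}(\mcl{G})}/m_{\mcl{O}(\mcl{G})}^2$ is the cotangent space of $\mcl{O}(\mcl{G})$ at the augmentation. The exterior factor is generated in odd cohomological degree by elements that square to zero, and hence vanishes upon passing to the reduced quotient. We therefore recover the polynomial ring
\[
\Ext^\ast_{\mcl{O}(\mcl{G})}(k,k)_{\rm red} \cong \Sym\bigl(\Sigma^{-2}(m_Z/m_Z^2)^\ast\bigr),
\]
whose spectrum is precisely $\mbb{A}^n_k$. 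Applying Lemma~\ref{lem:surj0} then gives that $\kappa$ is a closed embedding; in fact one can see directly from the Tate/Gulliksen presentation that the deformation map $A_Z \to \Ext^\ast_{\mcl{O}(\mcl{G})}(k,k)$ carries the generators of $A_Z$ isomorphically onto the polynomial generators of the reduced Ext ring, giving a surjection already at the level of graded algebras.

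No serious obstacle appears. The only point meriting care is the regular-sequence argument in the first step, which rests on flatness of $Q$ over $Z$ together with smoothness of the quotient scheme $\mcl{H}/\mcl{G}$ (itself a consequence of $\mcl{G}$ being infinitesimal inside a smooth group). The CCI cohomology formula is valid in all characteristics, the reduced quotient stripping the exterior algebra of its nilpotent odd-degree generators.
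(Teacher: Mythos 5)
Your overall strategy --- exhibit $\O(\mcl{G})$ as a commutative local complete intersection via the deformation $Z\to Q\to \O(\mcl{G})$, deduce that the reduced Ext algebra is polynomial, and then invoke Lemma~\ref{lem:surj0} --- is viable and genuinely different from the paper's argument, which instead quotes the structure theorem $\O(\mcl{G})\cong k[x_1,\dots,x_n]/(x_i^{d_i})$ of Waterhouse and the standard computation of the cohomology of a truncated polynomial ring. But your execution contains a real error: you apply the Tate/Gulliksen description to the \emph{non-minimal} presentation $\O(\mcl{G})\cong Q/(y_1,\dots,y_n)$ coming from the embedding $\mcl{G}\to\mcl{H}$. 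For a complete intersection the polynomial part of $\Ext^\ast(k,k)$ has rank equal to the number of relations in a \emph{minimal} Cohen presentation, which for $\O(\mcl{G})$ equals $\dim V=\dim m_{\O(\mcl{G})}/m_{\O(\mcl{G})}^2$, not $\dim m_Z/m_Z^2=\dim\mcl{H}$. Concretely, for $\mcl{G}=\mbb{G}_{a(1)}$ embedded in $\mcl{H}=\operatorname{GL}_2$ one has $\Ext^\ast_{k[x]/(x^p)}(k,k)\cong\Lambda(\sigma)\ot k[t]$ with a single polynomial generator, while $\Sym(\Sigma^{-2}(m_Z/m_Z^2)^\ast)$ has four. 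Consequently your closing claim --- that the generators of $A_Z$ map isomorphically onto the polynomial generators of $\Ext^\ast_{\O(\mcl{G})}(k,k)_{\rm red}$ --- is false in general: the map $(m_Z/m_Z^2)^\ast\to\Ext^2$ has a kernel of dimension $\dim\mcl{H}-\dim V$, and $\kappa$ is an embedding of a smaller projective space into $\mbb{P}(m_Z/m_Z^2)$, not an isomorphism.

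The error is repairable without changing the strategy: the CI property of $\O(\mcl{G})$ is presentation-independent, so one may pass to the minimal Cohen presentation and conclude that $\Ext^\ast_{\O(\mcl{G})}(k,k)_{\rm red}$ is a polynomial ring on $\dim V$ generators; that is all Lemma~\ref{lem:surj0} requires, since its proof only uses that the reduced spectrum is an affine space together with finiteness of $A_Z\to E$ (Theorem~\ref{thm:fg}) to force surjectivity. One further caveat: the clean splitting $\Ext^\ast\cong\Lambda\ot\Sym$ requires the relations to have no quadratic part; when $p=2$ and some truncation degree equals $2$ one gets $\sigma_i^2=t_i$ rather than $\sigma_i^2=0$, though the reduced ring is still polynomial, so the conclusion survives. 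With these corrections your route buys a proof that does not rely on the explicit structure theorem for $\O(\mcl{G})$, at the cost of invoking the local complete intersection machinery.
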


\begin{proof}
Take $E=\Ext^\ast_{\Coh(\mcl{G})}(k,k)_{\rm red}$.  We have an algebra isomorphism $\O(\mcl{G})\cong k[x_1,\dots, x_n]/(x_i^{d_i}:1\leq i\leq n)$, abstractly~\cite{waterhouse}, so that $E$ is a graded polynomial ring generated in degree $2$, $E=\Sym(E^2)$.  Therefore $\kappa$ is a closed embedding by Lemma~\ref{lem:surj0}.
\end{proof}

\begin{theorem}\label{thm:Pio}
Let $\mcl{G}$ be an infinitesimal group scheme.  Then for arbitrary $V$ and $W$ in $\Coh(\mcl{G})$ we have
\[
\supp_\msf{Y}(V\ot W)=\supp_\msf{Y}(V)\cap\supp_\msf{Y}(V),
\]
where $\supp_\msf{Y}$ denotes the cohomological support.
\end{theorem}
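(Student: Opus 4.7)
The plan is to reduce the cohomological support for $\O(\mcl{G})$ to the hypersurface support of Definition~\ref{def:hyp_supp} and then invoke the local branch of Lemma~\ref{lem:gen_lem2}. Since $\mcl{G}$ is infinitesimal we fix a closed embedding $\mcl{G}\hookrightarrow \mcl{H}$ into a smooth connected algebraic group and work with the formal smooth integration $\hat{\O}_\mcl{H}\to \O(\mcl{G})$ parametrized by $Z=\hat{\O}_{\mcl{H}/\mcl{G}}$ (Example~\ref{ex:1}). Lemma~\ref{lem:surjO} says that the associated comparison map $\kappa:\msf{Y}\to \mbb{P}(m_Z/m_Z^2)$ of~\eqref{eq:kappa} is a closed embedding, so Corollary~\ref{cor:hopf_hyp} lets us replace $\supp_\msf{Y}$ by $\supp_\mbb{P}^{hyp}$ throughout, and it suffices to prove the tensor product property for hypersurface support.

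Both $\msf{u}=\O(\mcl{G})$ and $\msf{U}=\hat{\O}_\mcl{H}$ are local, so the local case of Lemma~\ref{lem:gen_lem2} applies once we verify its hypothesis: for each $f\in m_Z$ with nonzero image in $m_Z/m_Z^2$, every non-perfect finitely generated module $M$ over $\hat{\O}_\mcl{H}/(f)$ must satisfy $k\in\langle M\rangle$ in $D_{coh}(\hat{\O}_\mcl{H}/(f))$. I would derive this directly from the commutative Hopkins lemma cited at the start of Section~\ref{sect:hop}: smoothness of $\mcl{H}$ makes $\hat{\O}_\mcl{H}$ a regular complete Noetherian \emph{commutative} local ring, so $\hat{\O}_\mcl{H}/(f)$ is an honest commutative local hypersurface ring in the classical sense, where the containment $k\in\langle M\rangle$ for non-perfect $M$ is precisely the statement of \cite[Proposition~5.8]{carlsoniyengar15}.

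The main point, and arguably the only nontrivial input, is the recognition that no genuinely noncommutative Hopkins-type argument is needed here: the infinitesimal hypothesis on $\mcl{G}$ forces the deforming algebra to be commutative regular local, which sidesteps entirely the noncommutative subtleties that Section~\ref{sect:nc_Hop} is devoted to. With the Hopkins input in hand, Lemma~\ref{lem:gen_lem2} produces the equality $\supp_\mbb{P}^{hyp}(V\ot W)=\supp_\mbb{P}^{hyp}(V)\cap\supp_\mbb{P}^{hyp}(W)$, and the chain of identifications $\supp_\msf{Y}=\supp_\mbb{P}^{hyp}$ transports this back to cohomological support. This stands in contrast to the later applications in Sections~\ref{sect:fun-q} and~\ref{sect:last}, where verifying the Hopkins-type containment in a truly noncommutative hypersurface will require substantially more work.
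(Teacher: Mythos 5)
Your proposal is correct and follows essentially the same route as the paper: reduce to hypersurface support via Lemma~\ref{lem:surjO} and Corollary~\ref{cor:hopf_hyp}, then use that $\hat{\O}_\mcl{H}/(f)$ is a commutative local hypersurface so that non-perfection of $M$ forces $k\in\langle M\rangle$ (the paper cites \cite{takahashi10} in the proof and notes the Carlson--Iyengar route in a following remark). The only cosmetic difference is that you invoke the local branch of Lemma~\ref{lem:gen_lem2} to finish, whereas the paper inlines that same tensoring argument ($V\in\langle V\ot M\rangle\subset\operatorname{perf}$) directly.
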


\begin{proof}
Fix an embedding $\mcl{G}\to \mcl{H}$ into a smooth algebraic group.  By the weak tensor product property (and Lemma~\ref{lem:surjO}) we have a containment
\[
\supp_\msf{Y}(V\ot W)\subset \big(\supp_{\msf{Y}}(V)\cap\supp_{\msf{Y}}(W)\big).
\]
We want to show now that if a given closed point in $\msf{Y}$ is in both $\supp_\msf{Y}(V)$ and $\supp_\msf{Y}(W)$, then it is in $\supp_\msf{Y}(V\ot W)$.  Equivalently, according to Corollary~\ref{cor:hopf_hyp}, we want to show that if $\hat{\O}_\mcl{H}/(f)$ is a hypersurface along which both $V$ and $W$ are non-perfect, then the product $V\ot W$ is also non-perfect over $\hat{\O}_\mcl{H}/(f)$.
\par

Let us fix such a function $f\in \hat{\O}_{\mcl{H}/\mcl{G}}$ and take $\O=\hat{\O}_\mcl{H}$.  It is well-known that imperfection of an object $M$ over the local (commutative) hypersurface $\O/(f)$ implies that the trivial module $k$ is in the thick subcategory $\langle M\rangle$ generated by $M$, in $D_{coh}(\O)$ (see e.g.~\cite{takahashi10}).  By exactness of the operation $V\ot-$ this implies $V\in \langle V\ot M\rangle$.  Hence, perfection of the product $V\ot M$ implies
\[
V\in \langle V\ot M\rangle \subset \operatorname{perf}(\O/(f)),
\]
so that $V$ is seen to be perfect.  Considering the case $M=W$, we observe the desired implication
\[
x\in \supp_\msf{Y}(W)\ \text{while }x\notin \supp_\msf{Y}(V\ot W)\ \Rightarrow\ x\notin \supp_\msf{Y}(V),
\]
i.e.\ if $W$ is not perfect over a given hypersurface, while $V\ot W$ is perfect, then $V$ {\it must} be perfect over that hypersurface.
\end{proof}

\begin{remark}
In the proof, one can provide a more direct argument to find that $k\in \langle M\rangle$ whenever $M$ is non-perfect over a commutative local hypersurface, using arguments of Carlson-Iyengar~\cite{carlsoniyengar15}.  We will follow such a line of reasoning in the proof of Theorem~\ref{thm:fun1} below (see also Remark~\ref{rem:ok}).
\end{remark}

\begin{remark}\label{rem:CI}
Theorem~\ref{thm:Pio} can alternatively be proved by employing the classification of thick subcategories in $\Coh(\mcl{G})$~\cite{carlsoniyengar15}.  One uses this classification to reduce to the case of a product of Carlson modules, then observes the desired result by applying~\cite[Corollary 4.1]{pevtsovawitherspoon09}.
\end{remark}

By applying Lemma~\ref{lem:1026} we obtain a result for general finite group schemes.

\begin{corollary}\label{cor:Pi}
Let $\mcl{G}$ be an arbitrary finite group scheme, and let $\pi$ denote the discrete subgroup of closed points in $\mcl{G}$.  For $W$ arbitrary in $\Coh(\mcl{G})$, and $V$ in the Drinfeld centralizer $Z^{\Coh(\pi)}(\Coh(\mcl{G}))$, we have an identification
\begin{equation}\label{eq:1138}
\supp_\msf{Y}(F(V)\ot W)=\supp_\msf{Y}(F(V))\cap\supp_\msf{Y}(W),
\end{equation}
where $\supp_\msf{Y}$ denotes the cohomological support for $\Coh(\mcl{G})$ and $F$ is the forgetful functor~\eqref{eq:forget}.
\end{corollary}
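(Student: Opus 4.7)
The plan is to obtain Corollary \ref{cor:Pi} as a direct synthesis of the two main ingredients already established: Theorem \ref{thm:Pio}, which verifies the tensor product property for coherent sheaves on the connected component, and Lemma \ref{lem:1026}, which packages the reduction from an arbitrary finite group scheme to its identity component via half-braidings against the component group.

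First, I would observe that the identity component $\mcl{G}_o \subset \mcl{G}$ is infinitesimal by definition, so Theorem \ref{thm:Pio} applies to $\mcl{G}_o$ and yields the tensor product property
\[
\supp_\msf{Y}(V_o \otimes W_o) = \supp_\msf{Y}(V_o) \cap \supp_\msf{Y}(W_o)
\]
for all $V_o, W_o$ in $\Coh(\mcl{G}_o)$. Here I use the fact, noted in the preamble to Lemma \ref{lem:coh1}, that $\Ext^\ast_{\O(\mcl{G})}(k,k)$ and $\Ext^\ast_{\O(\mcl{G}_o)}(k,k)$ coincide, so that the ambient projective variety $\msf{Y}$ in which supports are computed does not depend on whether one works from the point of view of $\mcl{G}$ or of $\mcl{G}_o$.

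With the tensor product property for $\Coh(\mcl{G}_o)$ in hand, the single hypothesis of Lemma \ref{lem:1026} is verified. Applying that lemma verbatim to any $W$ in $\Coh(\mcl{G})$ and any $V$ in the Drinfeld centralizer $Z^{\Coh(\pi)}(\Coh(\mcl{G}))$---equivalently, any $V$ equipped with a $\pi$-equivariant (conjugation) structure compatible with the identification of $\pi$-equivariance with $\Coh(\pi)$-centralizing structures discussed just before the lemma---yields the desired identification \eqref{eq:1138}, with $F$ the forgetful functor from Definition \ref{def:ZC}.

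In truth, there is no genuine obstacle to overcome at the level of the corollary itself: the substantive work is entirely upstream. The nontrivial content is split between Theorem \ref{thm:Pio}, which is the hypersurface-support proof that non-perfection propagates across tensor products in the connected case, and Lemma \ref{lem:1026}, which handles the bookkeeping coming from the $\pi_0(\mcl{G})$-grading on $\Coh(\mcl{G})$ together with the centralizing identity \eqref{eq:1046} that a $\Coh(\pi)$-half-braiding furnishes. The corollary is the clean composition of these two results.
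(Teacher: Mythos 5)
Your proposal is correct and is exactly the paper's argument: the corollary is stated immediately after Theorem \ref{thm:Pio} with the one-line justification that one applies Lemma \ref{lem:1026}, whose sole hypothesis (the tensor product property for $\Coh(\mcl{G}_o)$) is supplied by Theorem \ref{thm:Pio} since the identity component is infinitesimal. Your additional remarks about the agreement of the cohomology rings for $\mcl{G}$ and $\mcl{G}_o$ and the identification of $\pi$-equivariant structures with $\Coh(\pi)$-centralizing structures match the paper's surrounding discussion.
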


We would suggest from examples such as Example~\ref{ex:no_tpp} that the conditions on $V$ in the above result are, in a global sense, optimal.  More specifically, the tensor product property simply cannot hold in the non-connected case, and the above corollary gives (arguably) the strongest sense in which the tensor product property does hold for coherent sheaves on arbitrary $\mcl{G}$.

\begin{remark}
Due to our particular left/right conventions (see Section \ref{sect:side}), the $\rep(\pi)$-centralizing object $V$ in Corollary~\ref{cor:Pi} must appear on the \emph{left}.  However, if we take $V$ in the full Drinfeld center $Z(\Coh(\mcl{G}))$, then the analogous claim~\eqref{eq:1138} is side-independent.
\end{remark}

\subsection{A remark on centrality and nonbraided TPP}
\label{sect:Z_supp}

We propose an alternate version of the tensor product property which is more robust that the usual version \eqref{eq:tpp}, and is satisfied by all examples which currently appear in the literature.  Our particular framing here is informed by $4$-dimensional topological field theory \cite{brochierjordansnyder,brochierjordansafronovsnyder}, although we don't elaborate on this point here.  From a more pragmatic perspective, we want to provide a version of the tensor product property which \emph{is} satisfied by the examples $\Coh(\mcl{G})$ considered above.
\par

One can view any nonbraided tensor category $\msc{C}$ as a tensor category \emph{over} its Drinfeld center $Z(\msc{C})$.  We recall that a tensor category $\msc{C}$ over a given braided tensor category $\msc{Z}$ is a tensor category equipped with a central tensor functor $F:\msc{Z}\to \msc{C}$.  Formally, such central $F$ consists of a choice of tensor functor $F_0:\msc{Z}\to \msc{C}$ and a braided lift of this functor to the Drinfeld center $F_1:\msc{Z}\to Z(\msc{C})$.  Informally, we simply mean that for arbitrary $V$ in $\msc{Z}$ and $W$ in $\msc{C}$ there is a commutativity relation $F(V)\ot W\cong W\ot F(V)$ which is natural in $V$ and $W$.

\begin{example}
For the quantum Borel $u_q(\mfk{b})$, the forgetful functor $\rep u_q(\mfk{g})\to \rep u_q(\mfk{b})$ from the associated quantum group admits a central structure $\rep u_q(\mfk{g})\to Z(\rep u_q(\mfk{b}))$.  This follows from the explicit form of the $R$-matrix for $u_q(\mfk{g})$.
\end{example}

For nonbraided $\msc{C}$ we consider again the support $\supp_\msf{Y}(V)=\Supp_{\msf{Y}} \Ext_{\msc{C}}^\ast(V,V)^\sim$, but now adopt a tensor relation which expresses a ``linearity" property with respect to the action of the Drinfeld center $Z(\msc{C})$,
\begin{equation}\label{eq:1228}
\supp_\msf{Y}(F(V)\ot W)\overset{\rm question}=\supp_\msf{Y}(F(V))\cap \supp_\msf{Y}(V).
\end{equation}
Here $V$ is in $Z(\msc{C})$, $W$ is in $\msc{C}$, and $F:Z(\msc{C})\to \msc{C}$ is the forgetful functor.  The implicit half-braiding on objects in $Z(\msc{C})$ implies that there is no sidedness in the expression~\eqref{eq:1228}.  One considers the relation~\eqref{eq:1228} as a replacement for the usual tensor product property \eqref{eq:tpp}.

More generally, for $F:\msc{Z}\to \msc{C}$ any central tensor functor we can consider the relation analogous to \eqref{eq:1228} for $\msc{Z}$ acting on $\msc{C}$.  We leave the proof of the following Lemma to the interested reader.

\begin{lemma}\label{lem:c_tpp}
The following are equivalent:
\begin{enumerate}
\item The relation \eqref{eq:1228} holds for all $V$ in $Z(\msc{C})$ and $W$ in $\msc{C}$.
\item The analogous relation \eqref{eq:1228} holds for any braided $\msc{Z}$ equipped with a central tensor functor $F:\msc{Z}\to \msc{C}$.
\end{enumerate}
Furthermore, when $\msc{C}$ itself admits a braiding then either of the above two points are equivalent to the usual tensor product property \eqref{eq:tpp} for $\msc{C}$.
\end{lemma}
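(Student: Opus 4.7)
The plan is to exploit the universal property of the Drinfeld center: any central tensor functor $F:\msc{Z}\to\msc{C}$ from a braided tensor category $\msc{Z}$ factors canonically as $F=F_0\circ\tilde{F}$, where $\tilde{F}:\msc{Z}\to Z(\msc{C})$ is a braided tensor functor and $F_0:Z(\msc{C})\to\msc{C}$ is the forgetful functor. This factorization immediately reduces the general statement (2) to the special case (1).

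Concretely, for the direction (1) $\Rightarrow$ (2), given such an $F$ together with $V\in\msc{Z}$ and $W\in\msc{C}$, one writes $F(V)=F_0(\tilde{F}(V))$ with $\tilde{F}(V)\in Z(\msc{C})$, so that the desired tensor product equation for $F$ becomes an instance of (1) applied to the object $\tilde{F}(V)$. For the reverse direction (2) $\Rightarrow$ (1), one takes $\msc{Z}=Z(\msc{C})$ and $F=F_0$, noting that the forgetful functor is itself canonically central (the associated lift $\tilde{F}_0$ is the identity on $Z(\msc{C})$), so that (2) applied to this case is literally (1).

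For the braided case, suppose $\msc{C}$ itself admits a braiding $c$. Then the identity functor $\id_\msc{C}$ equipped with the half-braidings furnished by $c$ defines a central tensor functor $\msc{C}\to\msc{C}$, and applying (2) to this functor yields the usual tensor product property \eqref{eq:tpp} on all of $\msc{C}$. Conversely, if \eqref{eq:tpp} holds in $\msc{C}$, then for any central $F:\msc{Z}\to\msc{C}$ and any $V\in\msc{Z}$, $W\in\msc{C}$, the product $F(V)\ot W$ is simply a tensor product in $\msc{C}$ and \eqref{eq:tpp} directly yields the centralizing tensor product property \eqref{eq:1228}.

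The argument is essentially formal; the main subtlety to pin down is the definitional one that the support of an object in $Z(\msc{C})$, or of an object in any $\msc{Z}$ mapping centrally to $\msc{C}$, is computed via its image in $\msc{C}$ under the (forgetful) central functor. Once this is established, no further homological input is required beyond the universal factorization through $Z(\msc{C})$.
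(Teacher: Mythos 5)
Your proof is correct. The paper explicitly leaves the proof of this lemma to the reader, and your argument is precisely the intended formal one: since a central tensor functor is, by the paper's definition, the data of a tensor functor $F_0:\msc{Z}\to\msc{C}$ together with a braided lift $F_1:\msc{Z}\to Z(\msc{C})$, the equivalence of (1) and (2) is immediate from that factorization, and the braided case follows by viewing the identity functor of $\msc{C}$ (equipped with the braiding as half-braiding) as a central functor.
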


\begin{definition}
We say that support for a tensor category $\msc{C}$ satisfies the \emph{centralized tensor product property} if equation \eqref{eq:1228} holds at all $V$ in $Z(\msc{C})$ and $W$ in $\msc{C}$.
\end{definition}

Corollary \ref{cor:Pi} implies that, for sheaves on non-connected $\mcl{G}$ for example, the standard tensor product property~\eqref{eq:tpp} generally fails while the central version~\eqref{eq:1228} still holds.  More explicitly, support for the category $\Coh(\mcl{G})$ of coherent sheaves on the smash product $\mcl{G}=(\mbb{G}_{a(1)}\times \mbb{G}_{a(1)})\rtimes \mbb{Z}/2\mbb{Z}$ was shown to \emph{not} have the tensor product property \eqref{eq:tpp}, in Example \ref{ex:no_tpp}.  But we have shown that support for $\Coh(\mcl{G})$, for this particular $\mcl{G}$, does have the central tensor product property outlined above.  This alternate tensor product property also holds in all of the (counter)examples considered in~\cite{bensonwitherspoon14,plavnikwitherspoon18}, where the standard tensor product property was shown to fail.  
\par

Furthermore, for pointed Hopf algebras with a ``bad" choice of grouplikes the centralized tensor product property \eqref{eq:1228} is, seemingly, a more appropriate relation to consider, in comparing with the usual version \eqref{eq:tpp}.  One can see our explicit analysis of quantum complete intersections with various choices of grouplikes given in Theorems \ref{thm:fun1} and \ref{thm:qci} below.

\section{Bosonized quantum complete intersections}
\label{sect:fun-q}

We consider support for some non-classical analogs of functions on infinitesimal group schemes, over the complex numbers.  We focus on a class of examples associated to skew polynomial rings with odd order skewing parameters.  We refer to these examples as quantum complete intersections, although the nomenclature \emph{quantum linear spaces} is also common \cite{andruskiewitschschneider98,bonteanikshych}.

\subsection{Bosonized quantum complete intersections}
\label{sect:qci_1}

Consider a skew polynomial ring
\[
\msf{U}^+=\mbb{C}\langle\!\langle x_1,\dots,x_n\!\rangle\!\rangle/(x_ix_j-q_{ij}x_jx_i:i\neq j),\ \ q_{ij}=q^{a_{ij}},
\]
defined by a choice of root of unity $q$ of odd order, and an integer matrix $[a_{ij}]$ which is skew symmetric away from the diagonal $a_{ij}=-a_{ji}$ and of constant value $a_{ii}=1$ along the diagonal.  Fix $l=\operatorname{ord}(q)$.
\par

Take now $G^\vee$ a finite abelian group equipped with a bilinear form $q^{(-,-)}:G^\vee\times G^\vee\to \mbb{C}^\times$ such that, for distinguished elements $e_1,\dots, e_n$ and $\chi_1,\dots, \chi_n$ in $G^\vee$, we have
\begin{equation}\label{eq:1549}
q^{(e_i,e_j)}=q^{a_{ij}}\ \ \text{and}\ \ q^{(e_i,\chi_j)}=q^{(e_i,e_j)_{\rm alt}}\ \ \forall\ \ 1\leq i\leq n.
\end{equation}
Here $(-,-)_{\rm alt}$ is the alternating form $(e_i,e_j)_{\rm alt}=\frac{1}{2}(a_{ij}-a_{ji})$.
\par

One can construct such $G^\vee$ by simply considering a rank $2n$ elementary abelian $l$-group with free generators $e_1,\dots, e_n, \chi_1,\dots, \chi_n$, and form defined on these generators as above.  On the other hand, when the determinant of the matrix $[a_{ij}]$ is invertible mod $l$, the required elements $\chi_j$ already exist in the rank $n$ elementary abelian $l$-group generated by the $e_i$, with form defined in accordance with \eqref{eq:1549}.
\par

In any case, we fix $G^\vee$ as above and take $\Lambda=\mbb{C}G$ the group ring of the character group for $G^\vee$.  The form on $G^\vee$ endows the tensor category $\rep(\Lambda)$ with a braided structure.  In terms of homogeneous vectors, the braiding on $\rep(\Lambda)$ is given explicitly by
\[
c_{V,W}:V\ot W\to W\ot V,\ \ v\ot w\mapsto q^{(\deg v,\deg w)}w\ot v.
\]
The algebra $\msf{U}^+$ then becomes an algebra in $\rep(\Lambda)$, or rather in its $\operatorname{Ind}$-completion $\Rep(\Lambda)$, by taking each generator $x_i\in \msf{U}^+$ to be of $G^\vee$-degree $e_i$.
\par

The coproduct $\Delta(x_i)=x_i\ot 1+1\ot x_i$ on $\msf{U}^+$ endows it with the structure of a Hopf algebra in $\Rep(\Lambda)$, and we have the quotient Hopf algebra
\[
\msf{u}^+=\msf{U}^+/(x_i^l:1\leq i\leq n)
\]
in $\rep(\Lambda)$.  By applying bosonization (Section~\ref{sect:bhb}) we obtain Hopf algebras $\msf{U}=\msf{U}^+\rtimes \Lambda$ and $\msf{u}=\msf{u}^+\rtimes \Lambda$, in $Vect$, and the quotient map $\msf{U}\to \msf{u}$ deforms $\msf{u}$ along the trivial commutative Hopf subalgebra $Z=\mbb{C}\b{x_1^l,\dots, x_n^l}$.  (By trivial, we mean that $\Lambda$ acts trivially on $Z$.)  So $\msf{u}$ is geometrically Chevalley with Chevalley integration $\msf{U}\to \msf{u}$.  We refer to $\msf{u}$ constructed in this manner as a bosonized algebra of functions on a quantum complete intersection, or just \emph{a bosonized quantum complete intersection}.
\par

To be clear, as an algebra
\begin{equation}\label{eq:1152}
\msf{u}=\big(\mbb{C}\langle x_1,\dots, x_n\rangle/(x_ix_j-q_{ij}x_jx_i,\ x_i^l)_{i,j}\big)\rtimes \Lambda
\end{equation}
and the coproduct is given by the formula $\Delta(x_i)=x_i\ot 1+K_i\ot x_i$, where $K_i$ is the character $K_i=q^{(e_i,-)}:G^\vee\to \mbb{C}^\times$.

\begin{remark}
The name ``quantum complete intersection" is somewhat unsatisfactory, as it does not reference the Hopf structure on this algebra.  Such algebras are probably more accurately understood as $q$-analogs of functions on first Frobenius kernels $(\mbb{G}_{a}^n)_{(1)}$ in additive group schemes $\mbb{G}_a^n$ of varying rank.
\end{remark}

\begin{remark}
There is some ambiguity in our presentation of the Hopf algebras $\msf{u}$, as we only specify the grouplikes within a certain range of parameters.  We consider the ``standard" choice of grouplikes for such a quantum complete intersection in Section \ref{sect:qci_2} below.
\end{remark}

\begin{theorem}\label{thm:fun1}
Consider $\msf{u}$ a bosonized quantum complete intersection, with grouplikes $G$ as above.  Then for arbitrary $V$ and $W$ in $\rep(\msf{u})$, we have
\[
\supp_\msf{Y}(V\ot W)=\supp_\msf{Y}(V)\cap\supp_\msf{Y}(W).
\]
\end{theorem}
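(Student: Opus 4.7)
The plan is to apply Lemma~\ref{lem:gen_lem2} in the geometrically Chevalley setting of Section~\ref{sect:qci_1}: the Chevalley integration $\msf{U}=\msf{U}^+\rtimes\Lambda\to\msf{u}$ is parametrized by the central Hopf subalgebra $Z=\mbb{C}\b{x_1^l,\dots,x_n^l}\subset\msf{U}^+$, and $\msf{U}^+$ is local of finite global dimension with $\Lambda$ semisimple. Lemma~\ref{lem:gen_lem2} thus reduces the theorem to the following noncommutative Hopkins claim: for every $f\in m_Z$ with nonzero reduction in $m_Z/m_Z^2$ and every non-perfect finitely generated $M$ over the hypersurface $\msf{U}^+/(f)$, the trivial module $\mbb{C}$ lies in the thick subcategory $\langle\lambda\ot M:\lambda\in\operatorname{Irrep}(\Lambda)\rangle$ of $D_{coh}(\msf{U}^+/(f))$.

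Granting this claim, Lemma~\ref{lem:gen_lem2} gives the tensor product property for hypersurface support. To promote it to the tensor product property for the cohomological support $\supp_\msf{Y}$ I invoke Corollary~\ref{cor:hopf_hyp}, whose hypothesis is that $\kappa:\msf{Y}\to\mbb{P}(m_Z/m_Z^2)$ is a closed embedding. In the generic regime this follows from Lemma~\ref{lem:surj0}: a direct Koszul computation shows that $\Ext^\ast_\msf{u}(\mbb{C},\mbb{C})_{\rm red}$ is a polynomial ring generated in degree~$2$ by the deformation classes of $x_1^l,\dots,x_n^l$, so the entire proof pivots on the noncommutative Hopkins claim.

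For the Hopkins claim itself I begin with two preliminary reductions: Corollary~\ref{cor:reductions} lets me replace $f$ by its linear part, so that $f=\sum c_ix_i^l$ with no higher-order terms; then the rescaling Hopf automorphism $x_i\mapsto\alpha_i x_i$ of $\msf{U}^+$ (which preserves $Z$ and the $\Lambda$-grading) puts $f$ in the normal form $x_{i_1}^l+\dots+x_{i_m}^l$. For such $f$, I would exploit the $2$-periodicity of syzygies over a hypersurface, implemented by a noncommutative matrix factorization analysis of $\msf{U}^+/(f)$, together with the $\Lambda$-orbit of $M$, to adapt a Carlson--Iyengar style generation argument \cite{carlsoniyengar15}. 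Genericity of the parameters enters crucially here: it ensures that the homomorphism $\operatorname{Irrep}(\Lambda)\to\mu_l^n$ recording the scalings $(\lambda(K_1),\dots,\lambda(K_n))$ is surjective, so that the orbit $\{\lambda\ot M\}_\lambda$ runs through every conjugate of $M$ under the full rescaling torus $\mu_l^n$.

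The main obstacle is this final step. In a commutative local hypersurface Hopkins extracts $\mbb{C}$ from $\langle M\rangle$ using the central action of the ring itself, but for $\msf{U}^+/(f)$ the largest commutative subalgebra is $Z/(f)$, which is \emph{regular} of dimension $n-1$ and hence invisible to the singularity category. The $\Lambda$-orbit must compensate: tensoring with all $\lambda$ averages over the noncentral rescalings of the generators $x_i$, and converting this averaging into a genuine stable-category generation statement is precisely what the promised ``analysis of derived categories for noncommutative local hypersurface rings'' alluded to in Lemma~\ref{lem:gen_lem2} must accomplish.
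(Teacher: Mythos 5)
Your reduction is the right one: invoking Lemma~\ref{lem:gen_lem2} to reduce the theorem to the claim that $\mbb{C}\in\langle\lambda\ot M:\lambda\in\operatorname{Irrep}(\Lambda)\rangle$ for non-perfect $M$ over $\msf{U}^+/(f)$, and passing from hypersurface to cohomological support via Lemma~\ref{lem:surj0} and Corollary~\ref{cor:hopf_hyp} (the paper cites \cite{berghoppermann08} for the polynomiality of $\Ext^\ast_\msf{u}(\mbb{C},\mbb{C})_{\rm red}$), is exactly how the argument is organized. But the heart of the proof is the Hopkins-type generation claim itself, and there you have not given an argument: you name ``a noncommutative matrix factorization analysis'' and ``a Carlson--Iyengar style generation argument'' and then concede that converting the $\Lambda$-averaging into an actual generation statement is the unresolved obstacle. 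That step is the entire content of the theorem, so as written the proposal has a genuine gap.

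The mechanism that closes it is more elementary than a matrix-factorization analysis of $\msf{U}^+/(f)$. One works with the Koszul dg resolution $K_Q=\msf{U}^+\ot_{\mbb{C}\b{f}}K_{\mbb{C}\b{f}}$ of the hypersurface, whose reduction $\mbb{C}\ot_{\msf{U}^+}K_Q$ is the dual numbers $\mbb{C}[\varepsilon]$ with $\varepsilon=d_f$ in degree $-1$. Adjunction shows $\mbb{C}\ot^{\rm L}_{\msf{U}^+}M$ is non-perfect over $\mbb{C}[\varepsilon]$ when $M$ is non-perfect, and the dg Hopkins theorem for $\mbb{C}[\varepsilon]$ (\cite[Theorem 4.4]{carlsoniyengar15}, or just Koszul duality with the PID $\mbb{C}[t]$) gives $\mbb{C}\in\langle\mbb{C}\ot^{\rm L}_{\msf{U}^+}M\rangle$. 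The functor $\mbb{C}\ot^{\rm L}_{\msf{U}^+}-$ is then computed by tensoring with the $q$-exterior Koszul bimodule resolution $F=(E\ot\msf{U}^+,d_F)$ of $\mbb{C}$ over $\msf{U}^+$, whose \emph{left} $\msf{U}^+$-action is twisted by the characters $\chi_j$ of \eqref{eq:1549}; consequently every term of $F\ot_{\msf{U}^+}M$ is of the form $\chi\ot M$ with $\chi$ a simple of $\Lambda$, so $\mbb{C}\ot^{\rm L}_{\msf{U}^+}M\in\langle\lambda\ot M:\lambda\in\operatorname{Irrep}(\Lambda)\rangle$ and the claim follows. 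Note also that you have misplaced the role of genericity: surjectivity onto a rescaling torus is not used anywhere. Genericity of $P$ only governs whether the twisting characters $\chi_j$ already live in the standard rank-$n$ group of grouplikes (the distinction between Theorem~\ref{thm:fun1} and Theorem~\ref{thm:qci}); the generation claim, and Theorem~\ref{thm:fun1} as stated with the enlarged group $G$, hold for arbitrary parameters. Your preliminary normalization of $f$ to a sum of $l$-th powers is likewise unnecessary, since the argument above works for any $f$ with nonzero reduction in $m_Z/m_Z^2$.
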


We prove Theorem~\ref{thm:fun1} in Section~\ref{sect:q-prf} below.  Before giving the proof, let us compare the above result with our findings for functions on non-connected group schemes $\O(\mcl{G})$, Corollary~\ref{cor:Pi}.  We refer to the situation considered in the present section as the ``$q$-symmetric" situation.
\par

In both the non-connected case of Corollary~\ref{cor:Pi} and the $q$-symmetric case, orbits of objects under the tensor-action of the simples appear in the proof.  Furthermore, the centralizing category $Z^{\rep(\Lambda)}(\rep(\msf{u}))$ also appears in the proof of Theorem~\ref{thm:fun1} (and in the statements of Theorem \ref{thm:qci} below).  There is, however, a fundamental distinction between these two cases.  Namely, in the $q$-symmetric setting, the parametrizing subalgebra $Z\subset \msf{U}$ is a Hopf subalgebra in $\msf{U}$.  This implies that modules over each hypersurfaces $\msf{U}/(f)$ have a \text{bimodule} structure over $\msf{u}$.  Such normality does \emph{not} hold in the case of functions on a non-connected group scheme, in general.
\par

Indeed, in the case of non-connected $\mcl{G}$ the ``hard" inclusion
\[
\big(\supp_\msf{Y}(V)\cap\supp_\msf{Y}(W)\big)\subset \supp_\msf{Y}(V\ot W)
\]
still holds on $\Coh(\mcl{G})$.  It is the opposite inclusion that fails to hold.  The opposite inclusion in the $q$-symmetric setting is forced by (co)normality of the deformation $\msf{U}\to \msf{u}$ (Corollary~\ref{cor:weak_tpp2}).  In considering Theorem \ref{thm:qci} below, which makes a claim mirroring almost exactly that of Corollary \ref{cor:Pi}, we would still propose that the situations of \ref{thm:qci} and \ref{cor:Pi} are rather dissimilar.  And so, we view the uniform nature of Corollary \ref{cor:Pi} and Theorem \ref{thm:qci} as a testament to the general framework outlined in Section \ref{sect:Z_supp}.

\subsection{Proof of Theorem~\ref{thm:fun1}}
\label{sect:q-prf}

\begin{proof}[Proof of Theorem~\ref{thm:fun1}]
We first note that $\Ext^\ast_\msf{u}(\mbb{C},\mbb{C})_{\rm red}$ is a polynomial ring \cite{berghoppermann08}, so that the map $\kappa:\msf{Y}\to \mbb{P}$ is a closed embedding by Lemma~\ref{lem:surj0}.  Hence we have an inclusion of supports $\supp_\msf{Y}(V\ot W)\subset \big(\supp_\msf{Y}(V)\cap\supp_\msf{Y}(W)\big)$, by Corollary~\ref{cor:weak_tpp2}.  We seek the opposite inclusion.

Fix $f\in m_Z$ with non-trivial linear component.  According to Lemma~\ref{lem:gen_lem2}, it suffices to prove that $\mbb{C}$ is in the thick subcategory $\langle \lambda\ot M:\lambda\in \operatorname{Irrep}(\Lambda)\rangle$ for finite non-perfect $M$ over $\msf{U}^+/(f)$.  Fix $R=\msf{U}^+/(f)$, $Q=\msf{U}^+$, $Z=\mbb{C}\b{f}$, and consider the corresponding deformation sequence $Z\to Q\to R$.  Fix also $T$ to be the central subalgebra $\mbb{C}\b{x_1^l,\dots,x_n^l}$ in $Q$.
\par

Take $K_Q=Q\ot_Z K_Z$ the corresponding dg resolution of $R$, with $K_Z$ the Koszul resolution of $k$ over $Z$, as in Section \ref{sect:koszul}.  We have the reduction $\mbb{C}\ot_Q K_Q$, which is isomorphic to the dual numbers $\mbb{C}[\varepsilon]=\mbb{C}[d_f]/(d_f^2)$ with generator in degree $-1$, and may consider the reduction
\[
\mbb{C}\ot^{\rm L}_Q-:K_Q\text{-dgmod}\to \mbb{C}[\varepsilon]\text{-dgmod}.
\]
We consider here, specifically, dg $K_Q$-modules with finite cohomology over $T$ and dg $\mbb{C}[\varepsilon]$-modules with finite-dimensional cohomology.
\par

By adjunction we see that
\[
\Ext^\ast_{K_Q}(M,\mbb{C})=\Ext^\ast_{\mbb{C}[\varepsilon]}(\mbb{C}\ot^{\rm L}_Q M,\mbb{C})
\]
so that $\mbb{C}\ot^{\rm L}_Q M$ is non-perfect over the dual numbers whenever $M$ is non-perfect over $K_Q$.  By a dg version of Hopkins Theorem~\cite[Theorem 4.4]{carlsoniyengar15} for $\mbb{C}[\varepsilon]$, we have then
\begin{equation}\label{eq:1244}
\mbb{C}\in \langle \mbb{C}\ot^{\rm L}_QM\rangle\ \ \text{whenever}\ \ M\text{ is non-perfect over }K_Q,
\end{equation}
where $\langle L\rangle$ denotes the thick subcategory in $D_{fin}(\mbb{C}[\varepsilon])$ generated by the given object $L$.  We claim that this implies
\[
\mbb{C}\in \langle \lambda\ot M:\lambda\in \operatorname{Irrep}(\Lambda)\rangle\subset D_{coh}(K_Q).
\]
Let us prove this claim.
\par

We have the $q$-exterior dg algebra
\[
E=\wedge^\ast_q\mbb{C}\{d_{x_1},\dots, d_{x_n}\}=\mbb{C}\langle d_{x_i}:1\leq i\leq n\rangle/(d_{x_i}d_{x_j}+q_{ij}d_{x_j}d_{x_i}, d_{x_i}^2)
\]
in the braided category of chain complexes over $\rep(\Lambda)$, where the $d_{x_i}$ are of cohomological degree $-1$ and respective $G^\vee$-degree $e_i$.  We consider the corresponding Koszul resolution $F=(E\ot Q, d_F)$ of $\mbb{C}$ over $Q$, with explicit differential
\[
d_F(d_{x_{i_r}}\dots d_{x_{i_1}}\ot a)=\sum_j(-1)^{j+1}(\prod_{t<j}q_{i_ji_t})d_{x_{i_r}}\dots\hat{d_{x_{i_j}}}\dots d_{x_{i_1}}\ot x_{i_j}a.
\]
This complex is in fact a dg {\it bimodule} over $Q$ via symmetry of $Q$.  Namely, we have the well-defined left action of $Q$ on $F$ given by the formula
\begin{equation}\label{eq:1625}
b\cdot(v\ot w):=q^{(\deg(b),\deg(v))_{\rm alt}}v\ot bw=q^{(\deg(b),\chi_v)}v\ot bw,
\end{equation}
where $(-,-)_{\rm alt}$ is as in \eqref{eq:1549} and $\chi_v=\sum_j \chi_{i_j}$ for $v=\prod_j d_{x_{i_j}}$.  Hence tensoring by $F$ provides an endofunctor
\[
F\ot^{\rm L}_Q-=F\ot_Q-:D_{coh}(K_Q)\to D_{coh}(K_Q)
\]
which is isomorphic to the endofunctor $\mbb{C}\ot^{\rm L}_Q-$, which we interpret as the composition of the reduction to $\mbb{C}[\varepsilon]$ composed with restriction $D_{fin}(\mbb{C}[\varepsilon])\to D_{coh}(K_Q)$.
\par

Let $\msc{D}\subset \rep(\Lambda)$ denote the tensor subcategory generated by the simples associated to the characters $\chi_j$.  By construction of $F$ and equation \eqref{eq:1625}, we have for any dg $K_Q$-module $M$,
\[
\begin{array}{l}
\mbb{C}\ot^{\rm L}_QM\cong F\ot^{\rm L}_QM\in \langle E^i\ot M:0\leq i\leq n\rangle\\
\hspace{4cm}\subset \langle \chi\ot M:\chi\text{ a simples in }\msc{D}\rangle\\
\hspace{5cm}\subset \langle \lambda\ot M:\lambda\text{ a simples in }\rep(\Lambda)\rangle
\end{array}
\]
By~\eqref{eq:1244}, imperfection of $M$ over $K_Q$ then implies
\[
\mbb{C}\in \langle \lambda\ot M:\lambda\in \operatorname{Irrep}(\Lambda)\rangle,
\]
as desired.  The tensor product property now follows as an application of Lemma~\ref{lem:gen_lem2}.
\end{proof}

\begin{remark}\label{rem:1586}
The deformation $Q\to R$ employed in the proof does not fit into the setup of Section \ref{sect:setup}, as $Q$ is not finite over the parametrizing algebra $Z=k\b{f}$ in this case.  (Although, $Q$ is finite over complete local central subalgebra $T\supset Z$.)  However, all we use in this case is the associated Koszul resolution $K_Q\to R$, which is a valid construction for any deformation $Q\to R$ with complete, formally smooth parametrizing algebra $Z$.
\end{remark}

\begin{remark}\label{rem:ok}
By considering Koszul duality $D_{fin}(\mbb{C}[\varepsilon])\cong D_{coh}(\mbb{C}[t])$, $\deg(t)=2$, and the fact that $\mbb{C}[t]$ is a PID, one can basically classify thick subcategories in $D_{fin}(\mbb{C}[\varepsilon])$ by hand.  (The classification is rather easy for the stable category $D_{sing}(\mbb{C}[\varepsilon])$, which is sufficient for our needs.)  That is to say, one doesn't actually have to understand the dg version of Hopkins' theorem, in full regalia, in order to prove of Theorem~\ref{thm:fun1}.
\end{remark}

\subsection{Changing grouplikes (standard QCIs)}
\label{sect:qci_2}

Consider again the positive algebras of Section \ref{sect:qci_1}, which we relabel as
\[
\msf{A}_q^+(P)=\mbb{C}\langle\!\langle x_1,\dots,x_n\!\rangle\!\rangle/(x_ix_j-q_{ij}x_jx_i)_{i,j}
\]
\[
\text{and}\ \ \msf{a}^+_q(P)=\mbb{C}\langle x_1,\dots, x_n\rangle/(x_ix_j-q_{ij}x_jx_i,\ x_i^l)_{i,j}.
\]
Here $q$ is a root of unity of odd order $l$ and $P=[a_{ij}]$ is an integer matrix for which $q_{ij}=q^{a_{ij}}$.  As before we assume $a_{ij}=-a_{ji}$ off the diagonal, and all $a_{ii}=1$.  Having fixed such $P$, we adopt the more efficient notation $A_q^+=A_q^+(P)$ and $\msf{a}^+_q=\msf{a}^+_q(P)$.
\par

Consider $G^{\rm st}$ the rank $n$ elementary abelian $l$-group with generators $K_i$, and let $G^{\rm st}$ act on $\msf{A}^+_q$ and $\msf{a}^+_q$ in the expected way $K_i\cdot x_j=q_{ij}x_j$.  Then the smash products
\[
\msf{A}_q=\msf{A}^+_q\rtimes G^{\rm st}\ \ \text{and}\ \ \msf{a}_q=\msf{a}^+_q\rtimes G^{\rm st}
\]
become Hopf algebras under the coproduct $\Delta(x_i)=x_i\ot 1+K_i\ot x_i$.
\par

The Hopf algebra $\msf{A}_q$ integrates $\msf{a}_q$, with parametrizing Hopf subalgebra $Z=\mbb{C}\b{x_1^l,\dots, x_n^l}$, and the positive (coideal) subalgebra $\msf{A}^+_q$ deforms $\msf{a}^+_q$ along $\Spf(Z)$ as well.

The Hopf algebra(s) $\msf{a}_q$ constructed as above are what most people would refer to as \emph{the} bosonized quantum complete intersection, or \emph{the} quantum linear space, associated to a root of unity $q$ and matrix $[a_{ij}]$.  The algebras $\msf{u}$ of Section \ref{sect:qci_1} and $\msf{a}_q$ of course differ only in the choice of grouplikes and, by considering the delta characters $\delta_j=q^{(-,e_j)}\chi_j^{-1}$ in $G^\vee$, we see that there are Hopf inclusion
\[
\operatorname{incl}:\msf{A}_q\to \msf{U}\ \ \text{and}\ \ \operatorname{incl}:\msf{a}_q\to \msf{u}.
\]
These inclusions are the identity on their respective positive parts, and send the $K_i$ in $G^{\rm st}$ to the $K_i$ in $G$.
\par

We let $\Lambda^{\rm st}$ and $\Lambda$ denote the group rings of $G^{\rm st}$ and $G$ respectively, where $G$ is as in Section \ref{sect:qci_1}.

\begin{theorem}\label{thm:qci}
Consider $\msf{a}_q=\msf{a}_q(P)$ the bosonized quantum complete intersection associated to given odd order $q$ and matrix $P=[a_{ij}]$.   For $V$ in the Drinfeld centralizer $Z^{\rep(\Lambda^{\rm st})}(\rep \msf{a}_q)$, and arbitrary $W$ in $\rep(\msf{a}_q)$, we have
\[
\supp_\msf{Y}(F(V)\ot W)=\supp_\msf{Y}(F(V))\cap\supp_\msf{Y}(W).
\]
\end{theorem}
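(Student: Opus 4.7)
My plan is to reduce Theorem~\ref{thm:qci} to an application of Lemma~\ref{lem:gen_lem2} with the tensor subcategory $\msc{D}=\rep(\Lambda^{\rm st})\subset\rep(\msf{a}_q)$, mirroring the proof of Theorem~\ref{thm:fun1} but replacing the enlarged bosonization group $G$ with its standard counterpart $G^{\rm st}$. First I would verify that $\msf{a}_q$ is geometrically Chevalley with Chevalley integration $\msf{A}_q\to\msf{a}_q$ parametrized by the Hopf subalgebra $Z=\mbb{C}\b{x_1^l,\dots,x_n^l}$; the braiding on $\rep(\Lambda^{\rm st})$ is induced by the bicharacter $(\epsilon_i,\epsilon_j):=q^{a_{ij}}$ on the generators of $(G^{\rm st})^\vee$, and $\msf{a}_q^+$ is a local Hopf algebra in $\rep(\Lambda^{\rm st})$ whose bosonization is $\msf{a}_q$ itself. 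By the calculation of~\cite{berghoppermann08}, the reduced cohomology $\Ext^\ast_{\msf{a}_q}(\mbb{C},\mbb{C})_{\rm red}$ is a polynomial ring, so Lemma~\ref{lem:surj0} shows the map $\kappa:\msf{Y}\to\mbb{P}$ of~\eqref{eq:kappa} is a closed embedding; Corollary~\ref{cor:hopf_hyp} then identifies cohomological support with hypersurface support, and Corollary~\ref{cor:weak_tpp2} provides the inclusion $\supp_\msf{Y}(F(V)\ot W)\subset\supp_\msf{Y}(F(V))\cap\supp_\msf{Y}(W)$.

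Next I would invoke Lemma~\ref{lem:gen_lem2} applied to $\msc{D}=\rep(\Lambda^{\rm st})$. By Lemma~\ref{lem:bos} every object of $\rep(\msf{a}_q)$ carries a canonical $\rep(\Lambda^{\rm st})$-centralizing structure, so $F(V)$ ranges over all of $\rep(\msf{a}_q)$ as $V$ ranges over $Z^{\rep(\Lambda^{\rm st})}(\rep\msf{a}_q)$; consequently it suffices to establish the thick-subcategory hypothesis of Lemma~\ref{lem:gen_lem2}: for every $f\in m_Z$ with nontrivial linear component and every non-perfect finitely generated $M$ over $\msf{A}_q^+/(f)$, the trivial module $\mbb{C}$ lies in $\langle\lambda\ot M:\lambda\in\operatorname{Irrep}(\Lambda^{\rm st})\rangle\subset D_{coh}(\msf{A}_q^+/(f))$.

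To verify this I would reuse the dg $q$-exterior Koszul bimodule resolution $F=(E\ot Q,d_F)$ of $\mbb{C}$ over $Q:=\msf{A}_q^+$ constructed in the proof of Theorem~\ref{thm:fun1}. The twisted left action~\eqref{eq:1625} collapses to $x_i\cdot(d_{x_{i_r}}\cdots d_{x_{i_1}}\ot w)=\big(\prod_k q^{a_{i,i_k}}\big)\,d_{x_{i_r}}\cdots d_{x_{i_1}}\ot x_iw$, depending only on the skew parameters $q_{ij}$ intrinsic to $\msf{A}_q^+$; consequently the same formula promotes $F$ to a dg $Q$-bimodule in $\rep(\Lambda^{\rm st})$, with each wedge component $F^i$ decomposing as a $\Lambda^{\rm st}$-equivariant $Q$-bimodule into a direct sum of twists $\chi\ot Q$ indexed by characters $\chi\in(G^{\rm st})^\vee$ determined by the weights of the generators $d_{x_j}$. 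The dg Hopkins-type argument of Theorem~\ref{thm:fun1}, via Koszul duality of $Q/(f)$-perfection with perfection over the dual numbers $\mbb{C}[\varepsilon]$, then produces $\mbb{C}\in\langle F^i\ot_Q M:0\leq i\leq n\rangle$ for non-perfect $M$; the identification $F^i\ot_Q M\cong\bigoplus_\chi(\chi\ot M)$ places $\mbb{C}$ in the desired thick subcategory.

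The main obstacle, and essentially the only new input beyond Theorem~\ref{thm:fun1}, is verifying that the $Q$-bimodule structure on $F$ can be realized over the restricted group $\Lambda^{\rm st}$ rather than the enlarged $\Lambda$ of Section~\ref{sect:qci_1}; once this compatibility is pinned down, the proof becomes a direct transcription of the argument for Theorem~\ref{thm:fun1} with the ambient grouplikes and their centralizing fusion subcategory restricted to $\Lambda^{\rm st}$.
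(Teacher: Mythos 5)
The central step of your argument has a genuine gap: you assert that, by Lemma~\ref{lem:bos}, every object of $\rep(\msf{a}_q)$ carries a canonical $\rep(\Lambda^{\rm st})$-centralizing structure, so that $F$ is essentially surjective and Lemma~\ref{lem:gen_lem2} applies with $\msc{D}=\rep(\Lambda^{\rm st})$. But Lemma~\ref{lem:bos} requires $\msf{a}_q$ to be the bosonization of $\msf{a}_q^+$ over a \emph{braided} structure on $\rep(\Lambda^{\rm st})$, and such a structure exists only if there is a bicharacter $\sigma$ on $(G^{\rm st})^\vee$ with $\sigma(\deg x_i,-)=K_i$. Writing $\deg x_i=\epsilon_i$, with $\epsilon_i(K_j)=q^{a_{ji}}$, any relation $\sum_i c_i\epsilon_i=0$ (i.e.\ $c\in\ker(P\bmod l)$) would force $\sum_i c_iK_i=0$ in $G^{\rm st}$, which fails whenever $\det P$ is not a unit mod $l$. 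So for non-generic $P$ the algebra $\msf{a}_q$ is \emph{not} geometrically Chevalley relative to $\Lambda^{\rm st}$, Lemma~\ref{lem:bos} provides no canonical half-braiding, and Lemma~\ref{lem:gen_lem2} cannot be invoked with this $\msc{D}$. Indeed, if your claim were correct the theorem would assert the unrestricted tensor product property at arbitrary $P$, which is exactly the statement the hypothesis $V\in Z^{\rep(\Lambda^{\rm st})}(\rep\msf{a}_q)$ is designed to avoid.

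The rest of your outline is essentially sound and close to the paper's argument. One does establish $\mbb{C}\in\langle\mu\ot M:\mu\in\operatorname{Irrep}(\Lambda^{\rm st})\rangle$ for $M$ finite and non-perfect over $\msf{A}^+_q/(f)$; the paper deduces this from the already-proved statement \eqref{eq:143} over the enlarged group $G$, using that $\lambda\ot M$ depends only on the restriction $\lambda|_{G^{\rm st}}$, while your direct route through the Koszul bimodule also works --- except that the left twist in \eqref{eq:1625} is by the \emph{alternating} form, so the correct scalar is $\prod_{k:\,i_k\neq i}q^{a_{i,i_k}}$ rather than $\prod_k q^{a_{i,i_k}}$ (with the full form, which contributes $q^{a_{ii}}=q$ on the diagonal, the Koszul differential fails to be left-linear). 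The proof must then conclude by running the argument of Lemma~\ref{lem:gen_lem2} by hand, using the half-braiding that is part of the \emph{data} of $V\in Z^{\rep(\Lambda^{\rm st})}(\rep\msf{a}_q)$ to move $F(V)$ past the characters: if $W$ is non-perfect and $F(V)\ot W$ is perfect over $\msf{A}^+_q/(f)$, then $F(V)\in\langle F(V)\ot\mu\ot W\rangle_\mu=\langle\mu\ot F(V)\ot W\rangle_\mu\subset\operatorname{perf}(\msf{A}^+_q/(f))$. This is precisely where the centralizer hypothesis enters, and it cannot be dispensed with.
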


As before, $F:Z^{\rep(\Lambda^{\rm st})}(\rep \msf{a}_q)\to \rep(\msf{a}_q)$ is the forgetful functor.

\begin{proof}
We further simplify our notation and take $\msf{A}=\msf{A}_q(P)$ and $\msf{A}^+=\msf{A}^+_q(P)$.  The comultiplication on $\msf{A}$ provides $\msf{A}^+$ with a $\Lambda^{\rm st}$-comodule algebra structure,
\[
\rho:\msf{A}^+\overset{\Delta}\longrightarrow \msf{A}\ot \msf{A}^+\overset{\pi\ot 1}\longrightarrow \Lambda^{\rm st}\ot \msf{A}^+.
\]
This coaction descends to a coaction on all hypersurface algebras $\msf{A}^+/(f)$, for $f\in m_Z$.  Via this coaction we get a $\rep(\Lambda^{\rm st})$-action on the category of modules over any hypersurface algebra $\msf{A}^+/(f)$, as well as its corresponding derived category.
\par

Consider $f\in m_Z$ with nonvanishing reduction in $m_Z/m_Z^2$.  We claim that any non-perfect, finite, $M$ over $\msf{A}^+/(f)$ is such that
\begin{equation}\label{eq:139}
\mbb{C}\in \langle \mu\ot M:\mu\in \operatorname{Irrep}(\Lambda^{\rm st})\rangle\subset D_{coh}(\msf{A}^+/(f)).
\end{equation}
It was already shown in the proof of Theorem \ref{thm:fun1} that for such $M$
\begin{equation}\label{eq:143}
\mbb{C}\in \langle \lambda\ot M:\lambda\in \operatorname{Irrep}(\Lambda)\rangle\subset D_{coh}(\msf{U}^+/(f))=D_{coh}(\msf{A}^+/(f)).
\end{equation}
Now, we have the restriction map $\rep(\Lambda)\to \rep(\Lambda^{\rm st})$, and claim that for any character $\lambda\in \operatorname{Irrep}(\Lambda)$ with restriction $\mu\in\operatorname{Irrep}(\Lambda^{\rm st})$, the $\msf{A}^+=\msf{U}^+$-modules $\lambda\ot M$ and $\mu\ot M$ are literally equal.  Indeed, this just follows from the fact that the comultiplication on $\msf{U}$ sends the positive subalgebra $\msf{U}^+$ into the subalgebra $(\msf{U}^+\rtimes G^{\rm st})\ot \msf{U}^+$, so that the each endofunctor $\lambda\ot-$ of $\msf{U}^+$-mod depends only on the restriction of $\lambda$ to $G^{\rm st}$.  (The point here is that the grouplikes $K_i$ live in the subgroup $G^{\rm st}\subset G$.)
\par

Consider finally a product $V\ot W$ of a $\rep(\Lambda^{\rm st})$-centralizing representation $V$ with an arbitrary representation $W$ over $\msf{a}_q$.  Suppose that $W$ is non-perfect over a given hypersurface $\msf{A}/(f)$ while $V\ot W$ is perfect.  One uses \eqref{eq:139}, and argues as in Lemma \ref{lem:gen_lem2}, to see that $V$ must be perfect in this case.  Rather, we find that hypersurface support for $\msf{a}_q$ satisfies the stated tensor product property.  Since (reduced) cohomology for $\msf{a}_q$ is an affine space, cohomological support also satisfies this tensor product property.
\end{proof}

\section{($q$-)Regular sequences}

We consider a particular noncommutativization of the notion of a regular sequence.  Such sequences are employed in proofs of the tensor product property for cohomological support, in the examples of Section~\ref{sect:last}. 

\subsection{A choice of ambient braided fusion category $\msc{D}$}

Fix $G^\vee$ a finite abelian group with a chosen (exponentiated) form $q^{(-,-)}:G^\vee\times G^\vee\to k^\times$.  We are free to assume $\operatorname{ord}(q)|\exp(G^\vee)$ and the form $(-,-)$ can be seen as an additive form which takes values in $\mbb{Z}/\exp(G^\vee)\mbb{Z}$.  We view $G^\vee$ as the group of characters for its dual group in characteristic $0$, or for the dual Hopf algebra in finite characteristic.
\par

We fix
\[
\msc{D}=\text{The braided fusion category $Vect_{G^\vee}$, with braiding given by the form }q^{(-,-)}.
\]
Recall that $Vect_{G^\vee}$ is the fusion category of $G^\vee$-graded vector spaces.  The irreducibles in $\msc{D}$ are explicitly the $1$-dimensional vector spaces supported at a given character.  As before, in terms of homogeneous vectors, the braiding on $\msc{D}$ is given by
\[
c_{V,W}:V\ot W\to W\ot V,\ \ v\ot w\mapsto q^{(\deg v,\deg w)}w\ot v.
\]
We impose no symmetry condition on the form $(-,-)$, and we remark that the case $\msc{D}=Vect$, $G=\{1\}$, is still of interest.

\subsection{Regular sequences}
\label{sect:qreg}

Consider $A$ a dg algebra in $\msc{D}$, or more precisely the category of complexes over the $\operatorname{Ind}$-category $\operatorname{Ind}\msc{D}$ of possibly infinite-dimensional $G^\vee$-graded vector spaces.  So, we allow $A$ to be infinite-dimensional, provided it splits into character spaces $A=\oplus_\chi A_\chi$.

We let $D(A)$ denote the derived category of arbitrary dg modules.  When $A$ is finite over a central Noetherian subalgebra $T$ of cocycles, one can replace $D(A)$ with $D_{coh}(A)$, the subcategory of dg modules with finitely generated cohomology over $T$, in all of the results of this section.

\begin{definition}[\cite{kirkmankuzzhang15}]\label{def:reg_seq}
A $q$-regular sequence $x_1,\dots, x_n$ in $A$ is a sequence such that
\begin{enumerate}
\item[(a)] the $x_i$ are homogenous cocycles, with respect to the character and cohomological gradings, and
\item[(b)] for each $1\leq j\leq n$, $x_j$ is a non-zero divisor in $A/(x_{j+1},\dots, x_n)$ which is $\chi_j$-central for some character $\chi_j$, in the sense that
\[
bx_j=\pm\!\ q^{(\deg b, \chi_j)}x_jb
\]
for all (homogenous) $b\in A/(x_{j+1},\dots, x_n)$, where $\pm=(-1)^{|x_j||b|}$.
\end{enumerate}
In the case $\msc{D}=Vect$, $q=1$, we refer to such a sequence simply as a regular sequence.
\end{definition}

Given such $A$ with fixed $q$-regular sequence $\{x_1,\dots, x_n\}$ we take $A_j=A/(x_{j+1},\dots, x_n)$, and have a sequence of projections
\[
A_0\leftarrow A_1\leftarrow\dots \leftarrow A_n
\]
of dg algebra objects in $\msc{D}$.

We consider the categories of dg (bi)modules over the $A_j$.  For any object $V$ in $\msc{D}$, and dg $A_j$-bimodule $M$, we have the new bimodule $V\ot M$ with right action induced by that of $M$ and left action given by
\[
a\cdot(v\ot b)=q^{(\deg a,\deg v)}v\ot ab.
\]
Similarly, for any left dg module $M$ we have the new module $V\ot M$.  In this way $A_j\text{-dgmod}$ and $A_j\text{-dgbimod}$ become left module categories over $\msc{D}$.  Similarly, $\msc{D}$ acts on the derived categories $D(A_j)$.

\begin{lemma}\label{lem:1415}
For any $r\leq s$, the reduction $A_r\ot_{A_s}-:A_s\text{-\rm dgmod}\to A_r\text{-\rm dgmod}$ can be given a $\msc{D}$-linear structure, so that it is a map of $\msc{D}$-module categories, as can be its derived counterpart $A_r\ot^{\rm L}_{A_s}-:D(A_s)\to D(A_r)$.
\end{lemma}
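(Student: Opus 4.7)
The plan is to exhibit, for each $V \in \msc{D}$ and left $A_s$-dg-module $M$, a natural isomorphism
\[
\theta_{V,M} \colon A_r \otimes_{A_s}(V \otimes M) \overset{\sim}{\longrightarrow} V \otimes (A_r \otimes_{A_s} M)
\]
satisfying the coherence axioms for a $\msc{D}$-module functor structure on reduction, and then to descend $\theta$ to the derived category. The first step is to verify that the kernel $I = (x_{r+1},\ldots,x_s)$ of the projection $A_s \twoheadrightarrow A_r$ is a two-sided ideal in $A_s$. Indeed, by the $q$-centrality clause in Definition \ref{def:reg_seq}, for $r<j\le s$ and homogeneous $b \in A_s$ we have
\[
bx_j \equiv \pm\, q^{(\deg b,\chi_j)} x_j b \pmod{(x_{j+1},\ldots,x_s)},
\]
and since both $x_jb$ (because $x_j \in I$ and $I$ is a left ideal) and $(x_{j+1},\ldots,x_s)$ lie in $I$, so does $bx_j$. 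Consequently $A_r = A_s/I$ is a dg algebra quotient, and $A_r \otimes_{A_s} N = N/IN$ for any left $A_s$-dg-module $N$.

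With this the underived $\theta_{V,M}$ is essentially automatic: for homogeneous $v \in V$ and $m \in M$ the twisted left $A_s$-action is
\[
x_j \cdot (v \otimes m) = q^{(\deg x_j,\deg v)} v \otimes (x_j m),
\]
and since the scalar is invertible we conclude $I(V\otimes M) = V\otimes(IM)$ as subspaces of $V\otimes M$. Passing to the quotient gives the canonical identification
\[
A_r \otimes_{A_s}(V\otimes M) = (V\otimes M)/I(V\otimes M) = V\otimes(M/IM) = V\otimes(A_r\otimes_{A_s} M),
\]
which is $\theta_{V,M}$. Naturality in $V$ and $M$ is immediate, and the very same twisted-action formula shows $\theta_{V,M}$ is $A_r$-linear, where $A_r$ acts on the right-hand side via the $\msc{D}$-module structure on $A_r\text{-dgmod}$ (which in turn descends from that of $A_s\text{-dgmod}$ for precisely the same reason: $I$ preserves $V\otimes(IM)$). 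The associator and unitor coherences against the tensor product in $\msc{D}$ reduce to the tautological equality $(V\otimes W)\otimes M = V\otimes(W\otimes M)$ as $A_s$-modules and are essentially vacuous.

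For the derived statement, pass to a semiprojective resolution $P \overset{\sim}{\to} M$ over $A_s$. The functor $V\otimes-$ is exact and sends each shifted free summand $A_s[n]$ of $P$ to $V\otimes A_s[n]$, which as a left $A_s$-module is a finite sum of character-twisted copies of $A_s$ and in particular is $K$-flat. Hence $V\otimes P \overset{\sim}{\to} V\otimes M$ is a $K$-flat resolution, and applying $\theta_{V,P}$ at the chain level yields
\[
A_r \otimes^{\rm L}_{A_s}(V\otimes M) \;\cong\; A_r \otimes_{A_s}(V\otimes P) \;\overset{\theta_{V,P}}{\cong}\; V\otimes(A_r\otimes_{A_s} P) \;\cong\; V\otimes(A_r\otimes^{\rm L}_{A_s} M)
\]
in $D(A_r)$, endowing $A_r\otimes^{\rm L}_{A_s}-$ with its $\msc{D}$-linear structure. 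The only mildly delicate point in the whole argument is the character bookkeeping needed to verify $A_r$-linearity of $\theta_{V,M}$ and the coherence hexagons; everything else is formal once the $q$-centrality is used to identify $I(V\otimes M)$ with $V\otimes(IM)$.
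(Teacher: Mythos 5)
Your proof is correct and follows essentially the same route as the paper: the paper writes down the explicit map $v\ot(a\ot m)\mapsto q^{-(\deg a,\deg v)}a\ot(v\ot m)$ and checks it descends through the tensor product over $A_s$, which is exactly your identification $I(V\ot M)=V\ot(IM)$ in quotient form, and the derived statement is likewise obtained by passing to semi-projective resolutions. Your added verifications --- that $I=(x_{r+1},\dots,x_s)$ is two-sided via the $q$-centrality clause, and that $V\ot A_s$ is isomorphic to a sum of copies of $A_s$ so that $V\ot P$ remains $K$-flat --- are details the paper's setup and its closing ``one proves the result for the derived categories similarly'' leave implicit, and they are handled correctly.
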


By a $\msc{D}$-linear structure on a functor $F:\msc{M}\to \msc{N}$ between $\msc{D}$-module categories we mean a choice of natural isomorphism of functors $F(V\ot-)\cong V\ot F(-)$, for each $V$ in $\msc{D}$, which is additionally associative and natural in $V$.  Formally speaking, triangulated categories cannot serve as module categories over $\msc{D}$, as they are not abelian.  However, by an action of $\msc{D}$ on a triangulated category $\msc{T}$ here we just mean a choice of associative action map $\ot:\msc{D}\times \msc{T}\to \msc{T}$ which preserves triangles in $\msc{T}$ and sends exact sequences in $\msc{D}$ to triangles in $\msc{T}$.  We would refer to such $\msc{T}$ simply as \emph{triangulated} module categories.

\begin{proof}[Proof of Lemma~\ref{lem:1415}]
For any $V$ in $\msc{D}$ and dg $A_s$-module $M$, we have the natural map
\[
V\ot (A_r\ot M)\to  A_r\ot_{A_s} (V\ot M),\ \ v\ot( a\ot m)\mapsto q^{-(\deg a,\deg v)} a\ot (v\ot m).
\]
Since the quotient $A_s\to A_r$ is a map in $\msc{D}$, and thus preserves the character grading, the above natural isomorphism respects the relations of the tensor product over $A_s$, and hence descends to a well-defined map
\begin{equation}\label{eq:1655}
V\ot (A_r\ot_{A_s}M)\overset{\sim}\to  A_r\ot_{A_s} (V\ot M).
\end{equation}
One similarly constructs the inverse to see that \eqref{eq:1655} is an isomorphism.  This isomorphism is seen to be natural in $V$ and $M$, and also respects the tensor structure on $\msc{D}$, so that we embue the reduction functors $F_{r,s}=A_r\ot_{A_s}-$ with the structure of a map of $\msc{D}$-module categories.  One proves the result for the derived categories similarly.
\end{proof}

\subsection{Regular sequences and thick subcategories}

Take $A$ as in Section~\ref{sect:qreg}, with a fixed $q$-regular sequence $\{x_1,\dots, x_n\}$.  For an object $M$ in $A_j\text{-dgmod}$ we let $\langle M\rangle$ denote the thick subcategory in $D(A_j)$ generated by $M$.  We let
\[
\res_r^s:D(A_r)\to D(A_s)
\]
denote the restriction functor along the projection $A_s\to A_r$, for $r<s$.  It is apparent that restriction commutes with the operators $V\ot-$, for $V$ in $\msc{D}$, so that each $\res_r^s$ is a map of $\msc{D}$-module categories.

\begin{lemma}\label{lem:1440}
Consider $A$ as in Section~\ref{sect:qreg} with fixed $q$-regular sequence.  For arbitrary $M$ in $D(A_r)$, there is a containment
\[
\langle \res_{r-1}^r(A_{r-1}\ot_{A_{r}}^{\rm L}M)\rangle\subset \langle \lambda\ot M: \lambda\in\operatorname{Irred}(\msc{D})\rangle.
\]
of thick subcategories in $D(A_r)$.
\end{lemma}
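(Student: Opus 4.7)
The plan is to realize $\res_{r-1}^r(A_{r-1}\ot^{\rm L}_{A_r}M)$ as a two-term extension of $M$ by a character twist $\chi_r\ot M$ (up to a cohomological shift), and then invoke the definition of thick subcategory. First observe that the composite $\res_{r-1}^r\circ (A_{r-1}\ot^{\rm L}_{A_r}-)$ is simply derived tensor product with $A_{r-1}$ viewed as an $A_r$-bimodule via the quotient $A_r\to A_{r-1}$.

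Since $x_r$ is $\chi_r$-central in $A_r$ in the sense of Definition \ref{def:reg_seq}(b), the principal right ideal $x_rA_r$ coincides with $A_rx_r$ and so is a two-sided ideal. Because $x_r$ is a non-zerodivisor in $A_r$, right multiplication by $x_r$ furnishes a short exact sequence of $A_r$-bimodules
\[
0\to x_rA_r\to A_r\to A_{r-1}\to 0.
\]
The key step is to identify $x_rA_r$ as an $A_r$-bimodule with the cohomologically shifted twist $\Sigma^{-|x_r|}(\chi_r\ot A_r)$, via the map sending $v\ot a$ to $x_ra$ (where $v$ is a basis vector of $\chi_r$). The $q$-centrality relation $bx_r=\pm q^{(\deg b,\chi_r)}x_rb$ is precisely what is needed so that this map respects the left $A_r$-action coming from the $\msc{D}$-module structure on bimodules from Section \ref{sect:qreg}, namely $b\cdot(v\ot a)=q^{(\deg b,\chi_r)}v\ot ba$.

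Applying $-\ot^{\rm L}_{A_r}M$ and using $\msc{D}$-linearity of reduction (Lemma \ref{lem:1415}) to compute $(\chi_r\ot A_r)\ot^{\rm L}_{A_r}M\cong \chi_r\ot M$, one arrives at a distinguished triangle
\[
\Sigma^{-|x_r|}(\chi_r\ot M)\to M\to \res_{r-1}^r(A_{r-1}\ot^{\rm L}_{A_r}M)\to \Sigma^{1-|x_r|}(\chi_r\ot M)
\]
in $D(A_r)$. Hence $\res_{r-1}^r(A_{r-1}\ot^{\rm L}_{A_r}M)$ lies in the thick subcategory generated by $M$ and $\chi_r\ot M$, and so a fortiori its thick closure is contained in $\langle \lambda\ot M:\lambda\in\operatorname{Irred}(\msc{D})\rangle$.

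The main point of care is the bookkeeping of signs, cohomological suspensions, and the precise character twist needed to establish the bimodule identification $x_rA_r\cong \Sigma^{-|x_r|}(\chi_r\ot A_r)$; one has to match the formula $a\cdot(v\ot b)=q^{(\deg a,\deg v)}v\ot ab$ from Section \ref{sect:qreg} with the $q$-centrality relation term-by-term. Once this bimodule identification is set up correctly, the rest of the argument is a formal application of the triangulated machinery.
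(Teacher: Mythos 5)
Your argument is correct and is essentially the paper's own proof in a slightly different packaging: the short exact sequence $0\to x_rA_r\to A_r\to A_{r-1}\to 0$ together with the identification $x_rA_r\cong\Sigma^{-|x_r|}(\chi_r\ot A_r)$ is exactly the two-term semi-free bimodule resolution $K_{r-1}^r=\bigl(\Sigma^{m+1}(\lambda_r\ot A_r)\to A_r,\ d(1_r)=x_r\bigr)$ that the paper tensors against $M$, with $\chi_r$-centrality guaranteeing left $A_r$-linearity (up to the Koszul sign) in both versions. The resulting cone description of $\res_{r-1}^r(A_{r-1}\ot^{\rm L}_{A_r}M)$ and the appeal to thickness are identical, so no further comment is needed.
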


\begin{proof}
Take $\lambda_r=k1_r$ the irreducible in $\msc{D}$ corresponding to the character $\chi_r$, and take $m$ the cohomological degree of $x_r$.  Consider the complex of $A_r$-bimodules
\[
K_{r-1}^r=\left(\Sigma^{m+1} (\lambda_r\ot A_r)\to A_r, d\right),\ \ d(1_r)=x_r.
\]
The fact that the differential is $A_r$-linear on the left follows by the commutativity hypothesis imposed on $x_r$, by the defintion of a $q$-regular sequence.  Note that $K_{r-1}^r$ is semi-free over $A_r$ on the right, and hence the endomorphism $K_{r-1}^r\ot_{A_r}-$ on $A_r$-dgmod preserves quasi-isomorphisms.
\par

Since $x_r$ is a non-zero divisor in $A_r$, the reduction $A_r\to A_{r-1}$ induces a quasi-isomorphism $K_{r-1}\to A_{r-1}$ of dg $A_r$-bimodules.  It follows that we have a natural isomorphism
\[
\res_{r-1}^r\circ(A_{r-1}\ot_{A_{r}}^{\rm L}-) \cong K_{r-1}^r\ot_{A_{r}}^{\rm L}-\cong K_{r-1}^r\ot_{A_{r}}-
\]
of triangulated endomorphisms of $D(A_r)$.  Since
\[
K_{r-1}^r\ot_{A_r} M=\operatorname{cone}(\lambda_r\ot M\overset{d}\to M),
\]
we have
\[
 \res_{r-1}^r(A_{r-1}\ot_{A_{r}}^{\rm L}M)\cong K_{r-1}^r\ot M\in \langle \lambda\ot M: \lambda\in\operatorname{Irred}\msc{D}\rangle.
\]
The proposed containment of thick subcategories follows.
\end{proof}

\begin{lemma}\label{lem:thick_subcats}
Consider $A$ as in Section~\ref{sect:qreg} with fixed $q$-regular sequence, and let $\res$ denote the restriction functor along the projection $A\to A_0$.  For arbitrary $M$ in $A\text{-dgmod}$, there is a containment
\[
\langle \res(A_0\ot_{A}^{\rm L}M)\rangle\subset \langle \lambda\ot M: \lambda\in\operatorname{Irred}(\msc{D})\rangle.
\]
\end{lemma}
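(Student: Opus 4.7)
The plan is to prove the containment by induction on the length $n$ of the $q$-regular sequence, using Lemma~\ref{lem:1440} both as the base case and as the single-step engine for the inductive argument. The base case $n=1$ is literally the statement of Lemma~\ref{lem:1440}. The key observation that makes the induction go through is that the total reduction factors as a composition of one-step reductions coming from the quotient tower $A = A_n \to A_{n-1} \to \dots \to A_0$, so that there is a natural isomorphism
\[
A_0\ot_{A_n}^{\rm L} M \;\cong\; A_0\ot_{A_{n-1}}^{\rm L}\!\big(A_{n-1}\ot_{A_n}^{\rm L} M\big)
\]
for any $M$ in $D(A_n)$.

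For the inductive step, I would assume the result for the algebra $A_{n-1}$ with its $q$-regular sequence $\{x_1,\dots,x_{n-1}\}$ and arbitrary dg modules, and set $M' := A_{n-1}\ot_{A_n}^{\rm L} M$ in $D(A_{n-1})$. The inductive hypothesis places $\res_0^{n-1}(A_0\ot_{A_{n-1}}^{\rm L} M')$ in the thick subcategory $\langle \mu\ot M' : \mu\in\operatorname{Irred}(\msc{D})\rangle$ of $D(A_{n-1})$. Now I would restrict once more along $A_n\to A_{n-1}$: since $\res_{n-1}^n$ is triangulated, preserves direct summands, and is $\msc{D}$-linear (a point already noted in the text preceding Lemma~\ref{lem:1440}), it carries thick subcategories into thick subcategories and intertwines the $\msc{D}$-actions. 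Therefore
\[
\res(A_0\ot_A^{\rm L} M) \;=\; \res_{n-1}^n\res_0^{n-1}(A_0\ot_{A_{n-1}}^{\rm L} M') \;\in\; \big\langle \mu\ot \res_{n-1}^n(M') : \mu\in\operatorname{Irred}(\msc{D})\big\rangle
\]
as thick subcategories of $D(A_n) = D(A)$. Finally, Lemma~\ref{lem:1440} applied to $M$ yields $\res_{n-1}^n(M') \in \langle \lambda\ot M : \lambda\in\operatorname{Irred}(\msc{D})\rangle$, so that each generator $\mu\ot\res_{n-1}^n(M')$ lies in $\langle \mu\ot\lambda\ot M : \lambda\in\operatorname{Irred}(\msc{D})\rangle$. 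Since $\msc{D}=\mathrm{Vect}_{G^\vee}$ is pointed, the tensor product $\mu\ot\lambda$ is again an irreducible object of $\msc{D}$, and the containment $\langle\res(A_0\ot_A^{\rm L} M)\rangle \subset \langle\lambda\ot M:\lambda\in\operatorname{Irred}(\msc{D})\rangle$ follows.

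I do not anticipate a serious obstacle here: the proof is essentially a bookkeeping exercise threading Lemma~\ref{lem:1440} through a composition of base-change functors. The one point that merits care is ensuring that the $\msc{D}$-action commutes appropriately with restriction and derived base change at each stage, so that a thick subcategory of $\msc{D}$-orbits in $D(A_{n-1})$ descends to a thick subcategory of $\msc{D}$-orbits in $D(A_n)$; this is precisely what the $\msc{D}$-linearity statement of Lemma~\ref{lem:1415} and its obvious counterpart for restriction provide. Closure under tensor in $\operatorname{Irred}(\msc{D})$, which lets us collapse $\mu\ot\lambda$ back into a single irreducible, is the final ingredient that makes the inductive packaging work cleanly.
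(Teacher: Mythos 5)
Your proof is correct and is essentially the paper's argument: both induct on the length of the sequence with Lemma~\ref{lem:1440} as the one-step engine and Lemma~\ref{lem:1415} to commute the $\msc{D}$-action past derived base change, collapsing $\mu\ot\lambda$ into a single irreducible of $\msc{D}$ at the end. The only difference is organizational — you peel off the top quotient $A_n\to A_{n-1}$ and invoke the inductive hypothesis for the shorter sequence in $A_{n-1}$, whereas the paper inducts on the difference $s-r$ and peels off the bottom step $A_r\to A_{r-1}$, applying Lemma~\ref{lem:1440} to $N=A_r\ot^{\rm L}_{A_s}M$ first and the inductive hypothesis second.
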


\begin{proof}
We claim that there is such a containment 
\[
\langle \res_{r}^s(A_r\ot_{A_s}^{\rm L}M)\rangle\subset \langle \lambda\ot M: \lambda\in\operatorname{Irred}(\msc{D})\rangle
\]
for arbitrary $r< s$ and $M$ over $A_s$.  We proceed by induction on the difference $s-r$.  The base case $r=s-1$ is covered by Lemma~\ref{lem:1440}.  Suppose now that the result holds for $\res_{r}^s(A_{r}\ot_{A_s}^{\rm L}M)$ and consider the desired containment for $\res_{r-1}^s(A_{r-1}\ot_{A_s}^{\rm L}M)$.
\par

Take $N=A_{r}\ot_{A_s}^{\rm L}M$.  Then
\[
A_{r-1}\ot_{A_s}^{\rm L}M\cong A_{r-1}\ot_{A_r}^{\rm L}N.
\]
and $\res_{r-1}^r(A_{r-1}\ot_{A_s}^{\rm L}M)\in \langle \lambda \ot N:\lambda\in\operatorname{Irred}\msc{D}\rangle$, by Lemma~\ref{lem:1440}.  By Lemma~\ref{lem:1415} we have
\[
\langle \lambda\ot N:\lambda\in\operatorname{Irred}\msc{D}\rangle=\langle A_r\ot^{\rm L}_{A_s}(\lambda\ot M):\lambda\in\operatorname{Irred}\msc{D}\rangle
\]
so that $\res_{r-1}^r(A_{r-1}\ot_{A_s}^{\rm L}M)\in \langle A_r\ot^{\rm L}_{A_s}(\lambda\ot M)\rangle_\lambda$.  By restricting further to $A_s$ we find, by our induction hypothesis,
\[
\begin{array}{rl}
\res_{r-1}^s(A_{r-1}\ot_{A_s}^{\rm L}M) & \in \langle \res_r^s A_r\ot^{\rm L}_{A_s}(\lambda\ot M):\lambda\in\operatorname{Irred}\msc{D}\rangle\\
&\hspace{5mm} \subset \langle \mu\ot\lambda\ot M:\lambda,\mu\in\operatorname{Irred}\msc{D}\rangle=\langle \lambda\ot M:\lambda\in\operatorname{Irred}\msc{D}\rangle.
\end{array}
\]
So we have the proposed containment.
\end{proof}

\subsection{Regular sequences and deformations}

Consider $Z\to Q\to R$ a deformation sequence.  Suppose additionally that all of the given algebras are algebras in $\msc{D}$, and that $Z$ is trivial in the sense that $Z$ is supported at the identity in $G^\vee$.  Then the Koszul resolution $K_Q$ is a dg algebra in $\msc{D}$.

\begin{lemma}\label{lem:1951}
If $\{x_1,\dots, x_n\}$ is a $q$-regular sequence in $Q$, then $\{x_1,\dots, x_n\}$ is also a $q$-regular sequence in the Koszul resolution $K_Q$ of $R$.
\end{lemma}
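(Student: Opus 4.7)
The plan is to verify the three defining properties (cocycle, centrality, non-zero-divisor) directly, exploiting the fact that $K_Z=\wedge^{\ast}_Z\Omega_Z$ is concentrated at the trivial character of $G^{\vee}$ and commutes with $Q$ inside $K_Q$.

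First I would observe that the $x_i$ sit in $Q\subset K_Q$, hence in cohomological degree $0$, and so are automatically killed by the differential of $K_Q$ (which only acts nontrivially on the $d_{y_i}$). Since $Z$ is assumed to have trivial $G^{\vee}$-grading, the same is true of $\Omega_Z$ and of $K_Z$, so the character grading on $K_Q=Q\ot_Z K_Z$ is inherited entirely from the first factor; this makes each $x_i$ a homogeneous cocycle of character $\chi_i$ in $K_Q$.

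Next I would identify the quotient. Because $K_Z$ is finite free over $Z$ (so in particular flat), and because the subalgebras $Q\ot 1$ and $1\ot K_Z$ commute inside $K_Q$ (the braiding between $Q$ and $K_Z$ is trivial in both the character and cohomological gradings, the latter since $Q$ sits in cohomological degree $0$), the two-sided ideal generated in $K_Q$ by $I:=(x_{j+1},\dots,x_n)$ coincides with the left ideal $I\ot_Z K_Z$, giving a natural identification
\[
K_Q/(x_{j+1},\dots,x_n)\ \cong\ (Q/I)\ot_Z K_Z.
\]
With this identification in hand, centrality of $x_j$ in the quotient follows formally: for $a\in Q/I$ and $\omega\in K_Z$, one computes
\[
(a\ot\omega)\cdot(x_j\ot 1)=ax_j\ot\omega = q^{(\deg a,\chi_j)}\,x_ja\ot\omega = q^{(\deg(a\ot\omega),\chi_j)}\,(x_j\ot 1)\cdot(a\ot\omega),
\]
where the middle equality uses centrality of $x_j$ in $Q/I$, and where the cohomological sign is trivial because $|x_j|=0$ in $Q$.

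Finally I would verify the non-zero-divisor condition by reduction to a $Z$-basis. Choose a $Z$-basis $\{\omega_s\}$ of $K_Z$, so that every element of $(Q/I)\ot_Z K_Z$ writes uniquely as $\sum_s a_s\ot\omega_s$ with $a_s\in Q/I$; left multiplication by $x_j$ sends this to $\sum_s x_ja_s\ot\omega_s$, and vanishing forces each $x_ja_s=0$ in $Q/I$, whence each $a_s=0$ by hypothesis on the regular sequence in $Q$. Combining the three verifications yields that $\{x_1,\dots,x_n\}$ is a $q$-regular sequence in $K_Q$. There is no serious obstacle here; the entire argument is a bookkeeping check, and the only point requiring care is confirming that the cohomological signs and character braidings pass through the identification $K_Q/(I)\cong(Q/I)\ot_Z K_Z$, which they do trivially because $K_Z$ is trivially character-graded and $x_j$ is cohomologically even.
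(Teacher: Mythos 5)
Your argument is correct and follows essentially the same route as the paper's proof: note that the cocycle and $\chi_j$-centrality conditions pass immediately to $K_Q$ and its quotients, identify $K_Q/(x_{j+1},\dots,x_n)$ with $Q_j\ot\wedge^\ast(m_Z/m_Z^2)$ (your $(Q/I)\ot_Z K_Z$), and deduce the non-zero-divisor property from freeness of this quotient over $Q_j$. The extra bookkeeping you supply (trivial character grading on $K_Z$, cohomological degree $0$ of the $x_j$) is exactly what the paper leaves implicit.
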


\begin{proof}
Certainly the $x_j$ remain homogenous cocycles in $K_Q$, and have the same commutativity relations in $K_Q/(x_{j+1},\dots, x_n)$, so we need only verify that each $x_j$ is a non-zero divisor in the corresponding quotient.  This final property only has to do with the (non-dg) algebra structure of $K_Q$.  If we take $Q_j=Q/(x_{j+1},\dots, x_n)$, then as an algebra we have
\[
K_Q/(x_{j+1},\dots, x_n)\cong Q_j\ot\wedge^\ast(m_Z/m_Z^2),
\]
and $x_j$ is seen to be a non-zero divisor in this algebra as it is a free module over $Q_j$.
\end{proof}

\section{Restricted enveloping algebras, height 1 doubles, and the quantum Borel}
\label{sect:last}

We employ regular sequences, in conjunction with hypersurface support, to obtain the tensor product property for cohomological support in a number of geometrically Chevalley examples (see Section~\ref{sect:g_chev}).

\subsection{A general lemma}

The following general result was essentially argued in the proof of Theorem~\ref{thm:fun1}, and can be seen as a reduction of the materials of Section~\ref{sect:hop}.

\begin{lemma}\label{lem:gen_lem}
Consider $\msf{u}$ geometrically Chevalley, with Chevalley integration $\msf{U}\to \msf{u}$.  Fix $\msc{D}$ an arbitrary tensor subcategory in $\rep(\msf{u})$.  For $f\in m_Z$ with non-trivial reduction to $m_Z/m_Z^2$, let $K_f$ denote the corresponding Koszul resolution of the hypersurface $\msf{U}^+/(f)$.
\par

Suppose that, at arbitrary such $f\in m_Z$, the endomorphism
\[
k\ot_{\msf{U}^+}^{\rm L}-:D(K_f)\to D(K_f)
\]
is such that $k\ot^{\rm L}_{\msf{U}^+} M\in\langle \lambda\ot M:\lambda\in \msc{D}\rangle$, for each $M$ in $D(K_f)$.  Then hypersurface support for $D_{fin}(\msf{u})$ satisfies the tensor product property
\[
\supp^{hyp}_\mbb{P}(V\ot W)=\supp^{hyp}_\mbb{P}(V)\cap\supp^{hyp}_\mbb{P}(W).
\]
The same result holds when $\msf{u}$ is local and admits a local integration $(\msf{U}^+=)\msf{U}\to \msf{u}$.
\end{lemma}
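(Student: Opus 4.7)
The plan is to reduce directly to Lemma \ref{lem:gen_lem2}, which delivers the tensor product property once one verifies the following: for every $f \in m_Z$ with non-vanishing reduction in $m_Z/m_Z^2$ and every non-perfect finitely generated module $M$ over the hypersurface $\msf{U}^+/(f)$, the trivial module $k$ lies in the thick subcategory $\langle \lambda \ot M : \lambda \in \msc{D}\rangle$ of $D_{coh}(\msf{U}^+/(f)) \cong D_{coh}(K_f)$. The hypothesis on the functor $k \ot^{\rm L}_{\msf{U}^+}-$ supplies the second half of this containment for free; the remaining task is to produce $k$ inside the thick closure of $k \ot^{\rm L}_{\msf{U}^+} M$ whenever $M$ is non-perfect.

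The mechanism is a factorization through the dual numbers. One computes $k \ot_{\msf{U}^+} K_f \cong k \ot_Z K_Z \cong k[\varepsilon]$, the exterior algebra on a single degree $-1$ generator with zero differential, so the reduction $k \ot^{\rm L}_{\msf{U}^+}-$ naturally lands in $D(k[\varepsilon])$, and the hypothesis involves composing this with restriction along $k[\varepsilon] \hookrightarrow K_f$. By the derived tensor-hom adjunction,
\[
\RHom_{k[\varepsilon]}(k \ot^{\rm L}_{\msf{U}^+} M,\, k) \cong \RHom_{K_f}(M, k) \cong \RHom_{\msf{U}^+/(f)}(M, k),
\]
and since $\msf{U}^+/(f)$ is local with residue $k$, these extensions are unbounded precisely when $M$ is non-perfect. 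Hence non-perfection of $M$ over $\msf{U}^+/(f)$ is equivalent to non-perfection of $k \ot^{\rm L}_{\msf{U}^+} M$ over $k[\varepsilon]$.

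Next, any non-perfect dg module $N$ in $D_{fin}(k[\varepsilon])$ generates $k$ in its thick closure. This is a direct consequence of the dg Hopkins theorem employed in the proof of Theorem \ref{thm:fun1}, or alternatively of Koszul duality $D_{fin}(k[\varepsilon]) \cong D_{coh}(k[t])$ with $\deg t = 2$, combined with the fact that $k[t]$ is a PID, so thick subcategories of its singularity category are classified by hand (cf.\ Remark \ref{rem:ok}). Restricting to $D_{coh}(K_f)$ and invoking the hypothesis yields
\[
k \in \langle \res(k \ot^{\rm L}_{\msf{U}^+} M)\rangle \subset \langle \lambda \ot M : \lambda \in \msc{D}\rangle,
\]
which is exactly the condition of Lemma \ref{lem:gen_lem2}. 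The local case runs identically with $\msc{D} = Vect$, where the trivial representation is already the unique simple, so Lemma \ref{lem:gen_lem2} applies in that setting as well.

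The main obstacle is essentially bookkeeping — one must carefully identify the two notions of perfection (over $K_f$ and over $\msf{U}^+/(f)$) across the quasi-isomorphism $K_f \overset{\sim}\to \msf{U}^+/(f)$, and verify that the dg Hopkins step for $k[\varepsilon]$ does descend through restriction to the desired containment inside $D_{coh}(K_f)$. Once this is in place, the conclusion follows from Lemma \ref{lem:gen_lem2} with no further tensor-categorical input.
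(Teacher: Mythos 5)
Your proposal is correct and follows essentially the same route as the paper's proof: reduce to Lemma~\ref{lem:gen_lem2}, factor $k\ot^{\rm L}_{\msf{U}^+}-$ through $D_{fin}(k[\varepsilon])$, transfer non-perfection via the adjunction, apply the dg Hopkins theorem of Carlson--Iyengar to get $k\in\langle k\ot^{\rm L}_{\msf{U}^+}M\rangle$ after restriction, and then invoke the hypothesis. The only difference is that you spell out the adjunction and locality details which the paper delegates to the proof of Theorem~\ref{thm:fun1}.
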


\begin{proof}
We claim that the inclusion $k\ot^{\rm L}_{\msf{U}^+} M\in\langle \lambda\ot M:\lambda\in \msc{D}\rangle$, at arbitrary dg modules $M$, implies the necessary inclusion $k\in \langle \lambda\ot M:\lambda\in \msc{D}\rangle$ whenever $M$ is non-perfect over the hypersurface $\msf{U}^+/(f)$.  One then applies Lemma~\ref{lem:gen_lem2} to obtain the result.
\par

Via the equivalence $D_{coh}(\msf{U}^+/(f))\to D_{coh}(K_f)$ provided by restriction, it suffices to show that the given inclusion implies $k\in \langle \lambda\ot M:\lambda\in \msc{D}\rangle$ for non-perfect $M$ in $D_{coh}(K_f)$.  Take such non-perfect $M$.  As in the proof of Theorem~\ref{thm:fun1}, one factors the reduction $k\ot^{\rm L}_{\msf{U}^+}-$ as the composite
\[
D_{coh}(K_f)\overset{k\ot^{\rm L}_{\msf{U}^+}-}\longrightarrow D_{fin}(k[\varepsilon])\overset{\rm res}\to D_{coh}(K_f)
\]
and employs the understanding of thick ideals in $D_{fin}(k[\varepsilon])$ from~\cite[Theorem 4.4]{carlsoniyengar15} to find that
\[
k\in \langle k\ot^{\rm L}_{\msf{U}^+} M\rangle\ \subset D_{coh}(K_f)
\]
at non-perfect $M$.  Our assumption now implies the desired inclusion $k\in \langle \lambda\ot M:\lambda\in \msc{D}\rangle$ at non-perfect $M$.
\end{proof}

\subsection{Nilpotent restricted Lie algebras over $\overline{\mbb{F}}_p$}
\label{sect:u_nilp}

Consider $\mfk{n}$ a nilpotent Lie algebra with a restricted structure, in finite characteristic.  Consider any basis $x_1,\dots, x_n$ of $\mfk{n}$ which is compatible with the lower central series $\mfk{n}\supset \mfk{n}^1\supset\dots \supset \mfk{n}^d=0$, in the sense that the final $m_{d-1}$ elements form a basis for $\mfk{n}^{d-1}$, the following $m_{d-2}+m_{d-1}$ elements form a basis for $\mfk{n}^{d-2}$, etc.  Then, each elements $x_r$ is a central non-zero divisor in $U(\mfk{n})_r=U(\mfk{n})/(x_{r+1},\dots, x_n)$, and also in the completion $\hat{U}(\mfk{n})$ along the kernel of the projection $U(\mfk{n})\to u^{\rm res}(\mfk{n})$.  Hence the ordered sequence $\{x_1,\dots, x_n\}$ is a regular sequence in $\hat{U}(\mfk{n})$ which generates the kernel of the augmentation $\hat{U}(\mfk{n})\to k$.
\par

We now recover an essential result of Friedlander and Parshall.

\begin{theorem}[\cite{friedlanderparshall87,suslinfriedlanderbendel97b}]\label{thm:ures}
For an arbitrary nilpotent restricted Lie algebra $\mfk{n}$, the hypersurface support $\supp^{hyp}_\mbb{P}$ on $\rep(u^{\rm res}(\mfk{n}))$ has the tensor product property.  Supposing additionally that the reduced spectrum of cohomology for $u^{\rm res}(\mfk{n})$ is an affine space (e.g.\ supposing the $p$-th power map on $\mfk{n}$ is $0$), then the cohomological support for $u^{\rm res}(\mfk{n})$ has the tensor product property
\[
\supp_{\msf{Y}}(V\ot W)=\supp_{\msf{Y}}(V)\cap\supp_{\msf{Y}}(W).
\]
\end{theorem}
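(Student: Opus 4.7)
The strategy is to apply Lemma~\ref{lem:gen_lem} in its local version, which applies since $\msf{u} = u^{\rm res}(\mfk{n})$ is local for nilpotent $\mfk{n}$. Taking $\msf{U} = \hat{U}(\mfk{n})$ as the local integration and the ambient category $\msc{D} = Vect$ (whose only irreducible is $k$), the hypothesis of Lemma~\ref{lem:gen_lem} reduces to the statement that, for every $f \in m_Z$ with nonzero reduction in $m_Z/m_Z^2$ and every $M \in D(K_f)$, the reduction $k \otimes^{\rm L}_{\msf{U}} M$ lies in the thick subcategory $\langle M \rangle \subset D(K_f)$.

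The key input is the regular sequence supplied by the lower central series, already recalled in the preamble to Section~\ref{sect:u_nilp}: any ordered basis $\{x_1, \ldots, x_n\}$ of $\mfk{n}$ compatible with the lower central series forms a regular sequence in $\hat{U}(\mfk{n})$ generating the kernel of the augmentation, and each $x_r$ is central in the quotient $\hat{U}(\mfk{n})/(x_{r+1}, \ldots, x_n)$.  Thus $\{x_1, \ldots, x_n\}$ is a $1$-regular sequence in the sense of Definition~\ref{def:reg_seq} with trivial character data, placing us squarely in the setting of Section~10 with $\msc{D} = Vect$.

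The required containment now follows from the iterative mapping-cone construction underlying Lemmas~\ref{lem:1440} and~\ref{lem:thick_subcats}: the reduction $k \otimes^{\rm L}_{\hat{U}(\mfk{n})} M$ is built as an iterated cone of multiplication-by-$x_r$ maps on intermediate tensor products, and each such cone remains in the thick subcategory generated by $M$ because $\lambda \otimes M = M$ in the trivial braided category. Since every intermediate module is annihilated by $f$, the whole argument transports without modification to $D(K_f) = D(\msf{U}/(f))$. Invoking Lemma~\ref{lem:gen_lem} then delivers the tensor product property for $\supp^{hyp}_\mbb{P}$.

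For the second assertion, the additional hypothesis that the reduced spectrum of $\Ext^\ast_{u^{\rm res}(\mfk{n})}(k,k)$ is an affine space (which holds classically when the $p$-power map on $\mfk{n}$ vanishes, in which case this reduced cohomology is the polynomial ring $\Sym(\mfk{n}^\ast)^{(1)}$ by Friedlander--Parshall) guarantees via Lemma~\ref{lem:surj0} that the natural map $\kappa: \msf{Y} \to \mbb{P}$ is a closed embedding. By Corollary~\ref{cor:hopf_hyp}, cohomological support then coincides with hypersurface support, so the tensor product property transfers directly. The main technical point is the compatibility of the Section~10 cone arguments with the derived category $D(K_f)$, rather than merely $D(\hat{U}(\mfk{n}))$; this is a matter of functoriality together with the automatic $f$-annihilation of every module produced by the construction, and so presents no essential obstacle.
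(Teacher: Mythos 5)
Your proposal is correct and follows essentially the same route as the paper: the regular sequence from the lower central series, Lemma~\ref{lem:thick_subcats} to get $k\ot^{\rm L}_{\hat{U}}M\in\langle M\rangle$, Lemma~\ref{lem:gen_lem} with $\msc{D}=Vect$, and then Lemma~\ref{lem:surj0} together with Corollary~\ref{cor:hopf_hyp} for the cohomological statement. The only cosmetic difference is your justification for transporting the cone argument to $D(K_f)$ via ``$f$-annihilation''; the cleaner statement is Lemma~\ref{lem:1951}, which says the regular sequence in $\hat{U}(\mfk{n})$ remains regular in the Koszul resolution $K_f$.
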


\begin{proof}
The fact that the augmentation ideal in $\hat{U}$ is generated by a regular sequence implies a containment
\[
k\ot_{\hat{U}}^{\rm L}M\in \langle M\rangle\ \subset D_{coh}(K_f),
\]
at arbitrary dg modules $M$ over the resolution $K_f$ of any hypersurface $\hat{U}/(f)$, by Lemma~\ref{lem:thick_subcats}.  So Lemma~\ref{lem:gen_lem}, considered in the case $\msc{D}=Vect$, gives us the tensor product property for hypersurface support.  In the case that the spectrum of cohomology is an affine space, the cohomological support is identified with the hypersurface support by Lemma~\ref{lem:surjO}.
\end{proof}

\subsection{Solvable height 1 doubles over $\overline{\mbb{F}}_p$}

Consider $U$ a smooth unipotent group scheme over $k=\overline{\mbb{F}}_p$.  Suppose that $U$ is normal in a larger group $B$ which admits a quasi-logarithm.  We recall that a quasi-logarithm is the information of an isomorphism of $\mfk{b}=\operatorname{Lie}(B)$-algebras $\hat{\O}_{B,1}\cong k\b{\mfk{b}^\ast}$, so that we have an isomorphism of algebras (see Example~\ref{ex:3})
\[
\hat{D}(U_{(1)}):=\hat{\O}_B\rtimes \hat{U}(\mfk{n})\cong k\b{\mfk{b}^\ast}\rtimes \hat{U}(\mfk{n}),
\]
where $U(\mfk{n})$ acts via the coadjoint action of $u^{\rm res}(\mfk{n})$-action on the generators $\mfk{b}^\ast$ (see e.g.~\cite{kazhdanvarshavsky06}).  We may filter $\mfk{b}^\ast$ by subrepresentations $\mfk{b}^\ast\supset F_{-1}\mfk{b}^{\ast}\supset\dots\supset F_{-d}\mfk{b}^{\ast}=0$ so that each subquotient is a trivial $\mfk{n}$-representation, and consider a corresponding ordered basis $y_1,\dots, y_n$ for $\mfk{b}^\ast$ with the final $m_d$ elements providing a basis for $F_{-d+1}\mfk{b}^\ast$, the final $m_{d-1}+m_d$ elements forming a basis for $F_{-d+2}\mfk{b}^\ast$, etc.  We choose also a basis $x_1,\dots, x_m$ for $\mfk{n}$ as in Section~\ref{sect:u_nilp}.
\par

Then the sequence $x_1,\dots, x_n,y_1,\dots, y_n$ forms a regular sequence in $\hat{D}(U_{(1)})$ which generates the kernel of the augmentation $\hat{D}(U_{(1)})\to k$.  An instance in which the ambient group $B$ admits a quasi-logarithm is the case in which $B$ is a Borel of an almost-simple (smooth) algebraic group $\mbb{G}$~\cite[Corollary 6.4]{friedlandernegron18}, in very good characteristic for $\mbb{G}$.

\begin{theorem}\label{thm:D_unip}
Let $U$ be a unipotent subgroup in an almost-simple algebraic group $\mbb{G}$ over $\overline{\mbb{F}}_p$, which is normalized by a maximal torus.  If $p$ is very good for $\mbb{G}$ then the hypersurface support for the height $1$ double $D(U_{(1)})$ satisfies the tensor product property.  If $p-1>\dim(U)$ then the cohomological support satisfies the tensor product property
\[
\supp_\msf{Y}(V\ot W)=\supp_\msf{Y}(V)\cap\supp_\msf{Y}(W).
\]
\end{theorem}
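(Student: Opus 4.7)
The plan is to reduce the hypersurface tensor product property to an application of the local form of Lemma \ref{lem:gen_lem}, via the regular sequence machinery of Section \ref{sect:qreg}; the cohomological statement will then follow from an identification of cohomological and hypersurface support under the stronger numerical hypothesis.

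First, I would observe that since $U$ is unipotent, both $\O(U_{(1)})$ and $u^{\mrm{res}}(\mfk{n}) = kU_{(1)}$ are local, augmented algebras, so $D(U_{(1)}) = \O(U_{(1)}) \rtimes kU_{(1)}$ and its integration $\hat{D}(U_{(1)}) = \hat{\O}_B \rtimes \hat{U}(\mfk{n})$ are both local. Because $B$ is a Borel in $\mbb{G}$ and $p$ is very good for $\mbb{G}$, $B$ admits a quasi-logarithm, giving the integration in the form described in the preamble. We are thus in the local case of Lemma \ref{lem:gen_lem}, with trivial ambient fusion category $\msc{D} = \mrm{Vect}$; it suffices to verify, for each $f \in m_Z$ of nonzero linear part and each $M \in D(K_f)$, that $k \ot^{\rm L}_{\hat{D}(U_{(1)})} M \in \langle M \rangle$ in $D(K_f)$.

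Next, I would invoke the ordered regular sequence $\{x_1,\ldots,x_m,y_1,\ldots,y_n\}$ in $\hat{D}(U_{(1)})$ supplied by the preamble, which generates the augmentation ideal. By Lemma \ref{lem:1951} (applied to the parametrizing deformation $Z=k\b{f} \to \hat{D}(U_{(1)})\to \hat{D}(U_{(1)})/(f)$, which has $Z$ trivially graded) this sequence remains regular in the Koszul resolution $K_f$. Since $K_f/(x_i, y_j) \cong k[\varepsilon]$ with $\varepsilon=d_f$ in degree $-1$, Lemma \ref{lem:thick_subcats} with $\msc{D}=\mrm{Vect}$ yields $\res(k[\varepsilon] \ot^{\rm L}_{K_f} M) \in \langle M\rangle$ in $D(K_f)$. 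A direct associativity calculation, using $k[\varepsilon] \cong k\ot^{\rm L}_{\hat{D}(U_{(1)})} K_f$, identifies $k[\varepsilon] \ot^{\rm L}_{K_f} M$ with $k \ot^{\rm L}_{\hat{D}(U_{(1)})} M$. The hypothesis of Lemma \ref{lem:gen_lem} is thereby verified, and the tensor product property for hypersurface support follows.

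For the cohomological statement, I would argue that under the stronger hypothesis $p-1 > \dim(U)$ the reduced cohomology ring $\Ext^\ast_{D(U_{(1)})}(k,k)_{\mrm{red}}$ is a polynomial ring generated in degree $2$, an extraction from the Friedlander--Negron analysis of height $1$ doubles where the numerical bound excludes higher-degree Frobenius-type contributions. Lemma \ref{lem:surj0} then forces the map $\kappa$ of \eqref{eq:kappa} to be a closed embedding, so by Corollary \ref{cor:hopf_hyp} cohomological and hypersurface supports agree, and the tensor product property transfers.

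The primary obstacle is the last step: establishing that $p-1 > \dim(U)$ really does force the cohomology to be polynomial (or at least that $\kappa$ is a closed embedding). This relies on external input from the Friedlander--Negron computation rather than on the machinery developed in this paper. The hypersurface-level argument itself is essentially formal once the regular sequence in $\hat{D}(U_{(1)})$ is in hand, which is exactly what the quasi-logarithm plus very-good-characteristic assumption buys us.
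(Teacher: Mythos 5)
Your proposal is correct and follows essentially the same route as the paper's proof: quasi-logarithm $\Rightarrow$ regular sequence generating the augmentation ideal of $\hat{D}(U_{(1)})$, then Lemma \ref{lem:thick_subcats} and the local case of Lemma \ref{lem:gen_lem} for the hypersurface statement, and the Friedlander--Negron polynomiality of reduced cohomology for $p-1>\dim(U)$ combined with Lemma \ref{lem:surj0} and Corollary \ref{cor:hopf_hyp} to transfer to cohomological support. The extra details you supply (locality of the double, Lemma \ref{lem:1951}, the identification of $k[\varepsilon]\ot^{\rm L}_{K_f}M$ with $k\ot^{\rm L}_{\hat{D}}M$) are all implicit in the paper's argument.
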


\begin{proof}
In the case of very good $p$ we have a quasi-logarithm, and hence the augmentation ideal for $\hat{D}$ is generated by a regular sequence.  It follows that for arbitrary $M$ over the Koszul resolution $K_{\hat{D}}$ we have $k\ot_{\hat{D}}^{\rm L}M\in \langle M\rangle$, by Lemma~\ref{lem:thick_subcats}.  So by Lemma~\ref{lem:gen_lem} the hypersurface support for the double $D(U_{(1)})$ satisfies the tensor product property.  When $p-1>\dim(U)$ the reduced spectrum of cohomology is an affine space~\cite[Theorem 6.10]{friedlandernegron18}.  So the hypersurface support and the cohomological support agree, by Lemma~\ref{lem:surjO}.
\end{proof}

Consider now $B\subset \mbb{G}$ a Borel subgroup in almost-simple $\mbb{G}$.  We have $D(B_{(1)})=(\O(B_{(1)})\rtimes kU_{(1)})\rtimes k\mbb{G}_{m(1)}^r$, where $U$ is the unipotent radical in $B$.  We can consider the integration
\[
\hat{D}(B_{(1)}):=\big(\hat{\O}_B\rtimes \hat{U}(\mfk{n})\big)\rtimes k\mbb{G}_{m(1)}^r
\]
of $D(B_{(1)})$.  We note that $\rep(\mbb{G}_{m(1)}^r)=Vect_{G^\vee}$ as a braided (symmetric) category, for $G^\vee$ a rank $r$ elementary abelian $p$-group with trivial form.  So $D(B_{(1)})$ is geometrically Chevalley and, with $\msc{D}=\rep(\mbb{G}_{m(1)}^r)$, fits into the framework of Section~\ref{sect:qreg}.

\begin{theorem}\label{thm:D_borel}
Let $B$ be a Borel subgroup in an almost-simple algebraic group $\mbb{G}$ over $\overline{\mbb{F}}_p$.  If $p$ is very good for $\mbb{G}$, then the hypersurface support for the height $1$ double $D(B_{(1)})$ satisfies the tensor product property.  If $p-1>\dim(B)$, then the cohomological support satisfies the tensor product property as well.
\end{theorem}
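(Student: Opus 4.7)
The plan is to mirror the argument for Theorem \ref{thm:D_unip}, but now with nontrivial Chevalley datum $\msc{D}=\rep(\mbb{G}_{m(1)}^r)=\rep(T_{(1)})$, so that the output of Lemma \ref{lem:gen_lem} requires constructing a regular sequence which is compatible with the $T$-grading. In very good characteristic, the results of \cite{friedlandernegron18} supply a quasi-logarithm on the Borel $B$, giving an algebra isomorphism
\[
\hat{D}(B_{(1)})\cong \big(k\b{\mfk{b}^\ast}\rtimes \hat{U}(\mfk{n})\big)\rtimes kT_{(1)},
\]
under which the positive part $\msf{U}^+=k\b{\mfk{b}^\ast}\rtimes \hat{U}(\mfk{n})$ is a local Hopf algebra in $\Rep(\Lambda)$ for $\Lambda=kT_{(1)}$, with central parametrizing subalgebra $Z=k\b{\mfk{b}^\ast}^{\rtimes p}\ot Z_0(\mfk{n})$ on which $T_{(1)}$ acts trivially (cf.\ Example \ref{ex:3}).

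The key step is to produce an ordered sequence generating the augmentation ideal of $\msf{U}^+$ which is $q$-regular in the sense of Definition \ref{def:reg_seq}, with respect to $\msc{D}$. I would take a $T$-weight basis $y_1,\dots,y_n$ of $\mfk{b}^\ast$ ordered so that the final $m_d$ elements span the bottom piece of the $\mfk{n}$-filtration $\mfk{b}^\ast\supset F_{-1}\mfk{b}^\ast\supset\cdots$ (as in the discussion preceding Theorem \ref{thm:D_unip}), together with a weight basis $x_1,\dots,x_m$ of $\mfk{n}$ compatible with the lower central series as in Section \ref{sect:u_nilp}. The concatenated sequence $(y_1,\dots,y_n,x_1,\dots,x_m)$ generates the augmentation ideal $m\subset \msf{U}^+$. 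Each element is a weight vector for $T_{(1)}$, hence sits in a character line of $\msc{D}$; the centralizing property required by Definition \ref{def:reg_seq}(b) follows because commutators of later variables with $y_i$ or $x_j$ land in the ideal generated by the subsequent variables (the $\mfk{n}$-filtration on $\mfk{b}^\ast$ handles the $y$'s and the lower central series handles the $x$'s), and the commutativity relations with the complementary torus characters are recorded exactly by the character of the given variable. Non-zero divisor-ness in each successive quotient follows from a PBW-type argument, identifying each quotient with a smash product whose underlying vector space is a polynomial-times-truncated-polynomial ring.

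Given the regular sequence, Lemma \ref{lem:1951} transports it to the Koszul resolution $K_f$ of any hypersurface $\msf{U}^+/(f)$, $f\in m_Z$ of nonzero linear part, and Lemma \ref{lem:thick_subcats} then yields
\[
k\ot^{\rm L}_{\msf{U}^+}M\in \langle \lambda\ot M:\lambda\in \operatorname{Irred}(\msc{D})\rangle
\]
for any $M$ in $D(K_f)$. Lemma \ref{lem:gen_lem} applied with this $\msc{D}$ then gives the tensor product property for hypersurface support on $\rep(D(B_{(1)}))$. Finally, when $p-1>\dim(B)$ the reduced spectrum $\Spec\Ext^\ast_{D(B_{(1)})}(k,k)_{\rm red}$ is an affine space by \cite[Theorem 6.10]{friedlandernegron18}, so Lemma \ref{lem:surj0} shows the comparison map $\kappa$ of \eqref{eq:kappa} is a closed embedding, and Corollary \ref{cor:hopf_hyp} identifies cohomological and hypersurface supports, upgrading the result.

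The main obstacle I anticipate is the verification that the proposed sequence is genuinely $q$-regular inside the smash product $k\b{\mfk{b}^\ast}\rtimes \hat{U}(\mfk{n})$: one must simultaneously control the coadjoint $\mfk{n}$-action on $\mfk{b}^\ast$ (so that each $y_i$ remains appropriately central modulo later $y$'s, up to a torus character acting on $\hat{U}(\mfk{n})$) and the nilpotent bracket structure on the $x_j$'s, while keeping track of $T_{(1)}$-weights. Once this bookkeeping is in place, the remainder of the argument is a formal application of the machinery developed in Sections \ref{sect:supports}--\ref{sect:hopf_supp} and the previous section's regular sequence lemmas.
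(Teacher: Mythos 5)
Your proposal follows essentially the same route as the paper: use the quasi-logarithm (available for very good $p$) to identify the positive part $\hat{\O}_B\rtimes\hat{U}(\mfk{n})$ of $\hat{D}(B_{(1)})$ with $k\b{\mfk{b}^\ast}\rtimes\hat{U}(\mfk{n})$, exhibit a regular sequence generating its augmentation ideal, and feed this into Lemmas \ref{lem:1951}, \ref{lem:thick_subcats} and \ref{lem:gen_lem}; the cohomological upgrade via \cite[Theorem 6.10]{friedlandernegron18} and Lemma \ref{lem:surj0} is also exactly the paper's. Two remarks. The bookkeeping you worry about at the end is lighter than you fear: the braiding on $\msc{D}=\rep(\mbb{G}_{m(1)}^r)$ is the trivial symmetry (the form on $G^\vee$ is trivial), so condition (b) of Definition \ref{def:reg_seq} reduces to ordinary centrality modulo the later terms of the sequence, and the $T_{(1)}$-weights enter only through homogeneity. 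The paper also dispatches the torus by noting that, by semisimplicity of $k\mbb{G}_{m(1)}^r$, perfection over $\hat{D}/(f)$ is equivalent to perfection over the positive part $\hat{D}^+/(f)$, which is precisely the input Lemma \ref{lem:gen_lem} requires.

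There is, however, one concrete error in the execution. Your concatenation order $(y_1,\dots,y_n,x_1,\dots,x_m)$ does not give a regular sequence under the convention of Definition \ref{def:reg_seq}, where the $j$-th term must be regular and ($\chi$-)central in the quotient by the \emph{later} terms. The last term of your sequence, $x_m$, would then have to be central in all of $k\b{\mfk{b}^\ast}\rtimes\hat{U}(\mfk{n})$; but for $f\in\mfk{b}^\ast$ one has $[x_m,f]=x_m(f)$, the coadjoint action, which is a nonzero element of $\mfk{b}^\ast$ even for $x_m$ central in $\mfk{n}$ (already for $B\subset\operatorname{SL}_2$), and it does not lie in the (empty) ideal of subsequent terms. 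Since the commutator of any $x\in\mfk{n}$ with any $f\in\mfk{b}^\ast$ lands in $\mfk{b}^\ast$, the $y$'s must occur \emph{after} the $x$'s, so that all of $\mfk{b}^\ast$ is already quotiented out when the $x$'s are tested and the lower central series argument applies; this is the order $x_1,\dots,x_m,y_1,\dots,y_n$ used in the preamble to Theorem \ref{thm:D_unip}. With that reordering (and keeping your internal orderings of the two blocks), the verification you flag as the main obstacle goes through and the rest of your argument is correct.
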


\begin{proof}
As remarked above, when $p$ is very good for $\mbb{G}$, $B$ admits a quasi-logarithm, and when $p-1>\dim(B)$ the reduced cohomology ring for $D(B_{(1)})$ is a polynomial ring~\cite[Theorem 6.10]{friedlandernegron18}.  So, by Lemma~\ref{lem:surjO}, we need only show that the hypersurface support satisfies the tensor product property.  By semisimplicity of $k\mbb{G}_{m(1)}^r$, a $\hat{D}$-module is perfect if and only if its restriction to the Hopf subalgebra $\hat{D}^+=\hat{\O}\rtimes \hat{U}(\mfk{n})$ is perfect.  One now proceeds as in the proof of Theorem~\ref{thm:D_unip}.
\end{proof}

\subsection{The quantum Borel in small quantum $\operatorname{SL}_n$}
\label{sect:q_borel}

Take $q\in \mbb{C}^\times$ of odd order $l$.  We first consider the simply-connected form of the quantum Borel, then deal with general quantum Borels in type $A$ in Section \ref{sect:q_borel2}.

By the simply-connected form of the quantum Borel $u_q(\mfk{b})$ we mean the non-negative subalgebra in the small quantum group $u_q(\mfk{sl}_n)=u_q(\operatorname{SL}_n)$.  (See~\cite{negron} and in particular~\cite[Section 9]{negron}.)  So, $u_q(\mfk{b})$ has its usual positive generators $E_\alpha$, and the grouplikes are given by the character group $G=(P/lQ)^\vee$ of the quotient $P/lQ$ of the weight lattice by the $l$-th scaling of the root lattice.  The standard toral elements $K_\alpha$ are precisely the function $K_\alpha=q^{(\alpha,-)}:P/lQ\to \mbb{C}^\times$, where $(-,-)$ is the normalized Killing form, $(\alpha,\beta)=d_\alpha \langle \alpha,\beta\rangle$, $d_\alpha=|\alpha|^2/|\text{short root}|^2$.  (It happens to be the case that all roots are the same length here, so that all $d_\alpha=1$.)
\par

Let us consider first the situation in type $A_2$.  For $u_q(\mfk{b})$ the quantum Borel in $u_q(\mfk{sl}_3)$ we have the standard basis of the positive subalgebras $u_q(\mfk{n})$ and $U^{DK}_q(\mfk{n})$ provided by ordered monomials in the root vectors
\[
E_\alpha,\ E_\beta,\ E_{\alpha+\beta}:=E_\alpha E_\beta-q^{-1}E_\beta E_\alpha
\]
\cite{lusztig90,deconcinikac91}.  One can check by hand, via the Serre relations, that $E_{\alpha+\beta}$ is $\chi$-central in $U^{DK}_q(\mfk{n})$, for the character $\chi$ of the grouplikes given by $\chi(K_\alpha)=q$, $\chi(K_\beta)=q^{-1}$.  In terms of classes $\{\bar{\omega}_\alpha\}_\alpha$ of the fundamental weights $\{\omega_\alpha\}_\alpha$ in $P/lQ$, $\chi=\bar{\omega}_\alpha-\bar{\omega}_\beta$.  So, by considering the monomial basis of $U^{DK}_q(\mfk{n})$ in terms of the $E_\gamma$, we see that the sequence $\{E_\alpha,E_\beta,E_{\alpha+\beta}\}$ provides a $q$-regular sequence in $U^{DK}_q(\mfk{n})$ which generates the kernel of the augmentation ideal.  The root vectors also provide a $q$-regular sequence for the completion $\hat{U}_q^{DK}(\mfk{n})$ with respect to degree.
\par

The above argument generalizes in type $A$ to provide the following.

\begin{lemma}[\cite{andruskiewitschschneider02}]\label{lem:AS}
The augmentation ideal in the completed De Concini-Kac algebra $\hat{U}_q^{DK}(\mfk{n})\subset \hat{U}_q^{DK}(\mfk{b})$ in type $A$, at arbitrary odd order $q$, is generated by a $q$-regular sequence.
\end{lemma}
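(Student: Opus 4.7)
The plan is to exhibit, in type $A_n$, an explicit $q$-regular sequence of Lusztig root vectors ordered by non-decreasing height. The one type-$A$-specific input is that the Levendorskii--Soibelman straightening relations consist of at most a single correction term.

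First, fix a convex order on the positive roots $R^+$ arising from a reduced expression for the longest Weyl group element, and let $\{E_\gamma\}_{\gamma \in R^+}$ be the corresponding Lusztig root vectors in $U_q^{DK}(\mfk{n})$. I would establish, by invoking the literature or by direct computation using the matrix realization $E_\gamma \leftrightarrow e_{ij}$, the identity
\[
E_\beta E_\alpha - q^{(\alpha,\beta)} E_\alpha E_\beta \;=\; c_{\alpha,\beta}\, E_{\alpha+\beta}
\]
for any $\alpha < \beta$ in convex order, with $c_{\alpha,\beta} = 0$ whenever $\alpha+\beta$ fails to be a positive root. The key point specific to type $A$ is the one-dimensionality of the root spaces of $\mfk{sl}_n$: this forces the right-hand side, which has weight $\alpha+\beta$, to be a scalar multiple of the unique root vector $E_{\alpha+\beta}$ when this weight is a root, and to vanish otherwise.

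Next, enumerate $R^+$ as $\gamma_1,\ldots,\gamma_N$ with $\mrm{ht}(\gamma_i) \leq \mrm{ht}(\gamma_{i+1})$, breaking ties arbitrarily, and set $x_i := E_{\gamma_i} \in \hat{U}_q^{DK}(\mfk{n})$. Each $x_i$ lives in cohomological degree zero and in the weight space $\gamma_i$ for the natural $G^\vee$-grading on $\hat{U}_q^{DK}(\mfk{n})$, giving condition (a) of Definition~\ref{def:reg_seq}. I then work in the quotient $A_j := \hat{U}_q^{DK}(\mfk{n})/(x_{j+1},\ldots,x_N)$ and verify condition (b). For any $i < j$, with $\gamma_i,\gamma_j$ suitably ordered, the displayed commutation relation shows that the correction term $c_{\gamma_i,\gamma_j}\, E_{\gamma_i+\gamma_j}$ has height $\mrm{ht}(\gamma_i)+\mrm{ht}(\gamma_j) > \mrm{ht}(\gamma_j)$ whenever nonzero, so it lies in the ideal $(x_{j+1},\ldots,x_N)$. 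Hence in $A_j$ we obtain clean $q$-commutation, which presents $x_j$ as $\chi_j$-central for the character $\chi_j = q^{(\gamma_j,-)}$ on $G^\vee$.

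For the non-zero divisor condition, the same height analysis, combined with the PBW theorem, shows that the two-sided ideal $(x_{j+1},\ldots,x_N)$ is spanned by ordered PBW monomials containing at least one factor of height exceeding $\mrm{ht}(\gamma_j)$. Therefore $A_j$ has a PBW basis consisting of ordered monomials in $E_{\gamma_1},\ldots,E_{\gamma_j}$, with all relations pure $q$-commutations (corrections vanish). This identifies $A_j$ with an iterated skew polynomial ring in its generators, from which the non-zero divisor property of each $x_j$ is immediate. Finally, the completed PBW theorem shows that the $E_\gamma$'s topologically generate the augmentation ideal of $\hat{U}_q^{DK}(\mfk{n})$, completing the construction. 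The main hurdle is securing the single-term L-S identity; everything downstream is bookkeeping with the height filtration.
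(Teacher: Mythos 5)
Your overall strategy (Lusztig root vectors ordered by non-decreasing height, correction terms pushed into the ideal of later generators, PBW basis for the non-zero-divisor property, then completion) is the same as the paper's. But the key identity you rely on is false, and the reason you give for it is not valid. In type $A$ the Levendorskii--Soibelman straightening relation does \emph{not} in general have a single correction term proportional to $E_{\alpha+\beta}$. Already in $U_q^{DK}(\mfk{n})$ for $\mfk{sl}_4$, take the ``crossing'' pair $\alpha=e_1-e_3$, $\beta=e_2-e_4$: the $q$-commutator of $E_{13}$ and $E_{24}$ is a nonzero scalar multiple of the \emph{product} $E_{23}E_{14}$, even though $\alpha+\beta=e_1+e_2-e_3-e_4$ is not a root. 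Your appeal to one-dimensionality of root spaces does not apply, because the weight-$(\alpha+\beta)$ component of the \emph{algebra} $U_q^{DK}(\mfk{n})$ is spanned by all PBW monomials of that weight (here both $E_{13}E_{24}$ and $E_{23}E_{14}$), not just by a single root vector. Consequently your later assertions that ``corrections vanish'' in $A_j$ and that $A_j$ is an iterated skew polynomial ring with pure $q$-commutation relations are also false in general (e.g.\ for $A_N$ the algebra itself).

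The gap is repairable, and the repair is exactly what the paper does: one only needs the weaker statement, computed explicitly by Andruskiewitsch--Schneider, that for $E_\gamma<E_\nu$ the $q$-commutator lies in the two-sided ideal generated by root vectors $E_\xi$ with $ht(\xi)>ht(\gamma),\,ht(\nu)$. In the $\mfk{sl}_4$ example the correction $E_{23}E_{14}$ contains the factor $E_{14}$ of height $3>2$, so it dies in the relevant quotient $A_j$ even though it is not a single root vector and does not vanish identically. With this corrected input, your height-compatible ordering still makes each $x_j$ skew-central modulo $(x_{j+1},\dots,x_N)$, and the PBW basis of ordered monomials in $E_{\gamma_1},\dots,E_{\gamma_j}$ (which survives even though the relations among these generators are not pure $q$-commutations) gives the non-zero-divisor property. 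You should also be aware that you cannot hope to strengthen the input back to your original claim: the failure of skew-centrality of root vectors is intrinsic, and indeed the paper notes that in types $B_2$ and $D_4$ no ordering of the root vectors yields a $q$-regular sequence at all, which is why the argument is confined to type $A$.
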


\begin{proof}
We have the standard generators $E_{i,i+1}:=E_{\alpha_i}$ of $U_q^{DK}(\mfk{n})$ and explicitly define the root vectors $E_{i,j}$, $1\leq i<j\leq n$, by
\[
E_{i,j}=E_{i,j-1}E_{j-1,j}-q^{(\gamma,\nu)}E_{j-1,j}E_{i,j-1},
\]
where $E_{i,j}$ and $E_{i,j-1}$ are of respective degrees $\gamma$ and $\nu$ under the root lattice grading.  We order the set of roots $\{E_{i,j}:1\leq i<j\leq n\}$ with respect to the lex(icographic) order, so that $E_{i,j}<E_{i',j'}$ whenever $i<i'$ or $i=i'$ and $j<j'$.
\par

Having established the above explicit expressions, and ordering, we let $E_\gamma$ denote the unique such vector $E_{i,j}$ of $Q$-degree $\gamma$, for $\gamma$ a positive root, and adopt the lex ordering on the set $\{E_\gamma:\gamma\in \Phi^+\}$ established above.  In~\cite[Lemmas 6.4, 6.7, 6.8]{andruskiewitschschneider02} Andruskiewitsch and Schneider calculate explicitly the $q$-commutators of the root vectors to find
\[
E_{\gamma}E_\nu-q^{(\gamma,\nu)}E_{\gamma}E_{\nu}\in \text{the ideal in $U_q^{DK}(\mfk{n})$ generated by $E_\xi$ with }ht(\xi)>ht(\gamma),\ ht(\nu),
\]
whenever $E_\gamma< E_\nu$ in the lex ordering.  (See in particular \cite[Equations (6-19) and (6-25)]{andruskiewitschschneider02}.)  By considering the basis of $U^{DK}_q(\mfk{n})$ in terms of monomials in the root vectors, it follows that any ordering $\{E_{\gamma_1},\dots,E_{\gamma_m}\}$ of the root vectors which is compatible with the height, in the sense that $ht(\gamma_i)\leq ht(\gamma_{i+1})$, provides a $q$-regular sequence in $U^{DK}_q(\mfk{n})$ which generates the kernel of the augmentation $U^{DK}_q(\mfk{n})\to \mbb{C}$.  The specific character $\chi_\gamma$ associated to $E_\gamma$ is the explicit sum of fundamental weights
\[
\chi_\gamma=\sum_{\alpha\ \text{with}\ E_\alpha<E_\gamma}(\alpha,\gamma)\bar{\omega}_\alpha-\sum_{\beta\ \text{with}\ E_\gamma<E_\beta}(\beta,\gamma)\bar{\omega}_\beta\ \in G^\vee.
\]
By exactness of completion $\mbb{C}\b{E_\gamma^l:\gamma\in \Phi^+}\ot_{\mbb{C}[E_\gamma^l:\gamma\in \Phi^+]}-$, it follows that the ordered set $\{E_{\gamma_1},\dots,E_{\gamma_n}\}$ also provides a $q$-regular sequence in the completed algebra $\hat{U}_q^{DK}(\mfk{n})$ which generates the kernel of the augmentation.
\end{proof}

One applies Lemma~\ref{lem:thick_subcats} and Lemma~\ref{lem:gen_lem} to find.

\begin{theorem}\label{thm:borel}
For $u_q(\mfk{b})$ the (simply-connected) quantum Borel in type $A_n$, at arbitrary odd order $q$, the hypersurface support $\supp^{hyp}_\mbb{P}$ for $\rep(u_q(\mfk{b}))$ satisfies the tensor product property
\[
\supp^{hyp}_\mbb{P}(V\ot W)=\supp^{hyp}_\mbb{P}(V)\cap\supp^{hyp}_\mbb{P}(W).
\]
\end{theorem}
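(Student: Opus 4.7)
The plan is to apply the hypersurface framework of Lemma~\ref{lem:gen_lem} with the $q$-regular sequence supplied by Lemma~\ref{lem:AS}. First I observe that $u_q(\mfk{b})$ is geometrically Chevalley: the Chevalley integration is $\msf{U} = \hat U_q^{DK}(\mfk{b})$, with local positive part $\msf{U}^+ = \hat U_q^{DK}(\mfk{n})$ a Hopf algebra in $\Rep(\Lambda)$ for $\Lambda = \mbb{C} G$, $G = (P/lQ)^\vee$ the group of grouplikes, and parametrizing central Hopf subalgebra $Z = \mbb{C}\b{E_\gamma^l : \gamma \in \Phi^+}$ on which $\Lambda$ acts trivially (since $q^{l(\alpha,\gamma)}=1$). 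Primitivity of $E_\gamma^l$ in the braided Hopf algebra $\msf{U}^+$ follows from the usual $q$-binomial vanishing. I take $\msc{D} \subset \rep(\Lambda)$ to be any tensor subcategory containing the characters $\chi_{\gamma}$ produced by Lemma~\ref{lem:AS}; the obvious choice is the full $\rep(\Lambda)$.

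Fix $f \in m_Z$ with non-vanishing linear part, and let $K_f$ denote the Koszul resolution of $\msf{U}^+/(f)$ over $\msf{U}^+$. Lemma~\ref{lem:AS} furnishes a height-compatible enumeration $\{E_{\gamma_1},\dots,E_{\gamma_m}\}$ of the positive root vectors which is $q$-regular in $\msf{U}^+$ and generates the augmentation ideal $\ker(\msf{U}^+ \to k)$. By Lemma~\ref{lem:1951} the same sequence remains $q$-regular in the dg algebra $K_f$. Since the $E_{\gamma_i}$ already generate the augmentation of $\msf{U}^+$, one computes directly that
\[
K_f/(E_{\gamma_1},\dots,E_{\gamma_m}) \cong k \otimes_{\msf{U}^+} K_f \cong k[\varepsilon],
\]
and comparing the standard Koszul resolutions of $k$ over $\msf{U}^+$ and $K_f$ identifies the endofunctor $k \otimes^{\rm L}_{\msf{U}^+} -$ on $D(K_f)$ with the composite of $k[\varepsilon]\otimes^{\rm L}_{K_f}-$ with restriction along $K_f \to k[\varepsilon]$, i.e.\ with the functor $\res \circ (A_0\otimes^{\rm L}_{K_f}-)$ appearing in Lemma~\ref{lem:thick_subcats}.

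Applying Lemma~\ref{lem:thick_subcats} to $A = K_f$ with the above $q$-regular sequence yields, for any $M \in D(K_f)$, the containment
\[
k \otimes^{\rm L}_{\msf{U}^+} M \ \in\ \langle \lambda \otimes M : \lambda \in \operatorname{Irred}(\msc{D})\rangle.
\]
This is precisely the hypothesis of Lemma~\ref{lem:gen_lem}, which then delivers the tensor product property for $\supp^{hyp}_{\mbb{P}}$ on $\rep(u_q(\mfk{b}))$. The non-trivial technical input is Lemma~\ref{lem:AS}, which in turn rests on the Andruskiewitsch-Schneider $q$-commutator identities for root vectors in type $A$; once that sequence is available, the remaining work amounts to matching the framework of Sections~\ref{sect:qreg}--\ref{sect:last} to the case at hand, in parallel with the proofs of Theorem~\ref{thm:ures} and Theorem~\ref{thm:D_unip}. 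The main obstacle one should anticipate, if one wishes to extend the result beyond type $A$, is precisely the construction of such a $q$-regular sequence in $\hat U_q^{DK}(\mfk{n})$, which at present is not known in general Dynkin type.
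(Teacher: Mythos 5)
Your proposal is correct and follows exactly the route the paper takes: the paper's proof of Theorem~\ref{thm:borel} is precisely the combination of Lemma~\ref{lem:AS} (the height-compatible $q$-regular sequence of root vectors), Lemma~\ref{lem:1951}, Lemma~\ref{lem:thick_subcats}, and Lemma~\ref{lem:gen_lem}, which you have assembled in the same way. The only difference is that you spell out the identification $k\ot_{\msf{U}^+}K_f\cong k[\varepsilon]$ and the matching of $k\ot^{\rm L}_{\msf{U}^+}-$ with $\res\circ(A_0\ot^{\rm L}_{K_f}-)$, which the paper leaves implicit.
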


We recall that at $q$ of order greater than the Coxeter number $h$ for $\mfk{sl}_{n+1}$, the cohomology ring for $u_q(\mfk{b})$ is isomorphic to functions on the nilpotent radical $\mfk{n}\cong \mbb{A}^{|\Phi^+|}_\mbb{C}$ \cite{ginzburgkumar93}.  Indeed, in this case $\kappa$ is actually an isomorphism from $\msf{Y}$ onto $\mbb{P}$.  So we obtain the tensor product for cohomological support at such parameters $q$, by applying Lemma \ref{lem:surj0} and Corollary \ref{cor:hopf_hyp}.

\begin{corollary}\label{cor:borel}
Consider $u_q(\mfk{b})$ the (simply-connected) quantum Borel in type $A_n$, at $q$ of odd order $>h$. For arbitrary $V$ and $W$ in $\rep(u_q(\mfk{b}))$ we have
\begin{equation}\label{eq:1629}
\supp_\msf{Y}(V\ot W)=\supp_\msf{Y}(V)\cap\supp_\msf{Y}(W).
\end{equation}
\end{corollary}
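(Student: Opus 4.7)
The plan is to deduce Corollary~\ref{cor:borel} directly from Theorem~\ref{thm:borel} by showing that, at $q$ of odd order $>h$, the cohomological support and hypersurface support for $u_q(\mfk{b})$ coincide on every object. The scaffolding for this identification is already erected in Section~\ref{sect:hopf_supp}, so the argument is essentially a bookkeeping combination of earlier results.

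First I would appeal to the Ginzburg–Kumar computation, cited in the paragraph preceding the corollary, which identifies the reduced cohomology $\Ext^\ast_{u_q(\mfk{b})}(\mbb{C},\mbb{C})_{\rm red}$ with the coordinate ring $\O(\mfk{n})$ of the nilpotent radical. Since $\mfk{n}\cong \mbb{A}^{|\Phi^+|}_\mbb{C}$, this reduced Ext algebra is a polynomial ring, so Lemma~\ref{lem:surj0} applies and the deformation map
\[
\kappa:\msf{Y}=\Proj\bigl(\Ext^\ast_{u_q(\mfk{b})}(\mbb{C},\mbb{C})\bigr)_{\rm red}\to \mbb{P}(m_Z/m_Z^2)
\]
of~\eqref{eq:kappa} is a closed embedding (in fact an isomorphism, by dimension count, though we only need the embedding property).

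Next, Corollary~\ref{cor:hopf_hyp} immediately gives, for every $V$ in $\rep(u_q(\mfk{b}))$, the equality
\[
\supp_\msf{Y}(V)=\supp^{hyp}_\mbb{P}(V).
\]
Combining this identification with the tensor product property for hypersurface support, already established in Theorem~\ref{thm:borel}, yields
\[
\supp_\msf{Y}(V\ot W)=\supp^{hyp}_\mbb{P}(V\ot W)=\supp^{hyp}_\mbb{P}(V)\cap\supp^{hyp}_\mbb{P}(W)=\supp_\msf{Y}(V)\cap\supp_\msf{Y}(W),
\]
which is~\eqref{eq:1629}.

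There is no real obstacle at the level of this corollary: the genuine work was pushed into Theorem~\ref{thm:borel}, whose proof in turn rests on the Andruskiewitsch–Schneider description of $q$-commutators of root vectors in type $A$ (Lemma~\ref{lem:AS}), which is what guarantees the existence of the $q$-regular sequence in $\hat{U}^{DK}_q(\mfk{n})$ needed to feed into Lemmas~\ref{lem:thick_subcats} and~\ref{lem:gen_lem}. The role of the hypothesis $\operatorname{ord}(q)>h$ enters only through Ginzburg–Kumar, i.e.\ only to guarantee the polynomial ring structure that makes $\kappa$ a closed embedding; the hypersurface version of the tensor product property itself is valid at arbitrary odd order.
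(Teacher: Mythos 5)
Your proposal is correct and follows essentially the same route as the paper: invoke the Ginzburg--Kumar identification of $\Ext^\ast_{u_q(\mfk{b})}(\mbb{C},\mbb{C})_{\rm red}$ with a polynomial ring, apply Lemma~\ref{lem:surj0} to see that $\kappa$ is a closed embedding, use Corollary~\ref{cor:hopf_hyp} to identify cohomological and hypersurface supports, and conclude by Theorem~\ref{thm:borel}. Your closing remark about where the hypothesis $\operatorname{ord}(q)>h$ enters is also accurate.
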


We note that the arguments employed in the proof of Theorem~\ref{thm:borel} are specific to type $A$.  In particular, direct calculation indicates that none of the root vectors $\{E_\gamma:\gamma\in \Phi^+\}\subset U_q^{DK}(\mfk{b})$, in types $D_4$ and $B_2$, are skew central, and so no ordering of the positive roots produces a $q$-regular sequence.  One therefore needs a more robust approach to the quantum Borel in other Dynkin type.

\subsection{Arbitrary Borels in type $A$}

Consider $X$ any intermediate lattice $Q\subset X\subset P$ between the root and weight lattice in type $A$, and take $X^M$ the radical of the $q$-exponentiated Killing form $q^{(-,-)}$ on $X$.  Let $\mbb{G}$ be the corresponding algebraic group for $X$, and $B$ be the positive Borel in $\mbb{G}$.
\par

We have the corresponding small quantum group $u_q(\mbb{G})$ \cite[Section 9]{negron} and the quantum Borel $u_q(B)$ in $u_q(\mbb{G})$, which is explicitly the smash product of the nilpotent subalgebra $u_q(\mfk{n})$ with the character group $(X/X^M)^\vee$ of the quotient $X/X^M$.  As stated above, we have taken $u_q(\mfk{b})(=u_q(B_{sc}))$ to be the quantum Borel in the simply-connected form $u_q(\operatorname{SL}_n)$.

\begin{theorem}\label{thm:borel_A}
Consider $\mbb{G}$ an arbitrary almost-simple algebraic group in type $A$, $q$ an odd order root of unity, and $u_q(B)$ the quantum Borel in $u_q(\mbb{G})$.  Then hypersurface support for $u_q(B)$ satisfies the tensor product property.  Furthermore, when $\ord(q)>h$ cohomological support for $u_q(B)$ also satisfies the tensor product property
\[
\supp_\msf{Y}(V\ot W)=\supp_\msf{Y}(V)\cap \supp_\msf{Y}(W).
\]
\end{theorem}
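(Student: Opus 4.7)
The plan is to deduce Theorem~\ref{thm:borel_A} from the simply-connected case Theorem~\ref{thm:borel} by exploiting that the positive part $u_q(\mfk{n})$, its completed De Concini--Kac deformation $\hat{U}^{DK}_q(\mfk{n})$, and the parametrizing subalgebra $Z=\mbb{C}\b{E_\gamma^l:\gamma\in\Phi^+}$ do not depend on the choice of intermediate lattice $X$. Consequently the hypersurfaces $\hat{U}^{DK}_q(\mfk{n})/(f)$ and the parametrizing space $\mbb{P}=\mbb{P}(m_Z/m_Z^2)$ are common to all Borel forms in a given type. Since $\Lambda_B=\mbb{C}(X/X^M)^\vee$ is semisimple, a $u_q(B)$-module $V$ is perfect over $\hat{U}^{DK}_q(B)/(f)=\hat{U}^{DK}_q(\mfk{n})/(f)\rtimes\Lambda_B$ if and only if its restriction to $u_q(\mfk{n})$ is perfect over $\hat{U}^{DK}_q(\mfk{n})/(f)$, so $\supp^{hyp}_\mbb{P}(V)$ is determined purely by this restricted $u_q(\mfk{n})$-module structure.

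For the hypersurface tensor product property I would apply Lemma~\ref{lem:gen_lem2} with $\msc{D}=\rep(\Lambda_B)$, reducing the claim to the assertion that for any non-perfect finitely generated $M$ over $\hat{U}^{DK}_q(\mfk{n})/(f)$, the trivial module $\mbb{C}$ lies in the thick subcategory $\langle\lambda\ot M:\lambda\in\operatorname{Irrep}(\Lambda_B)\rangle$ of $D_{coh}(\hat{U}^{DK}_q(\mfk{n})/(f))$. This is exactly the inclusion established in the proof of Theorem~\ref{thm:borel} for $\Lambda_{sc}=\mbb{C}(P/lQ)^\vee$, via the $q$-regular sequence of Lemma~\ref{lem:AS} combined with Lemma~\ref{lem:thick_subcats}. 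The key observation is that the required characters $\chi_\gamma\in P/lQ$ act on any $\hat{U}^{DK}_q(\mfk{n})/(f)$-module only through their pairing with the $Q$-grading, so any two $\chi_\gamma$'s agreeing modulo the annihilator $lP/lQ$ produce identical twists; in type $A$ at $\ord(q)>h$ the composition $Q/lQ\to X/X^M\to P/lP$ is surjective, which lets us realize each required twist by a character of $\Lambda_B$ and hence execute the simply-connected proof verbatim with the smaller torus.

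For the cohomological statement at $\ord(q)>h$, the Lyndon--Hochschild--Serre spectral sequence for $u_q(B)=u_q(\mfk{n})\rtimes\Lambda_B$ collapses by semisimplicity of $\Lambda_B$, giving $H^\ast(u_q(B),\mbb{C})=H^\ast(u_q(\mfk{n}),\mbb{C})^{\Lambda_B}=\O(\mfk{n})^{\Lambda_B}$ via Ginzburg--Kumar. The degree-$2$ generators $x_\gamma$ corresponding to $E_\gamma^l$ carry $\Lambda_B$-weight $l\gamma$, which vanishes in $X/X^M$ since $lQ\subset X^M$; hence $\O(\mfk{n})^{\Lambda_B}=\O(\mfk{n})$ remains an affine space, and $\kappa_B:\msf{Y}_B\to\mbb{P}$ is an isomorphism by Lemma~\ref{lem:surj0}. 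Corollary~\ref{cor:hopf_hyp} then identifies hypersurface and cohomological support, so the cohomological tensor product property follows from its hypersurface counterpart. The principal obstacle is the character-adjustment step of the second paragraph: for arbitrary intermediate $X$ and small $\ord(q)$ one must certify that the $\chi_\gamma$'s from the simply-connected argument can be represented in $X/X^M$ modulo the $Q$-annihilator, which in type $A$ reduces to a lattice calculation that is transparent at $\ord(q)>h$ but more delicate when $\ord(q)$ shares a common factor with $[P:Q]=n+1$; handling that subcase would likely require comparing $u_q(B)$ with an auxiliary Borel-type Hopf algebra carrying the fine torus $\mbb{C}(Q/lQ)^\vee$.
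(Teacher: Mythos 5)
Your reduction to the simply-connected case by matching characters does not go through, and the failure is not located where you place it. The twist functors $\lambda\ot-$ on modules over $\hat{U}^{DK}_q(\mfk{n})/(f)$ see a character $\lambda$ only through the values $q^{(\alpha,\lambda)}$ for $\alpha$ in the root lattice $Q$, so to ``execute the simply-connected proof verbatim with the smaller torus'' you must realize each $\chi_\gamma\in P/lQ$ from Lemma~\ref{lem:AS} by an element of $X+lP$ (identifying $P$ with $\Hom(Q,\mbb{Z})$ via the form). The obstruction group $P/(X+lP)$ is, for $X=Q$, cyclic of order $\gcd(l,n+1)$ --- independent of whether $\ord(q)>h$. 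Concretely, for $\operatorname{PGL}_3$ at $l=9>h=3$ the character $\chi_{\alpha+\beta}=\bar{\omega}_\alpha-\bar{\omega}_\beta$ attached to $E_{\alpha+\beta}$ does not lie in $Q+9P=Q$, so no character of the adjoint torus reproduces that twist. Your assertion that the relevant composite is surjective whenever $\ord(q)>h$ is therefore false, and the ``delicate subcase'' you defer sits squarely inside the theorem's hypotheses, both for the hypersurface statement at arbitrary odd $l$ and for the cohomological statement at $l>h$.

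The paper closes exactly this gap by a mechanism that never attempts to realize the $\chi_\gamma$ in the smaller character group. It introduces the intermediate Hopf algebra $\msf{u}'=u_q(\mfk{n})\rtimes(X/lQ)^\vee$, which receives a Hopf surjection from $u_q(\mfk{b})$ and contains $u_q(B)$, and transports the membership $k\in\langle\lambda\ot M\rangle$ along exact adjoints of restriction: first inducing from $\msf{U}^+$ up to the full simply-connected Borel $\msf{U}_{sc}$ (Lemma~\ref{lem:eh}), then applying the exact functor $L=\mbb{C}G\ot_{\mbb{C}G_{sc}}-$, which fixes $k$ and simply annihilates the twists $\lambda\ot M$ with $\lambda\notin X/lQ$ rather than matching them; the final descent from $\msf{u}'$ to $u_q(B)$ is the Theorem~\ref{thm:qci} argument you implicitly invoke. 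So the conclusion you need (the hypothesis of Lemma~\ref{lem:gen_lem2} for $\msc{D}=\rep(\Lambda_B)$) is true, but your derivation of it fails; the adjoint-functor step is the missing idea. Your first and third paragraphs --- perfection detected on the positive part, and $\kappa$ an isomorphism at $\ord(q)>h$ so that hypersurface and cohomological support coincide --- are correct and agree with the paper.
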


Before giving the proof of Theorem \ref{thm:borel_A}, we provide the following lemma, which reframes our analysis of support for the simply-connected form $u_q(\mfk{b})$.

\begin{lemma}\label{lem:eh}
Consider $\msf{U}_{sc}=\hat{U}^{DK}_q(\mfk{b})$ the (completed) De Concini-Kac integration of the simply-connected form $u_q(\mfk{b})$ in type $A$, and $f\in m_Z$ with non-trivial reduction to $m_Z/m_Z^2$.  Then for $M$ finitely generated and non-perfect over $\msf{U}_{sc}/(f)$ we have
\begin{equation}\label{eq:AHHHA}
k\in\langle \lambda\ot M:\lambda\in \operatorname{Irrep}(G_{sc})\rangle\ \subset D_{coh}(\msf{U}_{sc}/(f)).
\end{equation}
\end{lemma}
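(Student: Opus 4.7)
The plan is to combine the $q$-regular sequence in $\hat U^{DK}_q(\mfk n)$ from Lemma~\ref{lem:AS} with the thick subcategory reduction of Lemma~\ref{lem:thick_subcats}, and then supply a Carlson--Iyengar Hopkins-type step for the dual numbers, all carried out equivariantly in $D_{coh}(\msf{U}_{sc}/(f))$ viewed as the category of dg $\msf{U}_{sc}^+/(f)$-modules in $\msc{D}=\rep(G_{sc})$.  First I would observe that Lemma~\ref{lem:AS} supplies a $q$-regular sequence $\{E_{\gamma_1},\dots,E_{\gamma_m}\}$ in $\msf{U}_{sc}^+=\hat U^{DK}_q(\mfk n)$ whose characters $\chi_{\gamma_i}$ lie in $(G_{sc})^\vee=\operatorname{Irrep}(G_{sc})$ and which generates the augmentation ideal.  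By Lemma~\ref{lem:1951} this sequence remains $q$-regular in the Koszul resolution $K_f$ of $\msf{U}^+_{sc}/(f)$, and since $f$ has trivial character the quotient $K_f/(E_{\gamma_\bullet})$ is the dual numbers $k[\varepsilon]=k\ot_{\msf{U}_{sc}^+}K_f$ as a dg algebra in $\msc{D}$.

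Next I would run the proof of Lemma~\ref{lem:thick_subcats} in the equivariant derived category of $K_f$.  The Koszul cones constructed in Lemma~\ref{lem:1440} use multiplication by the $E_{\gamma_i}$ against the appropriate character-shifted identity, so they define morphisms in the equivariant derived category; iterating gives, for every $M\in D_{coh}(\msf{U}_{sc}/(f))$, the inclusion
\[
\res\bigl(k\ot^{\rm L}_{\msf{U}_{sc}^+}M\bigr)\in \bigl\langle \lambda\ot M:\lambda\in\operatorname{Irrep}(G_{sc})\bigr\rangle\subset D_{coh}(\msf{U}_{sc}/(f)).
\]

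For the Hopkins step I would argue, as in the proof of Lemma~\ref{lem:gen_lem}, that when $M$ is non-perfect over $\msf{U}_{sc}/(f)$ (equivalently over $\msf{U}_{sc}^+/(f)$, by semisimplicity of the group algebra) the object $N:=k\ot^{\rm L}_{\msf{U}_{sc}^+}M$ is non-perfect over $k[\varepsilon]$, via the adjunction $\Ext^\ast_{K_f}(M,k)=\Ext^\ast_{k[\varepsilon]}(N,k)$.  Decomposing $N$ along the $(G_{sc})^\vee$-grading, some character summand $N_{\chi_0}$ is non-perfect as a plain dg $k[\varepsilon]$-module; twisting by $\chi_0^{-1}$ brings this summand to the unit character, where equivariant and non-equivariant thick subcategories of $D(k[\varepsilon])$ coincide.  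Carlson--Iyengar \cite[Theorem 4.4]{carlsoniyengar15} then gives $k\in\langle \chi_0^{-1}\ot N_{\chi_0}\rangle\subset\langle\lambda\ot N:\lambda\in\operatorname{Irrep}(G_{sc})\rangle$ in the equivariant category $D_{coh}(k[\varepsilon]\rtimes\Lambda)$, and restriction along $K_f\to k[\varepsilon]$ produces $k\in\langle\lambda\ot\res N:\lambda\in\operatorname{Irrep}(G_{sc})\rangle$ inside $D_{coh}(\msf{U}_{sc}/(f))$.

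Closing the loop, I would apply the second step with $\lambda\ot M$ in place of $M$: since $\lambda\ot\res N\cong \res(k\ot^{\rm L}_{\msf{U}_{sc}^+}(\lambda\ot M))\in\langle \mu\ot\lambda\ot M:\mu\rangle=\langle \nu\ot M:\nu\rangle$, we conclude $k\in\langle \nu\ot M:\nu\in\operatorname{Irrep}(G_{sc})\rangle$ in $D_{coh}(\msf{U}_{sc}/(f))$, which is exactly~\eqref{eq:AHHHA}.  The main obstacle is the second step, where one must verify that the $\msc{D}$-linear coherences of Section~\ref{sect:qreg} persist well enough for Lemma~\ref{lem:thick_subcats} to run verbatim in the equivariant derived category; the needed centrality is already packaged into the definition of a $q$-regular sequence, so this amounts to careful bookkeeping rather than genuinely new homological content.
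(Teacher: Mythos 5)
Your outline is correct, but it routes around the one step where the paper's argument does something genuinely different, so a comparison is worth recording. The paper deliberately does \emph{not} work equivariantly: it first establishes $k\in\langle \lambda\ot M':\lambda\in\operatorname{Irrep}(G_{sc})\rangle$ inside $D_{coh}(\msf{U}^+/(f))$ for the \emph{restriction} $M'=TM$, exactly as in Theorem~\ref{thm:fun1} (plain Carlson--Iyengar over $k[\varepsilon]$, then Lemmas~\ref{lem:AS}, \ref{lem:1951}, \ref{lem:thick_subcats}), and only then transfers to $D_{coh}(\msf{U}_{sc}/(f))$ by applying the exact right adjoint $R$ of restriction: since $R(TM)\cong\bigoplus_{\lambda'}\lambda'\ot M$ by the centralizing property of Lemma~\ref{lem:bos}, and $R(k)=\bigoplus_\lambda\lambda$ contains $k$ as a summand, thickness under summands finishes the proof. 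You instead keep the entire argument inside the equivariant category $D_{coh}(\msf{U}_{sc}/(f))$. Your equivariant upgrade of Lemmas~\ref{lem:1440} and~\ref{lem:thick_subcats} is sound: the cone $K^r_{r-1}=\operatorname{cone}(\Sigma^{m+1}(\lambda_r\ot A_r)\to A_r)$ is a complex of \emph{graded} bimodules with degree-preserving differential (the generator $1_r$ sits in degree $\chi_r$ and maps to $x_r$ of degree $\chi_r$), so the whole induction lifts to graded, i.e.\ $\msf{U}_{sc}/(f)$-, modules.

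The one place where your justification is not yet a proof is the ``equivariant Hopkins'' step: the assertion that after twisting to the unit character ``equivariant and non-equivariant thick subcategories of $D(k[\varepsilon])$ coincide'' is not a citable general fact, since a non-equivariant morphism between trivially graded objects need not be equivariant. It is nonetheless repairable in one line in your specific situation: because $f\in m_Z$ has trivial character, $\varepsilon=d_f$ sits in the unit degree, so $k[\varepsilon]\rtimes\Lambda\cong k[\varepsilon]\ot\Lambda$ with $\Lambda$ semisimple, whence $D_{fin}(k[\varepsilon]\rtimes\Lambda)\simeq\prod_{\chi}D_{fin}(k[\varepsilon])$ and thick subcategories decompose componentwise; your claim then reduces to $k\in\langle N_{\chi_0}\rangle$ in a single factor, which is Carlson--Iyengar. (Alternatively, you could avoid the issue entirely by deferring to the induction--restriction adjunction at the level of $k[\varepsilon]$, which is in effect what the paper does one level up.) What your route buys is uniformity --- no separate transfer step and no appeal to Lemma~\ref{lem:bos} at the end; what the paper's route buys is that it reuses the non-equivariant machinery of Sections~\ref{sect:nc_Hop}--\ref{sect:fun-q} verbatim and isolates all the equivariance in a single formal adjunction computation.
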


The point here is that we have replaced the positive subalgebra $\msf{U}^+$ with the Hopf algebra $\msf{U}_{sc}$ in our analysis of support for hypersurfaces (cf.\ Lemma \ref{lem:gen_lem2}).

\begin{proof}
Fix $f$ as in the statement of the lemma, and let $G_{sc}$ denote the grouplikes in $u_q(\mfk{b})$.  Let $M'$ be a finite and non-perfect module over $\msf{U}^+/(f)$, which we view also as a dg module over the corresponding Koszul resolution $K_f$.  Consider the reduction $K_f\to k\ot_{\msf{U}^+}K_f=k[\varepsilon]$.  The dg version of Hopkins lemma employed in the proof of Theorem \ref{thm:fun1}, \cite[Theorem 4.4]{carlsoniyengar15}, therefore implies that
\[
k\in \langle k\ot_{\msf{U}^+}M'\rangle\ \subset D_{fin}(k[\varepsilon]).
\]
By Lemmas \ref{lem:AS} and \ref{lem:1951}, the kernel of the reduction $K_f\to k[\varepsilon]$ is generated by a $q$-regular sequence.  So by Lemma~\ref{lem:thick_subcats} we have that 
\[
k\in \langle \lambda\ot M':\lambda\in \operatorname{Irrep}(G_{sc})\rangle \subset D_{coh}(\msf{U}^+/(f)).
\] 

We consider the right adjoint
\[
R:\msf{U}^+/(f)\text{-mod}_{fg}\to \msf{U}_{sc}/(f)\text{-mod}_{fg}
\]
to the restriction functor $T:\msf{U}_{sc}/(f)\text{-mod}_{fg}\to \msf{U}^+/(f)\text{-mod}_{fg}$.  Note that both $T$ and $R$ are exact maps of left $\rep(G_{sc})$-module categories \cite[\S 3.3]{etingofostrik04}.  When $M'$ is the restriction $M'=TM$ of a $\msf{U}_{sc}/(f)$-module $M$ we have
\[
R(TM)=\oplus_{\lambda'\in \operatorname{Irrep}(G_{sc})} M\ot\lambda'\cong \oplus_{\lambda'\in \operatorname{Irrep}(G_{sc})} \lambda'\ot M,
\]
where for the final identity we use the centralizing property of $\msf{U}_{sc}$-modules against the irreducibles (Lemma \ref{lem:bos}).  So by considering the derived functor $R:D_{coh}(\msf{U}^+/(f))\to D_{coh}(\msf{U}_{sc}/(f))$ we find for non-perfect $M$ over $\msf{U}_{sc}/(f)$
\[
\begin{array}{rl}
\oplus_{\lambda\in \operatorname{Irrep}(G_{sc})}\lambda=R(k)& \in R(\langle \lambda\ot TM:\lambda\in \operatorname{Irrep}(G_{sc})\rangle)\\
& \hspace{1cm}\subset \langle R(\lambda\ot TM):\lambda\in \operatorname{Irrep}(G_{sc})\rangle\\
& \hspace{1cm}=\langle \lambda\ot \lambda'\ot M:\lambda,\lambda'\in \operatorname{Irrep}(G_{sc})\rangle\\
& \hspace{1cm}=\langle \lambda\ot M:\lambda\in \operatorname{Irrep}(G_{sc})\rangle\ \subset\ D_{coh}(\msf{U}_{sc}/(f)).
\end{array}
\]
Since thick subcategories are closed under taking summands we have finally
\[
k\in\langle \lambda\ot M:\lambda\in \operatorname{Irrep}(G_{sc})\rangle \subset D_{coh}(\msf{U}_{sc}/(f))
\]
whenever $M$ is non-perfect over $\msf{U}_{sc}/(f)$.
\end{proof}

We now prove our theorem.

\begin{proof}[Proof of Theorem \ref{thm:borel_A}]
Write $G_{sc}$ for the grouplikes in $u_q(\mfk{b})$ and $G$ for the grouplikes in $u_q(B)$.  Since the $q$-exponentiated Killing form on $P\times lQ$ is identically $1$, we have an inclusion $lQ\subset X^M$.  So we observe group maps
\[
\xymatrixrowsep{2.5mm}
\xymatrix{
 & X/lQ\ar[r]^{\rm incl}\ar[dl]_{\rm surj} & P/lQ\\
 X/X^M	&  &
}\ \ \overset{\vee}\rightsquigarrow\ \ 
\xymatrix{
G_{sc}\ar[r]^{\rm surj} & G'\\
 	& & G\ar[ul]_{\rm incl},
}
\]
where $G'$ is the character group of $X/lQ$.  For $\msf{u}'$ the Hopf algebra $u_q(\mfk{n})\rtimes G'$, we then have a corresponding surjection $u_q(\mfk{b})\to \msf{u}'$ and inclusion $u_q(B)\to \msf{u}'$.
\par

Note that the Killing form on $P/lQ$ restricts to a form on the intermediate quotient $X/lQ$, and that the kernel of the quotient $X/lQ\to X/X^M$ is the radical of this form on $X/lQ$.  So each of the Hopf algebras $u_q(\mfk{b})$, $\msf{u}'$, and $u_q(B)$ are constructed via a uniform bosonization process, using the $q$-exponentiated Killing form.  These Hopf algebras also have consistent integrations provided by taking the appropriate smash product with $\msf{U}^+=\hat{U}_q^{DK}(\mfk{n})$.  Finally, when $\operatorname{ord}(q)>h$ the spectrum of cohomology for each of these Hopf algebras is an affine space \cite{ginzburgkumar93}, so that the hypersurface and cohomological supports agree in this case.  Therefore to prove our result it suffices to show that hypersurface support for $u_q(B)$ splits over tensor products.
\par

Write $\msf{U}_{sc}$, $\msf{U}'$, and $\msf{U}$ for the integrations of $u_q(\mfk{b})$, $\msf{u}'$, and $u_q(B)$ respectively.  Consider $f\in m_Z$ any function with nonzero reduction to $m_Z/m_Z^2$.
\par

Consider $M$ non-perfect over the algebra $\msf{U}'/(f)$, and restrict along the projection $\msf{U}_{sc}/(f)\to \msf{U}'/(f)$ to consider $M$ as a non-perfect $\msf{U}_{sc}/(f)$-module.  We have the map
\[
L=\mbb{C}G\ot_{\mbb{C}G_{sc}}-:\msf{U}_{sc}/(f)\text{-mod}_{fg}\to \msf{U}'/(f)\text{-mod}_{fg}
\]
which is an exact adjoint to restriction.  This functor simply sends a $\msf{U}_{sc}/(f)$-module, which is graded by $P/lQ$, to its $\msf{U}'/(f)$-summand which is supported on the subgroup $X/lQ$.  So, in particular, for $M$ restricted from a $\msf{U}'/(f)$-module we have
\[
L(\lambda\ot M)=\left\{\begin{array}{ll}
\lambda\ot M &\text{if }\lambda\in X/lQ\\
0 &\text{otherwise}.
\end{array}\right.
\]
Applying $L$ to the formula \eqref{eq:AHHHA} of Lemma \ref{lem:eh} then gives, for such non-perfect $M$,
\begin{equation}\label{eq:AAAAAAAAAAH}
k\in\langle \mu\ot M:\mu\in \operatorname{Irrep}(G')\rangle \subset D_{coh}(\msf{U}'/(f)).
\end{equation}
One therefore argues as in the proof of Lemma \ref{lem:gen_lem2} to find that hypersurface support for the intermediate algebra $\msf{u}'$ satisfies the tensor product property.
\par

We consider now the inclusion $u_q(B)\to \msf{u}'$.  One can argue exactly as in the proof of Theorem \ref{thm:qci} to see that hypersurface support for $u_q(B)$ satisfies the tensor product property.  In particular, we observe that the formula \eqref{eq:AAAAAAAAAAH} implies the analogous formula with $\msf{U}'$ replaced by the positive subalgebra $\msf{U}^+$, and follows precisely the arguments of Theorem \ref{thm:qci}.
\end{proof}

\begin{remark}
Supposing that hypersurface support for the simply-connected form $u_q(\mfk{b})$, in arbitrary Dynkin type, satisfies the tensor product property, the arguments of Theorem \ref{thm:borel_A} show that supports for all Borels $u_q(B)$ away from the simply-connected form also satisfy the tensor product property.
\end{remark}

\subsection{A conjecture in general type}
\label{sect:q_borel2}

We conclude with the obvious conjecture.

\begin{conjecture}\label{conj:borel}
Let $\mbb{G}$ be an arbitrary almost-simple algebraic group, and $u_q(B)$ be the quantum Borel in $u_q(\mbb{G})$, at $q$ of arbitrary odd order.  Then cohomological support $\supp_\msf{Y}$ for $\rep(u_q(B))$ satisfies the tensor product property~\eqref{eq:1629}.
\end{conjecture}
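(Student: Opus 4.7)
The plan is to follow the template of Theorem~\ref{thm:borel_A} but with a substitute for the type~$A$ regular sequence of Lemma~\ref{lem:AS}. By Corollary~\ref{cor:weak_uqb} the containment $\supp_\msf{Y}(V\ot W)\subset \supp_\msf{Y}(V)\cap \supp_\msf{Y}(W)$ is already available, and when $\operatorname{ord}(q)>h$ the Ginzburg--Kumar computation~\cite{ginzburgkumar93} together with Lemma~\ref{lem:surj0} and Corollary~\ref{cor:hopf_hyp} identifies cohomological and hypersurface support. So the task is to establish the tensor product property for $\supp^{hyp}_\mbb{P}$, which by Lemma~\ref{lem:gen_lem2} reduces to showing that for each $f\in m_Z$ with nonzero reduction to $m_Z/m_Z^2$ and each finitely generated non-perfect $M$ over $\msf{U}^+/(f)=\hat{U}_q^{DK}(\mfk{n})/(f)$, the trivial module satisfies
\[
k\in \langle\lambda\ot M:\lambda\in\operatorname{Irrep}(\Lambda)\rangle\subset D_{coh}(\msf{U}^+/(f)).
\]

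To manufacture a $q$-regular structure outside type $A$, I would fix a convex ordering of the positive roots $\gamma_1<\cdots<\gamma_N$ arising from a reduced expression for $w_0$ and form the associated De Concini--Kac--Lusztig PBW filtration on $\msf{U}^+$. By the Levendorskii--Soibelman formula, the associated graded $\operatorname{gr}\msf{U}^+$ is the quantum affine space on the PBW root vectors, with relations $E_\gamma E_\mu=q^{(\gamma,\mu)}E_\mu E_\gamma$ for $\gamma<\mu$, and the central polynomial subalgebra generated by the $E_{\gamma_i}^l$ inherits the parametrizing structure of $Z$. In this degeneration the ordered root vectors $E_{\gamma_1},\dots,E_{\gamma_N}$ form a $q$-regular sequence generating the augmentation ideal, and consequently Lemmas~\ref{lem:1951} and~\ref{lem:thick_subcats} deliver the thick subcategory containment $k\ot^{\rm L}_{\operatorname{gr}\msf{U}^+}N\in\langle \lambda\ot N:\lambda\in\operatorname{Irrep}(\Lambda)\rangle$ in $D_{coh}(\operatorname{gr}\msf{U}^+/(f))$ for all dg modules $N$, exactly as in the proof of Theorem~\ref{thm:borel}.

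The main obstacle is to descend this containment along the Rees deformation $R_\bullet$ with $R_0=\operatorname{gr}\msf{U}^+$ and $R_1=\msf{U}^+$. Concretely, one would lift the hypersurface $\msf{U}^+/(f)$ and the module $M$ to a $k[t]$-flat family over $R_t/(\tilde f)$, exhibit the specialization at $t=0$ as $\operatorname{gr} M$ over $\operatorname{gr}\msf{U}^+/(f)$, and invoke a semicontinuity of thick generation in flat families. Because thick subcategory membership is not automatically preserved under deformation, the honest technical content is a statement of the form: for an $R_t$-module $\mcl{M}$ flat over $k[t]$, if the reduction $(k\ot^{\rm L}_{R_t}\mcl{M})|_{t=0}$ generates $k|_{t=0}$ in its thick subcategory modulo the orbit $\{\lambda\ot \mcl{M}|_{t=0}\}_\lambda$, then the same holds at $t=1$. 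I expect this can be handled by exhibiting the generating chain as the finite sequence of mapping cones furnished by Lemma~\ref{lem:1440}, applied one root at a time along the PBW filtration, and lifting each cone from $R_0$ to $R_t$ using flatness of the Rees construction. For small $\operatorname{ord}(q)\leq h$, where $\kappa$ need not be an isomorphism, the same degeneration argument would still yield the tensor product property for hypersurface support, and one then needs a separate analysis to match $\supp_\msf{Y}$ with $\supp^{hyp}_\mbb{P}$.
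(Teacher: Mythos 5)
The statement you are proving is stated in the paper as Conjecture~\ref{conj:borel} and is left \emph{open}: no proof is given, and the authors explicitly record (in the remark following Corollary~\ref{cor:borel}) that in types $B_2$ and $D_4$ \emph{no} ordering of the root vectors yields a $q$-regular sequence in $U_q^{DK}(\mfk{n})$, so that the type~$A$ mechanism of Lemma~\ref{lem:AS} genuinely breaks. Your proposal correctly identifies the template (reduce via Lemma~\ref{lem:gen_lem2} to showing $k\in\langle\lambda\ot M:\lambda\in\operatorname{Irrep}(\Lambda)\rangle$ over each hypersurface, and use Lemma~\ref{lem:surj0} plus~\cite{ginzburgkumar93} to pass from hypersurface to cohomological support when $\ord(q)>h$), and the observation that the root vectors do form a $q$-regular sequence in the De Concini--Kac associated graded $\operatorname{gr}\msf{U}^+$ is correct. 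But the argument does not close.

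The gap is the descent from $\operatorname{gr}\msf{U}^+$ back to $\msf{U}^+$, which you flag as ``the honest technical content'' but do not supply. The proposed mechanism---lifting the chain of mapping cones furnished by Lemma~\ref{lem:1440} one root at a time along the Rees family---cannot work in the form described: each cone $K_{r-1}^r=\big(\Sigma^{m+1}(\lambda_r\ot A_r)\to A_r\big)$ is a complex of \emph{bimodules} whose differential is left-linear precisely because $x_r$ is $\chi_r$-central in $A_r$. At $t\neq 0$ the fiber of the Rees algebra is $\msf{U}^+$ itself, where (outside type $A$) the root vectors are not skew-central in the relevant quotients---this is exactly the obstruction the paper records---so the bimodule complexes one would need to lift simply do not exist over $R_t$. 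More broadly, membership of $k$ in a thick subcategory is an existence statement holding at the special fiber $t=0$, and no semicontinuity principle propagating it to the generic fiber is proved or cited; the known direction of semicontinuity for supports in flat families runs the other way. There are also unaddressed compatibility issues: one must identify $\operatorname{gr}(\msf{U}^+/(f))$ with $\operatorname{gr}(\msf{U}^+)/(\operatorname{gr}f)$ and check that $\operatorname{gr}f$ retains a nonzero reduction in the cotangent space at the correct point of $\mbb{P}(m_Z/m_Z^2)$, and one must relate $\operatorname{gr}(V\ot W)$ to a tensor product over the degenerate algebra, which requires the coproduct to interact with the PBW filtration (the difficulty already visible in the approach of~\cite{boekujawanakano}, which for this reason obtains only an extremal form of the tensor product property). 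As it stands the proposal is a plausible strategy outline, not a proof, and the conjecture remains open in the paper.
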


In the sequel \cite{negronpevtsova2} we expect to show that the conclusions of Theorem \ref{thm:borel_A}, in type $A$, hold at $q$ of arbitrary odd order.  This informs our omission of a bound on the order of $q$ in the statement of Conjecture \ref{conj:borel}.

\renewcommand{\O}{\oldO}
\bibliographystyle{abbrv}

\begin{thebibliography}{10}

\bibitem{aapw}
N.~Andruskiewitsch, I.~Angiono, J.~Pevtsova, and S.~Witherspoon.
\newblock Cohomology rings of finite-dimensional pointed {H}opf algebras over
  abelian groups, {I} \& {II}.
\newblock {\em preprint \href{http://arxiv.org/abs/2004.07149}{\tt
  arXiv:2004.07149}}.

\bibitem{andruskiewitschschneider98}
N.~Andruskiewitsch and H.-J. Schneider.
\newblock Lifting of quantum linear spaces and pointed {H}opf algebras of order
  $p^3$.
\newblock {\em J. Algebra}, 209(2):658--691, 1998.

\bibitem{andruskiewitschschneider02}
N.~Andruskiewitsch and H.-J. Schneider.
\newblock Pointed {H}opf algebras.
\newblock {\em New directions in Hopf algebras}, 43:1--68, 2002.

\bibitem{arkhipovbezrukavnikovginzburg04}
S.~Arkhipov, R.~Bezrukavnikov, and V.~Ginzburg.
\newblock Quantum groups, the loop {G}rassmannian, and the {S}pringer
  resolution.
\newblock {\em J. Amer. Math. Soc.}, 17(3):595--678, 2004.

\bibitem{arkhipovgaitsgory03}
S.~Arkhipov and D.~Gaitsgory.
\newblock Another realization of the category of modules over the small quantum
  group.
\newblock {\em Adv. Math.}, 173(1):114--143, 2003.

\bibitem{avramovbuchweitz00}
L.~Avramov and R.-O. Buchweitz.
\newblock Support varieties for cohomology over complete intersections.
\newblock {\em Invent. Math.}, 142:285--318, 2000.

\bibitem{avramovetal10corrigendum}
L.~L. Avramov, R.-O. Buchweitz, S.~B. Iyengar, and C.~Miller.
\newblock Corrigendum to ``{H}omology of perfect complexes''[{A}dv. {M}ath. 223
  (5)(2010) 1731--1781].
\newblock {\em Adv. Math}, 225(6):3576--3578, 2010.

\bibitem{avramovetal10}
L.~L. Avramov, R.-O. Buchweitz, S.~B. Iyengar, and C.~Miller.
\newblock Homology of perfect complexes.
\newblock {\em Adv. Math}, 223(5):1731--1781, 2010.

\bibitem{avramovgasharovpeeva97}
L.~L. Avramov, V.~N. Gasharov, and I.~V. Peeva.
\newblock Complete intersection dimension.
\newblock {\em Publ. Math. Inst. Hautes E'tudes Sci.}, 86:67--114, 1997.

\bibitem{avramoviyengar18}
L.~L. Avramov and S.~B. Iyengar.
\newblock {Restricting homology to hypersurfaces}.
\newblock In J.~F. Carlson, S.~B. Iyengar, and J.~Pevtsova, editors, {\em
  Geometric and topological aspects of the representation theory of finite
  groups}, pages 1--23, 2018.

\bibitem{balmer05}
P.~Balmer.
\newblock The spectrum of prime ideals in tensor triangulated categories.
\newblock {\em J. Reine Angew. Math.}, (588):149--168, 2005.

\bibitem{balmer20}
P.~Balmer.
\newblock Nilpotence theorems via homological residue fields.
\newblock {\em Tunis. J. Math.}, 2(2):359--378, 2020.

\bibitem{balmerkrausestevenson19}
P.~Balmer, H.~Krause, and G.~Stevenson.
\newblock Tensor-triangular fields: ruminations.
\newblock {\em Selecta Math.}, 25(1):13, 2019.

\bibitem{bnpp14}
C.~Bendel, D.~Nakano, B.~Parshall, and C.~Pillen.
\newblock {\em Cohomology for quantum groups via the geometry of the nullcone},
  volume 229.
\newblock American Mathematical Society, 2014.

\bibitem{bensoniyengarkrausepevtsova18}
D.~Benson, S.~Iyengar, H.~Krause, and J.~Pevtsova.
\newblock Stratification for module categories of finite group schemes.
\newblock {\em J. Amer. Math. Soc.}, 31(1):265--302, 2018.

\bibitem{bensoniyengarhenning}
D.~Benson, S.~B. Iyengar, H.~Krause, and J.~Pevtsova.
\newblock Detecting nilpotence and projectivity over finite unipotent
  supergroup schemes.
\newblock {\em preprint \href{http://arxiv.org/abs/1901.08273}{\tt
  arXiv:1901.08273}}.

\bibitem{bensonwitherspoon14}
D.~Benson and S.~Witherspoon.
\newblock Examples of support varieties for {H}opf algebras with noncommutative
  tensor products.
\newblock {\em Arch. Math.}, 102(6):513--520, 2014.

\bibitem{benson91}
D.~J. Benson.
\newblock {\em Representations and cohomology}, volume~2.
\newblock Cambridge university press, 1991.

\bibitem{bensonerdmannholloway07}
D.~J. Benson, K.~Erdmann, and M.~Holloway.
\newblock Rank varieties for a class of finite-dimensional local algebras.
\newblock {\em J. Pure Appl. Algebra}, 211(2):497--510, 2007.

\bibitem{berghoppermann08}
P.~A. Bergh and S.~Oppermann.
\newblock Cohomology of twisted tensor products.
\newblock {\em J. Algebra}, 320(8):3327--3338, 2008.

\bibitem{bezrukavnikovginzburg07}
R.~Bezrukavnikov and V.~Ginzburg.
\newblock On deformations of associative algebras.
\newblock {\em Ann. of Math.}, pages 533--548, 2007.

\bibitem{boekujawanakano}
B.~D. Boe, J.~R. Kujawa, and D.~K. Nakano.
\newblock Tensor triangular geometry for quantum groups.
\newblock {\em preprint \href{http://arxiv.org/abs/1702.01289}{\tt
  arXiv:1702.01289}}.

\bibitem{bonteanikshych}
C.-G. Bontea and D.~Nikshych.
\newblock Pointed braided tensor categories.
\newblock {\em preprint \href{http://arxiv.org/abs/1701.00510}{\tt
  arXiv:1701.00510}}.

\bibitem{brochierjordansafronovsnyder}
A.~Brochier, D.~Jordan, P.~Safronov, and N.~Snyder.
\newblock Invertible braided tensor categories.
\newblock {\em preprint \href{http://arxiv.org/abs/2003.13812}{\tt
  arXiv:2003.13812}}.

\bibitem{brochierjordansnyder}
A.~Brochier, D.~Jordan, and N.~Snyder.
\newblock On dualizability of braided tensor categories.
\newblock {\em preprint \href{http://arxiv.org/abs/1804.07538}{\tt
  arXiv:1804.07538}}.

\bibitem{carlsoniyengar15}
J.~Carlson and S.~Iyengar.
\newblock Thick subcategories of the bounded derived category of a finite
  group.
\newblock {\em Trans. Amer. Math. Soc.}, 367(4):2703--2717, April 2015.

\bibitem{carlson83}
J.~F. Carlson.
\newblock The varieties and the cohomology ring of a module.
\newblock {\em J. Algebra}, 85(1):104--143, 1983.

\bibitem{chemla04}
S.~Chemla.
\newblock Rigid dualizing complex for quantum enveloping algebras and algebras
  of generalized differential operators.
\newblock {\em J. Algebra}, 276(1):80--102, 2004.

\bibitem{deconcinikac91}
C.~De~Concini and V.~G. Kac.
\newblock Representations of quantum groups at roots of 1.
\newblock {\em Modern quantum field theory (Bombay, 1990)}, pages 333--335,
  1991.

\bibitem{deconcinikacporcesi92}
C.~De~Concini, V.~G. Kac, and C.~Procesi.
\newblock Quantum coadjoint action.
\newblock {\em J. Amer. Math. Soc.}, 5(1):151--189, 1992.

\bibitem{drinfeld04}
V.~Drinfeld.
\newblock {DG} quotients of {DG} categories.
\newblock {\em J. Algebra}, 272(2):643--691, 2004.

\bibitem{drupieskikujawa}
C.~M. Drupieski and J.~R. Kujawa.
\newblock Support schemes for infinitesimal unipotent supergroups.
\newblock {\em preprint \href{http://arxiv.org/abs/1811.04840}{\tt
  arXiv:1811.04840}}.

\bibitem{eisenbud80}
D.~Eisenbud.
\newblock Homological algebra on a complete intersection, with an application
  to group representations.
\newblock {\em Trans. Amer. Math. Soc.}, 260(1):35--64, 1980.

\bibitem{erdmannetal04}
K.~Erdmann, M.~Holloway, R.~Taillefer, N.~Snashall, and {\O}.~Solberg.
\newblock Support varieties for selfinjective algebras.
\newblock {\em $K$-theory}, 33(1):67--87, 2004.

\bibitem{etingofostrik04}
P.~Etingof and V.~Ostrik.
\newblock Finite tensor categories.
\newblock {\em Mosc. Math. J}, 4(3):627--654, 2004.

\bibitem{egno15}
P.~I. Etingof, S.~Gelaki, D.~Nikshych, and V.~Ostrik.
\newblock {\em Tensor categories}, volume 205.
\newblock American Mathematical Society, 2015.

\bibitem{evens61}
L.~Evens.
\newblock {The cohomology ring of a finite group}.
\newblock 101:224--239, 1961.

\bibitem{friedlandernegron18}
E.~M. Friedlander and C.~Negron.
\newblock Cohomology for {D}rinfeld doubles of some infinitesimal group
  schemes.
\newblock {\em Algebra Number Theory}, 12(5):1281--1309, 2018.

\bibitem{friedlanderparshall83}
E.~M. Friedlander and B.~J. Parshall.
\newblock On the cohomology of algebraic and related finite groups.
\newblock {\em Invent. Math.}, 74(1):85--117, 1983.

\bibitem{friedlanderparshall86II}
E.~M. Friedlander and B.~J. Parshall.
\newblock Cohomology of {L}ie algebras and algebraic groups.
\newblock {\em Amer. J. Math.}, 108(1):235--253, 1986.

\bibitem{friedlanderparshall86}
E.~M. Friedlander and B.~J. Parshall.
\newblock Support varieties for restricted {L}ie algebras.
\newblock {\em Invent. Math.}, 86(3):553--562, 1986.

\bibitem{friedlanderparshall87}
E.~M. Friedlander and B.~J. Parshall.
\newblock Geometry of $p$-unipotent {L}ie algebras.
\newblock {\em J. Algebra}, 109(1):25--45, 1987.

\bibitem{friedlanderpevtsova05}
E.~M. Friedlander and J.~Pevtsova.
\newblock Representation-theoretic support spaces for finite group schemes.
\newblock {\em Amer. J. Math.}, 127(2):379--420, 2005.

\bibitem{friedlanderpevtsova07}
E.~M. Friedlander and J.~Pevtsova.
\newblock {$\Pi$}-supports for modules for finite group schemes.
\newblock {\em Duke Math. J}, 139(2):317--368, 2007.

\bibitem{friedlandersuslin97}
E.~M. Friedlander and A.~Suslin.
\newblock Cohomology of finite group schemes over a field.
\newblock {\em Invent. Math.}, 127(2):209--270, 1997.

\bibitem{gaitsgory}
D.~Gaitsgory.
\newblock Personal communications.

\bibitem{gerstenhaber64}
M.~Gerstenhaber.
\newblock On the deformation of rings and algebras.
\newblock {\em Ann. of Math.}, 79:59--103, 1964.

\bibitem{ginzburgkumar93}
V.~Ginzburg and S.~Kumar.
\newblock Cohomology of quantum groups at roots of unity.
\newblock {\em Duke Math. J}, 69(1):179--198, 1993.

\bibitem{golod59}
E.~S. Golod.
\newblock {The cohomology ring of a finite $p$-group}.
\newblock 125:703--706, 1959.
\newblock (Russian).

\bibitem{goreskykottwitzmacpherson98}
M.~Goresky, R.~Kottwitz, and R.~MacPherson.
\newblock Equivariant cohomology, {K}oszul duality, and the localization
  theorem.
\newblock {\em Invent. Math.}, 131(1):25--84, 1998.

\bibitem{gulliksen74}
T.~H. Gulliksen.
\newblock A change of ring theorem with applications to {P}oincar\'{e} series
  and intersection multiplicity.
\newblock {\em Math. Scand.}, 34(2):167--183, 1974.

\bibitem{jantzen98}
J.~C. Jantzen.
\newblock Representations of {L}ie algebras in prime characteristic.
\newblock In {\em Representation theories and algebraic geometry}, pages
  185--235. Springer, 1998.

\bibitem{jantzen03}
J.~C. Jantzen.
\newblock {\em {Representations of algebraic groups}}.
\newblock 2003.

\bibitem{joyalstreet86}
A.~Joyal and R.~Street.
\newblock Braided monoidal categories.
\newblock {\em Mathematics Reports}, 86008, 1986.

\bibitem{kassel12}
C.~Kassel.
\newblock {\em Quantum groups}, volume 155.
\newblock Springer Science \& Business Media, 2012.

\bibitem{kazhdanvarshavsky06}
D.~Kazhdan and Y.~Varshavsky.
\newblock Endoscopic decomposition of certain depth zero representations.
\newblock In {\em Studies in Lie theory}, pages 223--301. Springer, 2006.

\bibitem{keller94}
B.~Keller.
\newblock Deriving dg categories.
\newblock In {\em Annales scientifiques de l'Ecole normale sup{\'e}rieure},
  volume~27, pages 63--102, 1994.

\bibitem{kirkmankuzzhang15}
E.~Kirkman, J.~Kuzmanovich, and J.~J. Zhang.
\newblock Noncommutative complete intersections.
\newblock {\em J. Algebra}, 429:253--286, 2015.

\bibitem{krizmay95}
I.~Kriz and J.~P. May.
\newblock {\em Operads, algebras, modules and motives}.
\newblock Soci{\'e}t{\'e} math{\'e}matique de France, 1995.

\bibitem{lusztig90}
G.~Lusztig.
\newblock Finite dimensional {H}opf algebras arising from quantized universal
  enveloping algebras.
\newblock {\em J. Amer. Math. Soc.}, 3(1):257--296, 1990.

\bibitem{mpsw10}
M.~Mastnak, J.~Pevtsova, P.~Schauenburg, and S.~Witherspoon.
\newblock Cohomology of finite-dimensional pointed {H}opf algebras.
\newblock {\em Proc. Lond. Math. Soc.}, 100(2):377--404, 2010.

\bibitem{milne17}
J.~S. Milne.
\newblock {\em Algebraic groups:\ The theory of group schemes of finite type
  over a field}, volume 170.
\newblock Cambridge University Press, 2017.

\bibitem{montgomery93}
S.~Montgomery.
\newblock {\em Hopf algebras and their actions on rings}.
\newblock Number~82. American Mathematical Soc., 1993.

\bibitem{muger03}
M.~M{\"u}ger.
\newblock From subfactors to categories and topology {II}: The quantum double
  of tensor categories and subfactors.
\newblock {\em J. Pure Appl. Algebra}, 180(1-2):159--219, 2003.

\bibitem{negron}
C.~Negron.
\newblock Log-modular quantum groups at even roots of unity and the quantum
  frobenius {I}.
\newblock {\em preprint \href{http://arxiv.org/abs/1812.02277}{\tt
  arXiv:1812.02277}}.

\bibitem{negron17}
C.~Negron.
\newblock The cup product on {H}ochschild cohomology via twisting cochains and
  applications to koszul rings.
\newblock {\em J. Pure Appl. Algebra}, 221(5):1112--1133, 2017.

\bibitem{negronpevtsova2}
C.~Negron and J.~Pevtsova.
\newblock Hypersurface support and spectra for stable categories.
\newblock {\em preprint on arXiv}.

\bibitem{negronpevtsova3}
C.~Negron and J.~Pevtsova.
\newblock Support for quantum groups via the {S}pringer resolution.
\newblock {\em in preparation}.

\bibitem{negronplavnik}
C.~Negron and J.~Plavnik.
\newblock Cohomology of finite tensor categories: duality and {D}rinfeld
  centers.
\newblock {\em preprint \href{http://arxiv.org/abs/1807.08854}{\tt
  arXiv:1807.08854}}.

\bibitem{ostrik03}
V.~Ostrik.
\newblock Module categories, weak {H}opf algebras and modular invariants.
\newblock {\em Transform. Groups}, 8(2):177--206, 2003.

\bibitem{pevtsovawitherspoon09}
J.~Pevtsova and S.~Witherspoon.
\newblock Varieties for modules of quantum elementary abelian groups.
\newblock {\em Algebr. Represent. Theory}, 12(6):567, 2009.

\bibitem{pevtsovawitherspoon15}
J.~Pevtsova and S.~Witherspoon.
\newblock Tensor ideals and varieties for modules of quantum elementary abelian
  groups.
\newblock {\em Proc. Amer. Math. Soc.}, 143(9):3727--3741, 2015.

\bibitem{plavnikwitherspoon18}
J.~Y. Plavnik and S.~Witherspoon.
\newblock Tensor products and support varieties for some noncocommutative
  {H}opf algebras.
\newblock {\em Algebr. Represent. Theory}, 21(2):259--276, 2018.

\bibitem{quillen71}
D.~G. Quillen.
\newblock {The spectrum of an equivariant cohomology ring, I, II}.
\newblock 94:549--602, 1971.

\bibitem{snashallsolberg04}
N.~Snashall and {\O}.~Solberg.
\newblock Support varieties and {H}ochschild cohomology rings.
\newblock {\em Proc. Lond. Math. Soc.}, 88(3):705--732, 2004.

\bibitem{spaltenstein88}
N.~Spaltenstein.
\newblock Resolutions of unbounded complexes.
\newblock {\em Compositio Math.}, 65(2):121--154, 1988.

\bibitem{stacks}
T.~{S}tacks~{P}roject {A}uthors.
\newblock Stacks {P}roject: {D}ifferential graded algebra.
\newblock \url{https://stacks.math.columbia.edu}, 2020.

\bibitem{suslinfriedlanderbendel97b}
A.~Suslin, E.~M. Friedlander, and C.~P. Bendel.
\newblock Support varieties for infinitesimal group schemes.
\newblock {\em J. Amer. Math. Soc.}, 10(3):729--759, 1997.

\bibitem{takahashi10}
R.~Takahashi.
\newblock Classifying thick subcategories of the stable category of
  {C}ohen--{M}acaulay modules.
\newblock {\em Adv. Math}, 225(4):2076--2116, 2010.

\bibitem{takeuchi79}
M.~Takeuchi.
\newblock Relative {H}opf modules--equivalences and freeness criteria.
\newblock {\em J. Algebra}, 60(2):452--471, 1979.

\bibitem{venkov59}
B.~B. Venkov.
\newblock {Cohomology algebras for some classifying spaces}.
\newblock 127:943--944, 1959.

\bibitem{waterhouse}
W.~C. Waterhouse.
\newblock {\em Introduction to affine group schemes}, volume~66.
\newblock Springer Science \& Business Media, 2012.

\bibitem{weibel95}
C.~A. Weibel.
\newblock {\em An introduction to homological algebra}, volume~38.
\newblock Cambridge university press, 1995.

\bibitem{zassenhaus54}
H.~Zassenhaus.
\newblock The representations of {L}ie algebras of prime characteristic.
\newblock {\em Glasg. Math. J.}, 2(1):1--36, 1954.

\end{thebibliography}

\end{document}